\documentclass{amsart}
\usepackage{amsmath, amssymb, amsfonts, graphicx, hyperref}

\def\Ric{\mathop{\rm Ric}}
\def\Riem{\mathop{\rm Rm}}
\def\KNP{\mathop{\makebox{$\wedge$}\hspace{-8.4pt}\bigcirc}}
\def\Ric{\mathop{\rm Ric}}
\def\cRic{\mathop{\rm R\text{\i}\makebox[0pt]{\raisebox{5pt}{\tiny$\circ$\;\,}}c}}

\makeatletter
\newcommand{\umathbb}[1]{{\mathpalette\dude@umathbb{#1}}}
\newcommand{\dude@umathbb}[2]{%
	\raisebox{\depth}{\rotatebox{180}{$\m@th#1\mathbb{#2}$}}%
}
\newcommand{\rmathbb}[1]{{\mathpalette\dude@rmathbb{#1}}}
\newcommand{\dude@rmathbb}[2]{%
	\scalebox{-1}[1]{$\m@th#1\rm{#2}$}%
}
\newcommand{\fmathbb}[1]{{\mathpalette\dude@fmathbb{#1}}}
\newcommand{\dude@fmathbb}[2]{%
	\raisebox{\depth}{\scalebox{1}[-1]{$\m@th#1\mathbb{#2}$}}%
}
\makeatother

\newtheorem{theorem}{Theorem}[section]
\newtheorem{proposition}[theorem]{Proposition}
\newtheorem{lemma}[theorem]{Lemma}
\newtheorem{corollary}[theorem]{Corollary}
\newtheorem{conjecture}[theorem]{Conjecture}

\title[Scalar-flat K\"ahler 4-manifolds with a symmetry]{Complete scalar-flat K\"ahler 4-manifolds with a continuous symmetry}
\author{Brian Weber }
\date{September 1, 2023}

\begin{document}
	
	\maketitle
	
	\begin{abstract}
		We study complete scalar-flat K\"ahler manifolds with a Killing field and a mild asymptotic condition.
		We show that topological and geometric rigidities exist that powerfully restrict the manifold's behavior at infinity.
		We create a rough classification for scalar-flat K\"ahler manifolds with a Killing field.
	\end{abstract}
	
	
	%
	%
	%
	%
	%
	%
	%
	%
	\section{Introduction}
	
	K\"ahler manifolds with zero scalar curvature and a continuous symmetry have received a great deal of attention.
	Numerous metrics of this kind are known in the compact \cite{KLP97}, \cite{ACGT04} and non-compact cases \cite{Don09} \cite{Weber23} and a variety of ans\"atze and construction methods have been built \cite{LeB89} \cite{HS02} \cite{ACG06} \cite{DF89}.
	Treatments are available from perspectives such as twistor theory, Einstein-Weyl geometry, toric K\"ahler geometry, and under asymptotic rigidity assumptions; see for example \cite{Tod95}, \cite{Dan96}, \cite{Calder00}, \cite{DunPlan11}, \cite{LV16}, \cite{Weber24a}.
	Even still, the variety of complete metrics of this type has not been understood.
	
	Without any asymptotic conditions a full classification of the complete case is probably out of reach.
	In this paper we take the asymptotic condition to be a lower bound on the norm of the Killing field.
	This condition appears naturally, for example, in the study of ALE or ALF metrics.
	Additionally, in creating singularity models for collapsing K\"ahler manifolds, an F-structure \cite{CG86} on the collapsing sequence appears.
	In the limit this leads, under the right controls, to a Killing field on a complete manifold whose norm is bounded strictly away from zero.
	
	To explain our results, let $(M^4,J,g_4,V)$ be a K\"ahler manifold where $V$ is a symplectomorphic Killing field.
	If $M^4$ is simply connected we can solve $dz=-i_V\omega$ up to a constant; if not we can always solve for $z$ locally, or pass to the universal cover where we can solve for $z$ globally.
	The function $z$ is called a \textit{Killing potential} or the \textit{momentum} for $V$.
	Level-sets $M^3_z=\{z=const\}$ are manifolds away from zeros of $V$, and the intrinsic volume form of $M^3_z$ is $dVol_3=dVol_4({\frac{\nabla{}z}{|\nabla{}z|}},\cdot,\cdot,\cdot)$.
	The Killing field $V$ passes to each $M^3_z$, where we may take a Riemannian quotient to obtain an orbifold $M^2_z$, called the \textit{K\"ahler reduction} at $z=const$.
	Each $M^2_z$ is a smooth Riemannian orbifold that inherits a metric and complex structure from $M^4$; we denote its Gaussian curvature by $K$ and its intrinsic volume form by $dVol_2$.
	By pullback, the function $K$ and $2$-form $dVol_2$ also exists on the parent manifold $M^4$.
	In the scalar-flat case, away from zeros of $V$, we prove the relations
	\begin{eqnarray}
		&&K\,dVol_4
		=
		d\left[\frac{\triangle_4z}{|V|}dVol_3\,-\,Jd\log|V|^2\wedge*dVol_2\right], \quad\text{and} \label{EqnFirstEvoEqn} \\
		&&2|\Ric{}_4|^2\,dVol_4
		=
		d\left[\mathcal{L}_{\frac{\partial}{\partial{}z}}\left(\left(\frac{\triangle_4z}{|V|}\right)^2V_\flat\wedge{}dVol_2\right)
		-Jd\left(\frac{\triangle_4z}{|V|}\right)^2\wedge*dVol_2\right]. \label{EqnSecondEvoEqn}
	\end{eqnarray}
	Eq. (\ref{EqnFirstEvoEqn}) is a global form of the ``Toda-lattice equation'' (see Section \ref{SecEvoK}) and Eq. (\ref{EqnSecondEvoEqn}) is ultimately a Chern-Gauss-Bonnet identity.
	Using an integration relation between $M^4$ and the reductions $M^2_z$ (Lemma \ref{LemmaIntegration}), the following evolution equations are more-or-less immediate consequences where $M^2_z$ is compact and $|V|\ne0$:
	\begin{eqnarray}
		&&\int_{M^2_z}K\,dVol_2
		=\frac{d}{dz}\int_{M^2_z}\triangle_4z\,dVol_2, \label{EqnFirstIntegrated} \\
		&&\int_{M^2_z}|\Ric{}_4|^2\,dVol_2
		=\frac12\frac{d^2}{dz^2}\int_{M^2_z}(\triangle_4z)^2\,dVol_2. \label{EqnSecondIntegrated}
	\end{eqnarray}
	\begin{figure}[ht]
		\centering
		\includegraphics[scale=0.3]{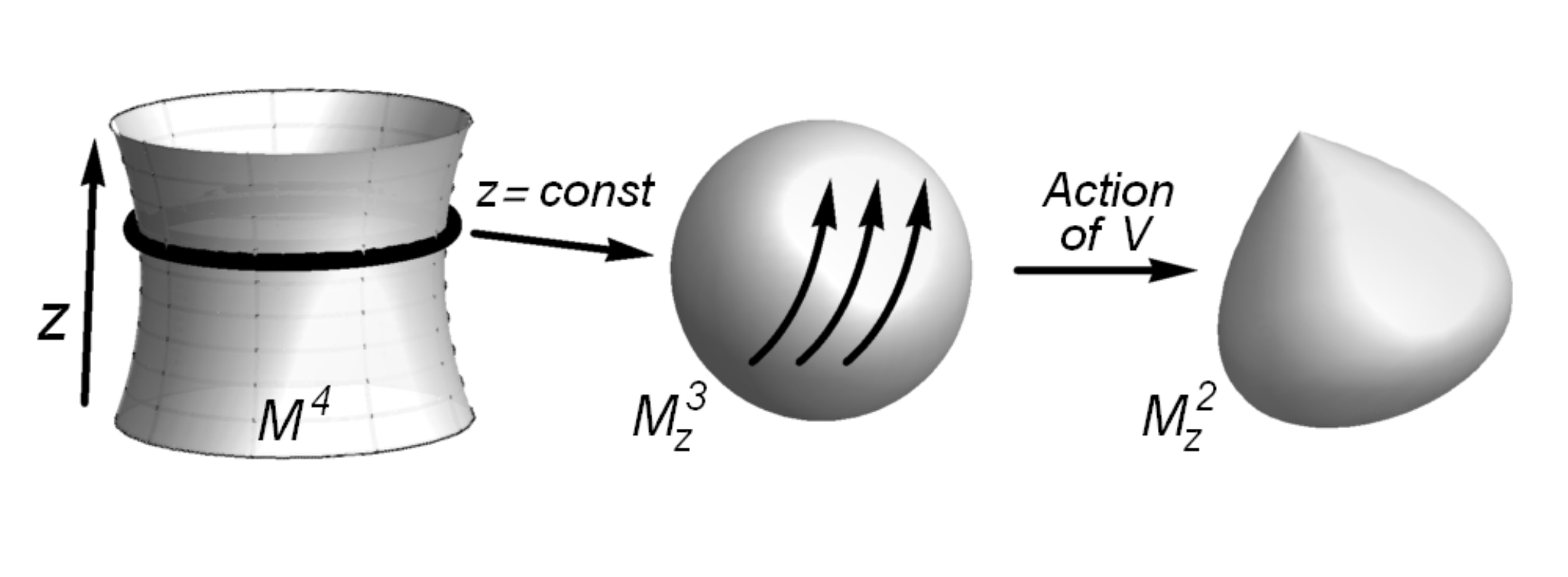}
		\caption{Schematic depiction of the K\"ahler reduction.
			Each level-set $M^3_z$ inherits a Killing field.
			When $z$ is nonsingular then $M^3_z$ is a manifold and its quotient $M^2_z$ is a Riemannian orbifold.
		} \label{FigReduction}
	\end{figure}
	
	Before stating the theorems we address the issue of finding the potential function $z$.
	A topological obstruction exists in $H^1_{dR}(M^4)$, but the function $\triangle_4z$ and field $\partial/\partial{}z$ exist regardless.
	We approach this more systematically in Section \ref{SecCoords}, but for now we state that $\triangle_4z=*(dV\wedge\omega)$ which exists globally, and we will see $\partial/\partial{}z=-|V|^{-2}(i_{V}\omega)^\sharp$ which exists everywhere except at zeros of $V$.
	The level-sets $M^3_z$ and quotients $M^2_z$ exist even if $z$ does not exist.
	This is because the distribution
	\begin{equation}
		\mathcal{D}\;=\;\{v\in{}TM^4\;\big|\;\omega(v,V)=0\}
	\end{equation}
	is always integrable, as a consequence of $d(i_V\omega)=0$.
	Supposing $V$ is Hamiltonian and a potential $z$ exists, then $\mathcal{D}=(\nabla{}z)^\perp$.
	But if $V$ is Hamiltonian or not, we may still assume every leaf of $\mathcal{D}$ is an embedded 3-manifold, except where a leaf intersects the singular locus $\{V=0\}$.
	One can, for example, pass to the universal cover of $M^4$, where $V$ is indeed Hamiltonian.
	Lemma \ref{LemmaHamiltonian} gives an explicit criterion for Hamiltonicity.
	Equations (\ref{EqnFirstEvoEqn}) and (\ref{EqnSecondEvoEqn}) hold everywhere except at zeros of $V$, even if $V$ is not Hamiltonian.
	
	We address another a possible complication in the formation of the orbifolds $M^2_z$.
	The field $V$ passes to each $M^3_z$, but if its orbits are not closed then the Riemannian quotient $M^2_z$ is non-Hausdorff; for this reason we only consider the case of Killing fields with closed orbits.
	This is not too serious a restriction however.
	If the orbits are non-closed, then the closure of its diffeomorphism action has rank at least 2, and we can find a second Killing field.
	The case of two symmetries has been dealt with elsewhere (see for example \cite{Don09}, \cite{Weber24a} and references therein), so when orbits are non-closed, one can refer to those works.
	If orbits of $V$ are closed but unbounded, we can take a discrete quotient along its diffeomorphism flow, and obtain closed orbits.
	We assume throughout this paper that the diffeomorphism flow of $V$ is parameterized by the unit circle with parameter $t\in[0,2\pi)$.
	
	\begin{theorem} \label{ThmCptNonCpt}
		Assume a compact set $K\subset{}M^4$ and number $\epsilon>0$ exist so $|V|>\epsilon$ on $M^4\setminus{}K$.
		Then every integral leaf $M^3_z$ is compact, or every integral leaf is non-compact.
	\end{theorem}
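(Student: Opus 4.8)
The plan is to realize the level sets $M^3_z$ as the leaves of the foliation $\mathcal{D}=\ker(i_V\omega)$ and to exploit the \emph{transverse} vector field
\begin{equation}
	\frac{\partial}{\partial z}\;=\;-|V|^{-2}\,(i_V\omega)^\sharp ,
\end{equation}
which is defined and nonvanishing on $M^4\setminus\{V=0\}$, has norm $|\partial/\partial z|=|V|^{-1}$, and is transverse to every leaf. Because $i_V\omega$ is closed and $i_{\partial/\partial z}(i_V\omega)=-1$ is constant, Cartan's formula gives $\mathcal{L}_{\partial/\partial z}(i_V\omega)=0$, so the flow $\phi_s$ of $\partial/\partial z$ preserves $\mathcal{D}$ and hence carries leaves diffeomorphically onto leaves. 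I would let $U_c$ (resp.\ $U_{nc}$) be the set of points lying on a compact (resp.\ non-compact) leaf; these are disjoint, their union is $M^4\setminus\{V=0\}$, and it suffices to show both are open, since $M^4\setminus\{V=0\}$ is connected. Connectedness holds because the zero locus of the Killing field is a disjoint union of totally geodesic submanifolds of codimension $\ge 2$ (isolated fixed points and fixed surfaces), and deleting a closed set of codimension $\ge 2$ from a connected manifold leaves it connected; moreover $\{V=0\}\subset K$, since $|V|>\epsilon$ off $K$.

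Openness of $U_c$ is the elementary half. If $L$ is a compact leaf, then by compactness $\phi_s$ is defined on all of $L$ for $|s|<\delta$, and the flow-out $L\times(-\delta,\delta)\to M^4$ is an injective local diffeomorphism, hence an open embedding; it produces a product neighborhood of $L$ whose leaves $\phi_s(L)\cong L$ are all compact.

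The crux is the openness of $U_{nc}$, and this is where the asymptotic hypothesis enters. Given a non-compact leaf $L$, I must produce a \emph{uniform} $\delta>0$ for which $\phi_s$ is defined on all of $L$ whenever $|s|<\delta$. Along an integral curve of $\partial/\partial z$ issuing from $L$ two failures must be excluded: running into $\{V=0\}$, and escaping to infinity in finite flow-time. Since $L$ avoids the compact zero locus $\{V=0\}\subset K$ and lies at a fixed level of the (locally defined) momentum, one gets $|V|\ge\eta>0$ on the part of $L$ inside $K$, while $|V|>\epsilon$ on $M^4\setminus K$; hence $|\partial/\partial z|=|V|^{-1}$ is bounded by some $C$ on a neighborhood of $L$. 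An integral curve then has speed $\le C$ and covers arclength $\le C|s|$, so by completeness of $g_4$ it stays in a compact set and cannot escape; thus $\phi_s$ exists on all of $L$ for a uniform $|s|<\delta$. Because $\phi_s$ is a foliation-preserving diffeomorphism, a maximality argument identifies $\phi_s(L)$ with the \emph{entire} leaf through $\phi_s(p)$, which is therefore diffeomorphic to $L$ and non-compact; the flow-out of $L$ is an open set contained in $U_{nc}$.

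With $U_c$ and $U_{nc}$ both open and partitioning the connected manifold $M^4\setminus\{V=0\}$, one of them is empty, which is the assertion. I expect the main obstacle to be exactly this uniform flow-existence over a non-compact leaf: simultaneously controlling $|V|^{-1}$ near infinity (where the hypothesis $|V|>\epsilon$ is used in an essential way) and on the compact part of $L$ approaching $\{V=0\}$. This is precisely the step that would break down for a Reeb-type foliation, where $|V|^{-1}$ blows up and a compact leaf can occur as a limit of non-compact ones.
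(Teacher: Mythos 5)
Your open--closed strategy on $M^4\setminus\{V=0\}$ is genuinely different from the paper's argument, and the easy half (openness of $U_c$, flow-invariance of $\mathcal{D}$, connectedness after deleting the codimension-$\ge2$ zero locus) is fine. But there is a concrete gap exactly at the step you identify as the crux: the asserted bound $|V|\ge\eta>0$ on $L\cap K$ for a non-compact leaf $L$. This would follow if $L$ were closed in $M^4$, but leaves of $\mathcal{D}$ need not be closed: a leaf is a component of a level set \emph{minus} the critical points of $z$, and its closure can meet $\{V=0\}$. In the flat model appearing in the proof of Lemma \ref{LemmaConnectedLevelsets}, with $\alpha$ and $\beta$ of opposite signs, the singular leaf through a saddle zero is a punctured cone accumulating at the zero; along such a leaf $|V|\to0$ inside $K$, so $|\partial/\partial z|=|V|^{-1}$ is unbounded on $L$, no uniform flow-time $\delta$ exists, and the flow-out of $L$ does not furnish a neighborhood. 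Since (given that the theorem is true but not yet proved) a transition from compact to non-compact leaves could a priori only occur across precisely such a singular leaf touching $\{V=0\}$ --- the hypothesis $|V|>\epsilon$ off $K$ confines the degeneration to $K$, and the parabolic $\mathbb{S}^2\times\mathbb{H}^2$ example of \S\ref{SubSecTypeIIIExamples} shows the transition really does happen through a leaf where $|V|^{-1}$ blows up --- your openness claim for $U_{nc}$ fails to be established at the only leaves where it is actually needed. A secondary, related omission: you never establish a global momentum (the paper needs Lemma \ref{LemmaHamiltonian}, whose proof itself uses a compact leaf), and ``lies at a fixed level of the locally defined momentum'' is not enough to make $L\cap K$ compact; nor do you address passing from leaves back to the level sets $M^3_z$ of the statement, which is where Lemma \ref{LemmaConnectedLevelsets} enters.

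For contrast, the paper's proof sidesteps the singular leaves entirely: it first gets a global $z$ (Lemma \ref{LemmaHamiltonian}), so level sets are \emph{closed} and compactness reduces to boundedness; it then shows every maximal trajectory of $\partial/\partial z$ meets a fixed precompact $\Omega\supset K\cup M^3_{z_0}$ (trajectories that die in finite time must approach $\{|\nabla z|=0\}\subset K$, and trajectories escaping to infinity in finite parameter are excluded by the estimate $|s_1-s_0|\ge\epsilon\,\mathrm{dist}(\gamma(s_0),\gamma(s_1))$ from (\ref{IneqLenOfGamma})); finally, unboundedness of any level set contradicts that same estimate via (\ref{EqnZbyDist}). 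The key point is that this argument is quantitative along trajectories and never needs uniform control of $|V|^{-1}$ over an entire singular leaf. To repair your proof you would need an additional local analysis near zeros of $V$ (in the spirit of the blow-up in Lemma \ref{LemmaConnectedLevelsets}) showing that compactness type cannot change across a critical level --- which is essentially the content your connectedness argument was meant to avoid.
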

	If the asymptotic condition $|V|>\epsilon$ is replaced by only $|V|>0$, then Theorem \ref{ThmCptNonCpt} is false.
	See \S\ref{SubSecTypeIIIExamples} for an example.
	
	The geometric Euler number of $M^2_z$, also called its orbifold Euler number, is
	\begin{equation}
		\chi^{g}(M^2_z)
		\;=\;\frac{1}{2\pi}\int_{M^2_z}K\,dVol_2
	\end{equation}
	(see \cite{Orlik72}).
	If $\chi(M^2)$ is its topological Euler number, we have the well-known relation $\chi(M^2)=\chi^g(M^2)+\sum_i(1-|\Gamma_i|^{-1})$ where the sum is taken over all orbifold points $p_i$ and $|\Gamma_i|$ is the cardinality of the orbifold group at $p_i$.
	In particular $\chi(M^2)\ge\chi^g(M^2)$ with equality if and only if $M^2$ is a manifold.
	
	When $M^3_z$ is non-singular then $M^3_z\rightarrow{}M^2_z$ is an $\mathbb{S}^1$ fibration and $M^3_z$ is a Seifert fibered space.
	When compact, the fibration's \text{geometric Euler number} is
	\begin{equation}
		e^{g}(M^3_z,M^2_z)
		\;=\;\frac{1}{4\pi^2}\int_{M^3_z}|V|^{-4}V_\flat\wedge{}dV_\flat \label{EqnChernSimonsIntro}
	\end{equation}
	under the normalizing condition that orbits of $V$ are closed and parameterized by $t\in[0,2\pi)$.
	If $M^3_z\rightarrow{}M^2_z$ is not just a fibration but a fiber bundle 
	with Euler number $e$, then $e=e^g(M^3_z,M^2_z)$.
	A second interpretation of (\ref{EqnChernSimonsIntro}) is that its integrand is the natural Chern-Simons 3-form on $M^3_z$ associated to the fibration; see \cite{NR84}, and the proof of Proposition \ref{PropConstantGrowthInText} below.
	\begin{theorem} \label{ThmConstantGrowth}
		Assume the leaves $M^3_z$ are compact.
		Away from zeros of $V$ the quantities $\frac{d}{dz}\int\,dVol_2$ and $e^g(M^3_z,M^2_z)$ are constant and
		\begin{equation}
			\frac{d}{dz}\int_{M^2_z}\,dVol_2
			\;=\;2\pi{}e^{g}(M^3_z,M^2_z). \label{EqnAreaOfReduction}
		\end{equation}
		Assume in addition that $M^4$ is scalar-flat.
		Away from zeros of $V$, the quantities $\frac{d^2}{dz^2}\int|V|^2dVol_2$, $\frac{d}{dz}\int\triangle_4z\,dVol_2$, and $\chi^{g}(M^2_z)$ are constant and
		\begin{equation}
			\frac{d^2}{dz^2}\int_{M^2_z}|V|^2\,dVol_2
			\;=\;\frac{d}{dz}\int_{M^2_z}\triangle_4z\,dVol_2
			\;=\;4\pi\chi^{g}(M^2_z).
		\end{equation}
		When $M^4$ is scalar flat, we also have
		\begin{equation}
			\frac{d^2}{dz^2}\int_{M^2_z}(\triangle_4z)^2\,dVol_2
			\;=\;2\int_{M^2_z}|\Ric|^2\,dVol_2. \label{EqnIntoThmCGB}
		\end{equation}
	\end{theorem}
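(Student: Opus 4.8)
The plan is to reduce the theorem to two geometric computations on the family $\{M^2_z\}$ together with the integrated identities (\ref{EqnFirstIntegrated}) and (\ref{EqnSecondIntegrated}). Write $\eta=|V|^{-2}V_\flat$ for the connection $1$-form of the $\mathbb{S}^1$-fibration $M^3_z\rightarrow M^2_z$, so that $i_V\eta=1$ and $\mathcal{L}_V\eta=0$, and let $\omega_z$ be the reduced K\"ahler form, which equals the area form $dVol_2$ and satisfies $\pi^*\omega_z=\omega|_{M^3_z}$. From $dz=-i_V\omega$ one gets $\nabla z=-JV$, hence $|\nabla z|=|V|$, and a direct check gives $i_{\partial_z}\omega=\eta$ for the normalized gradient $\partial_z=|V|^{-2}\nabla z$. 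Since $\mathcal{L}_Vg=0$ and $\mathcal{L}_V\omega=d(i_V\omega)=0$ force $\mathcal{L}_VJ=0$, we have $[V,\partial_z]=0$, so $\partial_z$ descends to each $M^2_z$. Finally, as the orbits are unit-circle parameterized each fiber has length $2\pi|V|$, giving $\int_{M^3_z}f\,dVol_3=2\pi\int_{M^2_z}f\,|V|\,dVol_2$ for $V$-invariant $f$.

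For (\ref{EqnAreaOfReduction}) and the constancy of $e^{g}$, I would establish the Duistermaat--Heckman-type identity $\frac{d}{dz}\omega_z=F$, where $d\eta=\pi^*F$ is the curvature of $\eta$. Indeed $d\omega=0$ and $i_{\partial_z}\omega=\eta$ give, by Cartan's formula, $\mathcal{L}_{\partial_z}\omega=d(i_{\partial_z}\omega)=d\eta$; reducing this to $M^2_z$, which is possible because $\partial_z$ projects, yields $\frac{d}{dz}\omega_z=F$. Integrating over the compact $M^2_z$ gives $\frac{d}{dz}\int_{M^2_z}dVol_2=\int_{M^2_z}F$. I then identify this with $2\pi e^{g}$ through the Chern--Simons computation $|V|^{-4}V_\flat\wedge dV_\flat=\eta\wedge d\eta$ and $\int_{M^3_z}\eta\wedge\pi^*F=2\pi\int_{M^2_z}F$, the factor $2\pi$ being the fiber integral of $\eta$; comparison with (\ref{EqnChernSimonsIntro}) gives $\int_{M^2_z}F=2\pi e^{g}$, which is (\ref{EqnAreaOfReduction}). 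Constancy is then immediate: over any interval of regular values the fibrations are mutually isomorphic, so the Euler class $[F]$---an integer, or rational in the Seifert case---cannot vary, and neither can $\frac{d}{dz}\int dVol_2$.

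For the scalar-flat assertions I would first prove the purely metric identity $\frac{d}{dz}\int_{M^2_z}|V|^2\,dVol_2=\int_{M^2_z}\triangle_4z\,dVol_2$ by the divergence theorem: for $\nu=\nabla z/|\nabla z|$ one has $\int_{\{z\le c\}}\triangle_4z\,dVol_4=\int_{M^3_c}\langle\nabla z,\nu\rangle\,dVol_3=\int_{M^3_c}|V|\,dVol_3=2\pi\int_{M^2_c}|V|^2\,dVol_2$, and differentiating in $c$ while applying the coarea formula to the left-hand side gives the stated identity. A second differentiation yields $\frac{d^2}{dz^2}\int|V|^2\,dVol_2=\frac{d}{dz}\int\triangle_4z\,dVol_2$. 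The equality of these with $4\pi\chi^{g}$ then follows from the integrated form of the first evolution equation (\ref{EqnFirstIntegrated}) together with the Gauss--Bonnet relation for $\int_{M^2_z}K\,dVol_2$, and their constancy follows because $\chi^{g}$, the orbifold Euler characteristic, is a topological invariant of the reduction, which is constant across regular values. Finally, (\ref{EqnIntoThmCGB}) is exactly (\ref{EqnSecondIntegrated}) rearranged.

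The main obstacle is the first group, precisely making $\frac{d}{dz}\omega_z=F$ rigorous: one must verify that $\partial_z$ descends to the reductions, so that $\mathcal{L}_{\partial_z}\omega$ reduces to $\frac{d}{dz}\omega_z$ without spurious vertical contributions, and that restriction to the leaf commutes with the $z$-derivative; the identity $[V,\partial_z]=0$, resting on $\mathcal{L}_VJ=0$, is exactly what secures this. A secondary but essential bookkeeping task is to track the orbit period $2\pi$ consistently across the fiber-length relation, the Chern--Simons normalization of $e^{g}$, and Lemma \ref{LemmaIntegration}, since these are what fix the precise constants $2\pi$ and $4\pi$ appearing in (\ref{EqnAreaOfReduction}) and the displays that follow it.
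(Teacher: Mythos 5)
Your first half is sound and takes a genuinely different route from the paper. The paper proves (\ref{EqnAreaOfReduction}) by computing the Chern--Simons form explicitly, $Tr(\theta\wedge d\theta)=|V|^{-2}V_\flat\wedge\mathcal{L}_{\frac{\partial}{\partial z}}dVol_2$ (Proposition \ref{PropConstantGrowthInText}), and gets constancy of $\frac{d}{dz}\int dVol_2$ analytically, by rewriting the second LeBrun equation as $\mathcal{L}_{\frac{\partial}{\partial z}}\mathcal{L}_{\frac{\partial}{\partial z}}dVol_2+(\triangle_2|V|^{-2})dVol_2=0$ and integrating (Lemma \ref{LemmaPartialElliptic}). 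Your Duistermaat--Heckman argument --- $\mathcal{L}_{\partial_z}\omega=d(i_{\partial_z}\omega)=d\eta$, with $i_V\eta=1$, $\mathcal{L}_V\eta=0$ forcing $i_Vd\eta=0$ so that $d\eta$ is basic on each leaf and $\frac{d}{dz}\omega_z=F$ --- is correct, and your normalization checks ($\eta\wedge d\eta=|V|^{-4}V_\flat\wedge dV_\flat$, fiber integral $2\pi$) match (\ref{EqnChernSimonsIntro}). Your constancy-by-isotopy is the mechanism the paper itself uses elsewhere (Lemma \ref{LemmaUnboundedZText}, Corollary \ref{CorSignedEuler}) and arguably cleaner than the analytic route. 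Likewise your divergence-theorem derivation of $\frac{d}{dz}\int_{M^2_z}|V|^2dVol_2=\int_{M^2_z}\triangle_4z\,dVol_2$ is a legitimate substitute for the paper's pointwise identity $\mathcal{L}_{\frac{\partial}{\partial z}}(|V|^2dVol_2)=(\triangle_4z)\,dVol_2$ (Lemma \ref{LemmaLieDerivsOfVols}), except that you should run it on a slab $\{c_0\le z\le c_1\}$ between regular values rather than on $\{z\le c\}$, which need not be compact in the noncompact setting.

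The scalar-flat half, however, has a genuine gap: it is circular. Equations (\ref{EqnFirstIntegrated}) and (\ref{EqnSecondIntegrated}) are not available inputs --- they are announced in the introduction as consequences of the evolution equations (\ref{EqnFirstEvoEqn}) and (\ref{EqnSecondEvoEqn}), whose proofs (Sections \ref{SecEvoK} and \ref{SecCGB}) \emph{are} the paper's proof of exactly the displays you must establish. Indeed (\ref{EqnSecondIntegrated}) is (\ref{EqnIntoThmCGB}) verbatim, so ``rearranging'' it proves nothing; and (\ref{EqnFirstIntegrated}) plus Gauss--Bonnet is precisely the second display minus the elementary identity you did prove. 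What is missing is the only place scalar-flatness actually enters: first, a proof that $\int_{M^2_z}K\,dVol_2=\frac{d}{dz}\int_{M^2_z}\triangle_4z\,dVol_2$, which the paper obtains by converting the Toda equation $u_{xx}+u_{yy}+(e^u)_{zz}=0$ into the global exact form $2K\,dVol_4=d\bigl[-Jd\log|V|^2\wedge *dVol_2+\frac{1}{|V|}\triangle_4z\,dVol_3\bigr]$ and applying Stokes on $M^4_{z_0,z_1}$ (Proposition \ref{PropGrowthOfVSqVol}); and second, a proof of (\ref{EqnIntoThmCGB}), which the paper derives from a Chern--Gauss--Bonnet transgression: setting $\bar\theta=|V|^{-2}V_\flat\otimes\nabla V$, the modified connection has curvature annihilated by $V$, giving $|\Ric|^2\,dVol_4=d\,\mathcal{TP}_{\Ric}$ in the scalar-flat case, with $\mathcal{TP}_{\Ric}$ reduced to usable form in Lemmas \ref{LemmaTPRicComp1}--\ref{LemmaUsableCGB} and integrated in Proposition \ref{PropTwoDerivsOfLaplSq}. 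Neither derivation, nor any substitute for them, appears in your proposal, so as written it proves only the first display and the identity $\frac{d^2}{dz^2}\int|V|^2dVol_2=\frac{d}{dz}\int\triangle_4z\,dVol_2$, while assuming the rest.
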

	Based on Theorems \ref{ThmCptNonCpt} and \ref{ThmConstantGrowth}, we create the following rough classification for scalar-flat K\"ahler manifolds $M^4$ with a symplectomorphic Killing field $V$:
	\begin{itemize}
		\item Type I: $M^4$ is compact.
		\item Type II: $M^4$ is non-compact, and $|V|>\epsilon>0$ outside a compact set.
		\begin{itemize}
			\item Type IIa: the distribution $\mathcal{D}$ has compact leaves
			\item Type IIb: the distribution $\mathcal{D}$ has non-compact leaves
		\end{itemize}
		\item Type III: a divergent sequence of points $p_i$ exist with $\lim_{i}|V(p_i)|=0$.
		\begin{itemize}
			\item Type IIIa: outside some compact set, $|V|>0$
			\item Type IIIb: a divergent sequence of points $p_i$ exist so that $|V(p_i)|=0$.
		\end{itemize}
	\end{itemize}
	In addition to Theorems \ref{ThmCptNonCpt} and \ref{ThmConstantGrowth}, another motivation for dividing Type II from Type III is that Type II metrics appear as ``deepest bubble'' blowup limits of collapsing extremal K\"ahler manifolds.
	A full classification of Type II metrics would have consequences for the study of K\"ahler 4-manifolds generally.
	About compact scalar-flat manifolds a great deal has been written; see for example \cite{LeB91b} \cite{KLP97} \cite{RS09} \cite{AP09} \cite{LeB20} \cite{CC18}.
	Hope for classifying Type III metrics seems out of reach.
	This paper focuses on the Type II case.
	\begin{lemma}[cf. Lemma \ref{LemmaUnboundedZText}] \label{LemmaOneOrTwoEnded}
		Assume $M^4$ is Type IIa.
		Then $M^4$ has at most two ends, the ends characterized by $z$ being unbounded from above, or else from below.
	\end{lemma}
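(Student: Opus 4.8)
The plan is to treat $z$ as a proper Morse--Bott function whose regular level sets are the leaves $M^3_z$, and to read off the ends from the two directions $z\to\pm\infty$. First I would record two facts. By Theorem~\ref{ThmCptNonCpt}, the Type~IIa hypothesis forces every leaf $M^3_z$ to be compact. Next, since $dz=-i_V\omega$ gives $|\nabla z|=|i_V\omega|=|V|$, the bound $|V|>\epsilon$ on $M^4\setminus K$ shows that $z$ has no critical points outside $K$ and that $|\nabla z|$ is bounded below there; together with the compactness of the leaves this makes $z$ proper away from $K$. Set $b=\max_K z$ and $a=\min_K z$, so that every value in $(b,\infty)$ and in $(-\infty,a)$ is a regular value lying outside $z(K)$.

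The second step is to build a product structure at infinity via the gradient flow. Consider the normalized field $X=\nabla z/|\nabla z|^2$ on the open region $R_+=\{z>b\}$, which satisfies $\mathcal{L}_X z\equiv1$ and $|X|=1/|V|<1/\epsilon$. Because $M^4$ is complete and $X$ has uniformly bounded norm on $R_+$, an integral curve of $X$ starting in $R_+$ cannot escape every compact set in finite time; flowing along $+X$ strictly increases $z$, so the trajectory never reaches $\partial R_+=\{z=b\}$ and hence exists for all $s\ge0$, along which $z=z(\gamma(0))+s\to+\infty$. In particular, if $R_+\neq\varnothing$ then $z$ is unbounded above, and the flow $\Phi_s$ of $X$ identifies $R_+$ diffeomorphically with $F_+\times[0,\infty)$, where $F_+$ is a regular level set $\{z=c\}$ with $c>b$. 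The identical argument applied to $-\nabla z/|\nabla z|^2$ on $R_-=\{z<a\}$ gives $R_-\cong F_-\times[0,\infty)$ with $z\to-\infty$. Since $M^4\setminus z^{-1}([a,b])=R_+\sqcup R_-$ while $z^{-1}([a,b])$ is compact, the ends of $M^4$ are exactly the connected components of $R_+$ together with those of $R_-$, i.e.\ the components of $F_+$ and of $F_-$; on each, $z$ is unbounded above, respectively below. This yields the stated characterization and reduces the lemma to showing that $F_+$ and $F_-$ are each connected.

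The main obstacle is precisely this connectedness of the fibers, which does not follow from properness alone, as the height function on a cylinder shows. Here I would use that $z$ is the moment map of the circle action generated by $V$: by $dz=-i_V\omega$ its critical set is the zero locus $\{V=0\}$, and near such a point the Hessian of $z$ has even-dimensional positive and negative eigenspaces, since $V$ rotates the normal directions. Thus $z$ is Morse--Bott with critical submanifolds of even index and even coindex, and, being proper, it obeys Atiyah's local-to-global (convexity) principle, so that all of its level sets---in particular $F_+$ and $F_-$---are connected. Consequently $R_+$ and $R_-$ are each connected or empty, and $M^4$ has at most two ends, one with $z\to+\infty$ and one with $z\to-\infty$.

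The one point that genuinely requires care is that this argument uses a globally defined $z$, i.e.\ that $V$ is Hamiltonian. I would dispatch this either by invoking the Hamiltonicity criterion of Lemma~\ref{LemmaHamiltonian}, or by passing to the cover on which $V$ is Hamiltonian and checking that the deck action respects the product decomposition $F_\pm\times[0,\infty)$ at infinity, so that the end count descends. I expect the real work to lie in this compatibility check and in the connectedness step, rather than in the flow construction, which is essentially routine once properness and the absence of critical points at infinity are in hand.
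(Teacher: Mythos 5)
Your proposal is correct in outline and its skeleton matches the paper's: Hamiltonicity via Lemma \ref{LemmaHamiltonian} (Type IIa means $M^4$ is non-compact with compact leaves, so no covering-space detour is needed --- the ``compatibility check'' you worry about at the end evaporates), compactness of all level-sets via Theorem \ref{ThmCptNonCpt}, the flow of $\frac{\partial}{\partial{}z}=|\nabla{}z|^{-2}\nabla{}z$ to push level-sets to infinity and show $z$ is unbounded on each end (your bounded-speed argument $|X|\le1/\epsilon$ is exactly the paper's length estimate (\ref{IneqLenOfGamma})), and finally connectedness of level-sets to cap the number of ends at two. Where you genuinely diverge is the connectedness step. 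The paper proves it (Lemma \ref{LemmaConnectedLevelsets}) by hand: it blows up at a zero of $V$, extracts a flat $\mathbb{C}^2$ limit, classifies symplectomorphic Killing fields there as $\alpha\,Im(z^1\partial_{z^1})+\beta\,Im(z^2\partial_{z^2})$, observes that all resulting model level-sets are connected, and concludes that leaves cannot locally disconnect, so all level-sets have the same number of components. You instead invoke the moment-map formalism: $z$ is Morse--Bott with critical set $\{V=0\}$ and even index and coindex, and a proper such function on a connected manifold has connected fibers (Atiyah's local-to-global argument). These are the same idea at heart --- the paper's blow-up is a bespoke verification of the even-index local structure you cite --- but your route is shorter at the cost of an external input: Atiyah's theorem as usually stated is for compact manifolds, so you must either cite the proper-moment-map extension (Lerman--Meinrenken--Tolman--Woodward) or run the elementary Morse--Bott argument directly, and in either case you need properness of $z$ on \emph{all} of $M^4$, not just ``away from $K$.'' That properness is not quite a consequence of ``compact leaves plus $|\nabla z|\ge\epsilon$'' alone, since a priori a single level $z^{-1}(c)$ could be an infinite disjoint union of compact leaves; it requires the trajectory-length estimate (the paper's (\ref{EqnZbyDist}), which bounds $\mathrm{dist}(p,\overline\Omega)$ by $\epsilon^{-1}|z(p)-s|$ for points connected to a fixed compact set by trajectories). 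Your own flow construction contains precisely this estimate, so the gap is one of bookkeeping rather than substance, but you should make it explicit before declaring $z^{-1}([a,b])$ compact. What your approach buys is generality and brevity (no rescaling limits, no classification of Killing fields on the blow-up model); what the paper's buys is self-containedness and a local picture of the critical set that it reuses elsewhere.
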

	Recall that a \textit{manifold end} is an unbounded open set with compact boundary.
	In the Type IIa case $|dz|=|V|$ is asymptotically bounded away from zero, level-sets are compact, and $V$ is holomorphic so its zeros cannot accumulate; thus the set of critical values $\mathcal{C}=\{z_0,\dots,z_N\}$ of $z$ is finite.
	For $i\in\{1,\dots,N\}$ define $M^4_i=\{a\in{}M^4\;|\;z_{i-1}<z(a)<z_i\}$.
	Set $M^{4-}=\{a\in{}M^4\;|\;z(a)<z_0\}$ and $M^{4+}=\{a\in{}M^4\;|\;z_N<z(a)\}$.
	We have
	\begin{equation}
		M^4\;=\;\overline{M^{4-}}
		\,\cup\,\overline{M^4_1}
		\,\cup\,\dots\,
		\cup\,\overline{M^{4}_N}
		\cup\,\overline{M^{4+}}
	\end{equation}
	and by Lemma \ref{LemmaOneOrTwoEnded}, $M^{4+}$ and $M^{4-}$ are the two possible manifold ends.
	\begin{theorem}[cf. Corollary \ref{CorSignedEuler}] \label{ThmMfldEndTopology}
		Assume $M^4$ is Type IIa and $z$ has a non-empty set of critical points $\mathcal{C}$.
		Then on any $M^4_i$, on $M^{4+}$, or on $M^{4-}$, the level-sets $M^3_z$, reductions $M^2_z$, and fibrations $M^3_z\rightarrow{}M^2_z$ are isotopic under the diffeomorphism flow of $\frac{\partial}{\partial{}z}$.
		
		If $M^{4+}\ne\varnothing$, then $\chi^g(M^2_z)\ge0$ and $e^g(M^3_z,M^2_z)\ge0$ on $M^{4+}$.
		If $M^{4-}\ne\varnothing$, then $\chi^g(M^2_z)\ge0$ and $e^g(M^3_z,M^2_z)\le0$ on $M^{4-}$.
	\end{theorem}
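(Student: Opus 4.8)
The plan is to prove the two assertions separately, handling the isotopy statement first and then the signs of $\chi^g$ and $e^g$.

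For the isotopy claim, I would fix one region $R$ equal to $M^4_i$, $M^{4+}$, or $M^{4-}$; by construction $R$ contains no critical value of $z$, so $V\ne0$ and $\nabla z\ne0$ throughout $R$ and the field $\partial/\partial z=-|V|^{-2}(i_V\omega)^\sharp$ is smooth and nonvanishing on $R$. The first step is to record that $z\circ\phi_s=z+s$ for the flow $\phi_s$ of $\partial/\partial z$, since $dz(\partial/\partial z)=|V|^{-2}|i_V\omega|^2=1$; because $|\partial/\partial z|=|V|^{-1}$ is bounded on the end and $M^4$ is complete, $\phi_s$ is defined for all $s$ needed to connect two leaves inside $R$, and a regular-interval argument shows $\phi_{z'-z}\colon M^3_z\to M^3_{z'}$ is a diffeomorphism, i.e. the leaves are isotopic. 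The second step is to check $[\,V,\partial/\partial z\,]=0$: since $V$ is Killing and symplectomorphic, $\mathcal{L}_Vg=\mathcal{L}_V\omega=0$, whence $\mathcal{L}_VJ=0$ and $\mathcal{L}_V|V|^2=0$, so $\mathcal{L}_V(\partial/\partial z)=0$. Thus $\phi_s$ commutes with the circle action generated by $V$, carries $V$-orbits to $V$-orbits, and therefore descends to an isotopy of the fibrations $M^3_z\to M^2_z$ and of the reductions $M^2_z$, as claimed.

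For the signs, the key observation is that because $M^4$ is Type IIa the leaves are compact, so Theorem \ref{ThmConstantGrowth} applies and makes the relevant geometric integrals polynomial in $z$ on any region free of critical values. On $M^{4+}$ the Euler number $e^g(M^3_z,M^2_z)$ is constant, so (\ref{EqnAreaOfReduction}) integrates to
\begin{equation*}
	\int_{M^2_z}dVol_2\;=\;2\pi\,e^g\,z+c_1 ,
\end{equation*}
an affine function of $z$; and since $M^4$ is scalar-flat, $\chi^g(M^2_z)$ is constant, so $\frac{d^2}{dz^2}\int_{M^2_z}|V|^2\,dVol_2=4\pi\chi^g$ integrates to
\begin{equation*}
	\int_{M^2_z}|V|^2\,dVol_2\;=\;2\pi\,\chi^g\,z^2+c_2z+c_3 .
\end{equation*}
Here I would invoke Lemma \ref{LemmaOneOrTwoEnded} to identify $M^{4+}$ with the end on which $z$ is unbounded above, so these identities hold for all $z$ in an interval of the form $(z_N,+\infty)$.

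The sign conclusions then follow from positivity of the left-hand sides. Both $\int_{M^2_z}dVol_2$ and $\int_{M^2_z}|V|^2\,dVol_2$ are strictly positive for every regular $z$, since $M^2_z$ is a nonempty orbifold and $|V|>\epsilon>0$ on the end. An affine function that stays positive as $z\to+\infty$ must have slope $\ge0$, forcing $e^g\ge0$ on $M^{4+}$; a quadratic that stays positive as $z\to+\infty$ must have leading coefficient $\ge0$, forcing $\chi^g\ge0$ on $M^{4+}$. On $M^{4-}$ the interval is instead $(-\infty,z_0)$, so positivity as $z\to-\infty$ reverses the affine condition, giving $e^g\le0$, while the quadratic's leading coefficient is again forced nonnegative, giving $\chi^g\ge0$. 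The main obstacle is not any single estimate but the bookkeeping behind the cited results: one must be sure that the constancy of $e^g$ and $\chi^g$ from Theorem \ref{ThmConstantGrowth} holds across the \emph{entire} open end rather than only locally, which is precisely why the decomposition into regions bounded by the finitely many critical values $\mathcal{C}=\{z_0,\dots,z_N\}$ is arranged first, and one must confirm through Lemma \ref{LemmaOneOrTwoEnded} that the relevant region is genuinely unbounded in $z$ so that the asymptotic positivity argument can be applied.
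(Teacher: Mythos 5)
Your proposal is correct and follows essentially the same route as the paper's own proof (Lemma \ref{LemmaUnboundedZText} combined with Corollary \ref{CorSignedEuler}): the isotopy comes from the flow of $\frac{\partial}{\partial{}z}$, which commutes with $V$ and hence carries the fibrations $M^3_z\rightarrow{}M^2_z$ along, and the signs come from integrating the evolution equations of Theorem \ref{ThmConstantGrowth} into an affine area function and a quadratic $\int_{M^2_z}|V|^2\,dVol_2$ on the unbounded $z$-range supplied by Lemma \ref{LemmaOneOrTwoEnded}, then demanding positivity as $z\rightarrow\pm\infty$. One cosmetic remark: on an interior region $M^4_i$ the bound on $|\partial/\partial{}z|=|V|^{-1}$ needed for long-time existence of the flow comes from compactness of $\{z'\le{}z\le{}z''\}$ and the absence of zeros of $V$ strictly between critical levels, rather than from the asymptotic $\epsilon$-bound you cite for the ends, but this does not affect the argument.
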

	In the Type III case level-sets need not be isotopic; see \S\ref{SubSecTypeIIIExamples}.
	A classification of the topologies of Type IIa manifold ends follows from Lemma \ref{LemmaOneOrTwoEnded} and Theorem \ref{ThmMfldEndTopology}, combined with the classification of Seifert fibered spaces.
	Type IIa manifolds have one or two ends, and each end has the asymptotic topology of a ray crossed with a Seifert fibered space with $\chi^g(M^2_z)\ge0$ and either $e^g(M^3_z,M^2_z)\ge0$ or $e^g(M^3_z,M^2_z)\le0$.
	They fall into the following families.
	
	If $\chi^g(M^2_z)=0$ and $e^g(M^3_z,M^2_z)=0$, the Thurston geometry of $M^3_z$ is Euclidean, $M^3_z$ is a compact quotient of $\mathbb{R}^3$ by a crystallographic group.
	There are only three cases that are orientable with orientable base: the 3-torus, a fibration over $\mathbb{S}^2$ with four exceptional orbits each of multiplicity $2$, and a fibration over $\mathbb{S}^2$ with three exceptional orbits each of multiplicity $3$.
	
	If $\chi^g(M^2_z)=0$ and $e^g(M^3_z,M^2_z)\ne0$, the Thurston geometry of $M^3_z$ is Nil, so $M^3_z$ is a quotient of $\mathbb{R}^3$ by a discrete subgroup of the Heisenberg group (see \S4 of \cite{Scott83}).
	This is the family of nilmanifolds, which are classified by $e^g(M^3_z,M^2_z)$.
	
	If $\chi^g(M^2_z)>0$ and $e^g(M^3_z,M^2_z)=0$ then the Thurston geometry of $M^3_z$ is $\mathbb{S}^2\times\mathbb{R}$.
	By Theorem \ref{ThmTypeIIa} we must have $\int_{M^4}|\Ric|^2=\infty$.
	Only two oriented compact 3-manifolds of this type, which are $\mathbb{S}^2\times\mathbb{S}^1$ and $\mathbb{R}P^3\#\mathbb{R}P^3$ (which has $\mathbb{S}^2\times\mathbb{S}^1$ as a double cover).
	However $\mathbb{R}P^3\#\mathbb{R}P^3$ has base $\mathbb{R}P^2$ which is non-orientable, so the only possibility for $M^3_z$ is $\mathbb{S}^2\times\mathbb{S}^1$.
	
	Lastly if $\chi^g(M^2_z)>0$ and $e^g(M^3_z,M^2_z)\ne0$ then $M^3_z$ has spherical Thurston geometry, so $M^3_z$ is covered by a 3-sphere and $M^2_z$ is a 2-sphere with up to three orbifold points.
	There are infinitely many topologies in this case, which fall into finitely many families.
	
	More detailed information on any of these cases is available; see for example \cite{Raymond68}, \cite{Orlik72}, \cite{Hempel76}, or \cite{Scott83}.
	For ``Thurston geometries'' generally see \cite{Thurston82}.
	
	We prove two theorems in the case Ricci curvature has a bound.
	\begin{theorem}[cf. Prop. \ref{PropTypeIIaMainText}] \label{ThmTypeIIa}
		Assume $M^4$ is Type IIa and $\int_{M^4}|\Ric|^2<\infty$.
		If $e^g(M^3_z,M^2_z)=0$ on $M^{4+}$ (resp. $M^{4-}$) then $\chi^g(M^2_z)=0$ on $M^{4+}$ (resp. $M^{4-}$).
	\end{theorem}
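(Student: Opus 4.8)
The plan is to convert the finiteness of $\int_{M^4}|\Ric|^2$ into an at-most-linear growth bound for a weighted area integral along the end, and then to show that a nonzero orbifold Euler number would instead force quadratic growth. I will work on $M^{4+}$, where by Lemma \ref{LemmaOneOrTwoEnded} the function $z$ ranges over $(z_N,\infty)$ and $|V|>\epsilon$, so $V$ has no zeros there and every quantity asserted constant in Theorem \ref{ThmConstantGrowth} is genuinely independent of $z$ on this end. First I would introduce
\begin{equation}
	A(z)=\int_{M^2_z}dVol_2,\qquad
	C(z)=\int_{M^2_z}\triangle_4z\,dVol_2,\qquad
	B(z)=\int_{M^2_z}(\triangle_4z)^2\,dVol_2,
\end{equation}
all finite since each $M^2_z$ is compact and $\triangle_4z=*(dV\wedge\omega)$ is smooth. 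The hypothesis $e^g=0$ together with (\ref{EqnAreaOfReduction}) gives $A'\equiv2\pi e^g=0$, so $A\equiv A_0>0$ is constant (a nonempty reduction has positive area); and the scalar-flat part of Theorem \ref{ThmConstantGrowth} gives $C'\equiv4\pi\chi^g$, so $C(z)=C(z_N)+4\pi\chi^g\,(z-z_N)$ is affine in $z$.

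Next I would bound the growth of $B$. Equation (\ref{EqnIntoThmCGB}) shows $B''(z)=2\int_{M^2_z}|\Ric|^2\,dVol_2\ge0$, so $B'$ is nondecreasing. Applying the integration relation of Lemma \ref{LemmaIntegration} to the $V$-invariant function $|\Ric|^2$ over the end identifies $\int_{M^{4+}}|\Ric|^2\,dVol_4$ with a positive multiple of $\int_{z_N}^\infty B''(z)\,dz$; since the left side is finite by hypothesis, $B'$ increases to a finite limit and hence stays bounded on $(z_N,\infty)$. Integrating the bound $B'\le\sup B'<\infty$ once more then shows that $B(z)$ grows at most linearly in $z$.

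The contradiction comes from a Cauchy--Schwarz estimate on $M^2_z$ with the measure $dVol_2$: because $\triangle_4z$ descends to $M^2_z$,
\begin{equation}
	C(z)^2\;\le\;A(z)\,B(z)\;=\;A_0\,B(z).
\end{equation}
Were $\chi^g\ne0$, the affine function $C$ would have nonzero slope, so $C(z)^2$ would grow quadratically and $B(z)\ge C(z)^2/A_0$ would grow quadratically as well, contradicting the at-most-linear bound just established. Therefore $\chi^g=0$ on $M^{4+}$, and the statement for $M^{4-}$ follows by replacing $z$ with $-z$.

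The step I expect to be most delicate is the passage through Lemma \ref{LemmaIntegration}: I need the end to be genuinely exhausted by the level-sets as $z\to\infty$, so that finiteness of $\int_{M^4}|\Ric|^2$ controls $B'$ over the entire half-line rather than merely on a bounded subinterval, and I need the precise normalizing constant relating $dVol_4$ to $dz\wedge dVol_2$ so that the identity $\int_{M^{4+}}|\Ric|^2\,dVol_4=\mathrm{const}\cdot\int B''$ holds with a genuinely positive constant. The quadratic-versus-linear dichotomy itself is robust and insensitive to these constants, so once the integration relation and the exhaustion are in hand the conclusion is immediate.
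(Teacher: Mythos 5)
Your proposal is correct and follows essentially the same route as the paper's proof of Proposition \ref{PropTypeIIaMainText}: the H\"older/Cauchy--Schwarz bound $C(z)^2\le A_0\,B(z)$ is exactly inequality (\ref{IneqHolderBasic}) specialized to $e^g=0$, and your boundedness of $B'$ from $\int_{M^4}|\Ric|^2<\infty$ via Proposition \ref{PropTwoDerivsOfLaplSq} and the Integration Lemma reproduces the paper's linear bound on $\int_{M^2_z}(\triangle_4z)^2\,dVol_2$, after which the quadratic-versus-linear contradiction is identical. The exhaustion worry you flag is settled in the paper by Lemma \ref{LemmaUnboundedZText}, which gives $z(M^{4+})=[z_N,\infty)$, just as you anticipated.
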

	\begin{proposition}[cf. Prop. \ref{PropTypeIIbMainText}] \label{PropTypeIIb}
		Assume $M^4$ is Type IIa, non-flat, and Ricci flat.
		Then $M^4$ is one-ended, $\triangle_4z$ is a non-zero constant, and $2e^g(M^3_z,M^2_z)\cdot\triangle_4z=\chi^g(M^2_z)$ for non-critical $z$.
		If $\triangle_4z>0$ then $M^{4-}=\varnothing$.
		If $\triangle_4z<0$ then $M^{4+}=\varnothing$.
	\end{proposition}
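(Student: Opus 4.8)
The plan is to extract the constancy of $\triangle_4z$ first, and then read off the remaining assertions from the linear growth laws of Theorem \ref{ThmConstantGrowth} and the sign constraints of Theorem \ref{ThmMfldEndTopology}. Since a Ricci-flat metric is scalar-flat with $\int_{M^4}|\Ric|^2=0<\infty$, every hypothesis of Theorems \ref{ThmConstantGrowth}, \ref{ThmMfldEndTopology}, \ref{ThmTypeIIa} and Corollary \ref{CorSignedEuler} is in force. In particular $e^g$ and $\chi^g$ are constant on each regular chamber, the area $\int_{M^2_z}dVol_2$ grows linearly with rate governed by $e^g$ via \eqref{EqnAreaOfReduction}, and \eqref{EqnIntoThmCGB} forces $\frac{d^2}{dz^2}\int_{M^2_z}(\triangle_4z)^2\,dVol_2=2\int_{M^2_z}|\Ric|^2\,dVol_2=0$, so $z\mapsto\int_{M^2_z}(\triangle_4z)^2\,dVol_2$ is affine.

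The core step is to show $u:=\triangle_4z=*(dV_\flat\wedge\omega)$ is a nonzero constant. Because $V$ is Killing one has $d^*V_\flat=0$ and the Bochner identity $\triangle_{\mathrm{Hodge}}V_\flat=2\Ric(V_\flat)$, so $d^*dV_\flat=2\Ric(V_\flat)=0$; being exact, $dV_\flat$ is therefore harmonic. On a scalar-flat K\"ahler surface $W^+=0$, and the Hodge star commutes with $\triangle_{\mathrm{Hodge}}$, so the self-dual part $(dV_\flat)^+=\tfrac12\,u\,\omega$ is a harmonic self-dual $2$-form. The four-dimensional Weitzenb\"ock formula on self-dual forms $\triangle_{\mathrm{Hodge}}\beta=\nabla^*\nabla\beta-2W^+(\beta)+\tfrac{s}{3}\beta$ collapses, since $W^+=0$ and $s=0$, to $\nabla^*\nabla\beta=0$; applied to $\beta=\tfrac12u\,\omega$ with $\omega$ parallel this gives $\nabla^*\nabla u=0$, i.e.\ $u$ is harmonic on $M^4$. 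As $u$ is manifestly invariant under the flow of $V$ and $\Ric\ge0$, Yau's Liouville theorem forces $u$ to be constant once $u$ is known to be bounded, and $|u|=|\langle dV_\flat,\omega\rangle|\le|dV_\flat|\le 2|\nabla V|$ is exactly the asymptotic control furnished by the Type IIa condition $|V|>\epsilon$ together with the finite Ricci-energy asymptotics. Establishing this bound is \emph{the main obstacle}. Finally $u\not\equiv0$: if $u\equiv0$ then, passing to the universal cover where $z$ exists, $z$ is harmonic with $|\nabla z|=|V|$ bounded below at infinity, and the Ricci-flat Bochner identity $\tfrac12\triangle|\nabla z|^2=|\mathrm{Hess}\,z|^2$ integrated over the ends forces $\mathrm{Hess}\,z\equiv0$, hence an isometric splitting and flatness, contradicting the hypothesis.

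With $u=\triangle_4z\equiv c$ a nonzero constant, the identity $2e^g\triangle_4z=\chi^g$ is a bookkeeping computation: $\int_{M^2_z}\triangle_4z\,dVol_2=c\int_{M^2_z}dVol_2$, and differentiating in $z$ shows the rate $\frac{d}{dz}\int_{M^2_z}\triangle_4z\,dVol_2=\int_{M^2_z}K\,dVol_2$ of \eqref{EqnFirstIntegrated} is proportional to $c$ times the area rate of \eqref{EqnAreaOfReduction}. Hence $\chi^g$ and $e^g\triangle_4z$ are proportional, with the proportionality constant fixed by the orbit normalization $t\in[0,2\pi)$, yielding $2e^g\triangle_4z=\chi^g$.

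It remains to prove one-endedness and the signs. Suppose $c=\triangle_4z>0$. On $M^{4-}$ Corollary \ref{CorSignedEuler} gives $e^g\le0$, whence $\chi^g=2e^g c\le0$; combined with $\chi^g\ge0$ this forces $\chi^g=e^g=0$ on $M^{4-}$. But an end with $\chi^g=e^g=0$ has level-sets of Euclidean Thurston type (the first family in the classification following Theorem \ref{ThmMfldEndTopology}), on which the vanishing of both invariants together with Ricci-flatness makes the end asymptotically flat; the splitting argument of the previous paragraph then forces flatness, so non-flatness excludes this end and $M^{4-}=\varnothing$. The case $c<0$ is symmetric: on $M^{4+}$ one has $e^g\ge0$, so $\chi^g=2e^g c\le0$, forcing $\chi^g=e^g=0$ and hence $M^{4+}=\varnothing$. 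In either case exactly one of $M^{4\pm}$ survives, so by Lemma \ref{LemmaOneOrTwoEnded} the manifold $M^4$ is one-ended. The decay estimates needed both for the boundedness of $u$ and for ruling out the degenerate $\chi^g=e^g=0$ end are the only genuinely analytic input; everything else is formal manipulation of the cited identities.
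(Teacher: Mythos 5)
Your proposal contains genuine gaps, and the most serious one is in what you yourself call the core step. You route the constancy of $u=\triangle_4z$ through harmonicity of $dV_\flat$, the Weitzenb\"ock formula on self-dual $2$-forms, and a Liouville theorem, and you then concede that the required boundedness of $u$ ``is the main obstacle'' --- that is, you leave the decisive analytic input unproven. No Liouville theorem or asymptotic control is needed: Lemma \ref{LemmaRicciBochner} is the pointwise Bochner identity $d(\triangle_4z)+2\Ric(\nabla z)=0$, so $\Ric\equiv0$ gives $d(\triangle_4z)=0$ and $\triangle_4z$ is constant on the connected $M^4$ in one line. Your argument that $u\not\equiv0$ has the same flaw: the claim that $\tfrac12\triangle|\nabla z|^2=|\mathrm{Hess}\,z|^2$ ``integrated over the ends forces $\mathrm{Hess}\,z\equiv0$'' is unjustified, since integrating a subharmonic quantity on a noncompact manifold requires boundary or cutoff control and $|\nabla z|=|V|$ has no upper bound under the Type IIa hypothesis. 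The paper's route avoids this entirely: if $\triangle_4z=0$, a two-ended $M^4$ contains a line and is flat by Cheeger--Gromoll (contradicting non-flatness), while in the one-ended case Lemma \ref{LemmaUnboundedZText} forces $z$ to attain an interior maximum or minimum, contradicting the maximum principle for the harmonic function $z$.

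Your exclusion of the degenerate end is also broken. Having deduced $\chi^g=e^g=0$ on (say) $M^{4-}$ when $c=\triangle_4z>0$, you assert that such an end is ``asymptotically flat'' and excluded by ``the splitting argument''; neither claim holds --- $\chi^g=e^g=0$ constrains only the Seifert topology of the level-sets, not the asymptotic metric, and the splitting theorem needs a line in $M^4$, which a single end does not supply. The correct argument, as in Proposition \ref{PropTypeIIbMainText}, is pure bookkeeping from the growth laws: on $M^{4-}$ the range of $z$ is $(-\infty,z_0]$, and once $\chi^g=0$, Proposition \ref{PropGrowthOfVSqVol} makes $\frac{d}{dz}\int_{M^2_z}|V|^2\,dVol_2=\int_{M^2_z}\triangle_4z\,dVol_2=c\cdot\mathrm{Vol}(M^2_z)$ a positive constant, so $\int_{M^2_z}|V|^2\,dVol_2\rightarrow-\infty$ as $z\rightarrow-\infty$, a contradiction; hence $M^{4-}=\varnothing$, with $c<0$ symmetric, and one-endedness follows. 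Finally, a bookkeeping caution: with the constants as stated in Theorem \ref{ThmConstantGrowth} ($\frac{d}{dz}\int dVol_2=2\pi e^g$ and $\frac{d}{dz}\int\triangle_4z\,dVol_2=4\pi\chi^g$), your differentiation actually yields $e^g\cdot\triangle_4z=2\chi^g$ rather than the stated $2e^g\cdot\triangle_4z=\chi^g$; appealing vaguely to ``the orbit normalization'' does not fix this factor, and you should track it explicitly.
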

	We have two results in the case that $V$ is globally bounded from below.
	\begin{proposition}[cf. Prop. \ref{PropMainTextCompactWithNoZeros}] \label{PropIntroCompactWithNoZeros}
		Assume $M^4$ is Type I and its Killing field is nowhere-zero.
		Then $M^4$ is flat.
	\end{proposition}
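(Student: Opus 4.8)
The plan is to prove the statement in two stages: first show that $M^4$ is Ricci-flat by applying Stokes' theorem to the Chern--Gauss--Bonnet identity (\ref{EqnSecondEvoEqn}), and then upgrade Ricci-flatness to flatness using the parallelism forced by the nowhere-zero Killing field.

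For the first stage, since $M^4$ is compact and $V$ is nowhere zero, $|V|$ attains a positive minimum, so $V\ne0$ on all of $M^4$ and the K\"ahler reduction data are globally defined. In particular every object on the right of (\ref{EqnSecondEvoEqn})---the function $\triangle_4z=*(dV_\flat\wedge\omega)$, the field $\partial/\partial z=-|V|^{-2}(i_V\omega)^\sharp$, the $2$-form $dVol_2$, its dual $*dVol_2$, and $V_\flat$---is a smooth global tensor on $M^4$. (This is exactly where the nowhere-vanishing hypothesis enters: the momentum $z$ itself need not exist globally on a compact manifold, since a global Killing potential would have critical points and hence zeros of $V$, but $\triangle_4z$ and $\partial/\partial z$ exist regardless.) Consequently the bracketed $3$-form, call it $\alpha$, is a smooth global $3$-form on the closed manifold $M^4$, and (\ref{EqnSecondEvoEqn}) reads $2|\Ric_4|^2\,dVol_4=d\alpha$. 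Integrating over $M^4$ and invoking Stokes' theorem gives $2\int_{M^4}|\Ric_4|^2\,dVol_4=\int_{M^4}d\alpha=0$, so $\Ric_4\equiv0$ and $M^4$ is Ricci-flat.

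For the second stage, $M^4$ is now compact and Ricci-flat, so by Bochner's theorem the Killing field $V$ is parallel, $\nabla V=0$. Since $g_4$ is K\"ahler, $J$ is parallel, whence $JV$ is parallel as well; thus the rank-$2$ distribution $\langle V,JV\rangle$ and its orthogonal complement are both parallel, and therefore integrable. Passing to the universal cover $\tilde M^4$ (complete and simply connected because $M^4$ is compact), the de Rham decomposition theorem yields an isometric splitting $\tilde M^4=L_1\times L_2$ with $L_1$ tangent to $\langle V,JV\rangle$. The leaf $L_1$ is spanned by the parallel fields lifting $V$ and $JV$, hence is flat, while the complementary leaf $L_2$ is a $2$-real-dimensional Ricci-flat manifold and hence also flat. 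Therefore $\tilde M^4$ is flat, and so is $M^4$. (Equivalently: Ricci-flat K\"ahler places the restricted holonomy in $SU(2)$, while the parallel nonzero $V$ forces it to fix $V_p$; the stabilizer of a nonzero vector in $SU(2)$ is trivial, so the holonomy is trivial.)

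The crux is the first stage's claim that $\alpha$ extends to a genuinely global, singularity-free $3$-form, and verifying it carefully is the main obstacle. This is precisely where the no-zeros hypothesis is indispensable: if $V$ vanished anywhere, the factor $|V|^{-2}$ in $\partial/\partial z$ and the reduction forms would blow up near $\{V=0\}$, Stokes' theorem would pick up boundary or residue contributions around the zero set, and the conclusion would fail---consistent with the existence of compact scalar-flat K\"ahler surfaces that are not flat but whose Killing fields have zeros. I would therefore check that each reduction quantity is smooth and well-defined across all of $M^4$, including the global orientation implicit in $dVol_2$, which is supplied canonically by $J$ together with $V/|V|$, so that no hidden boundary terms arise. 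Once the global smoothness of $\alpha$ is secured, the Bochner and de Rham steps are standard.
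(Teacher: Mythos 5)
Your proposal is correct, and its first stage coincides with the paper's own argument: the proof of Proposition \ref{PropMainTextCompactWithNoZeros} likewise observes that the nowhere-vanishing of $V$ makes the transgression in (\ref{EqnTransgressionOfRic}) --- equivalently the bracketed $3$-form of (\ref{EqnSecondEvoEqn}), with $\triangle_4z$ read as $*(dV_\flat\wedge\omega)$ and $\partial/\partial{}z$ as $-|V|^{-2}(i_V\omega)^\sharp$, since on a compact manifold $V$ indeed cannot be Hamiltonian --- globally smooth, and then integrates by Stokes to get $\Ric\equiv0$; your care about the non-existence of a global $z$ matches the paper's remarks and the non-Hamiltonian substitutions recorded in Lemmas \ref{LemmaTPRicComp1}--\ref{LemmaUsableCGB}. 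Where you genuinely diverge is the upgrade from Ricci-flat to flat. The paper stays inside its Chern--Gauss--Bonnet formalism: scalar-flat K\"ahler forces $W^+\equiv0$, so the signature form $\mathcal{P}_\tau$ of (\ref{EqnTwoTransgressionComp}) reduces to $-\frac{1}{96\pi^2}|W^-|^2\,dVol$, is smoothly transgressed because $V\ne0$, and integrates to zero, giving $W^-\equiv0$ and hence $\Riem\equiv0$ from the curvature decomposition. You instead invoke classical machinery: Bochner's theorem on a compact Ricci-flat manifold makes $V$ parallel (consistent with the paper's own Lemma \ref{LemmaRicciBochner}, $\triangle_4V+\Ric(V)=0$), and then parallel $V$ and $JV$ plus the de Rham splitting of the universal cover --- or, equivalently, the trivial stabilizer of a nonzero vector in $SU(2)$ --- yields flatness. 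Your second stage needs only $V\not\equiv0$ (a parallel Killing field is automatically nowhere zero once nontrivial) and avoids a second transgression computation, at the cost of importing Bochner and de Rham/holonomy theory; the paper's version uses the hypothesis $V\ne0$ a second time but is uniform, with both conclusions ($\Ric\equiv0$ and $W^-\equiv0$) falling out of the same transgression mechanism. Both routes are valid proofs of the proposition.
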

	\begin{theorem}[cf. Theorem \ref{ThmTextCptLevelsNoZeros}] \label{ThmIntroCptLevelsNoZeros}
		Assume $M^4$ is Type IIa and non-flat, $|V|$ has a global lower bound $|V|>\epsilon>0$, and $|\Ric|$ has an upper bound.
		
		Then $M^4$ is 2-ended, $z$ ranges from $-\infty$ to $\infty$, we have $\chi^g(M^2_z)\equiv2$, $e^g(M^3_z,M^2_z)\equiv0$ and $L^2(|\Ric|)=\infty$.
		All level-sets are diffeomorphic to $\mathbb{S}^2\times\mathbb{S}^1$, and $M^4$ is diffeomorphic to $\mathbb{R}\times\mathbb{S}^2\times\mathbb{S}^1$.
	\end{theorem}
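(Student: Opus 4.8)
The plan is to first fix the global topology, then pin down the two Seifert invariants $\chi^g$ and $e^g$, and finally rule out the degenerate value $\chi^g=0$. Since $|V|>\epsilon>0$ everywhere, the momentum $z$ satisfies $|\nabla z|=|V|>\epsilon$ and hence has no critical points, so the critical set is empty. With the leaves $M^3_z$ compact and $M^4$ complete, I would show $z$ is a proper submersion: every point of $\{a\le z\le b\}$ flows downward to the compact leaf $\{z=a\}$ in arclength at most $(b-a)/\epsilon$, so this region lies within bounded distance of a compact set, is closed, and is therefore compact. Ehresmann's theorem then gives a diffeomorphism $M^4\cong\mathbb R\times M^3_z$. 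Running the gradient flow, along which $z$ increases at rate $|\nabla z|\ge\epsilon$ and which exists for all finite arclength by completeness, shows $z$ attains arbitrarily large and small values, so $z$ is onto $\mathbb R$; thus $M^{4+}$ and $M^{4-}$ are both nonempty and, by Lemma \ref{LemmaOneOrTwoEnded}, $M^4$ is exactly two-ended.

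Next I would read off the invariants. Because $V$ has no zeros anywhere, Theorem \ref{ThmConstantGrowth} makes $\chi^g(M^2_z)$ and $e^g(M^3_z,M^2_z)$ genuine constants on all of $M^4$. Applying Theorem \ref{ThmMfldEndTopology} at the upper end gives $e^g\ge0$ and at the lower end $e^g\le0$; since $e^g$ is a single constant, $e^g\equiv0$, and likewise $\chi^g\ge0$. Constancy of $e^g=0$ also makes the reduction area $A(z)=\int_{M^2_z}dVol_2$ independent of $z$ by (\ref{EqnAreaOfReduction}).

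The heart of the argument is to show $\chi^g>0$, and I expect this to be the main obstacle. Suppose instead $\chi^g=0$. Then by Theorem \ref{ThmConstantGrowth} the quantity $\int_{M^2_z}|V|^2\,dVol_2$ has vanishing second $z$-derivative, so it is affine in $z$; but it is bounded below by $\epsilon^2A(z)=\epsilon^2A_0>0$ for every $z\in\mathbb R$, and an affine function on the whole line with a positive lower bound is constant. Likewise $\int_{M^2_z}\triangle_4z\,dVol_2$ is constant. The goal is to promote these constancies, together with the hypothesis $|\Ric|\le C$, to the statement $\int_{M^2_z}|\Ric|^2\,dVol_2\equiv0$, i.e. that $M^4$ is Ricci-flat. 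Writing $P(z)=\int_{M^2_z}(\triangle_4z)^2\,dVol_2$, identity (\ref{EqnIntoThmCGB}) gives $P''(z)=2\int_{M^2_z}|\Ric|^2\,dVol_2\ge0$, so $P$ is convex; the plan is to bound $P$ from above, since a convex function bounded above on all of $\mathbb R$ must be constant, whence $P''\equiv0$ and $\Ric\equiv0$. Obtaining this upper bound is exactly the delicate point: it requires controlling $\triangle_4z=*(dV_\flat\wedge\omega)$ fiberwise on each compact leaf, which I would attempt through the Killing identity $\triangle_4V=-\Ric(V)$ and the bound $|\Ric|\le C$, feeding back the already-established constancy of the zeroth and first moments $A_0$ and $\int_{M^2_z}\triangle_4z\,dVol_2$. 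Once Ricci-flatness is in hand, Proposition \ref{PropTypeIIb} forces a non-flat Ricci-flat Type IIa manifold to be one-ended, contradicting the two-endedness found above; the flat alternative is excluded by hypothesis. Hence $\chi^g=0$ is impossible and $\chi^g>0$.

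Finally I would assemble the conclusion. With $e^g=0$ and $\chi^g>0$, the Seifert classification recalled in the introduction leaves only $\mathbb S^2\times\mathbb R$ geometry; the base $\mathbb R P^2$ is non-orientable and excluded, so $M^2_z$ is a $2$-sphere, $\chi^g=\chi(\mathbb S^2)=2$, and $M^3_z\cong\mathbb S^2\times\mathbb S^1$. Combined with $M^4\cong\mathbb R\times M^3_z$ from the first step, this gives $M^4\cong\mathbb R\times\mathbb S^2\times\mathbb S^1$ with $z$ ranging over all of $\mathbb R$. For the curvature statement, since $e^g=0$ but $\chi^g=2\ne0$, Theorem \ref{ThmTypeIIa} forbids $\int_{M^4}|\Ric|^2<\infty$, giving $L^2(|\Ric|)=\infty$; as a check, Cauchy--Schwarz gives $P(z)\ge\big(\int_{M^2_z}\triangle_4z\,dVol_2\big)^2/A_0$, which grows quadratically once $\int_{M^2_z}\triangle_4z\,dVol_2$ is linear with nonzero slope (the case $\chi^g\ne0$ of Theorem \ref{ThmConstantGrowth}), forcing $P'(+\infty)=+\infty$ and $P'(-\infty)=-\infty$, so that $\int_{M^4}|\Ric|^2\,dVol_4$ is a positive multiple of $\big(P'(+\infty)-P'(-\infty)\big)=\infty$ through Lemma \ref{LemmaIntegration}.
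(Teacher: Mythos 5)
Your overall architecture matches the paper's proof of Theorem \ref{ThmTextCptLevelsNoZeros}: two-endedness and surjectivity of $z$, the deduction $e^g=0$ from positivity of the affine area function, the contradiction scheme for $\chi^g=0$, the Seifert classification step giving $M^3_z\cong\mathbb{S}^2\times\mathbb{S}^1$ and $\chi^g=2$, and $L^2(|\Ric|)=\infty$ via H\"older plus quadratic growth (which is exactly the mechanism behind Proposition \ref{PropTypeIIaMainText}, so invoking that theorem is legitimate). However, the central step --- showing that $\chi^g=0$ forces $\Ric\equiv0$ --- is left as a plan, and the plan you sketch would not close. You propose to bound $P(z)=\int_{M^2_z}(\triangle_4z)^2\,dVol_2$ from above on all of $\mathbb{R}$ and conclude constancy from convexity. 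But the only pointwise control that $|\Ric|\le\Lambda^2$ provides is Lemma \ref{LemmaRicciBochner}'s bound $\big|\frac{\partial}{\partial z}\triangle_4z\big|\le2\Lambda^2$, which yields $\sup_{M^3_z}|\triangle_4z|\le 2\Lambda^2|z-z_0|+C$ and hence, with the area constant, only $P(z)=O(z^2)$. A convex function with a quadratic upper bound need not be constant, so the route through ``convex and bounded above'' stalls exactly at the point you yourself flagged as delicate; no amount of feeding back the constancy of $\int_{M^2_z}dVol_2$ and $\int_{M^2_z}\triangle_4z\,dVol_2$ upgrades the linear sup-bound on $\triangle_4z$ to a uniform one.

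The paper's resolution is not an upper bound on $P$ but a cutoff argument with a crucial cancellation. From the transgression (\ref{EqnTransgressionOfRic}) and Lemma \ref{LemmaUsableCGB} one obtains, for any compactly supported one-variable $\eta$, the weak identity $2\int\eta\,|\Ric|^2\,dVol_4=\int\eta''\,(\triangle_4z)^2\,dVol_4$; note that testing a possibly quadratically growing $P$ against $\eta''$ is still not obviously small (with $\eta''\sim\delta^2$ on an interval of length $\sim\delta^{-1}$ and $P\sim z^2$, the naive estimate is $O(\delta^{-1})$, not $O(\delta)$). The missing idea is the further integration by parts of Lemma \ref{LemmaPrelimEllipticEst}: using the Bochner identity $d(\triangle_4z)=-2\Ric(\nabla z)$ to trade $\varphi\,(\triangle_4z)^2$ for $-\varphi'|\nabla z|^2\triangle_4z$ plus a Ricci term controlled by $\Lambda^2\int_{M^2_z}|\nabla z|^2\,dVol_2$, where $\int_{M^2_z}|\nabla z|^2\,dVol_2$ is \emph{constant} under the hypothesis $\chi^g=0$ (Lemma \ref{LemmaRicEllipticEst}). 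With the explicit cutoff $\eta=(1-(\delta z)^2)^4$ both resulting terms are $O(\delta)$, and letting $\delta\to0$ gives $\int_{M^4}|\Ric|^2=0$. You would need to supply this (or an equivalent) to repair the step; once Ricci-flatness is in hand, your appeal to Proposition \ref{PropTypeIIb} to contradict two-endedness is fine, though the paper simply applies the Cheeger--Gromoll splitting theorem directly to the two-ended Ricci-flat manifold.
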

	The main example of a Type IIa metric with a global lower bound on $|V|$ is the scalar-flat cross product $\mathbb{S}^2\times{}\mathbb{H}^2$ where the hyperbolic factor has a hyperbolic Killing field; see \S\ref{SubSecTypeIIExamples}.
	Theorem \ref{ThmIntroCptLevelsNoZeros} says any Type IIa manifold with $V$ bounded away from $0$ qualitatively resembles this.
	Theorem \ref{ThmIntroCptLevelsNoZeros} is false if only $|V|>0$.
	
	The following conjecture, if true, classifies the case where $|V|>\epsilon>0$ and level-sets are \textit{non}-compact.
	Compare with \cite{Weber19} which proved the Ricci-flat equivalent, and Theorem 1.6 of \cite{Weber24b} for the case with two symmetries instead of one.
	\begin{conjecture} \label{ConjIIb}
		Assume $M^4$ is Type IIb and $|V|$ has a global lower bound $|V|>\epsilon>0$.
		Then $M^4$ is flat.
	\end{conjecture}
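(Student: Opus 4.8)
The plan is to reduce to the Ricci-flat case and then cite \cite{Weber19}. A scalar-flat K\"ahler surface is anti-self-dual, so $W^+\equiv0$ and the full curvature of $M^4$ is carried by $\Ric$ and $W^-$; moreover a Ricci-flat K\"ahler $4$-manifold is hyperk\"ahler. Since \cite{Weber19} proves that a hyperk\"ahler Type IIb manifold with $|V|>\epsilon$ is flat, it suffices to upgrade the scalar-flat hypothesis $s\equiv0$ to $\Ric\equiv0$. The entire difficulty is therefore to kill the Ricci curvature using only the Type IIb structure and the global lower bound on $|V|$.

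First I would use the exactness in (\ref{EqnSecondEvoEqn}). Writing $2|\Ric_4|^2\,dVol_4=d\beta$ for the bracketed $3$-form $\beta$ there, Stokes' theorem on a slab $\{z_1\le z\le z_2\}$ combined with the integration relation (Lemma \ref{LemmaIntegration}) gives
\[
F(z):=\int_{M^3_z}\beta,
\qquad
F(z_2)-F(z_1)=\int_{\{z_1\le z\le z_2\}}2|\Ric_4|^2\,dVol_4\ \ge\ 0,
\]
so $F$ is non-decreasing with $F'(z)$ a nonnegative multiple of $\int_{M^2_z}|\Ric_4|^2\,dVol_2$. In Type IIa this is exactly the content of (\ref{EqnSecondIntegrated}) and of the constancy statements in Theorem \ref{ThmConstantGrowth}. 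The new difficulty is that each leaf $M^3_z$ is now non-compact (Theorem \ref{ThmCptNonCpt}), so $F(z)$ is an improper integral and Stokes acquires a boundary contribution from spatial infinity inside the leaves. The role of the bound $|V|>\epsilon$ is to force $\beta$ to decay so that this contribution vanishes, $F$ is finite, and the displayed identity is valid.

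Granting this, I would finish by passing to the reduction and invoking the Toda description of Section \ref{SecEvoK}. In Type IIb the reductions $M^2_z$ are complete non-compact orbifolds carrying the global Toda equation (\ref{EqnFirstEvoEqn}) for a potential $u$ whose conformal factor is comparable to $|V|^{\pm2}$; the bound $\epsilon<|V|$ pins this factor between definite constants, so the $M^2_z$ are uniformly non-collapsed complete surfaces. On such surfaces a Liouville-type argument for the Toda potential should force the Gauss curvature $K$ to vanish, which gives both the decay of $\beta$ needed above and, through the same estimates, the vanishing of the remaining component $W^-$. Boundedness of $F$ at both ends then forces $F$ to be constant, hence $\Ric\equiv0$, and with $W^-\equiv0$ the manifold is flat; alternatively, once $\Ric\equiv0$ one may quote \cite{Weber19} directly.

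The hard part is precisely this passage to the non-compact reduction. In Type IIa the compactness of $M^2_z$ makes the integrated identities (\ref{EqnFirstIntegrated})--(\ref{EqnSecondIntegrated}) and the constancy in Theorem \ref{ThmConstantGrowth} automatic, with no boundary terms; in Type IIb every slice is open and a priori curvature can escape to spatial infinity along the leaves. The global lower bound on $|V|$ is the only mechanism preventing this, and converting it into genuine pointwise curvature decay requires a Liouville / gradient-estimate theorem for the Toda equation on a complete surface carrying no assumed curvature bound --- the one-parameter-family analogue of the fixed $2$-dimensional analysis behind Theorem 1.6 of \cite{Weber24b}. Establishing that sharp decay, and thereby the vanishing of the flux at infinity, is the obstacle that keeps the statement at the level of a conjecture.
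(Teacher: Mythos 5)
You are attempting to prove Conjecture \ref{ConjIIb}, which the paper does not prove: it is posed as an open problem, with only the remark that \cite{Weber19} settles the Ricci-flat analogue and \cite{Weber24b} the two-symmetry analogue. So there is no argument of the paper to compare against, and your proposal --- which you candidly flag as incomplete --- cannot be graded as matching or diverging from one. Judged on its own terms it is a reasonable reduction strategy (kill $\Ric$ using an $L^2$ flux identity from (\ref{EqnSecondEvoEqn}), then quote \cite{Weber19}, noting that scalar-flat K\"ahler indeed gives $W^+\equiv0$), but beyond the gap you yourself identify, two steps fail as stated. First, the claim that $\epsilon<|V|$ pins the conformal factor of $M^2_z$ ``between definite constants'' is wrong: in the LeBrun metric (\ref{EqnLeBMetric}) the reduced metric is $we^u(dx^2+dy^2)$ with $w=|\nabla z|^{-2}=|V|^{-2}$ and $we^u=|\nabla x|^{-2}$, so the lower bound on $|V|$ only bounds $w$ from above and gives no control at all on $e^u$; ``uniformly non-collapsed'' does not follow, and your proposed Liouville argument is left with no a priori geometric control to run on. Second, even granting convergence of the flux $F(z)=\int_{M^3_z}\beta$, monotonicity plus boundedness at both ends yields only that $F$ has limits, whence $\int_{M^4}2|\Ric|^2\,dVol_4=\lim_{z\to+\infty}F-\lim_{z\to-\infty}F$, a finite but possibly positive quantity; boundedness does not force $F$ to be constant, so $\Ric\equiv0$ does not follow without a separate argument that the two limits coincide.

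The deeper issue, which you correctly locate, is that the paper's entire quantitative machinery --- Theorem \ref{ThmConstantGrowth}, Lemma \ref{LemmaPartialElliptic}, Propositions \ref{PropConstantGrowthInText}, \ref{PropGrowthOfVSqVol}, \ref{PropTwoDerivsOfLaplSq} --- is built on compact leaves, where Stokes' theorem applies with no boundary and the Integration Lemma \ref{LemmaIntegration} converts everything into honest ODEs in $z$. In Type IIb every leaf is non-compact (Theorem \ref{ThmCptNonCpt}), so each such identity acquires an uncontrolled flux at spatial infinity inside the leaf, and even the finiteness of $\int_{M^3_z}\beta$ presupposes the decay you would need to prove; note also that Lemma \ref{LemmaHamiltonian} uses a compact leaf, so in Type IIb even the global existence of $z$ requires passing to the universal cover as remarked in the introduction. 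Absent a Liouville or gradient estimate for the Toda system (\ref{EqnsLeBrun}) on complete non-compact reductions with no curvature or non-collapsing hypothesis, the strategy does not close, which is precisely why the statement remains a conjecture in the paper.
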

	
	In closing, we remark that scalar-flat K\"ahler metrics with a continuous symmetry are locally determined by solutions of the LeBrun equations \cite{LeB91a}
	\begin{equation}
		u_{xx}+u_{yy}+(e^u)_{zz}=0
		\quad\quad
		w_{xx}+w_{yy}+(we^u)_{zz}=0. \label{EqnsLeBrun}
	\end{equation}
	The nonlinearity of these equations and their fundamentally local nature make global questions difficult to address, however.
	Another approach that has been explored is the momentum or Hamiltonian constructions in K\"ahler geometry \cite{HS02}, \cite{ACG06}, \cite{ACGT04}.
	New in our approach are Eqs. (\ref{EqnFirstEvoEqn}) and (\ref{EqnSecondEvoEqn}), the global nature of which provide new controls that are especially useful in the non-complete case.
	
	In Section \ref{SecCoords} we establish basic properties of the coordinate system, and prove Lemma \ref{LemmaOneOrTwoEnded} and Theorems \ref{ThmCptNonCpt} and \ref{ThmMfldEndTopology}.
	In Sections \ref{SecKillingTwoForm} and \ref{SecEvoK} we prove Eq. (\ref{EqnFirstEvoEqn}), and prove Theorem \ref{ThmConstantGrowth} except (\ref{EqnIntoThmCGB}).
	In Section \ref{SecCGB} we use the Chern-Gauss-Bonnet formalism to prove (\ref{EqnSecondEvoEqn}) and (\ref{EqnIntoThmCGB}), and prove Proposition \ref{PropIntroCompactWithNoZeros}.
	In Section \ref{SecTypeIandII} we prove Theorems \ref{ThmTypeIIa}, \ref{PropTypeIIb} and \ref{ThmIntroCptLevelsNoZeros}.
	Section \ref{SecExamples} provides examples.

	%
	%
	%
	%
	%
	%
	%
	%
	\section{Basic properties of $M^4$} \label{SecCoords}
	
	Here we lay out the basic facts about K\"ahler 4-manifolds with a Killing field, and prove Theorem \ref{ThmConstantGrowth}.
	Section \ref{SubSecMixedFrame} establishes facts about our preferred framing, the ``mixed frame'' $\{\nabla{}x,\nabla{}y,\nabla{}z,\frac{\partial}{\partial{}t}\}$.
	Section \ref{SubSecVolumeForms} studies volume forms and proves the ``Integration Lemma'' referred to in the Introduction.
	Section \ref{SubSecGlobalsOfM} establishes some global properties of $M^4$ and includes the proof of Theorem \ref{ThmConstantGrowth}.
	We first record some standard K\"ahler identities we will use frequently.
	For any function $f$ and 1-form $\alpha$ we have
	\begin{equation}
		\begin{array}{ll}
			dVol_4=\frac12\omega\wedge\omega,
			&d\alpha\wedge\omega
			=-\frac12div_4(J\alpha^\sharp)\,\omega\wedge\omega, \\
			(dJdf)^+
			=-\frac12\left(\triangle_4{}f\right)\omega, \quad
			&df\wedge{}Jdg\wedge\omega
			=-\frac12\left<\nabla{}f,\,\nabla{}g\right>\omega\wedge\omega.
		\end{array} \label{EqnBasicOmega}
	\end{equation}
	Because $V$ is symplectomorphic we have $\mathcal{L}_V\omega=0$, and because $d\omega=0$ and $\mathcal{L}_V=di_V+i_Vd$ we have $0=d(i_V\omega)=0$.
	Therefore we can solve $dz=-i_V\omega$ locally, although an obstruction to a global solution may exist in the deRham space $H^1_{dR}(M^4)$.
	We frequently use the following relations:
	\begin{equation}
		dz\;=\;-i_V\omega,
		\quad\quad
		V_\flat\;=\;i_{\nabla{}z}\omega,
		\quad\quad
		dz\;=\;JV_\flat,
		\quad\quad
		\nabla{z}\;=\;-JV. \label{EqnBasicZV}
	\end{equation}
	where $V_\flat=\left<V,\,\cdot\right>$ and we use the convention $J(\eta)=\eta\circ{}J$ when $\eta\in\bigwedge^1$.
	This causes a sign to appear sometimes, due to the fact that $J(V_\flat)=-(JV)_\flat$.

	%
	%
	%
	%
	%
	%
	\subsection{The mixed frame} \label{SubSecMixedFrame}
	We use the LeBrun coordinates $(x,y,z,t)$ of \cite{LeB91a}.
	To construct these coordinates let $M^3_0$ be a non-singular leaf of $\mathcal{D}$; on $M^3_0$ we create three local coordinates $(x,y,t)$.
	To create the function $t$, choose any transversal to $V$ within $M^3_0$, declare $t=0$ on this transversal, and push the value of $t$ along the action of $V$.
	We've assumed the flow of $V$ is parameterized by the unit circle, so we may take $t\in[0,2\pi)$.
	On the quotient $M^2_0$ of $M^3_0$, we may place local isothermal coordinates $x$ and $y$.
	Pulling back along the quotient, we obtain functions $x$, $y$ on $M^3_0$.
	Because the flow of $V$ is by isometries, we retain the relations $\left<\nabla{}x,\nabla{}y\right>=0$ and $|\nabla{}x|^2=|\nabla{}y|^2$.
	
	On any neighnorhood of $M^3_0$ that is deformation retractable onto $M^3_0$ we can solve $dz=-i_V\omega=JV_\flat$ for $z$, so we have a locally defined function $z$ with $\nabla{}z=-JV$.
	Finally we push the functions $x$, $y$, and $t$ along the diffeomorphism flow of $\nabla{}z$ to obtain functions $(x,y,t)$ on a region of $M^4$.
	We remark that because $t$ parameterizes the flow of $V$ and $\frac{\partial{}x}{\partial{}t}=\frac{\partial{}y}{\partial{}t}=\frac{\partial{}z}{\partial{}t}=0$, we have the relation $\frac{\partial}{\partial{}t}=V$.
	
	For computations we find the ``mixed frame'' $\{\nabla{}x,\nabla{}y,\nabla{}z=-JV,\frac{\partial}{\partial{}t}=V\}$ to be most convenient.
	Lemmas \ref{LemmaVZBasics} and \ref{LemmaCoordSystemFacts} establish its basic properties.
	\begin{lemma} \label{LemmaVZBasics}
		$\text{Hess}\,z$, $\nabla{}V_\flat$, and $\nabla{}V$ are $J$-invariant in the sense that
		\begin{equation}
			\nabla_{Jv}J\nabla{}z=-\nabla_{v}\nabla{}z,
			\;\;
			\nabla_{Jv}JV_\flat=-\nabla_{v}\nabla{}V_\flat,
			\;\;
			\text{Hess}\,z(J\cdot,J\cdot)
			=\text{Hess}\,z(\cdot,\cdot) \label{EqnJInvariance}
		\end{equation}
		where $v$ is any vector field.
		We have the following relation:
		\begin{equation}
			(\text{Hess}\,z)(J\cdot,\,\cdot)
			\;=\;\nabla{}V_\flat
			\;=\;\frac12dV_\flat. \label{EqnBasicFirst}
		\end{equation}
		The Lie derivatives of $g$, $\omega$, and $J$ along $\nabla{}z$ are
		\begin{equation}
			\mathcal{L}_{\nabla{}z}g
			=2\text{Hess}\,z,
			\quad
			\mathcal{L}_{\nabla{}z}\omega
			=2\nabla{}V_\flat=dV_\flat,
			\quad\text{and}\quad
			\mathcal{L}_{\nabla{}z}J\;=\;0 \label{EqnLie}
		\end{equation}
	\end{lemma}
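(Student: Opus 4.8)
The plan is to derive the whole lemma from three structural inputs: that $V$ is Killing, so $\nabla V_\flat$ is a $2$-form and hence $\nabla V_\flat=\frac12 dV_\flat$; that the metric is K\"ahler, so $\nabla J=0$ and $J$ is pointwise orthogonal and skew-adjoint; and the defining relations $\nabla z=-JV$ and $i_{\nabla z}\omega=V_\flat$ from (\ref{EqnBasicZV}). Everything else is a consequence of differentiating $\nabla z=-JV$ and shuffling $J$ through inner products.

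First I would establish (\ref{EqnBasicFirst}). Differentiating $\nabla z=-JV$ and using $\nabla J=0$ gives $\text{Hess}\,z(u,v)=\langle\nabla_u\nabla z,v\rangle=-\langle J\nabla_u V,v\rangle$. Applying the symmetry of the Hessian together with the orthogonality and skew-adjointness of $J$, and then the antisymmetry of $\nabla V_\flat$ coming from the Killing condition, one rewrites $\text{Hess}\,z(Ju,v)=\langle\nabla_u V,v\rangle=(\nabla V_\flat)(u,v)$. Combined with $\nabla V_\flat=\frac12 dV_\flat$ this is precisely (\ref{EqnBasicFirst}). The $J$-invariance relations (\ref{EqnJInvariance}) then fall out of the same identity: substituting $u\mapsto Ju$ and $v\mapsto Jv$ into $\text{Hess}\,z(u,v)=-\langle J\nabla_u V,v\rangle$ and using $J^2=-1$ with orthogonality yields $\text{Hess}\,z(Ju,Jv)=\text{Hess}\,z(u,v)$. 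Reading this as the assertion that the endomorphism $w\mapsto\nabla_w\nabla z$ commutes with $J$ produces the first two displayed identities (the second being the same statement transcribed to $V_\flat$ via $\nabla z=-JV$).

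For the Lie derivatives (\ref{EqnLie}) I would proceed in the order forced by the logic. The formula $\mathcal{L}_{\nabla z}g=2\,\text{Hess}\,z$ is the standard identity for a gradient field, since $(\mathcal{L}_{\nabla z}g)(u,v)=\langle\nabla_u\nabla z,v\rangle+\langle\nabla_v\nabla z,u\rangle$. For the symplectic form, Cartan's formula with $d\omega=0$ gives $\mathcal{L}_{\nabla z}\omega=d\,i_{\nabla z}\omega=dV_\flat=2\nabla V_\flat$, using $i_{\nabla z}\omega=V_\flat$ from (\ref{EqnBasicZV}). The vanishing $\mathcal{L}_{\nabla z}J=0$ is then not proved independently but deduced: differentiating $\omega(u,v)=g(Ju,v)$ by the Leibniz rule gives $\mathcal{L}_{\nabla z}\omega=(\mathcal{L}_{\nabla z}g)(J\,\cdot\,,\,\cdot)+g((\mathcal{L}_{\nabla z}J)\,\cdot\,,\,\cdot)$, and substituting $\mathcal{L}_{\nabla z}g=2\,\text{Hess}\,z$ together with (\ref{EqnBasicFirst}) cancels the first two terms, leaving $g((\mathcal{L}_{\nabla z}J)\,\cdot\,,\,\cdot)=0$ and hence $\mathcal{L}_{\nabla z}J=0$.

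The main obstacle is bookkeeping rather than any real difficulty: the conventions $J\eta=\eta\circ J$ on $1$-forms and the sign in $\omega(\cdot,\cdot)=\pm g(J\cdot,\cdot)$ propagate through every identity, so one must fix them once and check that the antisymmetry of $\nabla V_\flat$ and the skew-adjointness of $J$ conspire to give the stated sign-consistent relations rather than their negatives. The logical ordering is the other delicate point: (\ref{EqnBasicFirst}) and the first two equations of (\ref{EqnLie}) must be in hand before one can conclude $\mathcal{L}_{\nabla z}J=0$, since that final statement is derived from them via the Leibniz expansion of $\mathcal{L}_{\nabla z}\omega$.
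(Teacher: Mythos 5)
Your proposal is correct and takes essentially the same approach as the paper: the identity $(\text{Hess}\,z)(J\cdot,\cdot)=\nabla V_\flat=\frac12 dV_\flat$ is extracted from $\nabla z=-JV$ together with $\nabla J=0$, the Killing antisymmetry, and the symmetry of the Hessian (you merely traverse the paper's chain of equalities in the opposite order, deriving (\ref{EqnBasicFirst}) first and the $J$-invariance (\ref{EqnJInvariance}) from it, where the paper does the reverse), while your treatment of (\ref{EqnLie})---the gradient-field formula for $\mathcal{L}_{\nabla z}g$, Cartan's formula with $d\omega=0$ for $\mathcal{L}_{\nabla z}\omega$, and the Leibniz expansion $\mathcal{L}_{\nabla z}\omega=(\mathcal{L}_{\nabla z}g)(J\cdot,\cdot)+g((\mathcal{L}_{\nabla z}J)\cdot,\cdot)$ forcing $\mathcal{L}_{\nabla z}J=0$---is exactly the paper's argument, including the logical ordering you flag as delicate.
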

	\begin{proof}
		The first two expression in (\ref{EqnJInvariance}) are immediate from the third.
		To prove the third, the fact that $J$ is covariant-constant and compatible with the metric gives
		\begin{equation}
			\begin{aligned}
				\left<\nabla_{v}\nabla{}z,w\right>
				&=
				\left<\nabla_{v}V,Jw\right>
				=-\left<\nabla_{Jw}V,v\right>
				=\left<\nabla_{Jw}\nabla{}z,Jv\right>
				=\left<\nabla_{Jv}\nabla{}z,Jw\right>.
			\end{aligned}
		\end{equation}
		The expression $(\text{Hess}\,z)(J\cdot,\,\cdot)=\nabla{}V_\flat$ is immediate from $J$-invariance of $\text{Hess}\,z$.
		To see $dV_\flat=2\nabla{}V_\flat$ we compute
		\begin{equation}
			\begin{aligned}
				dV_\flat(A,B)
				&=A\left<V,B\right>-A\left<V,B\right>-\left<V,[A,B]\right> \\
				&=\left<\nabla_AV,\,B\right>-\left<\nabla_AV,\,B\right>
				=2\left<\nabla_AV,\,B\right>
			\end{aligned}
		\end{equation}
		where we used $[A,B]=\nabla_AB-\nabla_BA$.
		For the Lie derivative of the metric, we use the torsion-free condition again to find
		\begin{equation}
			\begin{aligned}
				\left(\mathcal{L}_{\nabla{}z}g\right)(A,B)
				=\nabla{}z\left<A,B\right>-\left<[\nabla{}z,A],B\right>-\left<A,[\nabla{}z,B]\right>
				=2\left<\nabla_Az,\,B\right>
			\end{aligned}
		\end{equation}
		To take the Lie derivative of $\omega$, we use $\mathcal{L}_{\nabla{}z}=di_{\nabla{}z}+i_{\nabla{}z}d$, Eq. (\ref{EqnBasicZV}), and the fact that $d\omega=0$ to find
		\begin{equation}
			\mathcal{L}_{\nabla{}z}\omega
			=dV_\flat=2\nabla{}V_\flat.
		\end{equation}
		Finally for $\mathcal{L}_{\nabla{}z}J$ we use the Leibniz rule $\mathcal{L}_{\nabla{}z}\omega=\mathcal{L}_{\nabla{}z}g\circ{}J+g\circ\mathcal{L}_{\nabla{}z}J$ to obtain
		\begin{equation}
			2\nabla{}V_\flat
			=2(\text{Hess}\,z)(J\cdot,\,\cdot)
			+g\left(\left(\mathcal{L}_{\nabla{}z}J\right)\cdot, \,\cdot\right).
		\end{equation}
		Then the fact that $2\text{Hess}\,z(J\cdot,\,\cdot)=2\nabla{}V_\flat$ means $\mathcal{L}_{\nabla{}z}J=0$.
	\end{proof}
	\begin{lemma}[The coordinate system] \label{LemmaCoordSystemFacts}
		The fields $\nabla{}x$, $\nabla{}y$, $\nabla{}z$, and $\frac{\partial}{\partial{}t}$ are mutually orthogonal.
		The fields $\nabla{}z$ and $\nabla{}t$ are orthogonal.
		We have $\frac{\partial}{\partial{}t}=V$ and $\frac{\partial}{\partial{}z}=|\nabla{}z|^{-2}\nabla{}z=-|V|^{-2}JV=-|V|^{-2}(i_V\omega)^\sharp$.
		The functions $x$ and $y$ on $M^4$ are pluriharmonic conjugates (meaning $dx=Jdy$).
	\end{lemma}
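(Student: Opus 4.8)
The plan is to establish each relation first on the initial non-singular leaf $M^3_0$, where it follows from the construction of the isothermal coordinates and the fact that $V$ is tangent, and then to propagate it off $M^3_0$ along the flow of $\nabla z$. I would begin with the two coordinate-field identities. Since $t$ parameterizes the flow of $V$ and $x,y,z$ are constant in $t$ by construction, $\frac{\partial}{\partial t}=V$; consistency is confirmed by $V(z)=dz(V)=(JV_\flat)(V)=\langle V,JV\rangle=0$. For $\frac{\partial}{\partial z}$, the functions $x,y,t$ are constant along integral curves of $\nabla z$ (they are pushed along that flow), so those curves coincide with the $z$-coordinate curves and $\nabla z$ is proportional to $\frac{\partial}{\partial z}$; pairing with $dz$ and using $dz(\nabla z)=|\nabla z|^2$ fixes the factor and gives $\frac{\partial}{\partial z}=|\nabla z|^{-2}\nabla z$. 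Substituting $\nabla z=-JV$ (so $|\nabla z|=|V|$) together with the identity $(i_V\omega)^\sharp=JV$, which follows from $dz=-i_V\omega=JV_\flat$ and $(J\alpha)^\sharp=-J\alpha^\sharp$, yields the remaining forms $-|V|^{-2}JV=-|V|^{-2}(i_V\omega)^\sharp$.

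Next I would dispatch the orthogonalities that require no transport. By skew-symmetry of $J$ we have $\langle\nabla z,V\rangle=\langle -JV,V\rangle=0$ globally. Because $x,y,t$ are constant along the $\nabla z$ flow, $\langle\nabla x,\nabla z\rangle=\nabla z(x)=0$, $\langle\nabla y,\nabla z\rangle=\nabla z(y)=0$, and also $\langle\nabla z,\nabla t\rangle=\nabla z(t)=0$, giving the last assertion of the lemma. For $\langle\nabla x,V\rangle=V(x)$ and $\langle\nabla y,V\rangle=V(y)$, these vanish on $M^3_0$ since $x,y$ descend from the quotient $M^2_0$; to see they stay zero I would use that $V$ is a holomorphic Killing field, so $[\nabla z,V]=\mathcal{L}_V(JV)=(\mathcal{L}_V J)V=0$, whence $\nabla z\bigl(V(x)\bigr)=V\bigl(\nabla z(x)\bigr)=0$ forces $V(x)$ to remain $0$ along the flow.

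The heart of the matter is the pluriharmonic relation $dx=Jdy$, from which the final orthogonality $\langle\nabla x,\nabla y\rangle=0$ will follow. On $M^3_0$ this is the statement that $x+iy$ is a holomorphic coordinate for the complex structure inherited from $M^4$, i.e. the Cauchy--Riemann relation $Jdy=dx$ under the convention $J\eta=\eta\circ J$ (orienting the isothermal coordinates so the sign is $+$). To propagate it I observe that both sides are invariant under the $\nabla z$ flow: from $\nabla z(x)=\nabla z(y)=0$ we get $\mathcal{L}_{\nabla z}dx=d(\nabla z\,x)=0$ and $\mathcal{L}_{\nabla z}dy=0$, while Lemma \ref{LemmaVZBasics} gives $\mathcal{L}_{\nabla z}J=0$, so $\mathcal{L}_{\nabla z}(Jdy)=(\mathcal{L}_{\nabla z}J)dy+J(\mathcal{L}_{\nabla z}dy)=0$. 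Two forms that agree on $M^3_0$ and are annihilated by $\mathcal{L}_{\nabla z}$ agree throughout the flow-out, so $dx=Jdy$ everywhere. Then $\langle\nabla x,\nabla y\rangle=(Jdy)(\nabla y)=\langle\nabla y,J\nabla y\rangle=0$ completes the mutual orthogonality.

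The step I expect to be the main obstacle is precisely this propagation of $dx=Jdy$, because the flow of $\nabla z$ does \emph{not} preserve the metric: $\mathcal{L}_{\nabla z}g=2\,\text{Hess}\,z\neq0$ in general, so orthogonality and conformality relations cannot simply be transported along the flow. The point to get right is that $dx=Jdy$ is a purely complex-structure identity, and it survives because the flow-invariant objects are $J$ and the coordinate differentials rather than $g$. Consequently the orthogonality $\nabla x\perp\nabla y$ must be obtained as a consequence of $dx=Jdy$ at each point, instead of being carried along the flow as a metric relation.
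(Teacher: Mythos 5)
Your proposal is correct and follows essentially the same route as the paper: establish the relations on the initial leaf and propagate them along the $\nabla z$ flow, with the key step being exactly the paper's argument that $\mathcal{L}_{\nabla z}x=\mathcal{L}_{\nabla z}y=0$ and $\mathcal{L}_{\nabla z}J=0$ (Lemma \ref{LemmaVZBasics}) force $dx=Jdy$ to persist, the orthogonalities then coming from the coordinate construction. Your only departures are cosmetic and if anything slightly more careful: you identify $\frac{\partial}{\partial z}=|\nabla z|^{-2}\nabla z$ by the dual-basis characterization (annihilating $dx,dy,dt$ and pairing to $1$ with $dz$) where the paper inverts the block-diagonal Gram matrix, and you make explicit the commutation $[\nabla z,V]=(\mathcal{L}_V J)V=0$ needed to propagate $V(x)=V(y)=0$ off $M^3_0$, which the paper leaves implicit in the construction.
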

	\begin{proof}
		By construction the functions $x$ and $y$ are isothermal coordinates on $M^2_0$ so $dx=Jdy$ on $M^2_0$.
		Because $J$ and $g$ are $V$-invariant, the complex structure and metric on $M^2_0$ are inherited from $M^4$.
		From the coordinate construction we have $\mathcal{L}_{\nabla{}z}x=\mathcal{L}_{\nabla{}z}y=0$ and from Lemma \ref{LemmaVZBasics} we have $\mathcal{L}_{\nabla{}z}J=0$.
		Thus $\mathcal{L}_{\nabla{}z}\left(dx-Jdy\right)=0$ and so we retain $dx=Jdy$.
		In particular $dJdx=0$ and $dJdy=0$ so $x$ and $y$ are both pluriharmonic.
		
		Because $x$, $y$, $z$ are constant along $V$ and $t$ is a unit-speed parameterization of its flow, we have $\frac{\partial}{\partial{}t}=V$.
		The fact that $\nabla{}z$ is orthogonal to $\nabla{}x$ is immediate from $0=\mathcal{L}_{\nabla{}z}x=\left<\nabla{}z,\nabla{}x\right>$.
		We similarly have $\mathcal{L}_{\nabla{}z}y=0=\mathcal{L}_{\nabla{}z}t$ so $\nabla{}z$ is also orthogonal to $\nabla{}y$ and $\nabla{}t$.
		The fact that $\frac{\partial}{\partial{}t}$ is orthogonal to $\nabla{}x$ is due to $\left<\frac{\partial}{\partial{}t},\nabla{}x\right>=\frac{\partial{}x}{\partial{}t}=0$.
		Similarly $\frac{\partial}{\partial{}t}$ is orthogonal to $\nabla{}y$ and $\nabla{}z$.
		
		In any coordinate system $\{x^1,\dots,x^n\}$ the matrices $g(\nabla{}x^i,\nabla{}x^j)$ and $g\left(\frac{\partial}{\partial{}x^i},\frac{\partial}{\partial{}x^i}\right)$ are inverses.
		Because $\nabla{}z$ is orthogonal to $\nabla{}t$, $\nabla{}x$, $\nabla{}y$, the matrix $g(\nabla{}x^i,\nabla{}x^j)$ splits into a $1\times1$ block and a $3\times{}3$ block.
		From the $1\times1$ block we get $\frac{\partial}{\partial{}z}=|\nabla{}z|^{-2}\nabla{}z$.
	\end{proof}
	
	%
	%
	%
	%
	%
	%
	\subsection{Volume forms} \label{SubSecVolumeForms}
	We define the 3-form $dVol_3$ to be the intrinsic volume form of $M^3_z\subset{}M^4$ and the 2-form $dVol_2$ to be the intrinsic volume form of $M^2_z$.
	By pull-back along the action of $V$, $dVol_2$ can be regarded as a 2-form on any of the $M^3_z$ or on $M^4$.
	The local expressions are
	\begin{equation}
		\begin{aligned}
			dVol_4
			&\;=\;|V|^{-2}|\nabla{}x|^{-2}\,
			dz\wedge{}V_\flat\wedge{}dx\wedge{}dy \\
			dVol_3&
			\;=\;i_{\frac{\nabla{}z}{|\nabla{}z|}}dVol_4
			\;=\;|V|^{-1}|\nabla{}x|^{-2}V_\flat\wedge{}dx\wedge{}dy \\
			dVol_2
			&\;=\;i_{\frac{V}{|V|}}i_{\frac{\nabla{}z}{|\nabla{}z|}}dVol_4
			\;=\;|\nabla{}x|^{-2}dx\wedge{}dy \\
			*dVol_2
			&\;=\;|\nabla{}z|^{-2}dz\wedge{}V_\flat \\
			\omega
			&\;=\;|\nabla{}z|^{-2}dz\wedge{}V_\flat
			+|\nabla{}x|^{-2}dx\wedge{}dy
			\;=\;*dVol_2\,+\,dVol_2.
		\end{aligned} \label{EqnManyVols}
	\end{equation}
	\begin{lemma} \label{LemmaExtDerivsOfTwoVols}
		We have the exterior derivatives
		\begin{equation}
			\begin{aligned}
				d\big(dVol_2\big)
				&=-\left(\frac{\partial}{\partial{}z}\log|\nabla{}x|^2\right)dz\wedge{}dVol_2, \\
				d\big(*dVol_2\big)&=\left(\frac{\partial}{\partial{}z}\log|\nabla{}x|^2\right)dz\wedge{}dVol_2.
			\end{aligned} \label{EqnVolTwoExtDeriv}
		\end{equation}
	\end{lemma}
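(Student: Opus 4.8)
The plan is to compute both exterior derivatives directly from the local expressions recorded in (\ref{EqnManyVols}), exploiting the orthogonality of the mixed frame together with the $V$-invariance of $|\nabla{}x|^2$. The first identity I would establish by an explicit computation, and the second I would then deduce essentially for free from the Kähler condition.

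First I would start from $dVol_2=|\nabla{}x|^{-2}\,dx\wedge{}dy$. Since $d(dx\wedge{}dy)=0$, we have $d\big(dVol_2\big)=d(|\nabla{}x|^{-2})\wedge{}dx\wedge{}dy$, so everything reduces to understanding the $1$-form $d(|\nabla{}x|^2)$. The essential observation is that $|\nabla{}x|^2$ is constant along the flow of $V$: because $V$ is Killing ($\mathcal{L}_Vg=0$) and $x$ is $V$-invariant ($V(x)=\partial{}x/\partial{}t=0$, so $\mathcal{L}_V(dx)=d(\mathcal{L}_Vx)=0$), the field $\nabla{}x=(dx)^\sharp$ is $V$-invariant and hence $\frac{\partial}{\partial{}t}|\nabla{}x|^2=V(|\nabla{}x|^2)=0$. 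Expanding $d(|\nabla{}x|^2)$ in the coordinate coframe $\{dx,dy,dz,dt\}$, this kills the $dt$-term; upon wedging with $dx\wedge{}dy$ the surviving $dx$- and $dy$-terms also vanish, so only the $dz$-component contributes.

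This leaves $d\big(dVol_2\big)=-|\nabla{}x|^{-4}\big(\frac{\partial}{\partial{}z}|\nabla{}x|^2\big)\,dz\wedge{}dx\wedge{}dy$. Rewriting $|\nabla{}x|^{-4}\frac{\partial}{\partial{}z}|\nabla{}x|^2=|\nabla{}x|^{-2}\frac{\partial}{\partial{}z}\log|\nabla{}x|^2$ and recognizing $|\nabla{}x|^{-2}\,dz\wedge{}dx\wedge{}dy=dz\wedge{}dVol_2$ yields the first line of (\ref{EqnVolTwoExtDeriv}). For the second line I would avoid computing $d(*dVol_2)$ directly from $*dVol_2=|\nabla{}z|^{-2}dz\wedge{}V_\flat$ (which would require controlling both $d(|\nabla{}z|^{-2})$ and $dV_\flat$) and instead use the decomposition $\omega=*dVol_2+dVol_2$ from (\ref{EqnManyVols}). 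Since $M^4$ is Kähler, $d\omega=0$, so $d(*dVol_2)=-d(dVol_2)$, and the second identity follows immediately with the opposite sign.

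I do not anticipate a serious obstacle. The only point requiring genuine care is the vanishing of the $\partial/\partial{}t$-derivative of $|\nabla{}x|^2$, since this is precisely what forces the coefficient to depend on $z$ alone and lets the whole expression collapse onto the single term $dz\wedge{}dVol_2$; the rest is bookkeeping within the orthogonal coframe.
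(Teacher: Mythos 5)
Your proposal is correct and follows essentially the same route as the paper: compute $d\big(dVol_2\big)=d(|\nabla{}x|^{-2})\wedge{}dx\wedge{}dy$ from the local expression in (\ref{EqnManyVols}), then obtain $d\big(*dVol_2\big)$ from $d\omega=0$ and $\omega=*dVol_2+dVol_2$. Your explicit justification that only the $dz$-component of $d(|\nabla{}x|^{-2})$ survives (the $dt$-term vanishing by $V$-invariance of $|\nabla{}x|^2$, the $dx$- and $dy$-terms by wedging) is exactly the step the paper leaves implicit.
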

	\begin{proof}
		From (\ref{EqnManyVols}) we have $d(dVol_2)=d(|\nabla{}x|^{-2})\wedge{}dx\wedge{}dy$ which is the first equation in (\ref{EqnVolTwoExtDeriv}).
		The second equation follows from $d\omega=0$ and $\omega=*dVol_2+dVol_2$.
	\end{proof}
	\begin{lemma}[Integration Lemma] \label{LemmaIntegration}
		In addition to (\ref{EqnManyVols}) we have
		\begin{equation}
			dVol_4=dz\wedge{}dt\wedge{}dVol_2
			\quad\text{and}\quad
			dVol_3=|V|dt\wedge{}dVol_2. \label{EqnsVolInTermsOft}
		\end{equation}
		Consequently, if $f=f(x,y,z)$ is any $V$-invariant function, then
		\begin{equation}
			\begin{aligned}
				\int_{M^3_z}f\,dVol_3
				=2\pi\int_{M^2_z}|V|\,f\,dVol_2.
			\end{aligned}
		\end{equation}
		On the domain $M^4_{z_0,z_1}=\{p\in{}M^4\,\big|\,z_0<z(p)<z_1\}$ we have
		\begin{equation}
			\int_{M^4_{z_0,z_1}}f\,dVol_4 \;=\;2\pi\int_{z_0}^{z_1}\left(\int_{M^2_z}f\,dVol_2\right)\,dz.
		\end{equation}
	\end{lemma}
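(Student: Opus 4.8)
The plan is to reduce the two form identities in (\ref{EqnsVolInTermsOft}) to a single coefficient computation, and then to obtain the two integral formulas by integrating along the circle fibers and slicing with the coarea formula. The only genuinely new input beyond the local expressions already recorded in (\ref{EqnManyVols}) is the $dt$-component of $V_\flat$: since $\frac{\partial}{\partial t}=V$ by Lemma \ref{LemmaCoordSystemFacts}, the dual pairing gives $V_\flat\!\left(\frac{\partial}{\partial t}\right)=\langle V,V\rangle=|V|^2$, while $V_\flat\!\left(\frac{\partial}{\partial z}\right)=\langle V,|\nabla z|^{-2}\nabla z\rangle=0$ because $\nabla z=-JV\perp V$. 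Writing $V_\flat=a\,dx+b\,dy+c\,dz+e\,dt$ in the coordinate coframe, this records $c=0$ and $e=|V|^2$, and I will see that the remaining coefficients $a,b$ never enter.

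First I would establish $dVol_4=dz\wedge dt\wedge dVol_2$. Starting from (\ref{EqnManyVols}), $dVol_4=|V|^{-2}|\nabla x|^{-2}\,dz\wedge V_\flat\wedge dx\wedge dy$, and wedging $V_\flat$ against $dz\wedge dx\wedge dy$ annihilates its $dx,dy,dz$ components, so only the term $e\,dt=|V|^2\,dt$ survives; thus $dz\wedge V_\flat\wedge dx\wedge dy=|V|^2\,dz\wedge dt\wedge dx\wedge dy$, and substituting this together with $dVol_2=|\nabla x|^{-2}dx\wedge dy$ yields the claim. The identity $dVol_3=|V|\,dt\wedge dVol_2$ is the same in spirit: from (\ref{EqnManyVols}), $dVol_3=|V|^{-1}|\nabla x|^{-2}V_\flat\wedge dx\wedge dy$, and wedging $V_\flat$ against $dx\wedge dy$ again isolates the $dt$-term (the $dz$-term is in any case killed upon restriction to $M^3_z$, where $dz=0$), giving $V_\flat\wedge dx\wedge dy=|V|^2\,dt\wedge dx\wedge dy$ and hence the stated formula.

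For the integral identities I would pass to the fibration structure rather than to naive coordinates, since $t$ (and possibly $z$) is only locally a function. For a $V$-invariant $f=f(x,y,z)$, the form $f\,|V|\,dVol_2$ is the pullback of a form on $M^2_z$ under the circle bundle $M^3_z\to M^2_z$ (here $f$, $|V|$, and $dVol_2$ are each $V$-invariant), while $dt$ restricts to the fiber coordinate; integrating along each fiber, whose $t$-length is $2\pi$ by the normalization $t\in[0,2\pi)$, yields $\int_{M^3_z}f\,dVol_3=\int_{M^3_z}f|V|\,dt\wedge dVol_2=2\pi\int_{M^2_z}f|V|\,dVol_2$. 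For the four-dimensional integral I would use the coarea decomposition $dVol_4=|V|^{-1}dz\wedge dVol_3$, which follows from $dVol_3=i_{\nabla z/|\nabla z|}dVol_4$, the interior-product identity $dz\wedge i_X\,dVol_4=dz(X)\,dVol_4$, and $|\nabla z|=|V|$; combining this with the previous display gives $\int_{M^4_{z_0,z_1}}f\,dVol_4=\int_{z_0}^{z_1}\big(\int_{M^3_z}|V|^{-1}f\,dVol_3\big)\,dz=2\pi\int_{z_0}^{z_1}\big(\int_{M^2_z}f\,dVol_2\big)\,dz$.

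The hard part will be precisely this global-versus-local bookkeeping: the functions $t$ and $z$ need not exist globally, so the two integral formulas cannot be obtained by a single naive application of Fubini in $(x,y,z,t)$-coordinates. The honest justifications are (i) integration along the fibers of the genuinely global circle bundle $M^3_z\to M^2_z$, using that every orbit has $t$-length $2\pi$, and (ii) the coarea formula for the globally defined $dVol_3$ and the globally defined $1$-form $dz=JV_\flat$. Once these two structural facts are in hand the coefficient computation above is routine, and the variation of $M^2_z$ with $z$ is absorbed cleanly into the coarea slicing.
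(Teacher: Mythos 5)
Your proposal is correct and follows essentially the same route as the paper: you compute the coframe decomposition of $V_\flat$ (the paper writes $\frac{1}{|V|^2}V_\flat=dt-\frac{\left<\nabla{}x,\nabla{}t\right>}{|\nabla{}x|^2}dx-\frac{\left<\nabla{}y,\nabla{}t\right>}{|\nabla{}y|^2}dy$, which is exactly your $c=0$, $e=|V|^2$ computation with the irrelevant $a,b$ made explicit), then obtain the integral formulas by integrating over the $t\in[0,2\pi)$ fibers and slicing with $dVol_4=\frac{1}{|V|}dz\wedge{}dVol_3$, just as the paper does. Your added remarks on the local-versus-global status of $t$ and $z$ are a sound clarification of bookkeeping the paper leaves implicit, not a different argument.
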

	\begin{proof}
		By Lemma \ref{LemmaCoordSystemFacts} $\nabla{}z$ is perpendicular to $\nabla{}t$, $\nabla{}x$, and $\nabla{}y$.
		Thus $\frac{1}{|V|^2}V_\flat=dt-\frac{\left<\nabla{}x,\nabla{}t\right>}{|\nabla{}x|^2}dx-\frac{\left<\nabla{}y,\nabla{}t\right>}{|\nabla{}y|^2}dy$.
		The equations (\ref{EqnsVolInTermsOft}) are immediate from this and (\ref{EqnManyVols}).
		We have
		\begin{equation}
			\begin{aligned}
				\int_{M^3_z}f\,dVol_3
				&=
				\int_{M^3_z}f\,\frac{1}{|V|}V_\flat\wedge{}dVol_2
				=
				\int_{M^3_z}f\,|V|\,dt\wedge{}dVol_2.
			\end{aligned}
		\end{equation}
		Because the parameterization is $t\in[0,2\pi)$ and $|V|$, $f$ are $t$-invariant, this is
		\begin{equation}
			\begin{aligned}
				\int_{M^3_z}f\,dVol_3
				=
				\int_{M^2_z}\left(\int_0^{2\pi}f|V|\,dt\right)\,dVol_2
				=
				2\pi\int_{M^3_z}f|V|\,dVol_2
			\end{aligned}
		\end{equation}
		Finally, using $dVol_4=\frac{1}{|V|}dz\wedge{}dVol_3$, we have
		\begin{equation}
			\small
			\int_{z_0<z<z_1}f\,dVol_4
			=\int_{z_0}^{z_1}\left(\int_{M^3_z}\frac{1}{|V|}f\,dVol_3\right)dz \\
			=2\pi\int_{z_0}^{z_1}\left(\int_{M^2_z}f\,dVol_2\right)dz.
		\end{equation}
	\end{proof}
	As a final remark on the LeBrun coordinate system, from \cite{LeB91a} the metric is
	\begin{equation}
		g\;=\;we^u(dx^2\,+\,dy^2)\,+\,w\,dz^2\,+\,w^{-1}(dt+\alpha)^2 \label{EqnLeBMetric}
	\end{equation}
	where the 1-form $\alpha$ depends on the transversal and the LeBrun functions $u$, $w$ are
	\begin{equation}
		u\;=\;\log|\nabla{}z|^2\,-\,\log|\nabla{x}|^{2},
		\quad
		w\;=\;|\nabla{}z|^{-2}. \label{EqnLeBrunUW}
	\end{equation}
	The metric (\ref{EqnLeBMetric}) is scalar-flat K\"ahler if and only if $u$ and $w$ solve the system (\ref{EqnsLeBrun}).

	%
	%
	%
	%
	%
	%
	\subsection{Global properties} \label{SubSecGlobalsOfM}
	Recall that the distribution $\mathcal{D}=\left\{v\in{}TM^4\;\big|\;\omega(V,v)=0\right\}$ is integrable and its leaves are labeled by $M^3_z$.
	\begin{lemma} \label{LemmaPermutes}
		The diffeomorphism flow of $|V|^{-2}JV$ preserves the distribution $\mathcal{D}$.
		Consequently the diffeomorphism flow of $\frac{\partial}{\partial{}z}$ (not $\nabla{}z$) permutes the leaves $M^3_z$.
	\end{lemma}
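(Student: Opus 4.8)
The plan is to characterize $\mathcal{D}$ as the kernel of the globally-defined closed $1$-form $\alpha := i_V\omega$ and then to show that the flow of $W := |V|^{-2}JV$ annihilates $\alpha$ outright, from which preservation of $\mathcal{D}$ is immediate. Working with $\alpha$ rather than $z$ has the advantage of requiring no global potential, which need not exist.

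First I would record the two facts about $\alpha$ that drive everything. From $\mathcal{L}_V\omega=0$ and $d\omega=0$ we get $d\alpha=d(i_V\omega)=0$, so $\alpha$ is closed; and by definition $\mathcal{D}=\{v:\omega(V,v)=0\}=\ker\alpha$. Next, using (\ref{EqnBasicZV}) in the form $i_V\omega=-dz$ and $\nabla z=-JV$, I identify $\alpha^\sharp=(i_V\omega)^\sharp=JV$, and since $|JV|=|V|=|\alpha|$ this gives $W=|V|^{-2}JV=|\alpha|^{-2}\alpha^\sharp$. In particular $W$ is exactly the vector field $\alpha$-dual, normalized so that the pairing is the constant $\alpha(W)=|\alpha|^{-2}\langle\alpha^\sharp,\alpha^\sharp\rangle\equiv 1$.

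The computation is then a single application of Cartan's formula: because $d\alpha=0$,
\begin{equation*}
	\mathcal{L}_W\alpha \;=\; d\,i_W\alpha \,+\, i_W\,d\alpha \;=\; d\big(\alpha(W)\big) \;=\; d(1) \;=\; 0.
\end{equation*}
Since $\mathcal{L}_W\alpha=0$, the flow $\phi_s$ of $W$ satisfies $\phi_s^*\alpha=\alpha$, hence carries $\ker\alpha=\mathcal{D}$ to $\mathcal{D}$; this is the asserted preservation of the distribution. Moreover $\alpha(W)=1$ shows $W$ is transverse to $\mathcal{D}$, so the flow genuinely moves points across leaves rather than within one. Recalling $\frac{\partial}{\partial z}=-|V|^{-2}JV=-W$ from Lemma \ref{LemmaCoordSystemFacts}, the flow of $\frac{\partial}{\partial z}$ is the time-reversal of that of $W$ and therefore also preserves $\mathcal{D}$; since $dz(\partial/\partial z)=1$ the parameter increases at unit rate along it, so it sends the leaf $M^3_c$ to $M^3_{c+s}$ and thereby permutes the level-sets.

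The step I expect to carry the genuine content — as opposed to bookkeeping — is recognizing that the normalization by $|V|^{-2}$ is precisely what forces $\alpha(W)$ to be \emph{constant}. Without it, e.g.\ for the bare gradient $\nabla z$, one computes $i_{\nabla z}\alpha=\langle JV,-JV\rangle=-|\nabla z|^2$, so $\mathcal{L}_{\nabla z}\alpha=-d|\nabla z|^2$, which is not proportional to $\alpha$ unless $|\nabla z|$ is constant on level-sets; this is exactly why the flow of $\nabla z$ (as the statement parenthetically warns) does \emph{not} preserve $\mathcal{D}$ in general. The only further care required is to note that all of this takes place on the open set where $V\ne 0$, so that $W$ is defined and its (possibly incomplete) flow acts.
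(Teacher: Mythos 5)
Your proof is correct, and it takes a genuinely different route from the paper's. The paper fixes a section $v$ of $\mathcal{D}$ and applies the commutator identity $i_A\mathcal{L}_B-\mathcal{L}_Bi_A=i_{[A,B]}$ to the form $i_V\omega$, then kills the resulting bracket term by a Levi-Civita computation that uses $\left<\nabla z,\partial/\partial z\right>=1$, $\left<JV,v\right>=0$, and the symmetry of $\text{Hess}\,z$; the net conclusion there is only that $\mathcal{L}_{\frac{\partial}{\partial z}}(i_V\omega)$ annihilates $\mathcal{D}$, which suffices for invariance of the kernel. You instead prove the stronger statement $\mathcal{L}_W(i_V\omega)=0$ outright: with $\alpha=i_V\omega$ closed (from $\mathcal{L}_V\omega=0$ and $d\omega=0$) and the normalization $\alpha(W)=|V|^{-2}\left<JV,JV\right>\equiv 1$, Cartan's formula gives $\mathcal{L}_W\alpha=d(i_W\alpha)=d(1)=0$, so the flow preserves the $1$-form itself, not merely its kernel. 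The metric enters your argument only through the identification $\alpha^\sharp=JV$ from (\ref{EqnBasicZV}), with no covariant derivatives and no use of the Hessian or the K\"ahler condition beyond that identity; in effect you have identified $W$ as the metrically normalized dual of a closed $1$-form, for which form-invariance is automatic. Your computation $i_{\nabla z}\alpha=-|\nabla z|^2$, showing $\mathcal{L}_{\nabla z}\alpha=-d|\nabla z|^2$, is also a correct and welcome explanation of the parenthetical warning in the statement, which the paper's proof does not address; and your closing remarks on transversality ($\alpha(W)=1$, so the flow moves across leaves at unit $z$-rate) and on restricting to the open set $\{V\ne 0\}$ where the possibly incomplete flow is defined handle the "permutes the leaves" clause correctly, including in the non-Hamiltonian case where $z$ exists only locally but $\alpha$ is global. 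The only caveat worth noting is cosmetic: the lemma is stated for $|V|^{-2}JV$ while $\frac{\partial}{\partial z}=-|V|^{-2}JV$, a sign discrepancy already present in the paper, and as you observe the flows of $W$ and $-W$ preserve $\mathcal{D}$ simultaneously, so nothing is affected.
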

	\begin{proof}
		Let $v$ be any vector field whose vectors lie in $\mathcal{D}$; this means $i_vi_V\omega=0$.
		In the LeBrun system $\frac{\partial}{\partial{z}}=-|V|^{-2}JV$.
		The identity $i_A\mathcal{L}_B-\mathcal{L}_Bi_A=i_{[A,B]}$ gives
		\begin{equation}
			\begin{aligned}
			i_v\mathcal{L}_{\frac{\partial}{\partial{}z}}i_V\omega
			&\;=\;i_{[v,\frac{\partial}{\partial{}z}]}i_V\omega
			\;=\;
			\left<JV,\,\nabla_v\frac{\partial}{\partial{}z}\right>
			-\left<JV,\,\nabla_{\frac{\partial}{\partial{}z}}v\right> \\
			&\;=\;
			-\left<\nabla{}z,\,\nabla_v\frac{\partial}{\partial{}z}\right>
			-\left<JV,\,\nabla_{\frac{\partial}{\partial{}z}}v\right>.
			\end{aligned}
		\end{equation}
		Because $\left<\nabla{}z,\partial/\partial{}z\right>=1$ and $\left<JV,v\right>=i_vi_V\omega=0$, this is
		\begin{equation}
			\begin{aligned}
				i_v\mathcal{L}_{\frac{\partial}{\partial{}z}}i_V\omega
				&\;=\;
				\left<\nabla_v\nabla{}z,\,\frac{\partial}{\partial{}z}\right>
				+\left<\nabla_{\frac{\partial}{\partial{}z}}JV,\,v\right> \\
				&\;=\;
				\left<\nabla_{\frac{\partial}{\partial{}z}}\nabla{}z,\,v\right>
				-\left<\nabla_{\frac{\partial}{\partial{}z}}\nabla{}z,\,v\right>
				\;=\;0
			\end{aligned}
		\end{equation}
		where we used the symmetry of $Hess(z)$.
		We conclude that the diffeomorphism flow of $\frac{\partial}{\partial{}z}$ leaves the distribution $\mathcal{D}=\{v\,\big|\,i_vi_V\omega=0\}$ invariant.
	\end{proof}
	\begin{lemma} \label{LemmaHamiltonian}
		If $M^4$ is non-compact and $\mathcal{D}$ has a compact leaf, $V$ is Hamiltonian.
	\end{lemma}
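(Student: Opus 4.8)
The plan is to recast Hamiltonicity as an exactness statement and then exploit the compact leaf together with non-compactness through a Reeb-stability argument. Recall from (\ref{EqnBasicZV}) that the $1$-form $\eta := -i_V\omega$ satisfies $dz = \eta$ locally and is globally closed, so $V$ is Hamiltonian precisely when $[\eta] = 0$ in $H^1_{dR}(M^4)$; equivalently when the period homomorphism $c \colon \pi_1(M^4) \to \mathbb{R}$, $[\gamma] \mapsto \int_\gamma \eta$, vanishes. Thus the entire lemma reduces to showing that $\eta$ is exact.

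First I would record the transverse structure of the foliation $\mathcal{D} = \ker \eta$. Writing $X = \partial/\partial z$, Lemma \ref{LemmaCoordSystemFacts} gives $\eta(X) = dz(\partial/\partial z) = 1$ wherever $V \neq 0$, while $\mathcal{L}_X \eta = d(i_X\eta) + i_X d\eta = d(1) + 0 = 0$; this is the infinitesimal form of Lemma \ref{LemmaPermutes}, that the flow of $X$ permutes the leaves and preserves $\eta$. Consequently $\eta$ endows $\mathcal{D}$ with a transverse translation structure whose holonomy is trivial: any loop in a leaf bounds no transverse displacement because $\eta$ restricts to zero on each integral manifold. Trivial holonomy is exactly the hypothesis that makes Reeb-type stability applicable near the given compact leaf $L = M^3_{z_0}$.

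Next I would flow $L$ along $X$. Since $L$ is compact and $X$ is transverse to it with $\eta(X) = 1$, the flow $\phi_s$ exists for $s$ in some interval and $\Psi(\ell,s) = \phi_s(\ell)$ is an embedding of $L \times (\alpha,\beta)$ onto a saturated neighborhood with $\Psi^*\eta = ds$; in particular $z = z_0 + s$ is a genuine global function there, so $\eta$ is exact on this neighborhood. Taking $(\alpha,\beta)$ maximal, the set of $s$ for which $\phi_s(L)$ is a compact leaf diffeomorphic to $L$ is open by Reeb stability, and two scenarios remain. Either the flow closes up, $\phi_c(L) = L$ for some $c>0$, in which case $M^4$ contains the mapping torus of $\phi_c|_L$, a compact manifold which is open in the connected $M^4$ and hence equals $M^4$ --- contradicting non-compactness; or the leaves foliate $M^4$ as a product $L \times \mathbb{R}$, on which $z$ is globally defined and $\eta = dz$ is exact. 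The surviving case forces $\eta$ exact, so $V$ is Hamiltonian.

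The main obstacle is the completeness/closedness step just invoked: because $|X| = |V|^{-1}$, the transverse flow can in principle break down in finite time where $|V| \to 0$, i.e. on approaching a singular leaf through a zero of $V$, and one must also exclude the leaves simply running off to infinity inside a proper subset of $M^4$. The resolution is to show that the maximal product region is both open and closed --- openness being Reeb stability at each compact leaf, while closedness follows because a family of compact leaves $L_{s_n}$ with $s_n \to \beta < \infty$ remaining in a compact set would converge to a further compact leaf and contradict maximality, so any finite endpoint $\beta$ is genuinely an end of $M^4$ rather than an interior obstruction. Verifying that passing a critical level of $z$ contributes no period --- as in the local model of a rotation fixing an isolated point --- is the delicate remaining point; once it is dispatched, the product-versus-mapping-torus dichotomy closes the argument.
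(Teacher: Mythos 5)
Your core mechanism is the same as the paper's: flow the compact leaf along $\frac{\partial}{\partial{}z}=-|V|^{-2}JV$, observe that if the flow ever returns the leaf to itself then $M^4$ is a mapping torus over the compact leaf and hence compact (contradiction), and otherwise use the flow parameter itself as the global potential $z$. So the product-versus-mapping-torus dichotomy is exactly the paper's argument, and the Reeb-stability language is dispensable dressing on it (the foliation is cut out by the closed form $i_V\omega$, so trivial holonomy is automatic).

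However, your proposal has a genuine gap, and you flag it yourself without closing it: the case where $V$ has zeros. Your entire transverse-flow construction requires the field $|V|^{-2}JV$, which is undefined on $\{V=0\}$; leaves through zeros are singular, the flow can degenerate there, and your closedness argument (``compact leaves $L_{s_n}$ converge to a further compact leaf'') fails precisely at such levels. You acknowledge that ``verifying that passing a critical level of $z$ contributes no period \ldots{} is the delicate remaining point; once it is dispatched, the \ldots{} dichotomy closes the argument'' --- but you never dispatch it, and it is not a peripheral case: in most of the paper's examples (flat $\mathbb{C}^2$, the LeBrun instantons) the Killing field does vanish somewhere. The paper removes this case in its first line by an entirely different mechanism: a Killing field with a zero on a K\"ahler manifold is automatically Hamiltonian (citing McDuff--Salamon), so the flow argument only ever needs to run in the zero-free case, where $-|V|^{-2}JV$ is globally defined and no critical levels of $z$ exist. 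Without that reduction --- or an actual proof that crossing a singular level contributes no period --- your argument establishes the lemma only under the additional hypothesis that $V$ is nowhere zero.
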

	\begin{proof}
		If $V$ has a zero then it is Hamiltonian (see for example \cite{MS13}), so we need only consider the case $V$ has no zeros and $-|V|^2JV$ is defined everywhere.
		
		Let $M^3_0$ be a compact integral surface of $\mathcal{D}$, and let $\psi_z:M^4\rightarrow{}M^4$ be the diffeomorphism flow of $-|V|^{-2}JV$, with flow parameter $z$.
		Because $-|V|^{-2}JV$ permutes the integral surfaces of $\mathcal{D}$, if there is a point $p\in{}M^3_0$ for which $\psi_z(p)\in{}M^3_0$ then in fact $\psi_z:M^3_0\rightarrow{}M^3_0$.
		Possibly a value $z_0\ne0$ exists where $\psi_{z_0}$ maps the integral surface $M^3_0$ to itself.
		In this case $M^4$ is a mapping torus for $\psi_{z_0}:M^3_0\rightarrow{}M^3_0$.
		But then $M^4$ is compact, contradicting the hypotheses.
		Therefore $\psi_z$ never maps $M^3_0$ to itself unless $z=0$.
		Assign to each point $p\in{}M^4$ the unique value of $z$ for which $p\in\psi_z(M^3_0)$ defines the Killing potential $z:M^4\rightarrow\mathbb{R}$.
	\end{proof}
	\begin{lemma} \label{LemmaConnectedLevelsets}
		Assume $M^4$ is connected.
		Then every leaf of $\mathcal{D}$ is connected.
	\end{lemma}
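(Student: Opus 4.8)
The plan is to show that each leaf of $\mathcal D$, equivalently each level-set $M^3_z=\{z=c\}$ where $z$ is defined, is connected: the leaves are exactly the connected components of the level-sets, so the content is that a level-set is a single such component. Where $z$ does not exist globally the leaves of $\mathcal D$ are still well defined, and the flow $\psi_s$ of $\partial/\partial z=-|V|^{-2}JV$ still permutes them by Lemma \ref{LemmaPermutes}, so I would run the argument with this transverse flow in place of $z$. The two properties I would lean on are that $\psi_s$ permutes the leaves, and that, because $\langle\nabla z,\partial/\partial z\rangle=1$ (Lemma \ref{LemmaCoordSystemFacts}), it shifts the potential by its flow-time, $z\circ\psi_s=z+s$ wherever $z$ is defined.

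The quickest route is a path-pushing argument. Given $p,q\in\{z=c\}$, connectedness of $M^4$ supplies a path $\gamma:[0,1]\to M^4$ from $p$ to $q$, and I set $\tilde\gamma(r)=\psi_{\,c-z(\gamma(r))}(\gamma(r))$, flowing each point of $\gamma$ back to level $c$. Then $z(\tilde\gamma(r))=z(\gamma(r))+(c-z(\gamma(r)))=c$ with $\tilde\gamma(0)=p$ and $\tilde\gamma(1)=q$, so $\tilde\gamma$ joins $p$ to $q$ \emph{inside} $\{z=c\}$, proving the level-set path-connected. A more robust version replaces this with an open-closed argument: fix one leaf $L_0$ and study its saturation $N=\bigcup_s\psi_s(L_0)$. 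Each $\psi_s(L_0)$ is a connected integral manifold of $\mathcal D$ by Lemma \ref{LemmaPermutes}, so $N$ is the continuous image of the connected set $L_0\times(\text{flow interval})$ and hence connected; $N$ is open because the leaf through any $\psi_s(p)$ agrees locally with $\psi_s(L_0)$, so the transverse flow sweeps out a full neighborhood. If $N$ is also closed then $N=M^4$, and each level-set $\{z=c\}=N\cap\{z=c\}$ is a single leaf $\psi_s(L_0)$, hence connected.

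The real obstacle is that $\partial/\partial z=-|V|^{-2}JV$ blows up along the zero set $Z=\{V=0\}$, so $\psi_s$ is incomplete and cannot be continued across a critical level of $z$; correspondingly $\gamma$ cannot simply be pushed to level $c$ if it strays to the far side of a critical value. To close this gap I would use that $Z$ is the fixed-point set of the isometric $S^1$-action, hence a disjoint union of totally geodesic submanifolds of codimension at least two, so that $M^4\setminus Z$ stays connected and $\gamma$ may be taken to avoid $Z$. The decisive step is then the closedness of $N$ (equivalently, that the flow carries a full neighborhood of a regular leaf uniformly in $s$): I would build a local product chart $L\times(c-\epsilon,c+\epsilon)$ around each regular leaf via $\psi_s$, show the set of $z$-values with connected level-set is open and closed within each slab free of critical values, and finally glue slabs across the singular fibres of $z$ using that $M^4$ is connected and $Z$ has codimension $\ge2$. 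This slab-gluing across the critical levels is where the argument requires the most care.
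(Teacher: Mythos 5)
There is a genuine gap, and it sits exactly where you flagged ``the most care'' is needed. Your path-pushing and open-closed arguments are sound \emph{within a slab free of critical values of $z$} --- there the flow of $\frac{\partial}{\partial z}$ is complete up to the slab boundary and Lemma \ref{LemmaPermutes} does carry leaves to leaves --- but the entire content of the lemma is the crossing of critical levels, and the step you propose for it does not work. Avoiding the zero locus $Z=\{V=0\}$ with the initial path $\gamma$ (using codimension $\ge 2$) does not help: the obstruction is not that $\gamma$ meets $Z$, but that the pushed points $\psi_{c-z(\gamma(r))}(\gamma(r))$ can terminate at $Z$ in finite flow time, and the basin of such trajectories need not be small. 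Near a local extremum of $z$ (e.g.\ flat $\mathbb{C}^2$ with the Hopf rotation, $z=-\frac12|w|^2$, where every trajectory of $\frac{\partial}{\partial z}$ limits to the origin as $z\to 0$) the set of initial points whose trajectory dies before reaching level $c$ is \emph{open}, so no transversality or codimension count on $\gamma$ rescues the construction. Worse, your proposed ``slab-gluing across the singular fibres using that $M^4$ is connected and $Z$ has codimension $\ge 2$'' is false for a general function with the same formal properties: the height function on a vertically standing torus has connected ambient space, isolated critical points, and yet its level-sets disconnect across a saddle. So some special structure of Killing potentials must be invoked, and your proposal never identifies it.

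The paper's proof supplies precisely this missing local ingredient. It blows up at a zero $p_0$ of $V$, obtaining in the limit a symplectomorphic Killing field on flat $\mathbb{C}^2$, which after a choice of complex coordinates is $V=\alpha\,Im\left(z^1\frac{\partial}{\partial z^1}\right)+\beta\,Im\left(z^2\frac{\partial}{\partial z^2}\right)$; the local level-sets are then $\left\{\alpha\big(|z^1|^2\big)+\beta\big(|z^2|^2\big)=const\right\}$, which are $3$-spheres, connected hyperboloids of revolution, or a circle crossed with a plane --- connected in every case, in contrast with the real Morse saddle $x^2-y^2=c$. (The $J$-invariance of $\text{Hess}\,z$ is what rules out the torus-saddle behavior.) By $C^\infty$ convergence this connectedness persists just before the limit, so leaves cannot locally disconnect at a critical point; all leaves therefore have the same number of components, and connectedness of $M^4$ forces that number to be one. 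If you want to salvage your outline, you must insert this local model (or an equivalent analysis of the $\mathbb{S}^1$-fixed-point structure) as the gluing step; as written, the proposal reduces the lemma to its hardest case and stops there.
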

	\begin{proof}
		We show that if a leaf $M_{z_0}^3$ is \textit{locally} connected, then nearby leafs are also locally connected.
		For an argument by contradiction, assume not.
		Then there is a point $p_0\in{}M^3_{z_0}$ so that any sufficiently small ball $B_\delta$ containing $p_0$ and sufficiently small $\epsilon>0$, then $M^3_{z_0+\epsilon}\cap{}B_\delta$ is disconnected.
		
		By the isotopy criterion of Morse theory this is only possible if $p_0$ is a critical point of $z$.
		Choosing a sequence $\delta_i\searrow0$ sufficiently small and expanding the metric by $\delta_i{}^{-1}$ the ball $B_{\delta_i}$ becomes the ball $B_1$.
		For small enough $\delta_i$ local balls are diffeomorphic to Euclidean balls, so we can take a limit of the metric and complex structures in the $C^\infty$ sense on compact subsets.
		The metric converges to the flat metric on $\mathbb{R}^4$ and the complex structure converges to the complex structure on $\mathbb{C}^2$.
		The field $V$ converges to a symplectomorphic Killing field on $\mathbb{C}^2$ with a zero still at $p_0$.
		The structure of symplectomorphic Killing fields on flat $\mathbb{C}^2$ is completely straighforward: after choosing appropriate local complex coordinates $z^1=x^1+\sqrt{-1}y^1$, $z^2=x^2+\sqrt{-1}y^2$ where $p_0$ has coordinates $(0,0)$, we can write
		\begin{equation}
			\begin{aligned}
				V
				&\;=\;
				\alpha\,Im\left(z^1\frac{\partial}{\partial{}z^1}\right)
				+\beta\,Im\left(z^2\frac{\partial}{\partial{}z^2}\right) \\
				&\;=\;
				\alpha\left(-y^1\frac{\partial}{\partial{}x^1}+x^1\frac{\partial}{\partial{}y^1}\right)
				+\beta\left(-y^2\frac{\partial}{\partial{}x^2}+x^2\frac{\partial}{\partial{}y^2}\right)
			\end{aligned}
		\end{equation}
		for some $\alpha,\beta\in\mathbb{R}$, not both zero.
		Thus the level-sets of $z$ have the form $\alpha\big((x_1)^2+(y^1)^2\big)+\beta\big((x^2)^2+(y^2)^2\big)$.
		When $\alpha$ and $\beta$ have the same sign the level-sets are topologically 3-spheres.
		When $\alpha$ and $\beta$ have opposite signs, the level-sets are connected hyperboloids of revolution.
		When one of $\alpha$, $\beta$ is zero each level-set is a product of a circle with a plane.
		In all cases, the level-sets are connected.
		
		The convergence of both the metric and the potential in this blowup limit is in the $C^\infty$ sense.
		Thus, just prior to the limit for sufficiently small $\delta_i$, the critical point of the Killing potential must still be a saddle, so its level-sets near the critical point have the same structure and in particular are connected.
		This contradicts the assumption that level-sets are locally disconnected near $p$.
		
		Thus because the leafs of $\mathcal{D}$ cannot \textit{locally} separate into more than one component, all leafs must all have the same number of components.
		Because $M^4$ is connected, they must all have one component.
	\end{proof}
	
	\underline{\emph{Proof of Theorem \ref{ThmCptNonCpt}}}.
	We assume $M^4$ is complete and $|V|>\epsilon$ outside a compact set $K$.
	Assume one leaf of $\mathcal{D}$ is compact.
	We show all leafs of $\mathcal{D}$ are compact.
	
	Lemma \ref{LemmaHamiltonian} states $V$ is Hamiltonian so $z$ exists globally.
	Let $\Omega$ be a precompact domain that contains $K$ and the compact level-set $M^3_{z_0}$ which we assumed exists (it does not matter if it is singular).
	Let $\gamma$ be a trajectory of $z$ in the sense that
	\begin{equation}
		\dot\gamma
		\;=\;\frac{\partial}{\partial{}z}
		\;=\;\frac{1}{|\nabla{}z|^2}\nabla{z},
		\quad\text{and}\quad
		z(\gamma(s))\;=\;s. \label{EqnsGammaComditions}
	\end{equation}
	Thus $\frac{d}{ds}=\frac{\partial}{\partial{}z}$ along $\gamma$.
	Assume $\gamma:(a,b)\rightarrow{}M^4$ is maximally extended in the sense that the domain of $\gamma$ cannot be extended in such a way that (\ref{EqnsGammaComditions}) remains true.
	
	First we show $\gamma$ must intersect $\Omega$.
	Because $\gamma$ can always be uniquely extended except when $|\nabla{}z|=0$, we have three possibilities regarding $b$.
	The first is $b=+\infty$.
	The second is $b<\infty$ and some $\delta>0$ exists so $\gamma:[b-\delta,\,b)\rightarrow{}M^4$ is contained in a compact subset of $M^4$.
	The third is $b<\infty$ and $\gamma:[b-\delta,\,b)\rightarrow{}M^4$ is contained in no compact subset of $M^4$.
	In the second and third cases, we show $\gamma$ intersects $\Omega$.
	
	Assume the second possibility, that $b$ is finite and $\gamma|_{[b-\delta,b)}$ remains bounded.
	We show that $\gamma$ intersects $\Omega$.
	If $|(\nabla{}z)_{\gamma(s)}|>\epsilon$ for all $s\in[b-\delta,b)$ then $\gamma$ remains smooth with uniformly controlled tangent vector and therefore the path can be extended beyond $b$.
	Thus $|\nabla{}z|$ must approach $0$ near $b$, so $\gamma$ must intersect $K$ and therefore $\Omega$.
	
	Assume the third possibility, that $b$ is finite but $\gamma|_{[b-\epsilon,b)}$ is unbounded; we again show $\gamma$ intersects $\Omega$.
	Assume not; then $|\nabla{}z|\ge\epsilon$ along $\gamma$.
	Fix any $s_0\in(a,b)$.
	A sequence $\{s_i\}$ exists with $s_i\nearrow{}b$ and $\text{dist}(\gamma(s_0),\gamma(s_i))\rightarrow\infty$.
	Then
	\begin{equation}
		\begin{aligned}
			&|s_i-s_0|
			=|z(\gamma(s_i))-z(\gamma(s_0))|
			=\left|\int_{s_0}^{s_i}\frac{dz}{ds}\,ds\right|
			=\left|\int_{s_0}^{s_i}\left<\frac{d}{ds},\,\nabla{}z\right>\,ds\right| \\
			&\;\;\;=\int_{s_0}^{s_i}\left|\frac{d}{ds}\right||\nabla{}z|\,ds
			\ge\epsilon\int_{s_0}^{s_i}\left|\frac{d}{ds}\right|\,ds
			=\epsilon\;len(\gamma;s_0,s_i)
			\ge\epsilon\,\text{dist}(\gamma(s_0),\gamma(s_i))
		\end{aligned} \label{IneqLenOfGamma}
	\end{equation}
	where we used $\left<\frac{d}{ds},\,\nabla{}z\right>=\left|\frac{d}{ds}\right||\nabla{}z|$, which is due to $\frac{d}{ds}=|\nabla{}z|^{-2}\nabla{}z$ by Lemma \ref{LemmaCoordSystemFacts}.
	Because $\text{dist}(\gamma(s_0),\gamma(s_1))$ is unbounded, this contradicts the fact that $|s_i-s_0|\le|b-s_0|$.
	This forces our assumption that $|\nabla{}z|>\epsilon$ along $\gamma$ to be false.
	We conclude that $\gamma$ intersects $K$ and therefore $\Omega$.
	
	We have shown that either $\gamma$ intersects $\Omega$ or else $b=\infty$.
	The same considerations hold for the value $a$.
	Thus either $\gamma$ intersects $\Omega$ or else $\gamma:(-\infty,\infty)\rightarrow{}M^4$.
	In the latter case, because $(z\circ\gamma)(s)=s$ as $s$ ranges from $-\infty$ to $\infty$, $z\circ\gamma$ takes every value including the value $z_0$.
	Thus $\gamma$ intersects $M^3_{z_0}$, so intersects $\Omega$.
	We conclude that every trajectory intersects $\Omega$.
	
	Now we prove that every level-set $M^3_z$ is compact.
	For a contradiction, assume some $z'$ exists so $M^3_{z'}$ is not compact.
	Levelsets are closed (by continuity of $z$) so compactness is equivalent to boundedness.
	Thus $M^3_{z'}$ is unbounded.
	Because $M^3_{z'}$ is unbounded and $\overline{\Omega}$ is compact, a sequence of points $\{p_i\}\subset{}M^3_{z'}$ exists with $\text{dist}(p_i,\overline{\Omega})\rightarrow\infty$.
	Let $\gamma_i$ be the trajectory of $z$ passing through $p_i$; in particular $\gamma_i(z')=p_i\in{}M^3_{z'}$.
	Because $\gamma_i$ intersects $\Omega$, we can find some $s_i$ so $\gamma_i$ remains outside $\overline{\Omega}$ on $(s_i,z']$ and $\gamma_i(s_i)\in\overline{\Omega}$.
	Because $s_i=z(\gamma_i(s_i))$ and $z$ is uniformly bounded on $\overline{\Omega}$, the numbers $s_i$ are uniformly bounded.
	By (\ref{IneqLenOfGamma}) we have
	\begin{equation}
		|z'-s_i|
		\;\ge\;\epsilon\,\text{dist}(\gamma(s_i),\,p_i)
		\;\ge\;\epsilon\,\text{dist}(p_i,\,\overline{\Omega}). \label{EqnZbyDist}
	\end{equation}
	The left-hand side is bounded and the right-hand side is unbounded.
	This contradiction shows $M^3_{z'}$ is compact.
	\qed \\
	
	Before proving our final lemma, Lemma \ref{LemmaUnboundedZText}, we recall some basic definitions.
	An open subset $\Omega$ of a complete Riemannian manifold is called a \textit{manifold end} if it is unbounded but has a connected compact boundary.
	If $K\subset{}M^4$ is compact then the number of ends of $M^n$ relative to $K$ is the number of unbounded components of $M^n\setminus{}K$.
	We say that $M^n$ has $k$ many ends if the number of ends of $M^n$ relative to any compact subset is $\le{}k$, and there exists some compact subset $K$ so that $M^n$ has $k$ many ends relative to $K$.
	
	When all level-sets $M^3_z$ are compact, recall the decomposition
	\begin{equation}
		M^4\;=\;
		\overline{M^{4-}}\,\cup\,
		\overline{M^4_1}\,\cup\,\dots\,\cup\,
		\overline{M^4_N}\,\cup\,\overline{M^{4+}}
	\end{equation}
	where possibly $M^{4+}$ or $M^{4-}$ are empty.	
	\begin{lemma}[cf. Lemma \ref{LemmaOneOrTwoEnded}] \label{LemmaUnboundedZText}
		Assume $|V|>\epsilon>0$ outside some compact set, and at least one level-set is compact.
		Then each levelset $M^3_z$ and fibration $M^3_z\rightarrow{}M^2_z$ in $M_i^4$ (or in $M^{4+}$ or in $M^{4-}$) is isotopic along the $\frac{\partial}{\partial{}z}$ diffeomorphism flow.
		If $M^{4+}\ne\varnothing$ then $z(M^{4+})=[z_N,\infty)$, and if $M^{4-}\ne\varnothing$ then $z(M^{4-})=(-\infty,z_0]$ on $M^{4-}$.
	\end{lemma}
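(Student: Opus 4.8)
The plan is to use the normalized gradient flow $\partial/\partial{}z=|\nabla{}z|^{-2}\nabla{}z$ of Lemma \ref{LemmaCoordSystemFacts} as the isotopy, and to control it using the length estimate (\ref{IneqLenOfGamma}). By Theorem \ref{ThmCptNonCpt} the hypotheses place us in the Type IIa case, so every leaf $M^3_z$ is compact. Since $\nabla{}z=-JV$, the critical points of $z$ are exactly the zeros of $V$; these lie in the compact set $K$ and, being isolated, produce the finite critical set $\mathcal{C}=\{z_0,\dots,z_N\}$. On each $M^4_i$, on $M^{4+}$, and on $M^{4-}$ the function $z$ has no critical points, so $\partial/\partial{}z$ is a smooth vector field there, and by Lemma \ref{LemmaPermutes} its flow permutes the leaves while increasing $z$ at unit rate.

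First I would establish that the flow does not escape to infinity in finite $z$-time, which is the main technical point. Fix $z$-values $a<b$ lying in a single open interval of $\mathbb{R}\setminus\mathcal{C}$ and consider a forward trajectory $\gamma$ from $M^3_a$ toward $M^3_b$. Because $z\circ\gamma$ is monotone with total increment $b-a$, the portion of $\gamma$ lying outside $K$ has length at most $\epsilon^{-1}(b-a)$ by the computation in (\ref{IneqLenOfGamma}); the portion inside the compact set $K\cap{}z^{-1}([a,b])$, which contains no critical points and hence has $|\nabla{}z|\ge\delta>0$, has length at most $\delta^{-1}(b-a)$. Thus $\gamma$ has uniformly bounded length, so by completeness it cannot leave a fixed compact set, and the flow of $\partial/\partial{}z$ exists for all $z$-time in $[a,b]$ and carries $M^3_a$ diffeomorphically onto $M^3_b$. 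Since $\partial/\partial{}z$ and $V=\partial/\partial{}t$ are coordinate fields they commute, so this flow maps $V$-orbits to $V$-orbits, descends to a diffeomorphism $M^2_a\to{}M^2_b$ of the reductions, and intertwines the two circle fibrations; this is the asserted isotopy on $M^4_i$, and identically on $M^{4+}$ and $M^{4-}$.

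For the range claim, suppose $M^{4+}=\{z>z_N\}\ne\varnothing$ and, for contradiction, that $z_\ast:=\sup_{M^{4+}}z<\infty$. Pick $p\in{}M^{4+}$ and flow forward. As above the trajectory has length at most $(\epsilon^{-1}+\delta^{-1})(z_\ast-z(p))$, where $\delta>0$ bounds $|\nabla{}z|$ from below on the compact set $K\cap{}z^{-1}([z(p),z_\ast])$, which has no critical points since all critical values are $\le{}z_N$. By completeness the trajectory converges as $z\to{}z_\ast^-$ to a point $q$ with $z(q)=z_\ast$; since $\nabla{}z(q)\ne0$ would let the flow continue past $z_\ast$, the point $q$ must be a zero of $V$, i.e.\ a critical point with critical value $z_\ast>z_N$, contradicting the maximality of $z_N$ in $\mathcal{C}$. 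Hence $z$ is unbounded above on $M^{4+}$. Flowing $p$ backward reaches $z\to{}z_N^+$, and since each leaf is connected (Lemma \ref{LemmaConnectedLevelsets}) the intermediate-value property along trajectories gives $z(M^{4+})=(z_N,\infty)$, so $z(\overline{M^{4+}})=[z_N,\infty)$; the argument for $M^{4-}$ is symmetric. The main obstacle is precisely the no-escape estimate of the second paragraph: once the flow is known to be complete between non-critical levels, both the isotopy and the identification of the $z$-range of the ends follow directly.
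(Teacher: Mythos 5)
Your proof is correct and follows essentially the same route as the paper: the flow of $\frac{\partial}{\partial{}z}$ (which commutes with $V$, hence carries fibrations to fibrations) provides the isotopy, and the length estimate (\ref{IneqLenOfGamma}) controls the range of $z$ on the ends. The paper invokes the trajectory analysis from the proof of Theorem \ref{ThmCptNonCpt} more tersely where you spell out the no-escape estimate and argue the range claim by contradiction, but these are presentational differences, not a different method.
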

	\begin{proof}
		By Lemma \ref{LemmaHamiltonian} $z$ is Hamiltonian, by Lemma \ref{LemmaConnectedLevelsets} all level-sets are connected, and by Theorem \ref{ThmCptNonCpt} all level-sets are compact.
		Because levelsets of $\mathcal{D}$ are compact and connected, $M^4$ can have at most $2$ ends.
		
		Away from critical points of $z$ the diffeomorhism flow of $\frac{\partial}{\partial{}z}$ permutes the levelsets, but because $\frac{\partial}{\partial{}z}$ commutes with $V$, it also permutes the orbits of $V$.
		Because $M^3_z\rightarrow{}M^2_z$ is a quotient with leaves being the orbits of $V$, the flow of $\frac{\partial}{\partial{}z}$ permutes the fibration structure $M^3_z\rightarrow{}M^2_z$ as well.
		
		Assuming $M^{4+}\ne\varnothing$, then because $M^{4+}$ is a manifold end trajectories of $z$ in the sense of (\ref{EqnsGammaComditions}) are unbounded.
		Because $|V|>\epsilon>0$ outside of some compact set, by (\ref{IneqLenOfGamma}) along any such trajectory the value of $z$ is unbounded.
		Therefore $z$ has no upper bound on $M^{4+}$ and so $z(M^{4+})=[z_N,\infty)$.
		The same argument holds for $M^{4-}$ after replacing $z$ by $-z$.
	\end{proof}

	%
	%
	%
	%
	%
	%
	%
	%
	\section{The Killing 2-form} \label{SecKillingTwoForm}
	
	We gather required information about $dV_\flat$, the ``Killing 2-form.''
	In \S\ref{SubSecTwoFormApp} we find a global expression for the LeBrun equation $w_{xx}+w_{yy}+(we^u)_{zz}=0$ of (\ref{EqnsLeBrun}), and use this with information from \S\ref{SubSecTheTwoForm} to prove the first part of Theorem \ref{ThmConstantGrowth}.
	We frequently we use the decomposition $\bigwedge{}^2=\bigwedge^+\oplus\bigwedge^-$ where the projectors are $\pi^\pm:\bigwedge^2\rightarrow\bigwedge^\pm$, $\pi^\pm=\frac12(Id\pm*)$.
	This leads to identities such as $\eta=-*\eta+2\eta^+$ and $\eta^-=-*\eta+\eta^+$ whenever $\eta$ is a 2-form.
	
	%
	%
	%
	%
	%
	\subsection{The 2-form $dV_\flat$} \label{SubSecTheTwoForm}
	We gather some necessary information about $dV_\flat$.	
	\begin{lemma} \label{LemmaCompOfdV}
		We have
		\begin{equation}
			\begin{aligned}
				&dV_\flat
				\;=\;
				-\left(\frac{\partial}{\partial{}z}\log|\nabla{}z|^2\right)dz\wedge{}V_\flat
				-dz\wedge{}Jd\log|\nabla{}z|^2 \\
				&\hspace{0.425in}
				-V_\flat\wedge{}d\log|\nabla{}z|^2
				-|\nabla{}z|^2\left(\frac{\partial}{\partial{}z}\log|\nabla{}x|^{2}\right)dVol_2
			\end{aligned} \label{EqndV}
		\end{equation}
	\end{lemma}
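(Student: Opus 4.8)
The plan is to compute $dV_\flat$ directly in the LeBrun mixed frame of \S\ref{SubSecMixedFrame}, reading off the four terms of (\ref{EqndV}) from a single differentiation. From the metric (\ref{EqnLeBMetric}) together with $w=|\nabla{}z|^{-2}$ from (\ref{EqnLeBrunUW}) and the relation $\frac{\partial}{\partial{}t}=V$, the metric dual of $V$ is $V_\flat=|\nabla{}z|^{2}(dt+\alpha)$, so $dt+\alpha=|\nabla{}z|^{-2}V_\flat$. Differentiating,
\begin{equation}
	dV_\flat
	\;=\;d|\nabla{}z|^{2}\wedge(dt+\alpha)+|\nabla{}z|^{2}\,d\alpha
	\;=\;d\log|\nabla{}z|^{2}\wedge{}V_\flat+|\nabla{}z|^{2}\,d\alpha,
\end{equation}
and since $d\log|\nabla{}z|^{2}\wedge{}V_\flat=-V_\flat\wedge{}d\log|\nabla{}z|^{2}$, the first piece is already the third term of (\ref{EqndV}). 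It then remains to extract the two parts of $d\alpha$: its purely horizontal part (a multiple of $dx\wedge{}dy$) and its mixed part $dz\wedge\partial_z\alpha$, where $\partial_z\alpha=(\partial_z\alpha_x)dx+(\partial_z\alpha_y)dy$ is horizontal because $\alpha$ is $t$-invariant.

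For the horizontal part I would use that $\omega$ is closed. Writing $\omega=*dVol_2+dVol_2=|\nabla{}x|^{-2}dx\wedge{}dy+dz\wedge(dt+\alpha)$ from (\ref{EqnManyVols}), the condition $d\omega=0$ forces the $dx\wedge{}dy$-component of $d\alpha$ to equal $\frac{\partial}{\partial{}z}(|\nabla{}x|^{-2})\,dx\wedge{}dy$; equivalently this is exactly Lemma \ref{LemmaExtDerivsOfTwoVols} applied to $dVol_2$. Multiplying by $|\nabla{}z|^{2}$ and rewriting via $dVol_2=|\nabla{}x|^{-2}dx\wedge{}dy$ produces precisely the last term $-|\nabla{}z|^{2}\big(\frac{\partial}{\partial{}z}\log|\nabla{}x|^{2}\big)dVol_2$ of (\ref{EqndV}).

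The remaining and genuinely delicate piece is the mixed part $|\nabla{}z|^{2}\,dz\wedge\partial_z\alpha$, which must reproduce the first two terms of (\ref{EqndV}) together; since $d\omega=0$ says nothing about $\partial_z\alpha$, the extra input is the integrability of $J$, packaged as the $J$-invariance of $\text{Hess}\,z$ from Lemma \ref{LemmaVZBasics}. I would compute $\text{Hess}\,z=\tfrac12\mathcal{L}_{\nabla{}z}g$ from (\ref{EqnLeBMetric}) using $\nabla{}z=|\nabla{}z|^{2}\frac{\partial}{\partial{}z}$ and $\mathcal{L}_{fX}g=f\mathcal{L}_Xg+df\otimes{}X_\flat+X_\flat\otimes{}df$. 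Since $\frac{\partial}{\partial{}z}$ is a coordinate field (Lemma \ref{LemmaCoordSystemFacts}), the term $\mathcal{L}_{\partial/\partial{}z}g\supset2|\nabla{}z|^{2}(dt+\alpha)\odot\partial_z\alpha$ places $\partial_z\alpha$ into the off-diagonal (vertical--horizontal) entries of $\text{Hess}\,z$, while $|\nabla{}z|^{-2}d|\nabla{}z|^{2}\odot{}dz$ places $d\log|\nabla{}z|^{2}$ into the complementary off-diagonal entries. Demanding that the anti-$J$-invariant (type $(2,0)+(0,2)$) part of $\text{Hess}\,z$ vanish then forces $\partial_z\alpha=-|\nabla{}z|^{-2}(Jd\log|\nabla{}z|^{2})_{\mathrm{hor}}$; wedging with $dz$ and using $dz=JV_\flat$ from (\ref{EqnBasicZV}) converts the $Jd\log|\nabla{}z|^{2}$ bookkeeping into exactly $-\big(\frac{\partial}{\partial{}z}\log|\nabla{}z|^{2}\big)dz\wedge{}V_\flat-dz\wedge{}Jd\log|\nabla{}z|^{2}$, where the $dz\wedge{}V_\flat$ contribution arises entirely from the $V_\flat$-component of $Jd\log|\nabla{}z|^{2}$. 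Assembling the three contributions gives (\ref{EqndV}). The hard part is this last step: keeping the Hodge/complex-type bookkeeping straight so that the spurious $dz\wedge{}V_\flat$ and $dt$-pieces generated by $Jd\log|\nabla{}z|^{2}$ cancel correctly and only the two stated terms survive.
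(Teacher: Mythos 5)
Your proposal is correct, but it takes a genuinely different route from the paper's proof. The paper never introduces the connection form $\alpha$: it works invariantly from $dV_\flat=2\left<\nabla_\cdot V,\,\cdot\right>$ (Lemma \ref{LemmaVZBasics}), computing the contraction $i_{\frac{\partial}{\partial z}}dV_\flat=-Jd\log|\nabla{}z|^2$ directly from $J$ being parallel and the symmetry of $\text{Hess}\,z$, the contraction $i_VdV_\flat=-d|V|^2$ from $\mathcal{L}_VV_\flat=0$, and the remaining component from $dV_\flat(\nabla{}x,\nabla{}y)=-\nabla{}z\,|\nabla{}x|^2$, then assembling (\ref{EqndV}). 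You instead re-derive, inside the ansatz (\ref{EqnLeBMetric}), what are in effect LeBrun's structure equations $d\alpha=w_x\,dy\wedge{}dz+w_y\,dz\wedge{}dx+(we^u)_z\,dx\wedge{}dy$: your $d\omega=0$ step correctly pins the $dx\wedge{}dy$-part (equivalently Lemma \ref{LemmaExtDerivsOfTwoVols}, since $*dVol_2=dz\wedge(dt+\alpha)$), and your Hessian step correctly pins the mixed part, since in an adapted frame the $(V,\text{horizontal})$ entries of $\text{Hess}\,z$ come only from the term $w^{-1}(dt+\alpha)\odot\partial_z\alpha$ while the $(\nabla{}z,\text{horizontal})$ entries come only from $d(w^{-1})\odot w\,dz$, and $J$-invariance (with $J\nabla{}z=V$) equates them to give exactly $\partial_z\alpha=w_y\,dx-w_x\,dy=-|\nabla{}z|^{-2}\bigl(Jd\log|\nabla{}z|^2\bigr)_{\mathrm{hor}}$ with $w=|\nabla{}z|^{-2}$, as you claim. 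The final bookkeeping you flag also checks out: since $Jdf=f_y\,dx-f_x\,dy-f_z\,V_\flat$ for $t$-invariant $f$ (using $Jdx=-dy$, $Jdy=dx$, $Jdz=-V_\flat$), the $dz\wedge{}V_\flat$ contributions of the first two terms of (\ref{EqndV}) cancel and their sum is exactly $-dz\wedge\bigl(Jd\log|\nabla{}z|^2\bigr)_{\mathrm{hor}}=|\nabla{}z|^2\,dz\wedge\partial_z\alpha$. What the paper's route buys is brevity and frame-independence—no gauge choices, valid pointwise wherever $V\ne0$; what yours buys is structural transparency, as each of the four terms of (\ref{EqndV}) is visibly a distinct piece of the ansatz, and your argument doubles as a derivation of the K\"ahler integrability conditions for (\ref{EqnLeBMetric}). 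One small point you should make explicit: $\alpha$ has no $dz$-component (so that $\partial_z\alpha$ is genuinely horizontal); this is not automatic from $t$-invariance alone but follows from the orthogonality $\nabla{}z\perp\nabla{}t$ of Lemma \ref{LemmaCoordSystemFacts}, exactly as used in the proof of Lemma \ref{LemmaIntegration}.
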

	\begin{proof}
		We use $dV_\flat=2\left<\nabla_{\cdot}V,\,\cdot\right>$ from Lemma \ref{LemmaVZBasics}.
		Letting $v$ be any vector field,
		\begin{equation}
			\small
			\begin{aligned}
				&dV_\flat\left(\frac{\partial}{\partial{}z},\,v\right)
				=
				2\left<\nabla_{\frac{\partial}{\partial{}z}}V,\,v\right>
				=
				2|\nabla{}z|^{-2}\left<\nabla_{\nabla{}z}J\nabla{}z,\,v\right> \\
				&\quad\quad=
				-2|\nabla{}z|^{-2}\left<\nabla_{\nabla{}z}\nabla{}z,\,Jv\right>
				=
				-2|\nabla{}z|^{-2}\left<\nabla_{Jv}\nabla{}z,\,\nabla{}z\right>
				=
				-\left<Jv,\nabla\log|\nabla{z}|^2\right>.
			\end{aligned}
		\end{equation}
		Therefore $i_{\frac{\partial}{\partial{z}}}dV_\flat=-Jd\log|\nabla{}z|^2$.
		
		To compute $i_VdV_\flat$, we use the fact that $\mathcal{L}_VV_\flat=0$ which is $i_VdV_\flat+d(i_VV_\flat)=0$.
		Thus $i_{V}dV_\flat=-di_VV_\flat=-d|V|^2=-|V|^2d\log|V|^2$.
		From these expressions for $i_{\frac{\partial}{\partial{}z}}dV_\flat$ and $i_VdV_\flat$, the first three terms of (\ref{EqndV}) follow.
		
		Lastly we must check the action of $dV_\flat$ on pairs of vectors that are perpendicular to both $\partial/\partial{}z$ and $V_\flat$, meaning we check the action of $dV_\flat$ on $\nabla{}x$ and $\nabla{}y$.
		Using $\left<V,\,\nabla{}x\right>=\left<V,\,\nabla{}y\right>=0$, $\nabla{}y=J\nabla{}x$, and $V=J\nabla{}z$ we have
		\begin{equation}
			\begin{aligned}
				dV_\flat(\nabla{}x,\,\nabla{}y)
				&\;=\;2\left<\nabla_{\nabla{}x}V,\,\nabla{}y\right>
				\;=\;-2\left<V,\,\nabla_{\nabla{}x}J\nabla{}x\right>
				\;=\;-2\left<\nabla{}z,\,\nabla_{\nabla{}x}\nabla{}x\right> \\
				&\;=\;-2\left<\nabla{}x,\,\nabla_{\nabla{}z}\nabla{}x\right>
				\;=\;-\nabla{}z|\nabla{}x|^2 \\
			\end{aligned}
		\end{equation}
		Because $(dx\wedge{}dy)(\nabla{}x,\,\nabla{}y)=|\nabla{}x|^4$ we therefore have
		\begin{equation}
			\begin{aligned}
				dV_\flat\Big|_{span\{dx\wedge{}dy\}}
				&=-|\nabla{}z|^2|\nabla{}x|^{-4}\left(\frac{\partial}{\partial{}z}|\nabla{}x|^2\right)\,dx\wedge{}dy \\
				&=-|\nabla{}z|^2\left(\frac{\partial}{\partial{}z}\log|\nabla{}x|^2\right)dVol_2.
			\end{aligned}
		\end{equation}
	\end{proof}
	\begin{lemma} \label{LemmaVdV}
		We have the expressions
		\begin{equation}
			\begin{aligned}
				V_\flat\wedge{}dV_\flat
				&\;=\;
				dz\wedge{}V_\flat\wedge{}Jd\log|\nabla{}z|^2
				\,-\,|\nabla{}z|^2\left(\frac{\partial}{\partial{}z}\log|\nabla{}x|^2\right)\,V_\flat\wedge{}dVol_2 \\
				&\;=\;
				*\bigg(-d|V|^2 \,+\, (\triangle_4z)\,dz\bigg). \label{EqnVdVFirst}
			\end{aligned}
		\end{equation}
	\end{lemma}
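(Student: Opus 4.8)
The statement records two expressions for $V_\flat\wedge dV_\flat$, and I would establish them in turn, with essentially all the content in the second.

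The first equality is a direct wedge computation from Lemma \ref{LemmaCompOfdV}. I would simply form $V_\flat\wedge dV_\flat$ using the four-term expression (\ref{EqndV}). Two of the four terms---the one proportional to $dz\wedge V_\flat$ and the one proportional to $V_\flat\wedge d\log|\nabla z|^2$---contain a repeated factor of $V_\flat$ and hence vanish. The surviving terms are $-\,V_\flat\wedge dz\wedge Jd\log|\nabla z|^2$ and $-\,|\nabla z|^2(\tfrac{\partial}{\partial z}\log|\nabla x|^2)\,V_\flat\wedge dVol_2$; reordering $V_\flat\wedge dz=-dz\wedge V_\flat$ in the first of these produces exactly the claimed right-hand side. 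This step is routine.

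The second equality is where the real content lies, and I would prove it by passing to the self-dual/anti-self-dual decomposition rather than manipulating (\ref{EqndV}) further. The key observation is that $V_\flat=-Jdz$ (immediate from $dz=JV_\flat$ in (\ref{EqnBasicZV}) together with $J^2=-1$ on $1$-forms), so $dV_\flat=-dJdz$. The K\"ahler identity $(dJdf)^+=-\tfrac12(\triangle_4 f)\omega$ from (\ref{EqnBasicOmega}) then gives the self-dual part exactly, namely $(dV_\flat)^+=\tfrac12(\triangle_4 z)\,\omega$. Writing $*dV_\flat=2(dV_\flat)^+-dV_\flat=(\triangle_4 z)\,\omega-dV_\flat$ and contracting with $V$, I would use the two contraction formulas $i_V\omega=-dz$ from (\ref{EqnBasicZV}) and $i_VdV_\flat=-d|V|^2$ (established inside the proof of Lemma \ref{LemmaCompOfdV}) to obtain $i_V(*dV_\flat)=d|V|^2-(\triangle_4 z)\,dz$.

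To finish I would invoke the Hodge--contraction identity $*(V_\flat\wedge\beta)=(-1)^{\deg\beta}\,i_V(*\beta)$, which for the $2$-form $\beta=dV_\flat$ reads $*(V_\flat\wedge dV_\flat)=i_V(*dV_\flat)=d|V|^2-(\triangle_4 z)\,dz$. Applying $*$ once more and using $**=-1$ on $1$-forms in oriented Riemannian dimension four then yields $V_\flat\wedge dV_\flat=*\big(-d|V|^2+(\triangle_4 z)\,dz\big)$, which is the second expression. The only genuinely delicate part is the sign bookkeeping: the convention $J(V_\flat)=-(JV)_\flat$ flagged after (\ref{EqnBasicZV}), the precise form of the Hodge--contraction identity, and the value $**=-1$ on $1$- and $3$-forms in dimension four. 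I would verify these against a single orthonormal coframe (for instance $e^1=dx/|\nabla x|$, $e^2=dy/|\nabla x|$, $e^3=dz/|V|$, $e^4=V_\flat/|V|$, which orients $M^4$ compatibly with (\ref{EqnManyVols})) so as to be certain no stray sign is lost.
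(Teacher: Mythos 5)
Your proposal is correct and follows essentially the same route as the paper: the first equality by wedging $V_\flat$ into (\ref{EqndV}) and killing the two terms with a repeated $V_\flat$, and the second via the decomposition $dV_\flat=-*dV_\flat+2(dV_\flat)^+$ together with $(dV_\flat)^+=-(dJdz)^+=\tfrac12(\triangle_4z)\,\omega$, the contractions $i_VdV_\flat=-d|V|^2$ and $i_V\omega=-dz$, and the wedge/contraction Hodge duality with $**=-1$ on odd-degree forms in dimension four. Your reorganization (computing $i_V(*dV_\flat)$ first and then star-dualizing) is a trivially equivalent rearrangement of the paper's lines, and your sign bookkeeping checks out.
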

	\begin{proof}
		The first equality in (\ref{EqnVdVFirst}) is immediate from (\ref{EqndV}).
		To compute the second equality, we use the identity $dV_\flat=-*dV_\flat+2(dV_\flat)^+$.
		Because $(dV_\flat)^+=-(dJdz)^+=\frac12(\triangle_4z)\omega$, from (\ref{EqnBasicOmega}), and because $**:\bigwedge^3\rightarrow\bigwedge^3$ is multiplication by $(-1)$,
		\begin{equation}
			\begin{aligned}
				&V_\flat\wedge{}dV_\flat
				=
				-V_\flat\wedge*dV_\flat
				+2V_\flat\wedge(dV_\flat)^+ \\
				&\hspace{0.2in}=
				**(V_\flat\wedge*dV_\flat)
				-(\triangle_4z)**(V_\flat\wedge\omega)
				=
				*\left(i_VdV_\flat
				-(\triangle_4z)\,i_V\omega\right).
			\end{aligned} \label{EqnNewCompVwDV}
		\end{equation}
		Then $\mathcal{L}_V=di_V+i_Vd$ and $\mathcal{L}_VV_\flat=0$ gives $i_VdV_\flat=-d|V|^2$, and (\ref{EqnBasicZV}) gives $i_V\omega=-dz$.
		Entering these into (\ref{EqnNewCompVwDV}) provides the conclusion.
	\end{proof}
	\begin{lemma} \label{LemmaTriangleZ}
		We have the following expressions for $\triangle_4z$:
		\begin{equation}
			\triangle_4z
			\;=\;|\nabla{}z|^2\frac{\partial}{\partial{}z}\left(\log|\nabla{}z|^2-\log|\nabla{}x|^2\right)
			\;=\;\left<\nabla{}z,\,\nabla{}u\right>
			\;=\;wu_z.
		\end{equation}
	\end{lemma}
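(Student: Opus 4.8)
The plan is to express $\triangle_4 z$ as an invariant wedge product, evaluate that product in the mixed frame using the formula for $dV_\flat$ already in hand, and then recognize the two remaining equalities as restatements via the definitions. The key first step rests on (\ref{EqnBasicZV}): since $dz = JV_\flat$ we have $V_\flat = -Jdz$, hence $dV_\flat = -dJdz$, and by the K\"ahler identity $(dJdf)^+ = -\tfrac12(\triangle_4 f)\omega$ of (\ref{EqnBasicOmega}) the self-dual part is $(dV_\flat)^+ = \tfrac12(\triangle_4 z)\,\omega$. Because $\omega$ is self-dual with $\langle\omega,\omega\rangle = 2$, pairing against $\omega$ kills the anti-self-dual part and isolates the Laplacian, giving the clean identity
\[
\triangle_4 z \;=\; \langle dV_\flat,\,\omega\rangle \;=\; *\big(dV_\flat\wedge\omega\big),
\]
which is exactly the global expression $\triangle_4 z = *(dV_\flat\wedge\omega)$ advertised in the introduction. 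This is the workhorse of the argument.

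Next I would insert the explicit formula for $dV_\flat$ from Lemma \ref{LemmaCompOfdV} and wedge it against $\omega = *dVol_2 + dVol_2 = |\nabla z|^{-2}dz\wedge V_\flat + |\nabla x|^{-2}dx\wedge dy$ from (\ref{EqnManyVols}). Only terms proportional to the volume form $dz\wedge V_\flat\wedge dx\wedge dy$ survive. The diagonal term $-(\frac{\partial}{\partial z}\log|\nabla z|^2)\,dz\wedge V_\flat$ of $dV_\flat$ wedges with the $dVol_2$ part of $\omega$ to contribute $-\frac{\partial}{\partial z}\log|\nabla z|^2$; the term $-|\nabla z|^2(\frac{\partial}{\partial z}\log|\nabla x|^2)\,dVol_2$ wedges with the $*dVol_2$ part to contribute $-\frac{\partial}{\partial z}\log|\nabla x|^2$. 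The two off-diagonal terms $-dz\wedge Jd\log|\nabla z|^2$ and $-V_\flat\wedge d\log|\nabla z|^2$ also contribute: their horizontal ($dx,dy$) components wedge to zero, but their vertical components survive and combine to $+2\frac{\partial}{\partial z}\log|\nabla z|^2$, so the net coefficient of $\log|\nabla z|^2$ is $+\frac{\partial}{\partial z}\log|\nabla z|^2$. After rescaling by $dz\wedge V_\flat\wedge dx\wedge dy = |\nabla z|^2|\nabla x|^2\,dVol_4$, this assembles to
\[
dV_\flat\wedge\omega \;=\; |\nabla z|^2\,\frac{\partial}{\partial z}\!\left(\log|\nabla z|^2 - \log|\nabla x|^2\right)dVol_4,
\]
which is the first asserted equality. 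A shortcut that avoids the forms entirely is to compute $\triangle_4 z = (\det g)^{-1/2}\partial_z\big((\det g)^{1/2}g^{zz}\big)$ in the coordinates $(x,y,z,t)$: by Lemma \ref{LemmaCoordSystemFacts} the gradient $\nabla z$ is orthogonal to $\nabla x,\nabla y,\nabla t$, so $g^{zz} = |\nabla z|^2$ sits in a $1\times 1$ block, and $\sqrt{\det g} = |\nabla x|^{-2}$ by Lemma \ref{LemmaIntegration}; substituting gives the same answer with far less bookkeeping.

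The remaining two equalities are then formal. For the second, Lemma \ref{LemmaCoordSystemFacts} gives $\frac{\partial}{\partial z} = |\nabla z|^{-2}\nabla z$, hence $\langle\nabla z,\nabla f\rangle = |\nabla z|^2\,\partial_z f$ for every $V$-invariant $f$; applying this with $f = u := \log|\nabla z|^2 - \log|\nabla x|^2$ rewrites the first expression instantly as $\langle\nabla z,\nabla u\rangle$. The third equality is the same quantity written in terms of the LeBrun functions $u,w$ of (\ref{EqnLeBrunUW}), so it follows by substituting $u = \log|\nabla z|^2 - \log|\nabla x|^2$ and $w = |\nabla z|^{-2}$ into $|\nabla z|^2 u_z$.

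I expect the only real obstacle to lie in the middle step. The two off-diagonal pieces of $dV_\flat$ in Lemma \ref{LemmaCompOfdV} look at first as though they should spoil a clean formula for $\triangle_4 z$, and one must verify carefully that their contributions to $dV_\flat\wedge\omega$ do not cancel but instead reinforce the $\frac{\partial}{\partial z}\log|\nabla z|^2$ produced by the diagonal $dz\wedge V_\flat$ term, while their genuinely horizontal parts drop out. Tracking the orientation signs together with the $|\nabla z|^2|\nabla x|^2$ rescaling between $dz\wedge V_\flat\wedge dx\wedge dy$ and $dVol_4$ is where a sign or factor is easiest to lose; for this reason the determinant computation, which uses only the block structure of $g^{ij}$ from Lemma \ref{LemmaCoordSystemFacts} and the volume form from Lemma \ref{LemmaIntegration}, is the safer route to the same conclusion.
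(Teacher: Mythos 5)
Your proposal is correct and takes essentially the same route as the paper: the paper likewise derives $dV_\flat\wedge\omega=(\triangle_4z)\,dVol_4$ from $V_\flat=-Jdz$ and the K\"ahler identities of (\ref{EqnBasicOmega}), then wedges the expression for $dV_\flat$ from Lemma \ref{LemmaCompOfdV} against $\omega=*dVol_2+dVol_2$ from (\ref{EqnManyVols}), and your term-by-term accounting (the $-\frac{\partial}{\partial z}\log|\nabla z|^2$ from the $dz\wedge V_\flat$ term reinforced to a net $+\frac{\partial}{\partial z}\log|\nabla z|^2$ by the $+2\frac{\partial}{\partial z}\log|\nabla z|^2$ from the two off-diagonal terms, via $df\wedge Jdg\wedge\omega=-\frac12\left<\nabla f,\nabla g\right>\omega\wedge\omega$) correctly fills in the computation the paper leaves implicit. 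Your divergence-formula shortcut using the block structure of $g^{ij}$ and $\sqrt{\det g}=|\nabla x|^{-2}$ is also valid, but it is a bonus rather than a divergence from the paper's argument.
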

	\begin{proof}
		We have $V_\flat=-Jdz$ so that $dV_\flat\wedge\omega=(\triangle_4z)dVol_4$.
		Using $dV_\flat$ from Lemma \ref{LemmaCompOfdV} and $\omega$ from (\ref{EqnManyVols}), we have
		\begin{equation}
			\begin{aligned}
				(\triangle_4z)\,dVol_4
				=dV_\flat\wedge\omega
				&=|\nabla{}z|^2\left(\frac{\partial}{\partial{}z}\log|\nabla{}z|^2-\frac{\partial}{\partial{}z}\log|\nabla{}x|^2\right)dVol_4 \\
				&=\left<\nabla{}z,\,\nabla\log|\nabla{}z|^2 -\nabla\log|\nabla{}x|^2\right>\,dVol_4.
			\end{aligned}
		\end{equation}
		Using (\ref{EqnLeBrunUW}), this is $\triangle_4z=\left<\nabla{}z,\nabla{}u\right>=|\nabla{}z|^2\frac{\partial{}u}{\partial{}z}=wu_z$.
	\end{proof}
	\begin{lemma} \label{LemmaLieOfV}
		We have the two Lie derivatives
		\begin{equation}
			\begin{aligned}
				&\mathcal{L}_{\frac{\partial}{\partial{}z}}V_\flat
				\;=\;-Jd\log|\nabla{}z|^2
				\quad\text{and}\quad
				\mathcal{L}_{\frac{\partial}{\partial{}z}}dV_\flat
				\;=\;-dJd\log|\nabla{}z|^2.
			\end{aligned}
		\end{equation}
	\end{lemma}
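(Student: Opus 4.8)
The plan is to derive both identities from Cartan's magic formula $\mathcal{L}_X=di_X+i_Xd$, feeding in the interior-product computation $i_{\frac{\partial}{\partial{}z}}dV_\flat=-Jd\log|\nabla{}z|^2$ that was already carried out inside the proof of Lemma \ref{LemmaCompOfdV}. Almost all of the work is therefore done; the task reduces to assembling facts already on the table.

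First I would apply Cartan's formula to the $1$-form $V_\flat$, writing
\begin{equation}
	\mathcal{L}_{\frac{\partial}{\partial{}z}}V_\flat
	\;=\;d\big(i_{\frac{\partial}{\partial{}z}}V_\flat\big)
	\,+\,i_{\frac{\partial}{\partial{}z}}dV_\flat.
\end{equation}
The key observation is that the exact term drops out. Indeed $i_{\frac{\partial}{\partial{}z}}V_\flat=\left<V,\,\frac{\partial}{\partial{}z}\right>$, and by Lemma \ref{LemmaCoordSystemFacts} we have $\frac{\partial}{\partial{}z}=-|V|^{-2}JV$, so $\left<V,\,\frac{\partial}{\partial{}z}\right>=-|V|^{-2}\left<V,\,JV\right>=0$ because $J$ is skew-adjoint. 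Hence the only surviving contribution is $i_{\frac{\partial}{\partial{}z}}dV_\flat$, which by the computation in Lemma \ref{LemmaCompOfdV} equals $-Jd\log|\nabla{}z|^2$. This is the first identity.

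For the second identity I would invoke the fact that the Lie derivative commutes with the exterior derivative, $\mathcal{L}_Xd=d\mathcal{L}_X$. Applying this to the identity just obtained gives
\begin{equation}
	\mathcal{L}_{\frac{\partial}{\partial{}z}}dV_\flat
	\;=\;d\big(\mathcal{L}_{\frac{\partial}{\partial{}z}}V_\flat\big)
	\;=\;d\big(-Jd\log|\nabla{}z|^2\big)
	\;=\;-dJd\log|\nabla{}z|^2,
\end{equation}
exactly as claimed. There is no serious obstacle here: the computation is short, and the single point requiring care is the vanishing of $i_{\frac{\partial}{\partial{}z}}V_\flat$, which rests only on the orthogonality $\left<V,\,JV\right>=0$. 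Everything else is a direct consequence of Lemmas \ref{LemmaVZBasics}, \ref{LemmaCoordSystemFacts}, and \ref{LemmaCompOfdV}.
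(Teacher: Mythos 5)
Your proposal is correct and follows essentially the same route as the paper: the paper likewise applies Cartan's formula with $i_{\frac{\partial}{\partial{}z}}V_\flat=0$ and the interior-product computation $i_{\frac{\partial}{\partial{}z}}dV_\flat=-Jd\log|\nabla{}z|^2$ from Lemma \ref{LemmaCompOfdV}, then gets the second identity from $[\mathcal{L}_{\frac{\partial}{\partial{}z}},d]=0$. Your only addition is spelling out why $i_{\frac{\partial}{\partial{}z}}V_\flat=0$ (via $\left<V,JV\right>=0$), which the paper states without comment.
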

	\begin{proof}
		Using $\mathcal{L}_{\frac{\partial}{\partial{}z}}=di_{\frac{\partial}{\partial{}z}}+i_{\frac{\partial}{\partial{}z}}d$ and $i_{\frac{\partial}{\partial{}z}}V_\flat=0$, we have
		\begin{equation}
			\mathcal{L}_{\frac{\partial}{\partial{}z}}V_\flat
			\;=\;i_{\frac{\partial}{\partial{}z}}dV_\flat
			\;=\;-|V|^{-2}Jd|\nabla{}z|^2
			\;=\;-Jd\log|\nabla{}z|^2.
		\end{equation}
		The second equation follows from the commutator identity $[\mathcal{L}_{\frac{\partial}{\partial{}z}},d]=0$.
	\end{proof}
	\begin{lemma} \label{LemmaLieDerivsOfVols}
		We have Lie derivatives
		\begin{equation}
			\begin{aligned}
				\mathcal{L}_{\frac{\partial}{\partial{}z}}dVol_4
				&\;=\;\left(-\frac{\partial}{\partial{}z}\log|\nabla{}x|^2\right)\,dVol_4 \\
				\mathcal{L}_{\frac{\partial}{\partial{}z}}dVol_3
				&\;=\;\left(\frac12\frac{\partial}{\partial{}z}\log|\nabla{}z|^2\,-\,\frac{\partial}{\partial{}z}\log|\nabla{}x|^2\right)\,dVol_3
			\end{aligned} \label{EqnLieDeriveOfVolumes}
		\end{equation}
		Especially important are Lie derivatives involving $dVol_2$.
		These are
		\begin{equation}
			\begin{aligned}
				&\mathcal{L}_{\frac{\partial}{\partial{}z}}dVol_2
				\;=\;\left(-\frac{\partial}{\partial{}z}\log|\nabla{}x|^2\right)\,dVol_2 \\
				&\mathcal{L}_{\frac{\partial}{\partial{}z}}\big(|V|^2\,dVol_2\big)
				\;=\;
				(\triangle_4z)\,dVol_2 \\
				&\mathcal{L}_{\frac{\partial}{\partial{}z}}\left(V_\flat\wedge{}dVol_2\right)
				\;=\;\left(\frac{1}{|V|}\triangle_4z\right)\,dVol_3 \\
				&\mathcal{L}_{\frac{\partial}{\partial{}z}}*dVol_2
				\;=\;-\left(\frac{\partial}{\partial{}z}\log|V|^2\right)*dVol_2+dz\wedge{}Jd|V|^{-2}.
			\end{aligned} \label{EqnLieDeriveOfVSdVol}
		\end{equation}
	\end{lemma}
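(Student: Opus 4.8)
The plan is to obtain every identity in the lemma from the Leibniz rule for $\mathcal{L}_{\frac{\partial}{\partial z}}$ applied to the local product expressions of the volume forms in (\ref{EqnManyVols}) and (\ref{EqnsVolInTermsOft}), feeding in two structural facts already in hand. First, the coordinate differentials are all $\frac{\partial}{\partial z}$-closed: since $(x,y,z,t)$ are coordinates with $\frac{\partial}{\partial z}$ the corresponding coordinate field (Lemma \ref{LemmaCoordSystemFacts}), we have $\mathcal{L}_{\frac{\partial}{\partial z}}(df)=d(\frac{\partial f}{\partial z})=0$ for $f\in\{x,y,t\}$, and likewise $\mathcal{L}_{\frac{\partial}{\partial z}}(dz)=d(1)=0$. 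Second, I will use the already-proven $\mathcal{L}_{\frac{\partial}{\partial z}}V_\flat=-Jd\log|\nabla z|^2$ of Lemma \ref{LemmaLieOfV}. With these, only coefficient functions and $V_\flat$ carry nonzero Lie derivatives.

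I would begin with $dVol_2=|\nabla x|^{-2}dx\wedge dy$. Since $dx,dy$ are $\frac{\partial}{\partial z}$-closed, Leibniz gives $\mathcal{L}_{\frac{\partial}{\partial z}}dVol_2=(\frac{\partial}{\partial z}|\nabla x|^{-2})dx\wedge dy=-(\frac{\partial}{\partial z}\log|\nabla x|^2)\,dVol_2$, the first displayed identity. Then, using the factorizations $dVol_4=dz\wedge dt\wedge dVol_2$ and $dVol_3=|V|\,dt\wedge dVol_2$ from (\ref{EqnsVolInTermsOft}), the $dVol_4$ and $dVol_3$ derivatives follow immediately: the $dz,dt$ factors contribute nothing, the $dVol_2$ factor contributes the expression just found, and for $dVol_3$ the coefficient $|V|$ contributes $\frac{\partial}{\partial z}|V|=\tfrac12(\frac{\partial}{\partial z}\log|V|^2)\,|V|$, giving the $\tfrac12\frac{\partial}{\partial z}\log|\nabla z|^2$ term since $|V|=|\nabla z|$. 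For $|V|^2\,dVol_2$, Leibniz and the $dVol_2$ identity yield $|V|^2\big(\frac{\partial}{\partial z}\log|V|^2-\frac{\partial}{\partial z}\log|\nabla x|^2\big)\,dVol_2$, and Lemma \ref{LemmaTriangleZ} — which states exactly $\triangle_4z=|\nabla z|^2\frac{\partial}{\partial z}(\log|\nabla z|^2-\log|\nabla x|^2)$ — collapses this to $(\triangle_4z)\,dVol_2$.

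For $*dVol_2=|V|^{-2}dz\wedge V_\flat$, Leibniz together with $\mathcal{L}_{\frac{\partial}{\partial z}}(dz)=0$ and Lemma \ref{LemmaLieOfV} produces the coefficient term $-(\frac{\partial}{\partial z}\log|V|^2)*dVol_2$ plus the term $|V|^{-2}dz\wedge(-Jd\log|V|^2)=dz\wedge Jd|V|^{-2}$, using $Jd|V|^{-2}=-|V|^{-2}Jd\log|V|^2$; this is precisely the claimed formula, and the Leibniz route conveniently avoids ever expanding $dV_\flat$ (an alternative via $\mathcal{L}_{\frac{\partial}{\partial z}}\omega=d(|V|^{-2}V_\flat)$ would force that expansion).

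The one genuinely delicate computation is $\mathcal{L}_{\frac{\partial}{\partial z}}(V_\flat\wedge dVol_2)$. Leibniz yields $-Jd\log|V|^2\wedge dVol_2-(\frac{\partial}{\partial z}\log|\nabla x|^2)\,V_\flat\wedge dVol_2$, and the obstacle is to show the first wedge reduces to a multiple of $V_\flat\wedge dVol_2$. Here I would decompose $d\log|V|^2$ in the coframe $\{dz,V_\flat,dx,dy\}$: its $V_\flat$-component vanishes because $|V|^2$ is $V$-invariant ($i_Vd\log|V|^2=0$), so $d\log|V|^2=(\frac{\partial}{\partial z}\log|V|^2)\,dz+C\,dx+D\,dy$ for some functions $C,D$. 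Applying $J$ and using $Jdz=-V_\flat$ together with the pluriharmonic relation $dx=Jdy$ (Lemma \ref{LemmaCoordSystemFacts}) sends the $dx,dy$ part to another combination of $dx,dy$, which dies upon wedging with $dVol_2\propto dx\wedge dy$; only $-(\frac{\partial}{\partial z}\log|V|^2)\,V_\flat$ survives. Hence $-Jd\log|V|^2\wedge dVol_2=(\frac{\partial}{\partial z}\log|V|^2)\,V_\flat\wedge dVol_2$, and combining with the remaining term and Lemma \ref{LemmaTriangleZ} once more gives $|V|^{-2}(\triangle_4z)\,V_\flat\wedge dVol_2=\frac{1}{|V|}(\triangle_4z)\,dVol_3$ via $dVol_3=|V|^{-1}V_\flat\wedge dVol_2$. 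This coframe-decomposition-and-pluriharmonic-cancellation step is where the care lies; everything else is routine Leibniz bookkeeping.
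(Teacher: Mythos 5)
Your proof is correct, and it takes a genuinely different route from the paper's for most of the identities. The paper works intrinsically: it proves the $dVol_4$ identity via Cartan's formula $\mathcal{L}_{\frac{\partial}{\partial z}}=di_{\frac{\partial}{\partial z}}$ on 4-forms together with the K\"ahler identities (\ref{EqnBasicOmega}) and Lemma \ref{LemmaTriangleZ}, then cascades down to $dVol_3$ and $dVol_2$ using the commutator identity $[\mathcal{L}_A,i_B]=i_{[A,B]}$ applied to $dVol_3=i_{\frac{\nabla z}{|\nabla z|}}dVol_4$ and $dVol_2=i_{|V|^{-1}V}dVol_3$; for the final $*dVol_2$ identity it again uses Cartan's formula and must invoke the full expansion of $dV_\flat$ from Lemma \ref{LemmaCompOfdV}. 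You instead run pure Leibniz bookkeeping on the coordinate factorizations $dVol_2=|\nabla x|^{-2}dx\wedge dy$, $dVol_4=dz\wedge dt\wedge dVol_2$, $dVol_3=|V|\,dt\wedge dVol_2$, exploiting $\mathcal{L}_{\frac{\partial}{\partial z}}(df)=0$ for $f\in\{x,y,t,z\}$ and feeding in only Lemmas \ref{LemmaLieOfV} and \ref{LemmaTriangleZ}; this legitimately bypasses Lemma \ref{LemmaCompOfdV} (there is no circularity, since (\ref{EqnsVolInTermsOft}) and Lemma \ref{LemmaLieOfV} are established earlier). Notably, for $\mathcal{L}_{\frac{\partial}{\partial z}}(V_\flat\wedge dVol_2)$ the paper's proof is structurally the same as yours (Leibniz plus Lemma \ref{LemmaLieOfV}) but simply \emph{asserts} the key collapse of $-Jd\log|V|^2\wedge dVol_2$ to $\left(\frac{\partial}{\partial z}\log|V|^2\right)V_\flat\wedge dVol_2$ without justification; your coframe decomposition of $d\log|V|^2$ in $\{dz,V_\flat,dx,dy\}$ --- using $i_Vd\log|V|^2=0$ from the Killing property, $Jdz=-V_\flat$, and the pluriharmonic relation $dx=Jdy$ to kill the tangential part against $dVol_2$ --- supplies exactly the missing detail, and is consistent with the explicit expansion of $Jd\log|V|^2$ the paper later writes down in Section \ref{SecEvoK}. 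What each approach buys: the paper's interior-product route is frame-independent and reuses machinery needed elsewhere, while yours is more elementary, localizes all the work in coordinate derivatives of the two conformal factors, and makes the one delicate cancellation fully explicit.
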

	\begin{proof}
		Because $\mathcal{L}_{\frac{\partial}{\partial{}z}}=di_{\frac{\partial}{\partial{}z}}$ on any 4-form, and because $i_{\nabla{}z}\omega=V_\flat$, we have
		\begin{equation}
			\begin{aligned}
				\mathcal{L}_{\frac{\partial}{\partial{z}}}dVol_4
				&=di_{\frac{\partial}{\partial{z}}}dVol_4
				=d\left(|\nabla{}z|^{-2}(i_{\nabla{}z}\omega)\wedge\omega\right)
				=
				d\left(|\nabla{}z|^{-2}V_\flat\wedge\omega\right) \\
				&=
				d\left(|\nabla{}z|^{-2}V_\flat\right)\wedge\omega
				=d(|\nabla{}z|^{-2})\wedge{}V_\flat\wedge\omega
				+|\nabla{}z|^{-2}dV_\flat\wedge\omega \\
				&=-d(|\nabla{}z|^{-2})\wedge{}Jdz\wedge\omega
				-|\nabla{}z|^{-2}dJdz\wedge\omega.
			\end{aligned}
		\end{equation}
		Then using (\ref{EqnBasicOmega}) and Lemma \ref{LemmaTriangleZ} this is
		\begin{equation}
			\begin{aligned}
				\mathcal{L}_{\frac{\partial}{\partial{z}}}dVol_4
				&\;=\;
				-\left(\frac{\partial}{\partial{}z}\log|\nabla{}z|^2\right)\,dVol_4
				+|\nabla{}z|^{-2}(\triangle_4z)dVol_4 \\
				&\;=\;-\left(\frac{\partial}{\partial{}z}\log|\nabla{}x|^2\right)dVol_4
			\end{aligned}
		\end{equation}
		The second equation in (\ref{EqnLieDeriveOfVolumes}) follows from the identity $[\mathcal{L}_A,i_B]=i_{[A,B]}$ along with $dVol_3=i_{|\nabla{}z|\frac{\partial}{\partial{}z}}dVol_4=i_{\frac{\nabla{}z}{|\nabla{}z|}}dVol_4$.
		
		In (\ref{EqnLieDeriveOfVSdVol}), the first equation follows from $[\mathcal{L}_A,i_B]=i_{[A,B]}$ and $dVol_2=i_{|V|^{-1}V}dVol_3$.
		The second equation follows from the Leibniz rule and Lemma \ref{LemmaTriangleZ}.
		The third equation uses Lemma \ref{LemmaLieOfV} to obtain
		\begin{equation}
			\begin{aligned}
				\mathcal{L}_{\frac{\partial}{\partial{}z}}(V_\flat\wedge{}dVol_2)
				&\;=\;
				\left(\mathcal{L}_{\frac{\partial}{\partial{}z}}V_\flat\right)\wedge{}dVol_2
				+V_\flat\wedge\left(\mathcal{L}_{\frac{\partial}{\partial{}z}}dVol_2\right) \\
				&\;=\;
				-Jd\log|V|^{2}\wedge{}dVol_2
				-\left(\frac{\partial}{\partial{}z}\log|\nabla{}x|^2\right)V_\flat\wedge{}dVol_2 \\
				&\;=\;
				|\nabla{}z|^{-2}\triangle_4z\,V_\flat\wedge{}dVol_2
			\end{aligned}
		\end{equation}
		Then $dVol_3=|\nabla{}z|^{-1}V_\flat\wedge{}dVol_2$ gives the desired equation.
		For the final equation,
		\begin{equation}
			\begin{aligned}
				\mathcal{L}_{\frac{\partial}{\partial{}z}}\big(*dVol_2\big)
				&=
				\left(di_{\frac{\partial}{\partial{}z}}
				+i_{\frac{\partial}{\partial{}z}}d\right)*dVol_2 \\
				&=d\left(|V|^{-2}V_\flat\right)
				+\left(\frac{\partial}{\partial{}z}\log|\nabla{}x|^2\right)dVol_2 \\
				&=
				d|V|^{-2}\wedge{}V_\flat
				+|V|^{-2}dV_\flat
				+\left(\frac{\partial}{\partial{}z}\log|\nabla{}x|^2\right)dVol_2.
			\end{aligned}
		\end{equation}
		Using Lemma \ref{LemmaCompOfdV}, this is
		\begin{equation}
			\begin{aligned}
				\mathcal{L}_{\frac{\partial}{\partial{}z}}\big(*dVol_2\big)
				&=
				-\left(\frac{\partial}{\partial{}z}\log|V|^2\right)\,*dVol_2
				+dz\wedge{}Jd|V|^{-2}.
			\end{aligned}
		\end{equation}
	\end{proof}

	%
	%
	%
	%
	\subsection{Volume of $M^2_z$} \label{SubSecTwoFormApp}
	
	The LeBrun equations (\ref{EqnsLeBrun}) are local, as they rely on a choice of isothermal coordinates $x$, $y$ on $M^2_z$.
	However our first lemma shows the second of the LeBrun equations has a more global expression.	
	\begin{lemma} \label{LemmaPartialElliptic}
		The LeBrun equation $w_{xx}+w_{yy}+(we^u)_{zz}=0$ is the same as
		\begin{equation}
			\mathcal{L}_{\frac{\partial}{\partial{}z}}\mathcal{L}_{\frac{\partial}{\partial{}z}}dVol_2\,+\,\left(\triangle_2|V|^{-2}\right)dVol_2
			\quad\text{when $V\ne0$.}
		\end{equation}
		Therefore when $M^2_z$ is compact, $\frac{d^2}{dz^2}\int_{M^2_z}dVol_2=0$ at regular values of $z$.
	\end{lemma}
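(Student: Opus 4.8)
The plan is to express both terms of the proposed identity in the LeBrun coordinates and recognize their sum as the second LeBrun equation multiplied by $dx\wedge dy$. The first step is a purely algebraic observation about the LeBrun data. Since $\nabla z=-JV$ forces $|\nabla z|^2=|V|^2$, the definitions (\ref{EqnLeBrunUW}) give $w=|\nabla z|^{-2}=|V|^{-2}$ and $e^u=|\nabla z|^2|\nabla x|^{-2}$, whence $we^u=|\nabla x|^{-2}$. Comparing with (\ref{EqnManyVols}), this says exactly that $dVol_2=we^u\,dx\wedge dy$; the product $we^u$ is nothing but the conformal factor of the reduced metric, and this coincidence is what makes the whole argument go through.

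Next I would compute the double Lie derivative. Because $(x,y,z,t)$ is a coordinate chart with $\partial/\partial z=|\nabla z|^{-2}\nabla z$ (Lemma \ref{LemmaCoordSystemFacts}), the coordinate field annihilates $x$ and $y$, so $\mathcal{L}_{\partial/\partial z}dx=\mathcal{L}_{\partial/\partial z}dy=0$ and $dx\wedge dy$ is $\partial/\partial z$-invariant. Writing $a=we^u$, which is $V$-invariant and hence a function of $(x,y,z)$ alone, the Leibniz rule collapses to differentiating the coefficient: $\mathcal{L}_{\partial/\partial z}dVol_2=a_z\,dx\wedge dy$ (which one checks against the first line of (\ref{EqnLieDeriveOfVSdVol})), and then $\mathcal{L}_{\partial/\partial z}\mathcal{L}_{\partial/\partial z}dVol_2=(we^u)_{zz}\,dx\wedge dy$. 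For the remaining term, the reduced metric is $we^u(dx^2+dy^2)$, so in these isothermal coordinates $\triangle_2 f=(we^u)^{-1}(f_{xx}+f_{yy})$; applying this to $f=|V|^{-2}=w$ and multiplying by $dVol_2$ cancels the conformal factor and leaves $(\triangle_2|V|^{-2})\,dVol_2=(w_{xx}+w_{yy})\,dx\wedge dy$. Adding the two contributions yields $\bigl[(we^u)_{zz}+w_{xx}+w_{yy}\bigr]\,dx\wedge dy$, which vanishes precisely when the second LeBrun equation holds.

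For the concluding statement I would differentiate under the integral using the flow $\psi_s$ of $\partial/\partial z$, which permutes the level-sets by Lemma \ref{LemmaPermutes}. Writing $\int_{M^2_z}dVol_2=\int_{M^2_{z_0}}\psi_{z-z_0}^*\,dVol_2$ and using $\frac{d}{dz}\psi_{z-z_0}^*=\psi_{z-z_0}^*\mathcal{L}_{\partial/\partial z}$ converts $z$-derivatives into Lie derivatives, so that $\frac{d^2}{dz^2}\int_{M^2_z}dVol_2=\int_{M^2_z}\mathcal{L}_{\partial/\partial z}\mathcal{L}_{\partial/\partial z}dVol_2=-\int_{M^2_z}(\triangle_2|V|^{-2})\,dVol_2$ by the identity just proved. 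At a regular value of $z$ the field $V$ is nonzero on the entire compact level-set, so $|V|^{-2}$ is a smooth function on the closed orbifold $M^2_z$ and the integral of its Laplacian vanishes by the divergence theorem.

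The main obstacle is the bookkeeping of conventions and the clean cancellation of conformal factors. One must use the analyst's divergence-form Laplacian $\triangle_2=\mathrm{div}\,\mathrm{grad}$, which is the convention implicit in $(dJdf)^+=-\tfrac12(\triangle_4 f)\omega$ of (\ref{EqnBasicOmega}); with that sign both $(we^u)_{zz}$ and $w_{xx}+w_{yy}$ enter positively and the conformal factor $we^u$ in $\triangle_2$ cancels exactly against the same factor in $dVol_2$, which is the crux of the whole computation. I would also take care that differentiating under the integral is legitimate, i.e.\ that $\psi_s$ is defined and smooth on a neighborhood of the compact level-set; this holds precisely because $z$ is a regular value and $V\ne0$ there, which is also what guarantees $M^2_z$ is a genuine closed orbifold on which Stokes' theorem applies.
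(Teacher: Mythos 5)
Your proof is correct and follows essentially the same route as the paper's: both express everything in LeBrun coordinates via $w=|V|^{-2}$, $we^u=|\nabla x|^{-2}$, $dVol_2=we^u\,dx\wedge dy$, use $\mathcal{L}_{\frac{\partial}{\partial z}}(dx\wedge dy)=0$ to identify the double Lie derivative with $(we^u)_{zz}\,dx\wedge dy$ and the Laplacian term with $(w_{xx}+w_{yy})\,dx\wedge dy$, and then conclude by integrating over the closed orbifold (divergence theorem) and differentiating under the integral via the flow of $\frac{\partial}{\partial z}$ permuting the level-sets (Lemma \ref{LemmaPermutes}). Your convention $\triangle_2 f=(we^u)^{-1}(f_{xx}+f_{yy})$ is the correct one; the paper's proof states $\triangle_2=|\nabla x|^{-2}\left(\frac{\partial^2}{\partial x^2}+\frac{\partial^2}{\partial y^2}\right)$, an apparent typo for $|\nabla x|^{2}$, though its subsequent computation uses the right relation, so the two arguments agree in substance.
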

	\begin{proof}
		Because $x,y$ are isothermal coordinates on $M^2_z$, $\triangle_2=|\nabla{}x|^{-2}\left(\frac{\partial^2}{\partial{}x^2}+\frac{\partial^2}{\partial{}y^2}\right)$ and $dVol_2=|\nabla{}x|^{-2}dx\wedge{}dy$.
		From (\ref{EqnLeBrunUW}), $w=|\nabla{}z|^{-2}$ and $we^u=|\nabla{}x|^{-2}$.
		Thus
		\begin{equation}
			\begin{aligned}
				0&\;=\;w_{xx}+w_{yy}+(we^u)_{zz} \\
				&\;=\;
				\left(\frac{\partial^2}{\partial{}x^2}
				+\frac{\partial^2}{\partial{}y^2}\right)|\nabla{}z|^{-2}
				+\frac{\partial^2}{\partial{}z^2}|\nabla{}x|^{-2} \\
				&\;=\;
				|\nabla{}x|^{-2}\triangle_2|\nabla{}z|^{-2}
				\,+\,\frac{\partial^2}{\partial{}z^2}|\nabla{}x|^{-2}
			\end{aligned}
		\end{equation}
		Because $\mathcal{L}_{\frac{\partial}{\partial{}z}}dx=d\frac{\partial{}x}{\partial{}z}=0$ and $\mathcal{L}_{\frac{\partial}{\partial{}z}}dy=d\frac{\partial{}y}{\partial{}z}=0$ we have $\mathcal{L}_{\frac{\partial}{\partial{}z}}(dx\wedge{}dy)=0$, so
		\begin{equation}
			\begin{aligned}
				\mathcal{L}_{\frac{\partial}{\partial{}z}}\mathcal{L}_{\frac{\partial}{\partial{}z}}dVol_2
				&=\left(\frac{\partial^2}{\partial{}z^2}|\nabla{}x|^{-2}\right)dx\wedge{}dy
				=-\left(\triangle_2|V|^{-2}\right)\;dVol_2.
			\end{aligned} \label{EqnDoubleLieVol}
		\end{equation}
		When $M^2_z$ is compact, integrating both sides of (\ref{EqnDoubleLieVol}) gives
		\begin{equation}
			\begin{aligned}
				&\int_{M^2_z}\mathcal{L}_{\frac{\partial}{\partial{}z}}\mathcal{L}_{\frac{\partial}{\partial{}z}}dVol_2
				\;=\;-\int_{M^2_z}\triangle_2|V|^{-2}\,dVol_2\;=\;0.
			\end{aligned}
		\end{equation}
		(Because $M^2_z$ is smooth in the orbifold sense and because $|V|^{-2}$ is smooth, there is no issue in using integration-by-parts despite orbifold points; see for example \cite{CW11} \cite{TV05} \cite{Uhl82} \cite{DK81}.)
		By Lemma \ref{LemmaPermutes} the diffeomorphism flow of $\frac{\partial}{\partial{}z}$ fixes $\mathcal{D}$ and therefore permutes the leaves $M^3_z$ and their reductions $M^2_z$.
		We conclude that
		\begin{equation}
			\begin{aligned}
				&\frac{d^2}{dz^2}\int_{M^2_z}dVol_2
				\;=\;
				\int_{M^2_z}\mathcal{L}_{\frac{\partial}{\partial{}z}}\mathcal{L}_{\frac{\partial}{\partial{}z}}dVol_2
				\;=\;0.
			\end{aligned}
		\end{equation}		
	\end{proof}
	\begin{proposition}[cf. Theorem \ref{ThmConstantGrowth}] \label{PropConstantGrowthInText}
		When $z$ is non-singular and $M^2_z$ is compact,
		\begin{equation}
			\frac{d}{dz}\int_{M^2_z}\,dVol_2
			\;=\;
			\frac{1}{2\pi}\int_{M^3_z}\frac{1}{|V|^4}V_\flat\wedge{}dV_\flat
			\;=\;
			2\pi{}e^g(M^3_z,M^2_z) \label{EqnVolGrowthEquality}
		\end{equation}
		where $e^g(M^3_z,M^2_z)$ is the Euler number of the fibration $M^3_z\rightarrow{}M^2_z$.
	\end{proposition}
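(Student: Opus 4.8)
The plan is to prove the two equalities separately: the first, relating the $z$-derivative of the area of $M^2_z$ to the integral of $|V|^{-4}V_\flat\wedge dV_\flat$ over $M^3_z$, is a direct computation from the Lie-derivative formulas of Section~\ref{SecKillingTwoForm}; the second, identifying that integral with the Euler number, is the Chern--Simons interpretation of the connection $1$-form $|V|^{-2}V_\flat$.

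For the first equality I would begin with Lemma~\ref{LemmaPermutes}, which says the diffeomorphism flow of $\frac{\partial}{\partial z}$ carries each $M^2_z$ onto the neighbouring reductions, so differentiation under the integral sign is legitimate and, by the first formula of (\ref{EqnLieDeriveOfVSdVol}),
\begin{equation*}
\frac{d}{dz}\int_{M^2_z}dVol_2 \;=\; \int_{M^2_z}\mathcal{L}_{\frac{\partial}{\partial z}}dVol_2 \;=\; -\int_{M^2_z}\left(\frac{\partial}{\partial z}\log|\nabla x|^2\right)dVol_2 .
\end{equation*}
Next I would take the expression for $V_\flat\wedge dV_\flat$ from the first line of (\ref{EqnVdVFirst}) and pull it back to the level-set $M^3_z$; since $z$ is constant there the term containing $dz$ vanishes, leaving $V_\flat\wedge dV_\flat|_{M^3_z}=-|\nabla z|^2\big(\frac{\partial}{\partial z}\log|\nabla x|^2\big)\,V_\flat\wedge dVol_2$. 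Dividing by $|V|^4=|\nabla z|^4$ and using $V_\flat\wedge dVol_2=|\nabla z|\,dVol_3$ from (\ref{EqnManyVols}) makes the integrand equal to $-|V|^{-1}\big(\frac{\partial}{\partial z}\log|\nabla x|^2\big)\,dVol_3$, which is $V$-invariant. The Integration Lemma~\ref{LemmaIntegration} then gives
\begin{equation*}
\frac{1}{2\pi}\int_{M^3_z}\frac{1}{|V|^4}V_\flat\wedge dV_\flat \;=\; -\int_{M^2_z}\left(\frac{\partial}{\partial z}\log|\nabla x|^2\right)dVol_2 ,
\end{equation*}
which coincides with the previous display and settles the first equality.

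For the second equality I would exhibit $\theta:=|V|^{-2}V_\flat$ as a connection $1$-form for the circle action on $M^3_z$: one checks $\theta(V)=1$ and $\mathcal{L}_V\theta=0$, so with orbits parameterized by $t\in[0,2\pi)$ the fiber integral of $\theta$ is $2\pi$. A short computation gives $\theta\wedge d\theta=|V|^{-4}V_\flat\wedge dV_\flat$, the term $|V|^{-2}V_\flat\wedge d|V|^{-2}\wedge V_\flat$ dropping out because $V_\flat$ appears twice. Writing $\theta=dt+\pi^*\beta$ in a local trivialization, so that $d\theta=\pi^*\Omega$ is the pullback of the curvature, one has $\theta\wedge d\theta=dt\wedge\pi^*\Omega$ and hence $\int_{M^3_z}\theta\wedge d\theta=2\pi\int_{M^2_z}\Omega=4\pi^2 e$; this reproduces (\ref{EqnChernSimonsIntro}) and yields $\frac{1}{2\pi}\int_{M^3_z}|V|^{-4}V_\flat\wedge dV_\flat=2\pi e^g(M^3_z,M^2_z)$.

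The first equality is essentially bookkeeping with the identities of Section~\ref{SecKillingTwoForm}; the main obstacle lies in the second. Because the quotient $M^3_z\to M^2_z$ is in general a Seifert fibration rather than a genuine principal bundle, the Chern--Simons argument must be run in the orbifold category, where $e^g$ is a rational invariant. The factorization of $\int_{M^3_z}\theta\wedge d\theta$ through the base is clean away from exceptional fibers, but the contribution of each exceptional fiber of multiplicity $m$ must be weighted correctly so that the integral computes precisely the geometric (orbifold) Euler number; this is exactly the accounting supplied by \cite{NR84}.
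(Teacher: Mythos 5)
Your proposal is correct and takes essentially the same route as the paper's proof: both identify $\theta=|V|^{-2}V_\flat$ as the connection $1$-form of the circle fibration, reduce the Chern--Simons integrand $\theta\wedge d\theta=|V|^{-4}V_\flat\wedge dV_\flat$ via Lemma \ref{LemmaVdV} and the Lie-derivative formulas of Lemma \ref{LemmaLieDerivsOfVols} to $\mathcal{L}_{\frac{\partial}{\partial z}}dVol_2$, pass between $M^3_z$ and $M^2_z$ with the Integration Lemma \ref{LemmaIntegration}, and defer the Seifert/orbifold accounting at exceptional fibers to \cite{NR84}. The only differences are organizational---you verify the first equality by restricting (\ref{EqnVdVFirst}) to $M^3_z$ directly rather than routing it through $Tr(\theta\wedge d\theta)$, and you fiber-integrate in a local trivialization $\theta=dt+\pi^*\beta$ where the paper applies Lemma \ref{LemmaIntegration} to $Tr(F)$---neither of which changes the substance.
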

	\begin{proof}
		The Lie group $\mathbb{S}^1$ acts on $M^3_z$, generated by the field $\frac{\partial}{\partial{}t}=V$.
		Therefore
		\begin{equation}
			\theta
			\;=\;\frac{1}{|V|^2}V_\flat\;\otimes\;\frac{\partial}{\partial{}t}
			\;\in\;\Gamma\left(\bigwedge{}^1\otimes\mathfrak{g}\right) \label{EqnBundleConnection}
		\end{equation}
		is a connection 1-form on the fibration $M^3_z\rightarrow{}M^2_z$.
		Because the Lie group is abelian its curvature 2-form is $F=d\theta$.
		Thus
		\begin{equation}
			F\;=\;
			d\left(\frac{1}{|V|^2}V_\flat\right)\Big|_{M^3_z}\;\otimes\;\frac{\partial}{\partial{}t}
			\;\in\;
			\Gamma\left(\bigwedge{}^2\otimes\mathfrak{g}\right).
		\end{equation}
		Certainly then $Tr(F)=d(|V|^{-2}V_\flat)$.
		Because the Lie algebra is Abelian, the Chern-Simons form is simply
		\begin{equation}
			Tr(\theta\wedge{}d\theta)
			\;=\;
			\frac{1}{|V|^2}V_\flat\wedge{}d\left(\frac{1}{|V|^2}V_\flat\right)
			\;=\;
			\frac{1}{|V|^2}V_\flat\wedge{}Tr(F).
		\end{equation}
		By definition the geometric Euler number is $e^g(M^3_z,M^2_z)=\frac{1}{2\pi}\int_{M^2_z}Tr(F)$.
		Thus
		\begin{equation}
			\chi(M^3_z,M^2_z)
			=\frac{1}{2\pi}\int_{M^2_z}Tr(F)
			=\frac{1}{4\pi^2}\int_{M^3_z}\frac{V}{|V|^2}\wedge{}Tr(F)
			=\frac{1}{4\pi^2}\int_{M^3_z}Tr(\theta\wedge{}d\theta)
		\end{equation}
		(the second equality is from the Integration Lemma, Lemma \ref{LemmaIntegration}).
		When $M^3_z$ is compact it is known that this is a rational number, and when $M^3_z\rightarrow{}M^2_z$ is a compact fiber bundle it is an integer (the usual Euler number of the bundle)---see, for example, Lemma 1 of \cite{NR84}.
		
		From Lemma \ref{LemmaVdV} and Lemma \ref{LemmaLieDerivsOfVols} we compute
		\begin{equation}
			Tr(\theta\wedge{}d\theta)
			\;=\;
			\frac{1}{|V|^{-2}}\left(\frac{\partial}{\partial{}z}\log|\nabla{}x|^2\right)V_\flat\wedge{}dVol_2
			\;=\;
			\frac{1}{|V|^2}V_\flat\wedge{}\mathcal{L}_{\frac{\partial}{\partial{}z}}dVol_2. \label{EqnLieAndChernSimons}
		\end{equation}
		so using (\ref{EqnLieAndChernSimons}) and Lemma \ref{LemmaIntegration} we obtain
		\begin{equation}
			\begin{aligned}
				\chi(M^3_z,M^2_z)
				&=\frac{1}{4\pi^2}\int_{M^3_z}\frac{1}{|V|^2}V_\flat\wedge\mathcal{L}_{\frac{\partial}{\partial{}z}}dVol_2
				=\frac{1}{2\pi}\int_{M^2_z}\mathcal{L}_{\frac{\partial}{\partial{}z}}dVol_2 \\
				&=\frac{1}{2\pi}\frac{d}{dz}\int_{M^2_z}dVol_2.
			\end{aligned}
		\end{equation}
		By Lemma \ref{LemmaPartialElliptic}, $\frac{d}{dz}\int_{M^2_z}dVol_2$ is constant when the level-sets $M^3_z$ are compact and $z$ is non-singular.
	\end{proof}

	%
	%
	%
	%
	%
	%
	%
	%
	\section{The Euler number of the base} \label{SecEvoK}
	
	We prove the evolution equation $\frac{d^2}{dz^2}\int_{M^2_z}|V|^2dVol_2=4\pi\chi^g(M^2_z)$
	when $M^4$ is scalar-flat and the integral leaves $M^3_z$ are compact.
	In \S\ref{SubsecEuclExample} we perform an example.
	
	%
	%
	%
	%
	%
	%
	\subsection{Interpretation of the Toda-lattice equation} \label{SubsecEulerNumber}
	The equation $u_{xx}+u_{yy}+(e^u)_{zz}=0$---the ``Toda-lattice equation'' or ``$SU(\infty)$ Toda-lattice equation''---has long been studied in physics, K\"ahler geometry, and conformal geometry.
	Because it relies on the formation of $x$, $y$ coordinates, this equation is fundamentally local.
	We prove the Toda-lattice equation has the more global expression
	\begin{equation}
		\begin{aligned}
			2K\,dVol_4
			&\;=\;
			d\left[
			-(*dVol_2)\wedge{}Jd\log|\nabla{}z|^2
			+\frac{1}{|V|}\triangle_4z\,dVol_3
			\right]
		\end{aligned} \label{EqnsPointwiseEvoForK}
	\end{equation}
	where $K$ is the Gaussian curvature of $M^2_z$, considered as a function on $M^4$ by pulling back along the reduction.
	\begin{proposition}
		In the scalar-flat case the equation $u_{xx}+u_{yy}+(e^u)_{zz}=0$ is
		\begin{equation}
			\begin{aligned}
				2K\,dVol_2
				&\;=\;
				\Big(\triangle_2\log|V|^2\Big)\;dVol_2
				+\mathcal{L}_{\frac{\partial}{\partial{}z}}\mathcal{L}_{\frac{\partial}{\partial{}z}}\Big(|V|^2\,dVol_2\Big) \\
				&\;=\;
				\Big(\triangle_2\log|V|^2\Big)\;dVol_2
				+\mathcal{L}_{\frac{\partial}{\partial{}z}}\Big(\triangle_4z\;dVol_2\Big).
			\end{aligned} \label{EqnPropTodaSubs}
		\end{equation}
	\end{proposition}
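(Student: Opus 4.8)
The plan is to turn the claimed $2$-form identity into a single scalar PDE in the LeBrun coordinates $(x,y,z,t)$ and to recognize that PDE as the Toda-lattice equation $u_{xx}+u_{yy}+(e^u)_{zz}=0$. Every term in the statement is $V$-invariant, so it suffices to compute on one reduction $M^2_z$ in the isothermal coordinates $x,y$, where by (\ref{EqnManyVols}) the metric is $|\nabla x|^{-2}(dx^2+dy^2)$ and $dVol_2=|\nabla x|^{-2}\,dx\wedge dy$, and where $|V|^2=|\nabla z|^2$. The key reduction is that the Laplace--Beltrami operator of $M^2_z$ acts on any $V$-invariant $f$ by $\triangle_2 f=|\nabla x|^2(f_{xx}+f_{yy})$, so that the conformal factors cancel against $dVol_2$:
\[
	\triangle_2 f\;dVol_2\;=\;(f_{xx}+f_{yy})\,dx\wedge dy.
\]
This lets me clear all metric factors at once and reduce each $2$-form in the statement to a plain coefficient times $dx\wedge dy$.

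First I would handle the two curvature-type terms. The inherited metric on $M^2_z$ is the conformal metric $|\nabla x|^{-2}(dx^2+dy^2)$, so the standard two-dimensional formula $K=-e^{-2\phi}(\phi_{xx}+\phi_{yy})$ with $e^{2\phi}=|\nabla x|^{-2}$, i.e.\ $\phi=-\tfrac12\log|\nabla x|^2$, gives $K=\tfrac12\triangle_2\log|\nabla x|^2$. Combined with the displayed reduction this yields $2K\,dVol_2=\big((\log|\nabla x|^2)_{xx}+(\log|\nabla x|^2)_{yy}\big)\,dx\wedge dy$, while identically the first right-hand term becomes $(\triangle_2\log|V|^2)\,dVol_2=\big((\log|V|^2)_{xx}+(\log|V|^2)_{yy}\big)\,dx\wedge dy$.

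Next comes the double Lie derivative. By Lemma \ref{LemmaCoordSystemFacts} the functions $x,y$ are constant along the flow of $\partial/\partial z$, so $\mathcal{L}_{\partial/\partial z}(dx\wedge dy)=0$; and by (\ref{EqnLeBrunUW}) we have $|V|^2\,dVol_2=(|\nabla z|^2/|\nabla x|^2)\,dx\wedge dy=e^u\,dx\wedge dy$. Iterating the Lie derivative therefore gives $\mathcal{L}_{\partial/\partial z}\mathcal{L}_{\partial/\partial z}(|V|^2\,dVol_2)=(e^u)_{zz}\,dx\wedge dy$. Assembling the three pieces and cancelling the common factor $dx\wedge dy$, the stated $2$-form identity is exactly
\[
	\big((\log|\nabla x|^2)_{xx}+(\log|\nabla x|^2)_{yy}\big)=\big((\log|V|^2)_{xx}+(\log|V|^2)_{yy}\big)+(e^u)_{zz}.
\]
Since $u=\log|\nabla z|^2-\log|\nabla x|^2=\log|V|^2-\log|\nabla x|^2$ by (\ref{EqnLeBrunUW}), the difference of the two sides is precisely $-(u_{xx}+u_{yy}+(e^u)_{zz})$, so the identity holds if and only if the Toda equation does; scalar-flatness enters only as the hypothesis guaranteeing that first LeBrun equation. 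Finally, the equality of the two right-hand sides printed in the Proposition is immediate from $\mathcal{L}_{\partial/\partial z}(|V|^2\,dVol_2)=(\triangle_4 z)\,dVol_2$ in Lemma \ref{LemmaLieDerivsOfVols}.

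I expect no deep obstacle. The two points requiring care are the sign in the conformal Gaussian-curvature formula, which is pinned down by the choice $e^{2\phi}=|\nabla x|^{-2}$, and the vanishing $\mathcal{L}_{\partial/\partial z}(dx\wedge dy)=0$, which rests on the coordinate construction of Section \ref{SubSecMixedFrame}. Once these are fixed the remainder is bookkeeping of conformal factors, all of which collapse because every term is reduced to the same $dx\wedge dy$.
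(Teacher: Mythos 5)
Your proposal is correct and follows essentially the same route as the paper: both work in the isothermal LeBrun coordinates, use the conformal formula for $K$ (the paper writes it as $K=-h^{-2}(\partial_{xx}+\partial_{yy})\log h$ with $h=|\partial/\partial x|=|\nabla x|^{-1}$, equivalent to your $K=\tfrac12\triangle_2\log|\nabla x|^2$), exploit $\mathcal{L}_{\frac{\partial}{\partial z}}(dx\wedge dy)=0$ together with $|V|^2\,dVol_2=e^u\,dx\wedge dy$ to identify the double Lie derivative with $(e^u)_{zz}\,dx\wedge dy$, and obtain the second displayed equality from $\mathcal{L}_{\frac{\partial}{\partial z}}(|V|^2\,dVol_2)=(\triangle_4 z)\,dVol_2$ of Lemma \ref{LemmaLieDerivsOfVols}. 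The only cosmetic difference is the order of operations: you reduce every term of the claimed $2$-form identity to a coefficient of $dx\wedge dy$ and then match against the Toda equation, whereas the paper starts from the Toda equation and transforms it into the $2$-form identity.
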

	\begin{proof}
		Recall $u=\log\frac{|\nabla{}z|^2}{|\nabla{}x|^2}$, $w=|\nabla{}z|^{-2}$ from (\ref{EqnLeBrunUW}).
		Thus $u_{xx}+u_{yy}+(e^u)_{zz}=0$ is
		\begin{equation}
			0
			\;=\;
			\left(\frac{\partial^2}{\partial{}x^2}+\frac{\partial^2}{\partial{}y^2}\right)\log|\nabla{}z|
			-\left(\frac{\partial^2}{\partial{}x^2}+\frac{\partial^2}{\partial{}y^2}\right)\log|\nabla{}x|
			+\frac12\left(\frac{|\nabla{}z|^2}{|\nabla{}x|^2}\right)_{zz}. \label{EqnTodaSubs}
		\end{equation}
		The classic Gaussian curvature expression is $K=-h^{-2}\left(\partial_{xx}+\partial_{yy}\right)(\log{}h)$ where $h=|\partial/\partial{}x|$ and $(x,y)$ is an isothermal coordinate system.
		Using $|\nabla{}x|=|\partial/\partial{}x|^{-1}$, $dVol_2=|\nabla{}x|^{-2}dx\wedge{}dy$, and the fact that $\mathcal{L}_{\frac{\partial}{\partial{}z}}(dx\wedge{}dy)=0$, then Eq. (\ref{EqnTodaSubs}) becomes
		\begin{equation}
			\small
			\begin{aligned}
				2K\;dVol_2
				&\;=\;
				2\left(|\nabla{}x|^{2}\left(\frac{\partial^2}{\partial{}x^2}+\frac{\partial^2}{\partial{}y^2}\right)\log|\nabla{}x|\right)\left(|\nabla{}x|^{-2}dx\wedge{}dy\right) \\
				&\;=\;
				\left(2\left(\frac{\partial^2}{\partial{}x^2}+\frac{\partial^2}{\partial{}y^2}\right)\log|\nabla{}z|
				+\frac{\partial^2}{\partial{}z^2}\frac{|\nabla{}z|^2}{|\nabla{}x|^2}\right)\;dx\wedge{}dy \\
				&\;=\;
				\left(\left(\frac{\partial^2}{\partial{}x^2}+\frac{\partial^2}{\partial{}y^2}\right)\log|\nabla{}z|^2\right)\;dx\wedge{}dy
				+\mathcal{L}_{\frac{\partial}{\partial{}z}}\mathcal{L}_{\frac{\partial}{\partial{}z}}\left(\frac{|\nabla{}z|^2}{|\nabla{}x|^2}\;dx\wedge{}dy\right).
			\end{aligned}
		\end{equation}
		Then using $(\partial^2_{xx}+\partial^2_{yy})=h^2\triangle_2$ and $dx\wedge{}dy=h^{-2}dVol_2$, we arrive at
		\begin{equation}
			\begin{aligned}
				2K\,dVol_2
				&\;=\;
				\Big(\triangle_2\log|V|^2\Big)\;dVol_2
				+\mathcal{L}_{\frac{\partial}{\partial{}z}}\mathcal{L}_{\frac{\partial}{\partial{}z}}\Big(|V|^2\,dVol_2\Big). \label{EqnGlobalSUToda}
			\end{aligned}
		\end{equation}
		For the second equation in (\ref{EqnPropTodaSubs}), use $\mathcal{L}_{\frac{\partial}{\partial{}z}}(|V|^2\,dVol_2)=(\triangle_4z)dVol_2$ from (\ref{EqnLieDeriveOfVSdVol}).
	\end{proof}
	\begin{proposition}
		In the scalar-flat case the equation $u_{xx}+u_{yy}+(e^u)_{zz}=0$ is
		\begin{equation}
			\begin{aligned}
				2K\,dVol_4
				&\;=\;
				\Big(\triangle_2\log|V|^2\Big)\;dVol_4
				+\mathcal{L}_{\frac{\partial}{\partial{}z}}\Big(\triangle_4z\,dVol_4\Big), \quad \text{or} \\
				2K\,dVol_4&\;=\;
				d\left[
				-Jd\log|V|^2\wedge*dVol_2
				+\frac{1}{|V|}\triangle_4z\,dVol_3
				\right].
			\end{aligned} \label{EqnKOnFourMfld}
		\end{equation}
	\end{proposition}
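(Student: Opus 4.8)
The plan is to lift the two-dimensional identity (\ref{EqnPropTodaSubs}), already established in the previous proposition, up to a statement about $4$-forms and then to recognize the right-hand side as an exact form. For the first step I would simply wedge both sides of (\ref{EqnPropTodaSubs}) on the left by $dz\wedge{}dt$ and invoke $dVol_4=dz\wedge{}dt\wedge{}dVol_2$ from the Integration Lemma (Lemma \ref{LemmaIntegration}). Since $\frac{\partial}{\partial{}z}$ is the coordinate field dual to $dz$, one has $i_{\frac{\partial}{\partial{}z}}dz=1$ and $i_{\frac{\partial}{\partial{}z}}dt=0$, so $\mathcal{L}_{\frac{\partial}{\partial{}z}}(dz\wedge{}dt)=0$; as $\mathcal{L}_{\frac{\partial}{\partial{}z}}$ is a derivation it passes through the factor $dz\wedge{}dt$, and the first line of (\ref{EqnKOnFourMfld}) drops out at once. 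Although $z$ and $t$ are only locally defined, all the forms involved are global, so checking the identity in LeBrun coordinates suffices.

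For the second line of (\ref{EqnKOnFourMfld}) I would show that each term on the right of the first line is exact. The $\triangle_4z$ term is immediate: because $\triangle_4z\,dVol_4$ is top-degree it is closed, so $\mathcal{L}_{\frac{\partial}{\partial{}z}}(\triangle_4z\,dVol_4)=d\,i_{\frac{\partial}{\partial{}z}}(\triangle_4z\,dVol_4)$. Using $\frac{\partial}{\partial{}z}=|V|^{-2}\nabla{}z$ together with $dVol_3=i_{\nabla{}z/|\nabla{}z|}dVol_4$ and $|\nabla{}z|=|V|$ gives $i_{\frac{\partial}{\partial{}z}}dVol_4=|V|^{-1}dVol_3$, so this term equals $d\big(|V|^{-1}\triangle_4z\,dVol_3\big)$, which is precisely the second bracketed term in (\ref{EqnKOnFourMfld}).

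The curvature term is the substantive one. Writing $f=\log|V|^2$ and using $*dVol_2=\omega-dVol_2$ from (\ref{EqnManyVols}), I would split
\[
d\big(-Jdf\wedge *dVol_2\big)=d\big(-Jdf\wedge\omega\big)+d\big(Jdf\wedge dVol_2\big).
\]
For the first summand, $d\omega=0$ and the self-duality $*\omega=\omega$ with $|\omega|^2=2$, combined with the K\"ahler identity $(dJdf)^+=-\tfrac12(\triangle_4f)\omega$ from (\ref{EqnBasicOmega}), give $d(-Jdf\wedge\omega)=-dJdf\wedge\omega=(\triangle_4f)\,dVol_4$. For the second summand, since $f$ is $V$-invariant a short computation (using $Jdx=-dy$, $Jdy=dx$, $Jdz=-V_\flat$) collapses $Jdf\wedge dVol_2$ to $-|V|(\partial f/\partial{}z)\,dVol_3$; applying $d$ and evaluating with Lemma \ref{LemmaExtDerivsOfTwoVols} (equivalently with the value of $d(dVol_3)$ read off from Lemma \ref{LemmaLieDerivsOfVols}) should give $d(Jdf\wedge dVol_2)=(\triangle_2f-\triangle_4f)\,dVol_4$. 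Summing the two summands cancels the $\triangle_4f$ contributions and leaves $(\triangle_2\log|V|^2)\,dVol_4$, the remaining bracketed term of (\ref{EqnKOnFourMfld}).

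The main obstacle is this last identity, $d(Jdf\wedge dVol_2)=(\triangle_2f-\triangle_4f)\,dVol_4$, which amounts to controlling the discrepancy between the intrinsic Laplacian $\triangle_2$ on the reduction and the ambient Laplacian $\triangle_4$ acting on a $V$-invariant function. In LeBrun coordinates this reduces to the clean formula $\triangle_4f=\triangle_2f+|V|^2\big(f_{zz}+u_zf_z\big)$ (consistent with the expression $\triangle_4z=\left<\nabla{}z,\nabla{}u\right>$ of Lemma \ref{LemmaTriangleZ} upon taking $f=z$), and matching this against the explicit exterior derivative of $|V|(\partial f/\partial{}z)\,dVol_3$ is the only genuinely computational point; everything else is bookkeeping with the Lie-derivative and volume-form identities already recorded in \S\ref{SubSecTheTwoForm}.
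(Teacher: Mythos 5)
Your proposal is correct, and although it shares the paper's overall architecture (lift the 2-form identity (\ref{EqnPropTodaSubs}) to a 4-form identity, then exhibit the right-hand side as exact, with the $\triangle_4z$ term handled identically via $\mathcal{L}_{\frac{\partial}{\partial{}z}}=di_{\frac{\partial}{\partial{}z}}$ and $i_{\frac{\partial}{\partial{}z}}dVol_4=|V|^{-1}dVol_3$), both of your key computational steps take a genuinely different route. For the first line, the paper wedges (\ref{EqnPropTodaSubs}) with $*dVol_2$ and must then show that the Leibniz correction $(\triangle_4z)\,dVol_2\wedge\mathcal{L}_{\frac{\partial}{\partial{}z}}(*dVol_2)$ vanishes, using the fourth identity of Lemma \ref{LemmaLieDerivsOfVols} together with the K\"ahler identity $df\wedge{}Jdg\wedge\omega=-\frac12\left<\nabla{}f,\nabla{}g\right>\omega\wedge\omega$; your device of wedging with $dz\wedge{}dt$, which is annihilated by $\mathcal{L}_{\frac{\partial}{\partial{}z}}$, makes that correction disappear automatically, at the mild cost of working in local coordinates --- legitimate, as you note, since the identity to be proved is a pointwise one among globally defined forms. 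For the exactness of the curvature term, the paper writes $(\triangle_2\log|V|^2)\,dVol_2=d\left(i_{\nabla\log|V|^2}dVol_2\right)$, wedges with $*dVol_2$, kills the correction term using the decomposability of $dVol_2$, and then identifies $\left(i_{\nabla\log|V|^2}dVol_2\right)\wedge*dVol_2=-Jd\log|V|^2\wedge*dVol_2$ componentwise; you instead split $*dVol_2=\omega-dVol_2$, harvest $(\triangle_4f)\,dVol_4$ from the $\omega$ piece via $(dJdf)^+=-\frac12(\triangle_4f)\omega$, and absorb the $dVol_2$ piece through the Laplacian comparison $\triangle_4f=\triangle_2f+|V|^2(f_{zz}+u_zf_z)$ for $V$-invariant $f$, which is indeed a one-line consequence of the LeBrun metric (\ref{EqnLeBMetric}). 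Your remaining outlined computation does close: from $Jdf\wedge{}dVol_2=-f_z\,V_\flat\wedge{}dVol_2$, Lemmas \ref{LemmaCompOfdV} and \ref{LemmaExtDerivsOfTwoVols} give $d(V_\flat\wedge{}dVol_2)=(\triangle_4z)\,dVol_4$ and $df_z\wedge{}V_\flat\wedge{}dVol_2=|V|^2f_{zz}\,dVol_4$, so with $\triangle_4z=|\nabla{}z|^2u_z$ (Lemma \ref{LemmaTriangleZ}) one obtains $d(Jdf\wedge{}dVol_2)=-|V|^2(f_{zz}+u_zf_z)\,dVol_4=(\triangle_2f-\triangle_4f)\,dVol_4$ exactly as claimed. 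What your route buys is conceptual transparency --- it isolates the discrepancy between the intrinsic and ambient Laplacians as the single analytic input --- whereas the paper's route stays entirely within its prepared catalogue of volume-form and Lie-derivative identities and never invokes the coordinate expression of the metric.
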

	\begin{proof}
		Taking the exterior product with $*dVol_2$ on both sides of (\ref{EqnPropTodaSubs}), then using the Leibniz rule, we have
		\begin{equation}
			\begin{aligned}
				2K\,dVol_4
				&\;=\;
				\Big(\triangle_2\log|V|^2\Big)\;dVol_4
				+\mathcal{L}_{\frac{\partial}{\partial{}z}}\Big(\triangle_4z\,dVol_2\Big)\wedge*dVol_2 \\
				&\;=\;
				\Big(\triangle_2\log|V|^2\Big)\;dVol_4
				+\mathcal{L}_{\frac{\partial}{\partial{}z}}\Big(\triangle_4z\,dVol_2\wedge*dVol_2\Big) \\
				&\hspace{1.4in}
				-(\triangle_4z)\,dVol_2\wedge\mathcal{L}_{\frac{\partial}{\partial{}z}}\Big(*dVol_2\Big).
			\end{aligned} \label{EqnCompKFourVolInterm}
		\end{equation}
		We show the third term on the right vanishes.
		Using the fourth equation from (\ref{EqnLieDeriveOfVSdVol}) of Lemma \ref{LemmaLieDerivsOfVols} we have
		\begin{equation}
			\begin{aligned}
				dVol_2\wedge\mathcal{L}_{\frac{\partial}{\partial{}z}}\Big(*dVol_2\Big)
				&=-\left(\frac{\partial}{\partial{}z}\log|V|^2\right)dVol_4
				+dz\wedge{}Jd|V|^{-2}\wedge{}dVol_2 \\
				&=-\left(\frac{\partial}{\partial{}z}\log|V|^2\right)dVol_4
				+dz\wedge{}Jd|V|^{-2}\wedge\omega
			\end{aligned}
		\end{equation}
		where we used $\omega=dVol_2+*dVol_2$ with $dz\wedge*dVol_2=0$.
		Then using (\ref{EqnBasicOmega}) to simplify $dz\wedge{}Jd|V|^{-2}\wedge\omega$ we obtain
		\begin{equation}
			\begin{aligned}
				&dVol_2\wedge\mathcal{L}_{\frac{\partial}{\partial{}z}}\Big(*dVol_2\Big)
				=
				-\left(\frac{\partial}{\partial{}z}\log|V|^2\right)dVol_4
				-\left<\nabla{}z,\,\nabla|V|^{-2}\right>\,dVol_4 \\
				&\hspace{1in}
				=-\left(\frac{\partial}{\partial{}z}\log|V|^2\right)dVol_4
				-\left(\frac{\partial}{\partial{}z}\log|V|^{-2}\right)\,dVol_4
				\;=\;0.
			\end{aligned}
		\end{equation}
		We conclude that (\ref{EqnCompKFourVolInterm}) is
		\begin{equation}
			\begin{aligned}
				2K\,dVol_4
				&\;=\;
				\Big(\triangle_2\log|V|^2\Big)\;dVol_4
				+\mathcal{L}_{\frac{\partial}{\partial{}z}}\Big(\triangle_4z\,dVol_4\Big)
			\end{aligned} \label{EqnKOnFourMfldFirstDerived}
		\end{equation}
		which is the first equation of (\ref{EqnKOnFourMfld}).
		For the second equation, note that $\mathcal{L}_{\frac{\partial}{\partial{}z}}=di_{\frac{\partial}{\partial{}z}}$ on 4-forms, and that $i_{\frac{\partial}{\partial{}z}}(dVol_4)=|V|^{-1}dVol_3$ by (\ref{EqnManyVols}).
		Thus
		\begin{equation}
			\small
			\begin{aligned}
				\mathcal{L}_{\frac{\partial}{\partial{}z}}\Big(\triangle_4z\,dVol_4\Big)
				\;=\;d\left(\frac{1}{|V|}\triangle_4z\,dVol_3\right).
			\end{aligned} \label{EqnPropFirstExtDerivSimpl}
		\end{equation}
		For the other term on the right side of (\ref{EqnKOnFourMfldFirstDerived}) we compute
		\begin{equation}
			\begin{aligned}
				&\left(\triangle_2\log|V|^2\right)\;dVol_4
				\;=\;
				\left(\triangle_2\log|V|^2\right)\;dVol_2\wedge*dVol_2 \\
				&\quad\quad
				\;=\;d\left(i_{\nabla{}\log|V|^2}dVol_2\right)\wedge*dVol_2 \\
				&\quad\quad
				\;=\;d\left(i_{\nabla{}\log|V|^2}dVol_2\wedge*dVol_2\right)
				+(i_{\nabla{}\log|V|^2}dVol_2)\wedge{}d(*dVol_2).
			\end{aligned} \label{EqnPropKEqnInter}
		\end{equation}
		We show the second term on the right vanishes.
		From Lemma \ref{EqnVolTwoExtDeriv} we have $d(*dVol)=\left(\frac{\partial}{\partial{}z}\log|\nabla{}x|^2\right)dz\wedge{}dVol_2$.
		Then
		\begin{equation}
			\begin{aligned}
				&(i_{\nabla{}\log|V|^2}dVol_2)\wedge{}d(*dVol_2) \\
				&\quad=
				\left(\frac{\partial}{\partial{}z}\log|\nabla{}x|^2\right)(i_{\nabla{}\log|V|^2}dVol_2)\wedge{}dVol_2\wedge{}dz \\
				&\quad=
				\frac12\left(\frac{\partial}{\partial{}z}\log|\nabla{}x|^2\right)i_{\nabla{}\log|V|^2}(dVol_2\wedge{}dVol_2)\wedge{}dz
				\;=\;0
			\end{aligned}
		\end{equation}
		where we used $i_v(\eta\wedge\eta)=2(i_v\eta)\wedge\eta$ whenever $\eta$ is a 2-form, and that $dVol_2\wedge{}dVol_2=0$ due to the fact that $dVol_2$ is decomposable (it is a multiple of $dx\wedge{}dy$).
		Therefore
		\begin{equation}
			\begin{aligned}
				&\triangle_2\log|V|^2\,dVol_4
				\;=\;
				d\Big((i_{\nabla{}\log|V|^2}dVol_2)\wedge*dVol_2\Big).
			\end{aligned}
		\end{equation}
		Because we have the two expressions
		\begin{equation}
			\begin{aligned}
				i_{\nabla{}\log|V|^2}dVol_2
				&=-\left(\frac{\partial}{\partial{}y}\log|V|^2\right)dx
				+\left(\frac{\partial}{\partial{}x}\log|V|^2\right)dy \\
				Jd\log|V|^2
				&=\left(\frac{\partial}{\partial{}y}\log|V|^2\right)dx
				-\left(\frac{\partial}{\partial{}x}\log|V|^2\right)dy
				-\left(\frac{\partial}{\partial{}z}\log|V|^2\right)V_\flat,
			\end{aligned}
		\end{equation}
		then using $*dVol_2=|V|^{-2}dz\wedge{}V_\flat$ we obtain
		\begin{equation}
			\begin{aligned}
				\left(i_{\nabla{}\log|V|^2}dVol_2\right)\wedge*dVol_2
				\;=\;
				-Jd\log|V|^2\wedge*dVol_2.
			\end{aligned}
		\end{equation}
		Inserting these back into (\ref{EqnKOnFourMfldFirstDerived}), we obtain as claimed
		\begin{equation}
			\begin{aligned}
				2K\,dVol_4
				&\;=\;
				d\left(-Jd\log|V|^2\wedge*dVol_2
				+\frac{1}{|V|}\triangle_4z\,dVol_3\right).
			\end{aligned}
		\end{equation}
	\end{proof}
	\begin{proposition}[cf. Theorem \ref{ThmConstantGrowth}] \label{PropGrowthOfVSqVol}
		Assume $M^4$ is scalar-flat and the level-sets $M^3_z$ are compact.
		Then at non-critical values of $z$ the value $\chi^g(M^2_z)$ is constant and
		\begin{equation}
			\frac{d^2}{dz^2}\int_{M^2_z}|V|^2\,dVol_2
			\;=\;\frac{d}{dz}\int_{M^2_z}\triangle_4z\,dVol_2
			\;=\;4\pi\chi^g(M^2_z). \label{EqnSecondOfTri}
		\end{equation}
	\end{proposition}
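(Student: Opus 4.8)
\emph{Proof proposal.} The plan is to integrate the pointwise ``global Toda'' identity (\ref{EqnPropTodaSubs}) over the compact reduction $M^2_z$, observe that its Laplacian term contributes nothing, and then convert the surviving Lie-derivative term into a genuine $z$-derivative of an integral by exploiting that the $\frac{\partial}{\partial{}z}$-flow permutes the reductions. Concretely, I would begin from the second form of (\ref{EqnPropTodaSubs}),
\begin{equation}
	2K\,dVol_2
	\;=\;
	\left(\triangle_2\log|V|^2\right)dVol_2
	+\mathcal{L}_{\frac{\partial}{\partial{}z}}\left(\triangle_4z\,dVol_2\right),
\end{equation}
and integrate both sides over $M^2_z$, which is a closed orbifold on which $|V|\neq0$ (we work away from zeros of $V$).

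First I would dispose of the Laplacian term: since the orbifold points of $M^2_z$ are exceptional $V$-orbits, where $|V|\neq0$, the function $\log|V|^2$ is smooth on $M^2_z$, and the orbifold divergence theorem---justified exactly as in Lemma \ref{LemmaPartialElliptic}---gives $\int_{M^2_z}\triangle_2\log|V|^2\,dVol_2=0$. Next, because the $\frac{\partial}{\partial{}z}$-flow permutes the reductions $M^2_z$ (Lemma \ref{LemmaPermutes}), differentiation under the integral sign is legitimate and the Lie derivative emerges as a $z$-derivative,
\begin{equation}
	\int_{M^2_z}\mathcal{L}_{\frac{\partial}{\partial{}z}}\left(\triangle_4z\,dVol_2\right)
	\;=\;
	\frac{d}{dz}\int_{M^2_z}\triangle_4z\,dVol_2.
\end{equation}
Together with the definition $\int_{M^2_z}K\,dVol_2=2\pi\chi^g(M^2_z)$ this yields $4\pi\chi^g(M^2_z)=\frac{d}{dz}\int_{M^2_z}\triangle_4z\,dVol_2$, the right-hand equality in (\ref{EqnSecondOfTri}).

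For the remaining equality I would feed in the identity $\mathcal{L}_{\frac{\partial}{\partial{}z}}\left(|V|^2\,dVol_2\right)=(\triangle_4z)\,dVol_2$ from (\ref{EqnLieDeriveOfVSdVol}). Integrating this over $M^2_z$ and once more passing the Lie derivative through the integral via Lemma \ref{LemmaPermutes} gives $\int_{M^2_z}\triangle_4z\,dVol_2=\frac{d}{dz}\int_{M^2_z}|V|^2\,dVol_2$; differentiating this equality of functions of $z$ a second time turns $\frac{d}{dz}\int_{M^2_z}\triangle_4z\,dVol_2$ into $\frac{d^2}{dz^2}\int_{M^2_z}|V|^2\,dVol_2$, closing the chain (\ref{EqnSecondOfTri}).

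It remains to see that $\chi^g(M^2_z)$ is constant at non-critical values, and here I would argue by invariance rather than by computation: over any connected set of regular values the $\frac{\partial}{\partial{}z}$-flow furnishes orbifold diffeomorphisms among the $M^2_z$ (Lemma \ref{LemmaPermutes}), and the orbifold Euler number is an orbifold-diffeomorphism invariant, hence constant. The analytic substance is already contained in (\ref{EqnPropTodaSubs}), so the hard part here is purely bookkeeping: ensuring that (i) no contribution from the orbifold singular points survives the integration of the Laplacian term, and (ii) the interchange of $\frac{d}{dz}$ with $\int_{M^2_z}$ is legitimate. Both reduce to the permutation property of the flow and the smoothness of every integrand away from zeros of $V$.
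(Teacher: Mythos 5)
Your proposal is correct, but it takes a genuinely different route from the paper. The paper's proof works one dimension up: it integrates the $4$-form identity (\ref{EqnKOnFourMfld}) over a slab $M^4_{z_0,z_1}$, applies Stokes' theorem so that the $Jd\log|V|^2$ term dies on the boundary leaves (since $dz$ restricts to zero on $M^3_z$), converts the boundary integrals to $M^2_z$-integrals via the Integration Lemma \ref{LemmaIntegration}, and extracts the derivative by the Fundamental Theorem of Calculus; constancy of $\chi^g(M^2_z)$ is then deduced by showing both sides of the resulting identity vary smoothly in $z$ while $\chi^g$ is rational, hence locally constant. You instead stay in dimension two: you integrate the $2$-form identity (\ref{EqnPropTodaSubs}) directly over the closed orbifold $M^2_z$, kill the $\triangle_2\log|V|^2$ term by the orbifold divergence theorem (legitimate here since $|V|\neq 0$ on a regular level, so $\log|V|^2$ descends smoothly, exactly as the paper argues for $|V|^{-2}$ in Lemma \ref{LemmaPartialElliptic}), and convert the Lie derivative into $\tfrac{d}{dz}$ of the integral via the permutation property of the $\tfrac{\partial}{\partial z}$-flow (Lemma \ref{LemmaPermutes}); this is precisely the strategy the paper itself uses to prove Lemma \ref{LemmaPartialElliptic} and Proposition \ref{PropConstantGrowthInText}, just applied to the curvature identity instead of the volume identity, so the interchange-of-derivative-and-integral step you flag is already validated by the paper's own toolkit. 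Your constancy argument also differs: you invoke orbifold-diffeomorphism invariance of $\chi^g$ under the flow (which descends to the reductions because $\tfrac{\partial}{\partial z}$ commutes with $V$), rather than the paper's smoothness-plus-rationality argument; both are sound, yours is arguably more conceptual and matches the isotopy reasoning the paper uses later in Corollary \ref{CorSignedEuler}, while the paper's Stokes route has the mild advantage of never differentiating under an integral over an orbifold and of exhibiting the identity as a boundary-term computation that generalizes to the transgression arguments of Section \ref{SecCGB}. The second half of your argument (feeding in $\mathcal{L}_{\frac{\partial}{\partial z}}(|V|^2\,dVol_2)=(\triangle_4z)\,dVol_2$ and differentiating once more) coincides with the paper's.
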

	\begin{proof}
		We use the convenient notation $M^4_{z_0,z_1}=\{p\in{}M^4\,\big|\,z_0<z(p)<z_1\}$.
		If $z$ is non-critical at some value, by smoothness and the fact that levelsets are compact, $z$ is non-critical on an interval $z_0<z<z_1$ around that value.
		Integrating both sides of (\ref{EqnKOnFourMfld}) gives
		\begin{equation}
			\small
			\begin{aligned}
				&\int_{M^4_{z_0,z_1}}2K\,dVol_4 \\
				&\hspace{0.5in}=
				\int_{M^4_{z_0,z_1}}
				d\left[
				\left(\frac{1}{|\nabla{}z|}\triangle_4{}z\right)dVol_3
				+dz\wedge{}V_\flat\wedge{}Jd|\nabla{}z|^{-2}
				\right].
			\end{aligned} \label{EqnFirstintForNow}
		\end{equation}
		For the left-hand side of (\ref{EqnFirstintForNow}), Lemma \ref{LemmaIntegration} gives
		\begin{equation}
			\begin{aligned}
				\int_{M^4_{z_0,z_1}}2K\,dVol_4
				=4\pi\int_{z_0}^{z_1} \left(\int_{M^2_z}K\,dVol_2\right)\,dz
				=8\pi^2\int_{z_0}^{z_1}\chi^g(M^2_z)\,dz.
			\end{aligned} \label{EqnDealWithK}
		\end{equation}
		due to the Gauss-Bonnet theorem on orbifolds which states $\chi^g(M^2_z)=\frac{1}{2\pi}\int_{M^2_z}K\,dVol_2$ (as discussed in the introduction).
		For the right-hand side of (\ref{EqnFirstintForNow}), by Stokes theorem and the Integration Lemma \ref{LemmaIntegration} we have
		\begin{equation}
			\begin{aligned}
				&\int_{M^4_{z_0,z_1}}
				d\left[
				\left(\frac{1}{|\nabla{}z|}\triangle_4{}z\right)dVol_3
				+dz\wedge{}V_\flat\wedge{}Jd|\nabla{}z|^{-2}
				\right] \\
				&\quad\quad=
				\int_{M^3_{z_1}}\left(\frac{1}{|\nabla{}z|}\triangle_4{}z\right)dVol_3
				-\int_{M^3_{z_0}}\left(\frac{1}{|\nabla{}z|}\triangle_4{}z\right)dVol_3 \\
				&\quad\quad=
				2\pi\int_{M^2_{z_1}}\triangle_4{}z\,dVol_2
				-2\pi\int_{M^2_{z_0}}\triangle_4{}z\,dVol_2
			\end{aligned} \label{EqnDealWithd}
		\end{equation}
		The terms of the form $\int_{M^3_z}dz\wedge{}V_\flat\wedge{}J|\nabla{}z|^{-2}$ disappear because $M^3_z$ is an integral leaf of $(\nabla{}z)^{\perp}$, so $dz$ vanishes on $M^3_z$.
		Combining (\ref{EqnDealWithK}) and (\ref{EqnDealWithd}), multiplying by $\frac{1}{z_1-z_0}$, and taking a limit, the Fundamental Theorem of Calculus gives
		\begin{equation}
			\frac{d}{dz}\int_{M^2_z}\triangle_4{}z\;dVol_2
			\;=\;
			4\pi\chi^g(M^2_z). \label{EqnDoubleDeriv}
		\end{equation}
		Lemma \ref{LemmaLieDerivsOfVols} gives $(\triangle_4z)dVol_2=\mathcal{L}_{\frac{\partial}{\partial{}z}}(|V|^2dVol_2)$.
		Therefore
		\begin{equation}
			\frac{d^2}{dz^2}\int_{M^2_z}|V|^2\,dVol_2
			=
			\frac{d}{dz}\int_{M^2_z}(\triangle_4z)\,dVol_2
			=
			2\int{}K\,dVol_2
			=
			4\pi\chi^g(M^2). \label{EqnInProofEulerOfBase}
		\end{equation}
		Lastly we show the constancy of $\chi^g(M^2_z)$.
		The left-hand side of (\ref{EqnDoubleDeriv}) is
		\begin{equation}
			\small
			\frac{d}{dz}\int_{M^2_z}(\triangle_4z)dVol_2
			=
			\frac{1}{2\pi}\frac{d}{dz}\int_{M^3_z}\frac{\triangle_4z}{|V|}\,dVol_3
			=
			\frac{1}{2\pi}\int_{M^3_z}\mathcal{L}_{\frac{\partial}{\partial{}z}}\left(\frac{1}{|V|}\triangle_4z\,dVol_3\right). \label{EqnBackToM3}
		\end{equation}
		Away from $|V|=0$, $M^3_z$ is a manifold and $\mathcal{L}_{\frac{\partial}{\partial{}z}}\left(\frac{1}{|V|}\triangle_4z\,dVol_3\right)$ is a smooth 3-form.
		Thus the quantities on both sides of (\ref{EqnDoubleDeriv}) are smoothly varying with $z$.
		But $\chi^g(M^2_z)$ is rational (see, for example, \cite{Scott83}), so therefore is a constant.
	\end{proof}
	
	\begin{corollary}[cf. Theorem \ref{ThmMfldEndTopology}] \label{CorSignedEuler}
		Assume $M^4$ has compact level-sets and $|V|>\epsilon>0$ for sufficiently large $z$ (or large $-z$).
		Then for large $z$ (or $-z$) the numbers $\chi^g(M^2_z)$ and $e^g(M^3_z,M^3_z)$ are constant.
		Further, we have $\chi^g(M^2_z)\ge0$ for large $z$ or $-z$.
		We have $e^g(M^3_z,M^2_z)\ge0$ for large $z$ and $e^g(M^3_z,M^2_z)\le0$ for large $-z$.
	\end{corollary}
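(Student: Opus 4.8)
The plan is to reduce the entire statement to the two ``constant-growth'' identities of Theorem~\ref{ThmConstantGrowth} combined with the obvious positivity of the two geometric integrals
$$A(z)\;=\;\int_{M^2_z}dVol_2,\qquad B(z)\;=\;\int_{M^2_z}|V|^2\,dVol_2,$$
both of which are finite because the level-sets, and hence the reductions $M^2_z$, are compact, and both of which are nonnegative since $dVol_2$ is a volume form and $|V|^2\ge0$. The strategy is then simply that an affine function cannot stay nonnegative on a ray if its slope has the wrong sign, and a quadratic cannot stay nonnegative if it opens downward.

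First I would establish the constancy assertions. Since $\nabla z=-JV$ gives $|\nabla z|=|V|$, the critical points of $z$ are exactly the zeros of $V$. The hypothesis $|V|>\epsilon>0$ for large $z$ therefore guarantees that $z$ has no critical values on a ray $[z_N,\infty)$ (and symmetrically $z$ has no critical values on $(-\infty,z_0]$ in the large-$(-z)$ case). On such a ray Proposition~\ref{PropConstantGrowthInText} gives $A'(z)=2\pi e^g(M^3_z,M^2_z)$ with $e^g$ constant, and the scalar-flat identity of Proposition~\ref{PropGrowthOfVSqVol} gives $B''(z)=4\pi\chi^g(M^2_z)$ with $\chi^g$ constant. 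This yields the constancy of both $\chi^g$ and $e^g$ for large $z$ (resp.\ large $-z$).

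Next I would fix the sign of $e^g$. On the ray where $z$ is non-critical the function $A$ is affine with slope $2\pi e^g$. If $e^g<0$ on $[z_N,\infty)$ then $A$ has negative slope and is unbounded below as $z\to\infty$, contradicting $A\ge0$; hence $e^g\ge0$ for large $z$. Running the identical argument as $z\to-\infty$, an affine $A$ with positive slope would eventually become negative, so $A\ge0$ forces $e^g\le0$ for large $-z$.

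Finally the sign of $\chi^g$. On the same ray $B$ is a quadratic polynomial in $z$ with $B''=4\pi\chi^g$ constant. If $\chi^g<0$ then $B$ is a downward-opening parabola, hence unbounded below, contradicting $B\ge0$; thus $\chi^g\ge0$, and the same estimate applies verbatim toward $z\to-\infty$. I expect no genuine obstacle in this argument: the only points requiring care are the identification of critical values of $z$ with zeros of $V$, which makes the rays genuinely critical-point-free, and the compactness of the level-sets, which is what makes $A$ and $B$ finite and hence makes the ``unbounded below'' contradictions legitimate.
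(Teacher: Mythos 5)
Your proposal is correct and takes essentially the same approach as the paper: both arguments deduce the signs of $e^g$ and $\chi^g$ from the positivity of $\int_{M^2_z}dVol_2$ and $\int_{M^2_z}|V|^2\,dVol_2$ combined with the affine and quadratic growth identities of Propositions \ref{PropConstantGrowthInText} and \ref{PropGrowthOfVSqVol} along an infinite ray of $z$-values. The only cosmetic difference is that the paper obtains the constancy of $\chi^g$ and $e^g$ from the isotopy statement of Lemma \ref{LemmaUnboundedZText}, whereas you read it off from the constancy clauses already built into those two propositions---the same underlying machinery.
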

	\begin{proof}
		By Lemma \ref{LemmaUnboundedZText} we have that the $M^3_z$ and fibrations $M^3_z\rightarrow{}M^2_z$ are isotopic for large $z$; this implies the constancy of $\chi^g(M^2_z)$ and $e^g(M^3_z,M^2_z)$.
		To see that $\chi^g(M^2_z)\ge0$ for large $z$, note that the range of $z$ is infinite, so $\frac{d^2}{dz^2}\int_{M^2_z}|V|^2dVol_2=4\pi\chi^g(M^2_z)$ forces the constant $\chi^g(M^2_z)$ to be non-negative, or else $\int_{M^2_z}|V|^2dVol_2$ would become negative.
		
		To see that $e^g(M^3_z,M^2_z)\ge0$ on $M^{4+}$, note that $\frac{d}{dz}\int_{M^2_z}dVol_2=2\pi{}e^g(M^3_z,M^2_z)$ for all large $z$ forces $e^g(M^3_z,M^2_z)\ge0$ or else $Vol(M^2_z)$ would become negative.
	\end{proof}
	
	\indent\textbf{Remark.}
	We have no need to prove it here, but when the scalar curvature $s$ of $M^4$ is non-zero, then (\ref{EqnsPointwiseEvoForK}) becomes
	\begin{equation}
		\begin{aligned}
			\left(2K-s\right)\,dVol_4
			&\;=\;
			d\left[
			-Jd\log|V|^2\wedge*dVol_2
			+\frac{1}{|V|}\triangle_4z\,dVol_3
			\right].
		\end{aligned} \label{EqnKsTransgressed}
	\end{equation}
	This resembles a Chern-Gauss-Bonnet transgression.
	Despite this resemblance, note that $(2K-s)dVol_4$ is not scale-invariant and this expression has no obvious interpretation as a product of symmetric invariant polynomials.

	%
	%
	%
	%
	%
	%
	\subsection{The Euclidean example} \label{SubsecEuclExample}
	We illustrate the results of the previous two sections with the example that $M^4$ is flat $\mathbb{C}^2$.
	The choices of Killing field with compact level-sets are given by the potential
	\begin{equation}
		z\;=\;-\frac12\alpha|z^1|^2-\frac12\beta|z^2|^2
	\end{equation}
	where $\alpha$, $\beta$ are both positive---if $\alpha$, $\beta$ have different signs then the $M^3_z$ are non-compact.
	For orbits to close up after a diffeomorphism flow of $2\pi$, $\alpha$ and $\beta$ must be coprime integers.
	The easiest example is $\alpha=\beta=1$, where the Killing field is
	\begin{equation}
		V
		=y^1\frac{\partial}{\partial{}x^1}
		-x^1\frac{\partial}{\partial{}y^1}
		+y^2\frac{\partial}{\partial{}x^2}
		-x^2\frac{\partial}{\partial{}y^2}
	\end{equation}
	and the orbits are Hopf circles.
	In this case $M^3_z$ is the sphere $\mathbb{S}^3(\sqrt{-2z})\subset\mathbb{C}^2$ of radius $\sqrt{-2z}$ and $M^2_z$ is the 2-sphere of radius $\sqrt{-z/2}$ and area $-2\pi{}z$.
	We have $|V|^2=2z$ and $\triangle_4{}z=-4$.
	We compute
	\begin{equation}
		\begin{aligned}
			&\int_{M^2_z}|V|^2dVol_2\;=\;4\pi(z)^2,
			\quad
			\frac{d^2}{dz^2}\int_{M^2_z}|V|^2dVol_2
			\;=\;8\pi
			\;=\;4\pi\chi^g(M^2_z) \\
			&\int_{M^2_z}\triangle_4z\,dVol_2
			\;=\;8\pi{}z,
			\quad
			\frac{d}{dz}\int_{M^2_z}\triangle_4z\,dVol_2
			\;=\;8\pi
			\;=\;4\pi\chi^g(M^2_z) \\
			&\int_{M^2_z}dVol_2
			\;=\;-2\pi{}z,
			\quad
			\frac{d}{dz}\int_{M^2_z}dVol_2
			\;=\;-2\pi
			\;=\;2\pi{}e^g(M^3_z,M^2_z),
		\end{aligned}
	\end{equation}
	recalling that the Hopf fibration has $\chi^g(\mathbb{S}^3,\mathbb{S}^2)=-1$.
	Thus we explicitly recover the conclusions of Propositions \ref{PropGrowthOfVSqVol} and \ref{PropConstantGrowthInText}.
	
	Referring to the construction of Proposition \ref{PropConstantGrowthInText}, we have $V_\flat=rJdr$ where $r=(|z^1|^2+|z^2|^2)^{1/2}$ and $dV_\flat=-2\omega$.
	Then the integrand of (\ref{EqnVolGrowthEquality}) is the quantity $|V|^{-4}V_\flat\wedge{}dV_\flat=-2r^{-3}dVol_3$ (the ``$-$'' sign comes from orientation).
	Using $r=\sqrt{-2z}$, we have
	\begin{equation}
		\small
		e^g(M^3_z,M^2_z)
		=\frac{1}{4\pi^2}\int_{M^3_z}|V|^{-4}V_\flat\wedge{}dV_\flat
		=-\frac{1}{4\pi^2}\int_{\mathbb{S}^3(\sqrt{-2z})}\frac{2}{\big(\sqrt{-2z}\big)^3}\,dVol_3
		=-1.
	\end{equation}
	This explicitly recovers Eq. (\ref{EqnVolGrowthEquality}).

	%
	%
	%
	%
	%
	%
	%
	%
	\section{The Chern-Gauss-Bonnet Formalism} \label{SecCGB}
	
	Here we compute the Chern-Gaus-Bonnet integrals based on the existence the symplectomorphic Killing field $V$.
	We show that one of these integrals reduces to
	\begin{equation}
		\frac{d^2}{dz^2}\int_{M^2_z}(\triangle_4z)^2\,dVol_2
		=2\int_{M^2_z}|\Ric|^2\,dVol_2
	\end{equation}
	when level-sets are compact and scalar curvature is zero.
	The computation requires precise manipulation of the details of the theory; for example sign choices are crucial and not always consistent in the literature.
	We take the time to start from the beginning and be careful.
	We conclude in \S\ref{SubSectCGBExamples} by giving explicit demonstrations of our results on the scalar flat LeBrun instantons \cite{LeB89}.
	
	First we record a well-known Bochner identity.
	\begin{lemma} \label{LemmaRicciBochner}
		$d(\triangle_4z)+2\Ric(\nabla{}z)=0$.
		In particular $\frac{\partial}{\partial{}z}(\triangle_4z)=-2\Ric\left(\frac{\nabla{}z}{|\nabla{}z|},\,\frac{\nabla{}z}{|\nabla{}z|}\right)$.
	\end{lemma}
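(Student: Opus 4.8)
The plan is to recognize this as the Bochner--Weitzenb\"ock identity for the Killing field $V$, transported into a statement about $z$ by means of the K\"ahler identities of Lemma \ref{LemmaVZBasics}. Since $\nabla z=-JV$ and $\triangle_4z=*(dV_\flat\wedge\omega)$ are globally defined, the asserted identity is really a pointwise tensorial identity in $V$ and holds wherever $V\neq0$, independently of whether a global potential $z$ exists. So I would phrase everything in terms of $V$ and only translate back to $z$ at the end.

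First I would record the rough-Laplacian identity for a Killing field. Because $V$ is Killing, the endomorphism $A:=\nabla V$ is skew (this is the content of $dV_\flat=2\nabla V_\flat$ from Lemma \ref{LemmaVZBasics}), and the Kostant-type second-derivative formula $\nabla^2_{X,Y}V=R(X,V)Y$ holds, where $\nabla^2_{X,Y}=\nabla_X\nabla_Y-\nabla_{\nabla_XY}$; one checks this is consistent with the Ricci identity via the first Bianchi identity. Tracing over an orthonormal frame gives $\nabla^*\nabla V=\Ric(V)$, and combining with the Weitzenb\"ock formula $\triangle_H=\nabla^*\nabla+\Ric$ on $1$-forms yields the well-known Bochner identity $\triangle_HV_\flat=2\,\Ric(V)$ for a Killing field. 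Next I would convert this into a statement about $\triangle_4z$: using $V_\flat=-Jdz$ from (\ref{EqnBasicZV}), the K\"ahler fact that $J$ commutes with both $d$ and $\triangle_H$ on forms, and $\triangle_Hz=-\triangle_4z$ on functions, I compute $\triangle_HV_\flat=-J\,d(\triangle_4z)$. Equating with $2\,\Ric(V)$ and applying $J$ once more gives $d(\triangle_4z)=-2\,J\big(\Ric(V,\cdot)\big)$. Finally, since $V=J\nabla z$ and $\Ric$ is $J$-invariant on a K\"ahler manifold, $J\big(\Ric(V,\cdot)\big)(X)=\Ric(V,JX)=\Ric(J\nabla z,JX)=\Ric(\nabla z,X)$, so $d(\triangle_4z)+2\,\Ric(\nabla z)=0$ as claimed. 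The scalar consequence is then immediate by pairing with $\tfrac{\partial}{\partial z}=|\nabla z|^{-2}\nabla z$, since $d(\triangle_4z)(\tfrac{\partial}{\partial z})=-2\,|\nabla z|^{-2}\Ric(\nabla z,\nabla z)$.

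As an independent check I would also run the purely curvature-theoretic computation: trace the Hessian as $\triangle_4z=\sum_k\langle A(e_k),Je_k\rangle$, differentiate in a normal frame using $\nabla_XA=R(X,V)\,\cdot$ from the Kostant formula, and then invoke the standard K\"ahler identity $\rho(X,Y)=-\tfrac12\sum_k\langle R(X,Y)e_k,Je_k\rangle$ relating the Ricci form $\rho(\cdot,\cdot)=\Ric(J\cdot,\cdot)$ to the curvature; the two routes must agree. The main obstacle here is not conceptual but the careful bookkeeping of sign and normalization conventions---precisely the hazard the author flags at the opening of Section \ref{SecCGB}. In particular I would fix once and for all the curvature convention $R(X,Y)=\nabla_X\nabla_Y-\nabla_Y\nabla_X-\nabla_{[X,Y]}$, the convention $J\eta=\eta\circ J$ (so that $J^2=-1$ and $Jdz=-V_\flat$), and the Weitzenb\"ock sign, and verify the final identity against the flat $\mathbb{C}^2$ model of \S\ref{SubsecEuclExample}, where $\triangle_4z=-4$ is constant and $\Ric\equiv0$, so both sides vanish.
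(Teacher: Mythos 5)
Your argument is correct and is, at bottom, the same computation as the paper's, just dressed in Hodge-theoretic language: the paper proves the Killing--Bochner identity $\triangle_4V+\Ric(V)=0$ by commuting indices (your Kostant/trace step), transfers it to $\nabla{}z=-JV$ using only that $J$ is parallel and $\Ric$ is $J$-invariant, and then subtracts the function Bochner identity $\triangle_4dz=d\triangle_4z+\Ric(\nabla{}z)$ --- which is exactly the Weitzenb\"ock formula you invoke, specialized to the exact $1$-form $dz$. Your route through $\triangle_HV_\flat=2\Ric(V)$ together with $[\triangle_H,J]=0$ on $1$-forms is equivalent but marginally heavier, since on a K\"ahler manifold that commutation is itself nothing more than ``$\nabla^*\nabla$ commutes with a parallel endomorphism'' plus ``$\Ric$ commutes with $J$,'' i.e.\ the same two ingredients the paper uses directly; neither proof needs the deep K\"ahler package of Hodge identities.

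Two blemishes should be repaired. First, your side-claim that $J$ commutes with $d$ on forms is false --- on functions, $dJdf\ne{}Jd(df)=0$ in general; indeed $dJdf=0$ is precisely pluriharmonicity, as in Lemma \ref{LemmaCoordSystemFacts}. Fortunately you never use it: what your computation actually requires is $[\triangle_H,d]=0$ (always true) and $[\triangle_H,J]=0$ on $1$-forms (K\"ahler). Second, there are two compensating sign slips. With your own stated conventions $V_\flat=-Jdz$ and $\triangle_Hz=-\triangle_4z$, the correct intermediate identity is
\begin{equation*}
\triangle_HV_\flat\;=\;-J\,d(\triangle_Hz)\;=\;+J\,d(\triangle_4z),
\end{equation*}
not $-Jd(\triangle_4z)$ as you wrote; and then applying $J$ to $Jd(\triangle_4z)=2\Ric(V)$ gives $d(\triangle_4z)=-2J\big(\Ric(V,\cdot)\big)$, whereas from your stated intermediate line one would instead get $+2J\big(\Ric(V,\cdot)\big)$. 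The two errors cancel, so your final identity $d(\triangle_4z)+2\Ric(\nabla{}z)=0$ is right, but the chain as written is not internally coherent. Note also that your proposed sanity check on flat $\mathbb{C}^2$ cannot detect this, since there $\triangle_4z$ is constant and $\Ric\equiv0$, so both sides vanish identically; a sharper test is the LeBrun instanton data of \S\ref{SubSectCGBExamples}, where $\triangle_4z=-2(-4z+m^2)/(-2kz+m^2)$ is non-constant and the signs are actually exercised.
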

	\begin{proof}
		First we compute $\triangle{}V_\flat$.
		The Killing condition $V_{i,j}=-V_{j,i}$ forces $V_{j,j}=0$ so the usual commutation identity gives
		\begin{equation}
			\begin{aligned}
				V_{i,jj}
				&\;=\;-V_{j,ij}
				\;=\;-V_{j,ji}-\Riem{}_{ijjs}V_s\;=\;-\Ric{}_{is}V_s.
			\end{aligned}
		\end{equation}
		Because $J$ is covariant constant and $\Ric$ is $J$-invariant, this is
		\begin{equation}
			\triangle_4{}V+\Ric(V)\;=\;0
			\quad\quad\text{or}\quad\quad
			\triangle_4{}dz\;+\;\Ric(\nabla{}z)\;=\;0.
		\end{equation}
		Then we compute
		\begin{equation}
			z_{,ijj}
			\;=\;z_{,jij}
			\;=\;z_{,jji}+\Riem{}_{ijjs}z_{,s}
			\;=\;z_{,jji}+\Ric{}_{is}z_{,s}
		\end{equation}
		which is $\triangle_4dz=d\triangle_4z+\Ric(\nabla{}z)$.
		Thus $d\triangle_4z+2\Ric(\nabla{}z)=0$.
	\end{proof}

	%
	%
	%
	%
	%
	%
	\subsection{The transgression construction} \label{SubSecCGBBasics}
	We review the basic theory and set notation; this section's objective is to record Eq. \ref{EqnFirstTrans}.
	References are \cite{GH78} \cite{KN96} \cite{CS74}.
	Despite our K\"ahler setting it will be easiest to assume the structure group is $SO(4)$, due to the ease of representing the Hodge star on the Lie algebra $\mathfrak{o}(4)$.
	
	Recall the bracket $[A,B]$ when $A\in\bigwedge{}^k\otimes\mathfrak{o}(4)$ and $B\in\bigwedge{}^l\otimes\mathfrak{o}(4)$.
	This is defined by extending the rule $[\eta\otimes\mathfrak{a},\mu\otimes\mathfrak{b}]=\eta\wedge\mu\otimes[\mathfrak{a},\mathfrak{b}]$ by linearity.
	We have the commutation identity $[A,B]=(-1)^{kl+1}[B,A]$.
	
	Let $D$ be a connection $D:\bigwedge^k\otimes\mathfrak{o}(4)\rightarrow\bigwedge^{k+1}\otimes\mathfrak{o}(4)$; then $D$ obeys the Leibniz rule $D(\eta\wedge{}A)=d\eta\wedge{}A+(-1)^{k}\eta\wedge{}DA$ when $\mu\in\bigwedge{}^k$ and $A\in\bigwedge{}^k\otimes\mathfrak{g}$.
	$D$ can be expressed locally as a $\mathfrak{g}$-valued $1$-form $\theta$, where we have $DA=dA+[\theta,A]$.
	The curvature of $D$ is $F=D\circ{}D=d\theta+\frac12[\theta,\theta]\in\bigwedge{}^2\otimes\mathfrak{g}$.
	
	We use two different, but related, Hodge-star operators: one that operates on the vector space $\bigwedge^2$ and the other that operates on the Lie algebra $\mathfrak{o}(4)$.
	They are abstractly the same but it is important for practical reasons to distinguish them, so we use $*:\bigwedge^2\rightarrow\bigwedge^2$ and write $*\eta$ when $\eta$ is a 2-form, and $\star:\mathfrak{o}(4)\rightarrow\mathfrak{o}(4)$ and write $\mathfrak{a}\star$ when $\mathfrak{a}\in\mathfrak{o}(4)$.
	If $A=\eta\otimes\mathfrak{a}$ is a $\mathfrak{o}(4)$-valued 2-form, we write
	\begin{equation}
		*A=(*\eta)\otimes\mathfrak{a}, \quad\quad
		A\star=\eta\otimes(\mathfrak{a}\star), \quad\quad
		*A\star=(*\eta)\otimes(\mathfrak{a}\star).
	\end{equation}
	In a basis, the operators $*$ and $\star$ have expressions in terms of the Levi-Civita antisymmetric symbol $\epsilon$.
	If $\eta=\eta_{ij}$ is a 2-form then $(*\eta)_{ij}=\frac12\epsilon^{kl}{}_{ij}\eta_{kl}$ and if $\mathfrak{a}\in\mathfrak{o}(4)$ is an antisymmetric matrix $\mathfrak{a}=\mathfrak{a}^i_j$ then $(\mathfrak{a}\star)^i_j=-\frac12\mathfrak{a}^k_l\epsilon^{il}{}_{jk}$.
	The operators $*$, $\star$ create projectors $\pi^\pm:\bigwedge^2\rightarrow\bigwedge^\pm$, $\pi^\pm:\mathfrak{o}(4)\rightarrow\mathfrak{o}(3)$ where $\pi^\pm=\frac12(1\pm*)$, $\pi^\pm=\frac12(1\pm\star)$, respectively.
	
	It is well-known that the vector space of quadratic symmetric invariant polynomials on $\mathfrak{o}(4)$ is spanned by just two polynomials, which in polar form are $\mathcal{P}_1(\mathfrak{a},\mathfrak{b})=-\frac12Tr(\mathfrak{a}\mathfrak{b})$ (which 
	is half the polarization of the matrix norm) and $\mathcal{P}_2(\mathfrak{a},\mathfrak{b})=-\frac12Tr(\mathfrak{a}\mathfrak{b}\star)$ (which is twice the polarization of the Pfaffian).
	If $\mathcal{P}$ is a quadratic polynomial on $\mathfrak{o}(4)$, it extends to the algebra $\bigoplus_k\bigwedge{}^k\otimes\mathfrak{o}(4)$ by setting
	\begin{equation}
		\mathcal{P}(\eta\otimes\mathfrak{a},\,\mu\otimes\mathfrak{b})
		\;=\;\eta\wedge\mu\cdot\mathcal{P}(\mathfrak{a},\mathfrak{b})
	\end{equation}
	and extending by bilinearlity.
	For $A\in\bigwedge^k\otimes\mathfrak{o}(4)$, $A\in\bigwedge^l\otimes\mathfrak{o}(4)$ we have the commutator and Leibniz identities
	\begin{equation}
		\begin{aligned}
			&\mathcal{P}(A,\,B)\;=\;(-1)^{kl}\mathcal{P}(B,\,A), \\
			&d\mathcal{P}(A,\,B)\;=\;\mathcal{P}(DA,\,B)+(-1)^k\mathcal{P}(A,\,DB).
		\end{aligned} \label{EqnLeibniz}
	\end{equation}
	Below, we will make use of the identity
	\begin{equation}
		\mathcal{P}(\eta\otimes\mathfrak{a},\;\eta\wedge\mu\otimes\mathfrak{b})\;=\;0 \label{EqnAntiSymOnOneForms}
	\end{equation}
	whenever $\eta\in\bigwedge^1$ and $\mu$ is any other form.
	
	Now assume $\bar\theta\in\bigwedge^1\otimes\mathfrak{g}$ has the particular form $\bar\theta=\eta\otimes{}\mathfrak{a}$ where $\eta$ is a 1-form and $\mathfrak{a}$ is a section $\mathfrak{a}\in\mathfrak{g}$.
	Then $D_1=d+[\theta-\bar\theta,\,\cdot]$ is another connection.
	Interpolating between the original connection $D=d+\theta$ with curvature $F$ and the new connection $D_1=d+\theta-\bar\theta$ with curvature $F_1$, for each $t$ we have a connection $D_t=d+\theta-t\bar\theta$ with curvature
	\begin{equation}
		\begin{aligned}
			F_t
			&=
			d(\theta-t\bar\theta)+\frac12[\theta-t\bar\theta,\,\theta-t\bar\theta]
			=
			d\theta-td\bar\theta
			+\frac12[\theta,\,\theta]
			-[\bar\theta,\,\theta]+t^2[\bar\theta,\,\bar\theta].
		\end{aligned} \label{EqnSpecialF}
	\end{equation}
	Because we assumed $\bar\theta=\eta\otimes{}A$ we have $[\bar\theta,\bar\theta]=\eta\wedge\eta\otimes[A,A]=0$.
	Therefore
	\begin{equation}
		\begin{aligned}
			F_t
			&\;=\;
			d\theta+\frac12[\theta,\,\theta]-t\left(d\bar\theta+[\theta,\bar\theta]\right)
			\;=\;F\,-\,tD\bar\theta.
		\end{aligned}
	\end{equation}
	Then when $\mathcal{P}$ a symmetric invariant polynomial we have
	\begin{equation}
		\mathcal{P}(F_1,F_1)\,-\,\mathcal{P}(F,F)
		\;=\;\int_0^1\frac{d}{dt}\mathcal{P}(F_t,F_t)\,dt
		\;=\;2\int_0^1\mathcal{P}\left(\frac{d}{dt}F_t,F_t\right)\,dt.
	\end{equation}
	By (\ref{EqnSpecialF}) we have $F_t=F-tD\bar\theta$ so $\frac{d}{dt}F_t=-D\bar\theta$.
	Thus
	\begin{equation}
		\begin{aligned}
			\mathcal{P}(F_1,F_1)\,-\,\mathcal{P}(F,F)
			&\;=\;-2\int_0^1\mathcal{P}\left(D\bar\theta,F-tD\bar\theta\right)\,dt \\
			&\;=\;
			-2\mathcal{P}\left(D\bar\theta,F\right)
			+\mathcal{P}\left(D\bar\theta,D\bar\theta\right).
		\end{aligned} \label{EqnFirstTransgressionComp}
	\end{equation}
	To simplify this, we use the Leibniz rule (\ref{EqnLeibniz}) and the fact that $DF=0$ to obtain
	\begin{equation}
		\mathcal{P}\left(D\bar\theta,F\right)
		=d\left(\mathcal{P}\left(\bar\theta,F\right)\right)
	\end{equation}
	Then by the Leibniz rule we have
	\begin{equation}
		\begin{aligned}
			\mathcal{P}\left(D\bar\theta,D\bar\theta\right)
			&\;=\;d\left(\mathcal{P}\left(\bar\theta,D\bar\theta\right)\right)
			+\mathcal{P}\left(\bar\theta,D\circ{}D\bar\theta\right) \\
			&\;=\;d\left(\mathcal{P}\left(\bar\theta,D\bar\theta\right)\right)
			+\mathcal{P}\left(\bar\theta,F\wedge\bar\theta\right) \\
			&\;=\;d\left(\mathcal{P}\left(\bar\theta,D\bar\theta\right)\right)
			+\mathcal{P}\left(\eta\otimes{A},F\wedge\eta\otimes{A}\right)
			\;=\;d\left(\mathcal{P}\left(\bar\theta,D\bar\theta\right)\right)
		\end{aligned}
	\end{equation}
	where we also used (\ref{EqnAntiSymOnOneForms}).
	Equation (\ref{EqnFirstTransgressionComp}) is therefore
	\begin{equation}
		\mathcal{P}(F,F)
		\;=\;
		\mathcal{P}(F_1,F_1)
		+d\Big(2\mathcal{P}(\bar\theta,\,F)
		-\mathcal{P}(\bar\theta,D\bar\theta)\Big). \label{EqnFirstTrans}
	\end{equation}
	
	The curvature tensor $F\in\bigwedge^2\otimes\mathfrak{o}(4)$ is usually expressed as a matrix of 2-forms $F=(F^i_j)$.
	For the Levi-Civita connection, $F$ relates to the Riemann tensor $\Riem$ by
	\begin{equation}
		F^i_j\;=\;-\frac12\Riem{}_{stj}{}^i\,\eta^s\wedge\eta^t
	\end{equation}
	where we use the sign convention $sec(e_i,e_j)=\Riem_{ijji}$.
	The decomposition $\bigwedge^2=\bigwedge^+\oplus\bigwedge^-$ breaks $\Riem\in\bigwedge^2\otimes\bigwedge^2$ into the four components $\Riem^{\pm\pm}\in\bigwedge^\pm\otimes\bigwedge^\pm$.
	For example $\Riem^{+-}=\frac14(1+*)F(1-\star)$.
	It is well-known \cite{SiTh69}, \cite{Besse} that
	\begin{equation}
		\begin{aligned}
			&\Riem{}^{++}=W^++\frac{s}{12}Id_{\bigwedge^+}, \quad
			\Riem{}^{--}=W^-+\frac{s}{12}Id_{\bigwedge^-}, \\
			&\hspace{0.8in}
			\Riem{}^{+-}+\Riem{}^{-+}=\frac12\cRic\KNP{}g.
		\end{aligned} \label{EqnRmDecomp}
	\end{equation}
	It is typical \cite{DK97} to use $F^+$ to indicate $\frac12(F+F\star)$---as opposed to $\frac12(F+*F)$---so for example $F^+=\Riem{}^{++}+\Riem^{-+}$.
	For K\"ahler metrics, Proposition 2 of \cite{Derd83} states $W^+=-\frac{s}{12}\left(Id_{\bigwedge^+}+\frac32\omega\otimes\omega\right)$.
	(To resolve a sign issue with Derdzhinski's Eq. (36) of \cite{Derd83}, because our convention on the Riemann tensor is the opposite to the one used there, a tensor $T_{ijkl}:\bigwedge^+\rightarrow\bigwedge^+$ operates by $T(\zeta)_{ij}=\frac12T_{ijst}\zeta^{ts}$---notice the index reversal---so that $Id_{\bigwedge^+}=-\frac12\omega\otimes\omega-\frac12\eta\otimes\eta-\frac12\mu\otimes\mu$ where $\eta,\mu\in\bigwedge^+$ are orthogonal and of length $\sqrt{2}$.)
	We have the well known facts for K\"ahler metrics
	\begin{equation}
		\Riem{}^{++}\;=\;-\frac{s}{8}\omega\otimes\omega,
		\quad\quad
		|\Riem{}^{++}|^2\;=\;\frac14s^2. \label{EqnDerdRiemPlPl}
	\end{equation}

	%
	%
	%
	%
	%
	%
	\subsection{Construction of the Chern-Gauss-Bonnet integrands}
	This section's objective is to record Eq. (\ref{EqnTransgressionOfRic}).
	Recall the decomposition of $dJdz\in\bigwedge^2$ into its $\bigwedge^\pm$ components gives
	\begin{equation}
		dJdz\;=\;-\frac12(\triangle_4{}z)\,\omega\,+\,(dJdz)^-. \label{EqnDJDDecomp}
	\end{equation}
	Because $Jdz=-V_\flat$ we have $(dV_\flat)^+=\frac12(\triangle_4z)\omega$.
	Let $\{e_1,e_2,e_3,e_4\}$ be any orthonormal frame and $\{\eta^1,\eta^2,\eta^3,\eta^4\}$ its coframe.
	Because $\nabla{}V_\flat$ is antisymmetric, then $\nabla{}V$ is antisymmetric in the sense that $V^i{}_{,j}=\eta^i(\nabla_{e_j}V)$ is an anti-symmetric matrix, so it is a section with values in $\mathfrak{o}(4)$.
	By (\ref{EqnBasicFirst}) we have $(dJdz)_{ij}=-2V^s{}_{,j}g_{si}$.
	Using $\nabla{}V^+=\frac12(\nabla{}V+\nabla{}V\star)$ and (\ref{EqnDJDDecomp}) we have
	\begin{equation}
		(\nabla{}V)^+\;=\;-\frac14(\triangle{}z)\,J
		\quad\text{or}\quad
		\left[(\nabla{}V)^+\right]^i_j
		\;=\;-\frac14(\triangle{}z)\,J_j{}^i \label{EqnPosDerivOfV}
	\end{equation}
	where $J=J_s{}^t\eta^s\otimes{}e_t$ is the complex structure.
	(The ``$-$'' sign in (\ref{EqnPosDerivOfV}) comes from $\omega_{sj}g^{si}=-J_j{}^i$.)
	
	Let $\{e_1,e_2,e_3,e_4\}$ be an orthonormal frame adapted to $V$ in the sense that $[V,e_i]=0$ and let $\{\eta^1,\eta^2,\eta^3,\eta^4\}$ be the associated coframe.
	Let $\nabla=d+\theta$ be Levi-Civita differentiation, so $\theta$ is an $\mathfrak{o}(4)$-valued 1-form.
	Letting $w$ be any vector, $i_w\theta$ is a matrix with components $i_w\theta^i_j=\eta^i(\nabla_we_j)$.
	Recalling that $\nabla{}V$ is antisymmetric, we create a new $\mathfrak{o}(4)$-valued 1-form $\bar\theta$ by
	\begin{equation}
		\bar\theta\;=\;\frac{1}{|V|^2}V_\flat\otimes\nabla{}V. \label{EqnThetaBar}
	\end{equation}
	Then $\bar\theta\in\bigwedge{}^1\otimes\mathfrak{o}(4)$ is a tensor away from $V=0$.
	Therefore $\theta-\bar\theta$ is a connection 1-form.
	It has connection $D_1=d+[\theta-\bar\theta,\,\cdot]$ and curvature $F_1$.
	We show that $V$ is a null vector of $F_1$.
	First we verify that the connection $\theta-\bar\theta$ has a null vector:
	\begin{equation}
		\begin{aligned}
			i_V(\theta-\bar\theta)^i_j
			&\;=\;i_V\theta^i_j-i_V\left(\eta^1\otimes\frac{1}{|V|}\eta^i(\nabla_{e_j}V)\right) \\
			&\;=\;\eta^i(\nabla_Ve_j)-\eta^i(\nabla_{e_j}V)
			\;=\;\eta^i([V,\,e_j])
			\;=\;0.
		\end{aligned}
	\end{equation}
	Next we verify that $\theta$ and $\bar\theta$ are $V$-invariant in the sense that $\mathcal{L}_V\theta=0$.
	Very simply, $\mathcal{L}_V\theta^i_j=\mathcal{L}_V(\eta^j(\nabla{}e_j))$; then adaptedness of the frame gives $\mathcal{L}_V(\eta^i)=0$ and $\mathcal{L}_V(e_j)=0$, and we have $\mathcal{L}_V(\nabla)=0$ because $V$ is Killing.
	Thus the Leibniz rule gives $\mathcal{L}_V\theta=0$.
	Essentially the same argument gives $\mathcal{L}_V\bar\theta=0$.
	
	Now we establish that $F_1$ has a null vector.
	Because $i_V(\theta-\bar\theta)=0$ we have $i_V[\theta-\bar\theta,\theta-\bar\theta]=0$.
	Because $i_V(\theta-\bar\theta)=0$ and $\mathcal{L}_V(\theta-\bar\theta)=0$, then the fact that $\mathcal{L}_V=di_V+i_Vd$ on $\mathfrak{o}(4)$-valued 1-forms gives $i_Vd(\theta-\bar\theta)=0$.
	Therefore
	\begin{equation}
		\begin{aligned}
			i_VF_1
			&=i_Vd(\theta-\bar\theta)\,-\,\frac12i_V[\theta-\bar\theta,\,\theta-\bar\theta]
			=0.
		\end{aligned}
	\end{equation}
	Because $F_1$ has a null vector, the 4-form $\mathcal{P}(F_1,F_1)$ has a null vector so certainly $\mathcal{P}(F_1,F_1)=0$.
	Therefore (\ref{EqnFirstTrans}) gives
	\begin{equation}
		\begin{aligned}
			\mathcal{P}(F,F)
			&=d\,\bigg(2\mathcal{P}(\bar\theta,\,F)-\mathcal{P}(\bar\theta,\,D\bar\theta)\bigg).
		\end{aligned} \label{EqnComputationOfTRansgressions}
	\end{equation}
	We apply this to the usual Chern-Gauss-Bonnet formulas for the signature $\tau(M^4)$ and Euler invariant $\chi(M^4)$.
	The 4-forms $\mathcal{P}_\tau=\frac{1}{12\pi^2}\mathcal{P}_1(F,F)$ and $\mathcal{P}_\chi=\frac{1}{8\pi^2}\mathcal{P}_2(F,F)$ have the expressions\footnote{Some presentations have a factor of 1 rather than $\frac14$ on the Weyl components.
		This depends on whether the Weyl tensors are normed as operators $\bigwedge^\pm\rightarrow\bigwedge^\pm$ as in \cite{Derd83}, or as tensors as $|W|^2=W_{ijkl}W^{ijkl}$.
		The latter is 4 times the former.
		We use the latter, hence the factor of $\frac14$.}
	\begin{equation}
		\begin{aligned}
			&\mathcal{P}_{\tau}\;=\;\frac{1}{12\pi^2}\left(\frac14|W^+|^2-\frac14|W^-|^2\right)\,dVol, \\
			&\mathcal{P}_{\chi}\;=\;\frac{1}{8\pi^2}\left(\frac{1}{24}s^2\,-\,\frac12|\cRic|^2\,+\,\frac14|W^+|^2\,+\,\frac14|W^-|^2\right)\,dVol.
		\end{aligned}
	\end{equation}
	By (\ref{EqnComputationOfTRansgressions}) we have transgressions
	\begin{equation}
		\begin{aligned}
			&\mathcal{P}_{\tau}\;=\;d\mathcal{TP}_\tau \quad\text{where}\quad
			\mathcal{TP}_\tau\;=\;-\frac{1}{24\pi^2}\left(2\bar\theta^i_j\wedge{}F^j_i - \bar\theta^i_j\wedge{}D\bar\theta^j_i\right), \quad\text{and} \\
			&\mathcal{P}_{\chi}\;=\;d\mathcal{TP}_\chi \quad\text{where}\quad
			\mathcal{TP}_\chi\;=\;-\frac{1}{16\pi^2}\left(2\bar\theta^i_j\wedge(F\star)^j_i - \bar\theta^i_j\wedge(D\bar\theta\star)^j_i\right)
		\end{aligned} \label{EqnTwoTransgressionComp}
	\end{equation}
	which hold whenever $V\ne0$.
	Using $|\Riem|^2=\frac16s^2+2|\cRic|^2+|W^+|^2+|W^-|^2$ and $|\Riem{}^{++}|=\big|\frac{1}{12}s\,Id_{\bigwedge^+}+W^+\big|^2=\frac{1}{12}s^2+|W^+|^2$, we obtain the useful combination
	\begin{equation}
		\begin{aligned}
			-8\pi^2(3\mathcal{P}_\tau+2\mathcal{P}_\chi)
			\;=\;
			\big(|\cRic|^2\,-\,|\Riem{}^{++}|^2\big)\,dVol.
		\end{aligned}
	\end{equation}
	Other combinations of $\mathcal{P}_{\chi}$ and $\mathcal{P}_{\tau}$ are possible, but this combination is particularly useful because of the simplicity of its transgression:
	\begin{equation}
		\begin{aligned}
			-8\pi^2(3\mathcal{TP}_\tau+2\mathcal{TP}_\chi),
			=2\bar\theta^i_j\wedge\left(2(F^+)^j_i-(D\bar\theta^+)^j_i\right)
		\end{aligned} \label{EqnsMainTransgression}
	\end{equation}
	where the $D\bar\theta^+$ term is easy to work with due to (\ref{EqnPosDerivOfV}).
	We abbreviate
	\begin{equation}
		\begin{aligned}
			\mathcal{P}_{\Ric}
			&\;\triangleq\;-8\pi^2(3\mathcal{P}_\tau+2\mathcal{P}_\chi),
			\quad\text{which is transgressed by} \\
			\mathcal{TP}_{\Ric}
			&\;\triangleq\;2\bar\theta^i_j\wedge\left(2(F^+)^j_i-(D\bar\theta^+)^j_i\right)
		\end{aligned}
	\end{equation}
	so $\mathcal{P}_{\Ric}=d\mathcal{TP}_{\Ric}$ when $V\ne0$.
	Lastly we consider the K\"ahler case.
	In that case $|\Riem{}^{++}|^2=\frac14s^2$ and $|\cRic|^2=|\Ric|^2-\frac14s^2$ and we have the key expression
	\begin{equation}
		\small
		\begin{aligned}
			\mathcal{P}_{\Ric}
			&=\left(|\Ric|^2\,-\,\frac12s^2\right)\,dVol_4
			=
			d\left[
			2\bar\theta^i_j\wedge\left(2(F^+)^j_i
			-(D\bar\theta^+)^j_i\right)
			\right]
			=d\mathcal{TP}_{\Ric}.
		\end{aligned} \label{EqnTransgressionOfRic}
	\end{equation}
	\begin{proposition}[cf. Theorem \ref{PropIntroCompactWithNoZeros}] \label{PropMainTextCompactWithNoZeros}
		Let $M^4$ be a compact, scalar-flat K\"ahler manifold with a Killing field $V$.
		If $V$ is nowhere zero, then $M^4$ is flat.
	\end{proposition}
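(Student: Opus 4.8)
The plan is to combine the Chern--Gauss--Bonnet transgression (\ref{EqnTransgressionOfRic}) with a Bochner identity and a de Rham splitting. The crucial structural observation is that the hypothesis $V\neq0$ \emph{everywhere} is exactly what makes the transgression potential $\mathcal{TP}_{\Ric}$ a globally defined smooth $3$-form on the closed manifold $M^4$: the tensor $\bar\theta=|V|^{-2}V_\flat\otimes\nabla V$ of (\ref{EqnThetaBar}) is smooth wherever $V$ does not vanish, and hence so is $\mathcal{TP}_{\Ric}=2\bar\theta^i_j\wedge\bigl(2(F^+)^j_i-(D\bar\theta^+)^j_i\bigr)$. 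This global definedness is what lets Stokes' theorem be applied with no boundary contribution.

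First I would specialize the K\"ahler identity (\ref{EqnTransgressionOfRic}) to the scalar-flat case. Setting $s=0$ gives $\mathcal{P}_{\Ric}=|\Ric|^2\,dVol_4=d\mathcal{TP}_{\Ric}$ pointwise on all of $M^4$. Since $M^4$ is compact without boundary and $\mathcal{TP}_{\Ric}$ is globally smooth, Stokes' theorem yields
\[
\int_{M^4}|\Ric|^2\,dVol_4=\int_{M^4}d\mathcal{TP}_{\Ric}=0.
\]
As the integrand is non-negative, this forces $\Ric\equiv0$, so $M^4$ is Ricci-flat.

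Next I would upgrade Ricci-flatness to parallelism of $V$. The standard Bochner identity for a Killing field on a closed manifold, $\int_{M^4}|\nabla V|^2\,dVol_4=\int_{M^4}\Ric(V,V)\,dVol_4$ (consistent with the computation $\triangle_4V+\Ric(V)=0$ in Lemma \ref{LemmaRicciBochner}), now gives $\nabla V=0$; equivalently $\nabla V_\flat=0$, which matches $dV_\flat=2\nabla V_\flat$ from Lemma \ref{LemmaVZBasics}. Because $J$ is parallel, $\nabla z=-JV$ is parallel as well, so $V$ and $JV$ span a parallel, $J$-invariant $2$-plane distribution, with $\langle V,JV\rangle=0$ and $|V|=|JV|$ nowhere zero. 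I would then invoke the de Rham decomposition theorem on the complete, simply connected universal cover $\widetilde{M}^4$: the parallel distribution splits $\widetilde{M}^4$ isometrically as a Riemannian product of two surfaces. Since Ricci curvature of a product is the direct sum, each $2$-dimensional factor is Ricci-flat, and in real dimension $2$ this means vanishing Gaussian curvature. Hence $\widetilde{M}^4$ is flat, and therefore so is $M^4$.

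The main obstacle is the very first step: one must be certain that $\mathcal{TP}_{\Ric}$ is genuinely globally defined so that Stokes' theorem produces no boundary term. This is precisely where the nowhere-vanishing of $V$ is indispensable. If $V$ had zeros, $\bar\theta$ would blow up along the zero set, and integrating $d\mathcal{TP}_{\Ric}$ over $M^4$ minus small tubes about $\{V=0\}$ would pick up transgression contributions on the bounding hypersurfaces, so one could no longer conclude $\int_{M^4}|\Ric|^2\,dVol_4=0$. Everything downstream (Bochner, de Rham) is standard once Ricci-flatness is in hand.
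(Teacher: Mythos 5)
Your proposal is correct, and its first half coincides exactly with the paper's argument: the paper likewise observes that $V\ne0$ makes the transgression in (\ref{EqnTransgressionOfRic}) globally smooth, applies Stokes' theorem on the closed manifold with $s=0$ to get $\int_{M^4}|\Ric|^2\,dVol_4=0$, and concludes $\Ric\equiv0$. Where you diverge is in upgrading Ricci-flatness to flatness. The paper stays inside the Chern--Gauss--Bonnet formalism: since scalar-flat K\"ahler forces $W^+=0$ (via Derdzi\'nski's formula), the signature form $\mathcal{P}_\tau$ reduces to a negative multiple of $|W^-|^2\,dVol_4$, which is smoothly transgressed by $\mathcal{TP}_\tau$ from (\ref{EqnTwoTransgressionComp}); integrating kills $W^-$, and then $s=0$, $\cRic=0$, $W^\pm=0$ give $\Riem\equiv0$ componentwise. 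You instead use the classical Bochner identity for Killing fields on a closed manifold, $\int|\nabla V|^2=\int\Ric(V,V)$ (consistent with $\triangle_4V+\Ric(V)=0$ in Lemma \ref{LemmaRicciBochner}), to conclude $\nabla V=0$, and then the de Rham decomposition: the parallel orthogonal fields $V$ and $JV=-\nabla z$ split the universal cover as $\mathbb{R}^2\times\Sigma^2$, and Ricci-flatness forces the surface factor to have zero Gaussian curvature, hence flatness. Both completions are sound. The paper's route buys uniformity---one mechanism (transgression plus Stokes) disposes of every curvature component, with no appeal to splitting theory---while your route is more elementary after Ricci-flatness and extracts extra structure along the way ($V$ is parallel with constant norm, and $M^4$ is locally a metric product), at the mild cost of invoking de Rham's theorem. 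One small remark: in your second half the nowhere-vanishing of $V$ is automatic (a parallel, not identically zero field has constant nonzero norm), so the hypothesis is genuinely consumed only where you say it is, in the smoothness of $\bar\theta$ and $\mathcal{TP}_{\Ric}$.
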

	\begin{proof}
		The scalar-flat K\"ahler condition implies $s$, $W^+$, and $\Riem{}^{++}$ are all zero.
		The fact that $V\ne0$ implies the transgression on the right-hand side (\ref{EqnTransgressionOfRic}) is smooth.
		Integration by parts gives
		\begin{equation}
			\int_{M^4}|\Ric|^2\,dVol_4
			\;=\;
			\int_{M^4}d\left[
			2\bar\theta^i_j\wedge\left(2(F^+)^j_i-(D\bar\theta^+)^j_i\right)
			\right]
			\;=\;0
		\end{equation}
		so that $\Ric\equiv0$.
		Similarly, $\mathcal{P}_\tau=-\frac{1}{96\pi^2}|W^-|^2$ is smoothly transgressed by (\ref{EqnTwoTransgressionComp}), so integrating gives $W^-\equiv0$.
		We conclude that $\Riem\equiv0$.
	\end{proof}

	%
	%
	%
	%
	%
	%
	\subsection{Simplifying the Transgression}
	We simplify $\mathcal{TP}_{\Ric}$ in three steps, at the end arriving at a very usable expression.
	The aim is to record Eq. (\ref{EqnSecondDerivOfTriSq}).
	
	Given a K\"ahler metric, recall the Ricci tensor is $J$-invariant and $\rho=\Ric(J\cdot,\cdot)$ is a $(1,1)$-form called the Ricci form.
	We use $\rho_0=\cRic(J\cdot,\cdot)$ for the primitive Ricci form, and note that $\rho_0=\rho-\frac{1}{4}s\omega$.
	For convenience we set $\eta=\frac{1}{|V|}V_\flat$.
	\begin{lemma}[Step 1] \label{LemmaTPRicComp1}
		If the metric is K\"ahler and $V$ is Hamiltonian then
		\begin{equation}
			\begin{aligned}
				\mathcal{TP}_{\Ric}
				&\;=\;
				2\left(\frac{1}{|V|}\triangle_4z\right)\,\eta\wedge\rho
				+\frac12\left(\frac{1}{|V|}\triangle_4z\right)^2\eta\wedge{}d\eta.
			\end{aligned} \label{EqnTPRicFirst}
		\end{equation}
		If $V$ is not Hamiltonian, (\ref{EqnTPRicFirst}) remains true after replacing $\triangle_4z$ by $*\left[(dV_\flat)^+\wedge\omega\right]$.
	\end{lemma}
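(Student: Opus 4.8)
The plan is to expand the definition $\mathcal{TP}_{\Ric} = 2\bar\theta^i_j\wedge\bigl(2(F^+)^j_i - (D\bar\theta^+)^j_i\bigr)$ into its two natural pieces, a \emph{curvature term} $4\,\bar\theta^i_j\wedge(F^+)^j_i$ and a \emph{quadratic term} $-2\,\bar\theta^i_j\wedge(D\bar\theta^+)^j_i$, and to match the first with the $\eta\wedge\rho$ summand and the second with the $\eta\wedge d\eta$ summand of (\ref{EqnTPRicFirst}). Throughout I would write $\bar\theta=\frac{1}{|V|^2}V_\flat\otimes\nabla{}V$ and pull the scalar $1$-form factor $\frac{1}{|V|^2}V_\flat$ out of every contraction, so that each term becomes $\frac{1}{|V|^2}V_\flat$ wedged against a matrix trace of $\mathfrak{o}(4)$-valued forms.

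For the curvature term, first I would write (with summation over repeated indices) $\bar\theta^i_j\wedge(F^+)^j_i = \frac{1}{|V|^2}V_\flat\wedge\bigl[(\nabla{}V)^i_j\,(F^+)^j_i\bigr]$, a wedge of $V_\flat$ with the trace of $\nabla{}V$ against $F^+$. Since $F^+$ is valued in the self-dual part of $\mathfrak{o}(4)$ and the trace form pairs $\mathfrak{o}^+$ orthogonally to $\mathfrak{o}^-$, only the self-dual part of $\nabla{}V$ survives, so the trace equals $(\nabla{}V)^+{}^i_j\,(F^+)^j_i$. Substituting $(\nabla{}V)^+ = -\tfrac14(\triangle_4 z)J$ from (\ref{EqnPosDerivOfV}) reduces the whole term to the contraction of the complex structure against the curvature, $J_j{}^i\,(F^+)^j_i$. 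Using that $J$ (equivalently $\omega$) is self-dual, the sign convention $F^i_j=-\tfrac12\Riem{}_{stj}{}^i\eta^s\wedge\eta^t$, and the decomposition (\ref{EqnRmDecomp}), this contraction is a fixed multiple of the Ricci form; reconciling constants should give $J_j{}^i(F^+)^j_i = -2\rho$ and hence $4\,\bar\theta^i_j\wedge(F^+)^j_i = 2\bigl(\tfrac{1}{|V|}\triangle_4 z\bigr)\,\eta\wedge\rho$.

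For the quadratic term I would compute $D\bar\theta$ by the Leibniz rule, $D\bar\theta = d\bigl(\tfrac{1}{|V|^2}V_\flat\bigr)\otimes\nabla{}V - \tfrac{1}{|V|^2}V_\flat\wedge D\nabla{}V$, and then take the $\mathfrak{o}^+$ projection on the Lie-algebra slot. The crucial simplification is that when $(D\bar\theta)^+$ is wedged against $\bar\theta$, which already carries a factor of $V_\flat$, the $D\nabla{}V$ summand contributes $V_\flat\wedge V_\flat=0$ and drops out entirely, so no Kostant-type identity for $D\nabla{}V$ is needed. What remains is $\frac{1}{|V|^2}V_\flat\wedge d\bigl(\frac{1}{|V|^2}V_\flat\bigr)$ times the scalar $(\nabla{}V)^i_j(\nabla{}V)^+{}^j_i$, which by $\mathfrak{o}^\pm$-orthogonality equals $(\nabla{}V)^+{}^i_j(\nabla{}V)^+{}^j_i = \tfrac{1}{16}(\triangle_4 z)^2\,\mathrm{Tr}(J^2) = -\tfrac14(\triangle_4 z)^2$. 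Collecting the factor $-2$ and using $\frac{1}{|V|^2}V_\flat\wedge d\bigl(\frac{1}{|V|^2}V_\flat\bigr) = \frac{1}{|V|^4}V_\flat\wedge dV_\flat = \frac{1}{|V|^2}\,\eta\wedge d\eta$ yields exactly $\tfrac12\bigl(\tfrac{1}{|V|}\triangle_4 z\bigr)^2\,\eta\wedge d\eta$.

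For the non-Hamiltonian case I would note that $\triangle_4 z$ enters the computation only through $(\nabla{}V)^+$, which by (\ref{EqnPosDerivOfV}) is governed by $(dV_\flat)^+ = \tfrac12(\triangle_4 z)\omega$; since $*\bigl[(dV_\flat)^+\wedge\omega\bigr]=\triangle_4 z$ whenever $z$ exists and the left-hand side is defined globally regardless, every occurrence of $\triangle_4 z$ may be replaced by $*\bigl[(dV_\flat)^+\wedge\omega\bigr]$. The main obstacle is the constant- and sign-bookkeeping in identifying $J_j{}^i(F^+)^j_i$ with $-2\rho$: this requires carefully reconciling the paper's Riemann-tensor sign convention, the Lie-algebra star $\star$ entering $F^+$, and the decomposition (\ref{EqnRmDecomp}), where one is easily off by a factor of $2$ or a sign. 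By contrast the $\eta\wedge d\eta$ computation is routine once the $V_\flat\wedge V_\flat=0$ cancellation is observed.
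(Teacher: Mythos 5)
Your proposal is correct and follows essentially the same route as the paper's proof: the same split of $\mathcal{TP}_{\Ric}$ into $4\,\bar\theta^i_j\wedge(F^+)^j_i$ and $-2\,\bar\theta^i_j\wedge(D\bar\theta^+)^j_i$, the same use of $\mathfrak{o}^\pm$-orthogonality of the trace to replace $\nabla V$ by $(\nabla V)^+=-\frac14(\triangle_4z)J$, the same key contraction $J_j{}^i(F^+)^j_i=-2\rho$ (which the paper pins down using $\Riem^{++}=-\frac{s}{8}\,\omega\otimes\omega$ for the $-\frac{s}{2}\omega$ piece and the $\cRic\KNP g$ block for $-2\rho_0$), and the same wedge cancellation disposing of the $D\nabla V$ term — the paper instead first computes that term via covariant constancy of $J$ and then kills it with $\eta\wedge\eta=0$, a purely cosmetic difference from your dropping it before projection. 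Your treatment of the non-Hamiltonian case, substituting $*\left[(dV_\flat)^+\wedge\omega\right]$ for $\triangle_4z$ since $\triangle_4z$ enters only through $(dV_\flat)^+$, is likewise exactly the paper's remark.
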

	\begin{proof}
		In the Hamiltonian case, (\ref{EqnPosDerivOfV}) gives $(dV_\flat)^+=\frac12(\triangle_4z)\omega$, which gives $\triangle_4z=*\left[(dV_\flat)^+\wedge\omega\right]$.
		In the non-Hamiltonian case, we still have that $(dV_\flat)^+$ is proportional to $\omega$ and that $(dV_\flat)^+=\frac12*\left(dV_\flat\wedge\omega\right)\cdot\omega$.
		In everything that follows we begin our computations with $dV_\flat$ or $\nabla{}V$, and we can simply take $\triangle_4z$ to be shorthand for the quantity $*\left(dV_\flat\wedge\omega\right)$.
		
		We first evaluate $Tr(\bar\theta\wedge{}F^+)$.
		Using (\ref{EqnPosDerivOfV}) we have
		\begin{equation}
			\begin{aligned}
				Tr(\bar\theta\wedge{}F^+)
				&\;=\;\frac{1}{|V|}\eta\wedge(\nabla{}V)^s_t(F^+)_s^t \\
				&\;=\;\frac{1}{|V|}\eta\wedge(\nabla{}V^+)^s_t(F^+)_s^t
				\;=\;-\frac14(\triangle_4z)\frac{1}{|V|}\eta\wedge(F^+)^t_sJ_t{}^s.
			\end{aligned} \label{EqnFirstToughFplusComp}
		\end{equation}
		We have $F^+=-\frac12(\Riem^{++}+\Riem^{-+})$.
		Because $(\Riem^{+-})^j_i(\nabla{}V^+)^i_j=0$ we can write
		\begin{equation}
			\begin{aligned}
				(F^+)^t_sJ_t{}^s
				&\;=\;
				-\frac12\left(\Riem{}^{++}+\Riem{}^{-+}\right)_{ijs}{}^tJ_t{}^s\,\cdot\,\eta^i\wedge\eta^j \\
				&\;=\;
				-\frac12\left(\Riem{}^{++}+\Riem{}^{-+}+\Riem{}^{+-}\right)_{ijst}\omega^{ts}\,\cdot\,\eta^i\wedge\eta^j
			\end{aligned}
		\end{equation}
		By (\ref{EqnDerdRiemPlPl}) we have $\Riem^{++}=-\frac{s}{8}\omega\otimes\omega$ and using $\omega_{st}\omega^{st}=4$ we have
		\begin{equation}
			-\frac12(\Riem{}^{++})_{ijst}\omega^{ts}
			=-\frac{s}{4}\omega_{ij} \label{EqnRmPlPlJ}
		\end{equation}
		By (\ref{EqnRmDecomp}) we have $(\Riem{}^{-+}+\Riem{}^{+-})=\frac12\cRic\,\KNP{}g$, so
		\begin{equation}
			\begin{aligned}
				&-\frac12(\Riem{}^{+-}+\Riem{}^{-+})_{ijst}\omega^{ts}\cdot\eta^i\wedge\eta^j \\
				&\hspace{0.4in}=
				-\frac14\left(
				\cRic{}_{it}g_{js}+\cRic{}_{js}g_{it}
				-\cRic{}_{is}g_{jt}-\cRic{}_{jt}g_{is}
				\right)\omega^{ts}\cdot\eta^i\wedge\eta^j \\
				&\hspace{0.4in}=
				-(\rho_0)_{ij}\cdot\eta^i\wedge\eta^j
				\;=\;-2\rho_0.
			\end{aligned}
		\end{equation}
		Combining these, we have $(F^+)_s^tJ_t{}^s=-2\rho_0-\frac{s}{2}\omega=-2\rho$.
		Therefore
		\begin{equation}
			Tr(\bar\theta\wedge{}F^+)
			\;=\;\frac12\left(\frac{1}{|V|}\triangle_4z\right)\,\eta\wedge\rho.
		\end{equation}
		Next we compute $Tr(\bar\theta\wedge{}D\bar\theta^+)$.
		Using the Leibniz rule,
		\begin{equation}
			\begin{aligned}
				D\bar\theta^+
				&\;=\;D\left(\eta\otimes\frac{1}{|V|}DV^+\right)
				\;=\;d\eta\otimes\frac{1}{|V|}\nabla{}V^+
				\,-\,\eta\wedge{}D\left(\frac{1}{|V|}\nabla{}V^+\right).
			\end{aligned} \label{EqnLeibnizExtCompI}
		\end{equation}
		For the last term, we use $|V|^{-1}\nabla{}V^+=-\frac14|V|^{-1}(\triangle_4z)J$ and the fact that $J$ is covariant-constant to obtain
		\begin{equation}
			\begin{aligned}
				D\bar\theta^+
				&\;=\;d\eta\otimes\frac{1}{|V|}\nabla{}V^+
				\,+\,\frac14\eta\wedge{}d\left(\frac{1}{|V|}\triangle_4z\right)\otimes{}J.
			\end{aligned} \label{EqnLeibnizExtCompII}
		\end{equation}
		Therefore
		\begin{equation}
			\begin{aligned}
				Tr(\bar\theta\wedge{}D\bar\theta^+)
				&\;=\;
				\eta\wedge{}d\eta\,\cdot\,|V|^{-2}Tr\left((\nabla{}V)(\nabla{}V^+)\right) \\
				&\quad\hspace{0.4in}
				+\eta\wedge\eta\wedge{}d\left(\frac{1}{|V|}\triangle_4z\right)\,\cdot\,Tr\left(\frac{1}{|V|}(\nabla{}V)(J)\right) \\
				&\;=\;
				\eta\wedge{}d\eta\,\cdot\,|V|^{-2}Tr\left((\nabla{}V)(\nabla{}V^+)\right)
			\end{aligned}
		\end{equation}
		because $\eta\wedge\eta=0$.
		Then using (\ref{EqnPosDerivOfV}) and $Tr(JJ)=J_i{}^jJ_j{}^i=-|J|^2=-4$, we have
		\begin{equation}
			Tr\left((\nabla{}V)(\nabla{}V^+)\right)
			\;=\;
			Tr\left((\nabla{}V^+)(\nabla{}V^+)\right)
			\;=\;-|\nabla{}V^+|^2
			\;=\;-\frac14\left(\triangle_4z\right)^2
		\end{equation}
		and we conclude that
		\begin{equation}
			Tr(\bar\theta\wedge{}D\bar\theta)
			\;=\;-\frac14\left(\frac{1}{|V|}\triangle_4z\right)\,\eta\wedge{}d\eta.
		\end{equation}
		Combining the two pieces, we obtain as promised
		\begin{equation}
			\begin{aligned}
				TP_{\Ric}
				&\;=\;4\bar\theta^i_j\wedge(F^+)^j_i
				-2\bar\theta^i_j\wedge(D\bar\theta^+)^j_i \\
				&\;=\;
				2\left(\frac{1}{|V|}\triangle_4z\right)\,\eta\wedge\rho
				+\frac12\left(\frac{1}{|V|}\triangle_4z\right)^2\eta\wedge{}d\eta.
			\end{aligned} \label{EqnComputedFirst}
		\end{equation}
	\end{proof}
	\begin{lemma}[Step 2] \label{LemmaTPRicComp2}
		If the metric is K\"ahler and $V$ is Hamiltonian we have
		\begin{equation}
			\begin{aligned}
				\mathcal{TP}_{\Ric}
				&\;=\;
				\frac12*d\left(\frac{1}{|V|}\triangle_4z\right)^2
				+\left(\frac{1}{|V|}\triangle_4z\right)\left[s+\frac12\left(\frac{1}{|V|}\triangle_4z\right)^2\right]\,dVol_3.
			\end{aligned} \label{EqnTPSecond}
		\end{equation}
		If $V$ is not Hamiltonian, (\ref{EqnTPSecond}) remains true after replacing $\triangle_4z$ with $*\left[(dV_\flat)^+\wedge\omega\right]$.
	\end{lemma}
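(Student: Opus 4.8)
The plan is to start from the Step~1 identity \eqref{EqnTPRicFirst}, namely $\mathcal{TP}_{\Ric}=2q\,\eta\wedge\rho+\tfrac12 q^2\,\eta\wedge d\eta$ with the shorthand $q=\tfrac{1}{|V|}\triangle_4 z$ and $\eta=\tfrac{1}{|V|}V_\flat$, and reduce each of its two terms to a combination of a term of the form $*d(\cdots)$ and a multiple of $dVol_3$. The two tools I would lean on throughout are the interior-product/Hodge identity $i_X*\gamma=*(\gamma\wedge X_\flat)$ (bearing in mind that on a $4$-manifold $**=-1$ on $1$- and $3$-forms), and the Bochner identity of Lemma~\ref{LemmaRicciBochner}. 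Two elementary consequences I would record first are $*dz=|V|\,dVol_3$, immediate from $dVol_3=i_{\nabla z/|\nabla z|}dVol_4=\tfrac{1}{|V|}*dz$, and $V_\flat\wedge\omega=*dz=|V|\,dVol_3$, which follows from $i_V\omega=-dz$ together with self-duality of $\omega$.

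First I would dispose of the $\eta\wedge d\eta$ term. Because $V_\flat\wedge V_\flat=0$, the $d(|V|^{-1})$ contribution drops out and $\eta\wedge d\eta=|V|^{-2}V_\flat\wedge dV_\flat$. Inserting Lemma~\ref{LemmaVdV}, $V_\flat\wedge dV_\flat=*\!\left(-d|V|^2+(\triangle_4 z)\,dz\right)$, and using $*dz=|V|\,dVol_3$ yields $\eta\wedge d\eta=-|V|^{-2}*d|V|^2+q\,dVol_3$. Hence the second term of \eqref{EqnTPRicFirst} is $-\tfrac12 q^2|V|^{-2}*d|V|^2+\tfrac12 q^3\,dVol_3$, and the cubic piece already matches part of the target \eqref{EqnTPSecond}.

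Next I would treat the $\eta\wedge\rho$ term by splitting $\rho=\rho_0+\tfrac14 s\,\omega$. The trace part gives $\tfrac{s}{4}\eta\wedge\omega=\tfrac{s}{4}\,dVol_3$ from the facts recorded above. For the primitive part I use that $\rho_0$ is anti-self-dual on a K\"ahler surface, so that $*(V_\flat\wedge\rho_0)=-i_V\rho_0$ and therefore $V_\flat\wedge\rho_0=*\,i_V\rho_0$. To evaluate $i_V\rho_0=i_V\rho-\tfrac{s}{4}i_V\omega$ I note $i_V\rho=\Ric(JV,\cdot)=-\Ric(\nabla z,\cdot)=\tfrac12 d\triangle_4 z$ by Lemma~\ref{LemmaRicciBochner}, together with $i_V\omega=-dz$, giving $i_V\rho_0=\tfrac12 d\triangle_4 z+\tfrac{s}{4}dz$ and hence $\eta\wedge\rho_0=\tfrac{1}{2|V|}*d\triangle_4 z+\tfrac{s}{4}\,dVol_3$. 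Assembling, the first term of \eqref{EqnTPRicFirst} becomes $2q\,\eta\wedge\rho=\tfrac{q}{|V|}*d\triangle_4 z+qs\,dVol_3$.

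Finally I would add the two contributions and check that the two star-derivative pieces fuse into $\tfrac12*dq^2$. Expanding $dq^2=2\triangle_4 z\,|V|^{-2}d\triangle_4 z-(\triangle_4 z)^2|V|^{-4}d|V|^2$ and using that $*$ commutes with multiplication by the functions $\triangle_4 z$ and $|V|$, one verifies directly that $\tfrac{q}{|V|}*d\triangle_4 z-\tfrac12 q^2|V|^{-2}*d|V|^2=\tfrac12*dq^2$; combined with the collected $qs\,dVol_3$ and $\tfrac12 q^3\,dVol_3$ terms this is exactly \eqref{EqnTPSecond}. For the non-Hamiltonian case I would observe that every manipulation uses only $dV_\flat$, its self-dual part $(dV_\flat)^+$, and the globally defined scalar $*(dV_\flat\wedge\omega)$, so the substitution of $*[(dV_\flat)^+\wedge\omega]$ for $\triangle_4 z$ propagates unchanged. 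I expect the main obstacle to be sign bookkeeping in the self-duality and interior-product steps—particularly the $**=-1$ sign on $3$-forms—and, above all, confirming the final recombination into $\tfrac12*dq^2$: this cancellation is the crux, and it works precisely because the Bochner identity supplies the $d\triangle_4 z$ term needed to complete the product rule for $dq^2$.
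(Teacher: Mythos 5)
Your proposal is correct and follows essentially the same route as the paper: starting from the Step~1 identity, converting $\eta\wedge d\eta$ via Lemma~\ref{LemmaVdV}, converting $\eta\wedge\rho$ via the Bochner identity of Lemma~\ref{LemmaRicciBochner} together with the Hodge identity $*(\mu\wedge*\gamma)=\gamma(\mu^\sharp,\cdot)$, and fusing the two star-derivative pieces into $\tfrac12*d\left(\tfrac{1}{|V|}\triangle_4z\right)^2$. Your splitting $\rho=\rho_0+\tfrac{s}{4}\omega$ with $\rho_0$ anti-self-dual is only a cosmetic variant of the paper's $\rho=-*\rho+\tfrac{s}{2}\omega$ (both rest on $\rho^+=\tfrac{s}{4}\omega$), and your signs, including $**=-1$ on $3$-forms and $*dz=|V|\,dVol_3$, all check out.
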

	\begin{proof}
		We evaluate $\eta\wedge\rho$ using the Bochner identity $d(\triangle_4z)+2\Ric(\nabla{}z,\cdot)=0$ of Lemma \ref{LemmaRicciBochner}.
		This with $*\rho=\rho^+-\rho^-=-\rho+2\rho^+$ and $\rho^+=\frac{s}{4}\omega$ gives
		\begin{equation}
			\rho\;=\;-*\rho\,+\,\frac{s}{2}\omega.
		\end{equation}
		We have the usual identity $*(\mu\wedge*\gamma)=\gamma(\mu^\sharp,\cdot)$ when $\mu$ is a $1$-form and $\gamma$ is a 2-form.
		Using $\eta=|V|^{-1}V_\flat$ and the fact that $**:\bigwedge{}^3\rightarrow\bigwedge^3$ is multiplication by $-1$ gives
		\begin{equation}
			\begin{aligned}
				\eta\wedge\rho
				&\;=\;-\eta\wedge*\rho\,+\,\frac{s}{2}\,\eta\wedge\omega
				\;=\;\frac{1}{|V|}*\left(i_V\rho\,-\,\frac{s}{2}\,i_V\omega\right).
			\end{aligned}
		\end{equation}
		Then $\rho(\cdot,\cdot)=\Ric(J\cdot,\cdot)$ along with (\ref{EqnBasicZV}) and Lemma \ref{LemmaRicciBochner} gives
		\begin{equation}
			\begin{aligned}
				\eta\wedge\rho
				&\;=\;\frac{1}{|V|}*\left(-\Ric(\nabla{}z)\,+\,\frac{s}{2}\,dz\right)
				\;=\;\frac12\frac{1}{|V|}*\Big(d(\triangle_4{}z)\,+\,s\,dz\Big).
			\end{aligned}
		\end{equation}
		To compute $\eta\wedge{}d\eta$, note $\eta\wedge{}d\eta=|V|^{-2}V_\flat\wedge{}dV_\flat$.
		Then Eq. (\ref{EqnVdVFirst}) of Lemma \ref{LemmaVdV} is
		\begin{equation}
			\eta\wedge{}d\eta
			\;=\;
			-\frac{1}{|V|^2}*d|V|^2
			+\frac{1}{|V|^2}(\triangle_4z)*dz
		\end{equation}
		Placing these expressions for $\eta\wedge\rho$ and $\eta\wedge{}d\eta$ into (\ref{EqnComputedFirst}), we have
		\begin{equation}
			\begin{aligned}
				2\mathcal{TP}_{\Ric}
				&\;=\;
				4\left(\frac{1}{|V|}\triangle_4z\right)\left(\frac12\frac{1}{|V|}*\Big(d(\triangle_4{}z)\,+\,s\,dz\Big)\right) \\
				&\quad\quad
				+\left(\frac{1}{|V|}\triangle_4z\right)^2\left(-\frac{1}{|V|^2}*d|V|^2
				+\frac{1}{|V|^2}(\triangle_4z)*dz \right).
			\end{aligned}
		\end{equation}
		Collecting terms,
		\begin{equation}
			\begin{aligned}
				2\mathcal{TP}_{\Ric}&\;=\;
				2\left(\frac{1}{|V|}\triangle_4z\right)
				*\left[
				\frac{1}{|V|}d(\triangle_4z)
				-(\triangle_4{}z)\frac{1}{|V|^2}d|V|
				\right] \\
				&\quad\quad
				+\left(\frac{1}{|V|}\triangle_4z\right)
				\left[
				2s
				+\left(\frac{1}{|V|}\triangle_4z\right)^2
				\right]\frac{1}{|V|}\,*dz \\
				&\;=\;
				*d\left(\frac{1}{|V|}\triangle_4z\right)^2
				+\left(\frac{1}{|V|}\triangle_4z\right)
				\left[
				2s
				+\left(\frac{1}{|V|}\triangle_4z\right)^2
				\right]\,dVol_3.
			\end{aligned} \label{EqnComputedSecond}
		\end{equation}
	\end{proof}
	\begin{lemma}[Step 3] \label{LemmaUsableCGB}
		If the metric is K\"ahler and $V$ is Hamiltonian we have
		\begin{equation}
			\begin{aligned}
				\mathcal{TP}_{\Ric}
				&\;=\;
				\frac12\mathcal{L}_{\frac{\partial}{\partial{}z}}\left((\triangle_4z)^2\frac{1}{|V|}\,dVol_3\right)
				\,-\,\frac12(*dVol_2)\wedge{}Jd\left(\frac{1}{|V|}\triangle_4z\right)^2 \\
				&\hspace{0.8in}
				+s\left(\frac{1}{|V|}\triangle_4z\right)\,dVol_3.
			\end{aligned} \label{EqnTPThird}
		\end{equation}
		If $V$ is not Hamiltonian, replace $\frac{\partial}{\partial{}z}$ with $-|V|^{-2}JV_\flat$ and $\triangle_4z$ with $*(dV^+\wedge\omega)$.
	\end{lemma}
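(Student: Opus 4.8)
The plan is to obtain the claimed formula directly from Step 2 (Lemma \ref{LemmaTPRicComp2}) by recognizing its two terms as a single Lie derivative plus a base-gradient correction. Write $f=\frac{1}{|V|}\triangle_4z$ for brevity, so that Lemma \ref{LemmaTPRicComp2} reads $\mathcal{TP}_{\Ric}=\frac12*d(f^2)+f\left(s+\frac12 f^2\right)dVol_3$. The first observation is the bookkeeping identity $(\triangle_4z)^2\frac{1}{|V|}dVol_3=f^2\,V_\flat\wedge dVol_2$, which follows from $dVol_3=|V|^{-1}V_\flat\wedge dVol_2$ and $\triangle_4z=|V|f$.

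Next I would expand the Lie derivative. Using the Leibniz rule together with the third identity of (\ref{EqnLieDeriveOfVSdVol}) in Lemma \ref{LemmaLieDerivsOfVols}, namely $\mathcal{L}_{\frac{\partial}{\partial z}}(V_\flat\wedge dVol_2)=f\,dVol_3$, and recalling $V_\flat\wedge dVol_2=|V|\,dVol_3$, I get
\begin{equation}
\mathcal{L}_{\frac{\partial}{\partial z}}\!\left(f^2\,V_\flat\wedge dVol_2\right)
=\left(\frac{\partial f^2}{\partial z}\right)|V|\,dVol_3+f^3\,dVol_3 .
\end{equation}
The term $f^3\,dVol_3$ already accounts for the $\frac12 f^2$ piece hidden in the $dVol_3$ coefficient of Step 2, while the $s f\,dVol_3$ summand of Step 2 appears explicitly in the target. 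Thus everything reduces to the single identity
\begin{equation}
*dh=\left(\frac{\partial h}{\partial z}\right)|V|\,dVol_3-*dVol_2\wedge Jdh
\label{EqnHodgeKey}
\end{equation}
for any $V$-invariant function $h$, applied to $h=f^2$; granting (\ref{EqnHodgeKey}), the first and third terms on the right of the claimed formula combine into $\frac12*d(f^2)$, and what remains is exactly $\frac12*d(f^2)+f\left(s+\frac12 f^2\right)dVol_3$, which is Step 2.

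To prove (\ref{EqnHodgeKey}) I would use $*dh=i_{\nabla h}dVol_4$ and the orthonormal coframe $\theta^1=dx/|\nabla x|$, $\theta^2=dy/|\nabla x|$, $\theta^3=dz/|V|$, $\theta^4=V_\flat/|V|$ adapted to the fibration, with $dVol_4=\theta^1\wedge\theta^2\wedge\theta^3\wedge\theta^4$. Since $h$ is $V$-invariant, $\nabla h$ has no $\theta^4$-component; its $\theta^3$-component contracts to $dVol_3=\theta^1\wedge\theta^2\wedge\theta^4$ and, because $e_3 h=|V|\frac{\partial h}{\partial z}$, yields the first term of (\ref{EqnHodgeKey}). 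The remaining $\nabla x,\nabla y$ components contract to $(e_1h)\theta^2\wedge\theta^3\wedge\theta^4-(e_2h)\theta^1\wedge\theta^3\wedge\theta^4$, and using $*dVol_2=\theta^3\wedge\theta^4$ together with the K\"ahler relations $J\theta^2=\theta^1$, $J\theta^1=-\theta^2$ (which encode $dx=Jdy$) one checks this equals $-*dVol_2\wedge Jdh$, since the $\theta^3,\theta^4$ parts of $Jdh$ drop out against $\theta^3\wedge\theta^4$.

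The main obstacle is purely sign-keeping in this last coframe computation: fixing the orientation so that $\theta^1\wedge\theta^2\wedge\theta^3\wedge\theta^4=dVol_4$, and tracking the signs in the $J\theta^i$ and in the contractions $i_{e_i}dVol_4$. Once (\ref{EqnHodgeKey}) is secured the recombination is immediate. Finally, for the non-Hamiltonian case I would note that none of the inputs — the Leibniz expansion, Lemma \ref{LemmaLieDerivsOfVols}, and (\ref{EqnHodgeKey}) — require global existence of $z$: they use only the globally defined field $\frac{\partial}{\partial z}=-|V|^{-2}JV$ (Lemma \ref{LemmaCoordSystemFacts}) and the globally defined function $*(dV_\flat\wedge\omega)$, so the stated substitutions carry the identity over verbatim.
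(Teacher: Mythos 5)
Your proposal is correct and follows essentially the same route as the paper's proof: the paper likewise decomposes $*d\left(\frac{1}{|V|}\triangle_4z\right)^2$ into a $z$-direction piece (rewritten via the Leibniz rule and the third identity of Lemma \ref{LemmaLieDerivsOfVols} as the Lie derivative of $(\triangle_4z)^2|V|^{-1}dVol_3$ minus the cubic correction) plus the $-(*dVol_2)\wedge{}Jd$ piece, then substitutes into Step 2. The only difference is cosmetic --- you run the identity backward from the target to Step 2 and supply an explicit orthonormal-coframe verification of the Hodge-star identity $f_x*dx+f_y*dy=-(*dVol_2)\wedge{}Jdf$, which the paper asserts without proof --- and your sign-keeping in that verification checks out.
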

	\begin{proof}
		We begin with
		\begin{equation}
			\begin{aligned}
				*d\left(\frac{\triangle_4z}{|V|}\right)^2
				=\frac{\partial}{\partial{}z}\left(\frac{\triangle_4z}{|V|}\right)^2*dz
				+\frac{\partial}{\partial{}x}\left(\frac{\triangle_4z}{|V|}\right)^2*dx
				+\frac{\partial}{\partial{}y}\left(\frac{\triangle_4z}{|V|}\right)^2*dy.
			\end{aligned} \label{EqnWeStartWith}
		\end{equation}
		The last two terms on the right simplify: whenever $f=f(z,x,y)$ then
		\begin{equation}
			f_x*dx-f_y*dy
			=-(*dVol_2)\wedge{}Jdf
		\end{equation}
		The first term also simplifies using $*dz=V_\flat\wedge{}dVol_2$.
		Then (\ref{EqnWeStartWith}) becomes
		\begin{equation}
			*d\left(\frac{\triangle_4z}{|V|}\right)^2
			=\frac{\partial}{\partial{}z}\left(\frac{\triangle_4z}{|V|}\right)^2\,V_\flat\wedge{}dVol_2
			-(*dVol_2)\wedge{}Jd\left(\frac{\triangle_4z}{|V|}\right)^2.
		\end{equation}
		Use the Leibniz rule on the first term and Lemma \ref{LemmaLieDerivsOfVols}, we obtain
		\begin{equation}
			\begin{aligned}
				&\frac{\partial}{\partial{}z}\left(\frac{\triangle_4z}{|V|}\right)^2\;V_\flat\wedge{}dVol_2\\
				&\quad\quad\;=\;\mathcal{L}_{\frac{\partial}{\partial{}z}}\left(\left(\frac{1}{|V|}\triangle_4z\right)^2V_\flat\wedge{}dVol_2\right)
				-\left(\frac{1}{|V|}\triangle_4z\right)^2\mathcal{L}_{\frac{\partial}{\partial{}z}}\left(V_\flat\wedge{}dVol_2\right) \\
				&\quad\quad\;=\;\mathcal{L}_{\frac{\partial}{\partial{}z}}\left(\frac{1}{|V|}(\triangle_4z)^2\,dVol_3\right)
				-\left(\frac{1}{|V|}\triangle_4z\right)^3\,dVol_3.
			\end{aligned}
		\end{equation}
		Combining, we obtain
		\begin{equation}
			\begin{aligned}
				&*d\left(\frac{1}{|V|}\triangle_4z\right)^2
				+\left(\frac{1}{|V|}\triangle_4z\right)^3dVol_3 \\
				&\hspace{0.2in}=\mathcal{L}_{\frac{\partial}{\partial{}z}}\left(\frac{1}{|V|}(\triangle_4z)^2\,dVol_3\right)
				-*dVol_2\wedge{}Jd\left(\frac{1}{|V|}\triangle_4z\right)^2.
			\end{aligned}
		\end{equation}
		Substituting this into (\ref{EqnTPSecond}) gives the result.
	\end{proof}
	
	\begin{proposition}[cf. Theorem \ref{ThmConstantGrowth}] \label{PropTwoDerivsOfLaplSq}
		Assume $M^4$ is scalar-flat and assume the leaf $M^3_z$ is compact and non-singular.
		Then
		\begin{equation}
			\frac{d^2}{dz^2}\int_{M^2_z}(\triangle_4{}z)^2\,dVol_2
			\;=\;
			2\int_{M^2_z}|\Ric|^2\,dVol_2 \label{EqnSecondDerivOfTriSq}
		\end{equation}
	\end{proposition}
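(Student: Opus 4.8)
The plan is to integrate the pointwise Chern-Gauss-Bonnet identity (\ref{EqnTransgressionOfRic}) over a slab between two regular level-sets, in exact parallel with the proof of Proposition \ref{PropGrowthOfVSqVol}, and then extract (\ref{EqnSecondDerivOfTriSq}) by the Fundamental Theorem of Calculus. Since $M^4$ is scalar-flat we have $s=0$, $W^+=0$, and $\Riem^{++}=0$, so the left-hand side of (\ref{EqnTransgressionOfRic}) collapses to $|\Ric|^2\,dVol_4=d\mathcal{TP}_{\Ric}$, and the usable form of the transgression from Step 3 (Lemma \ref{LemmaUsableCGB}) loses its $s$-term, leaving
\begin{equation*}
	\mathcal{TP}_{\Ric}
	\;=\;\frac12\mathcal{L}_{\frac{\partial}{\partial{}z}}\!\left((\triangle_4z)^2\tfrac{1}{|V|}\,dVol_3\right)
	\,-\,\frac12(*dVol_2)\wedge{}Jd\!\left(\tfrac{1}{|V|}\triangle_4z\right)^2.
\end{equation*}

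I would fix a value of $z$ at which $M^3_z$ is compact and non-singular; by smoothness and compactness of the level-sets, $z$ remains regular on an interval $z_0<z<z_1$, and I work on the slab $M^4_{z_0,z_1}$. Integrating $|\Ric|^2\,dVol_4=d\mathcal{TP}_{\Ric}$ over this slab, the Integration Lemma (Lemma \ref{LemmaIntegration}) converts the left-hand side to $2\pi\int_{z_0}^{z_1}\big(\int_{M^2_z}|\Ric|^2\,dVol_2\big)\,dz$, while Stokes' theorem converts the right-hand side to the boundary difference $\int_{M^3_{z_1}}\mathcal{TP}_{\Ric}-\int_{M^3_{z_0}}\mathcal{TP}_{\Ric}$.

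The crucial simplification is that on each level-set only the first term of $\mathcal{TP}_{\Ric}$ survives. The second term carries the factor $*dVol_2=|V|^{-2}dz\wedge{}V_\flat$ from (\ref{EqnManyVols}), and since $dz$ vanishes when pulled back to the integral leaf $M^3_z$, that term restricts to zero---exactly as the analogous $dz\wedge{}V_\flat\wedge{}Jd|\nabla{}z|^{-2}$ term drops out in the proof of Proposition \ref{PropGrowthOfVSqVol}. For the surviving term I would use that the diffeomorphism flow of $\frac{\partial}{\partial{}z}$ permutes the leaves (Lemma \ref{LemmaPermutes}), so $\int_{M^3_z}\mathcal{L}_{\frac{\partial}{\partial{}z}}\beta=\frac{d}{dz}\int_{M^3_z}\beta$ for any $3$-form $\beta$; applying this with $\beta=(\triangle_4z)^2|V|^{-1}\,dVol_3$ and then invoking Lemma \ref{LemmaIntegration} (which gives $\int_{M^3_z}(\triangle_4z)^2|V|^{-1}\,dVol_3=2\pi\int_{M^2_z}(\triangle_4z)^2\,dVol_2$) yields $\int_{M^3_z}\mathcal{TP}_{\Ric}=\pi\frac{d}{dz}\int_{M^2_z}(\triangle_4z)^2\,dVol_2$.

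Assembling the two sides produces
\begin{equation*}
	2\pi\int_{z_0}^{z_1}\!\left(\int_{M^2_z}|\Ric|^2\,dVol_2\right)dz
	\;=\;\pi\left[\frac{d}{dz}\int_{M^2_z}(\triangle_4z)^2\,dVol_2\right]_{z_0}^{z_1},
\end{equation*}
and dividing by $z_1-z_0$ and sending $z_1\to{}z_0$ gives (\ref{EqnSecondDerivOfTriSq}) after cancelling the common factor $\pi$. I do not expect a genuine obstacle here, since the real work was already invested in deriving (\ref{EqnTransgressionOfRic}) and Lemma \ref{LemmaUsableCGB}; the one point requiring care is the validity of Stokes' theorem and the Lie-derivative-to-$\frac{d}{dz}$ passage across the orbifold points of $M^2_z$. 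As remarked after Lemma \ref{LemmaPartialElliptic}, because $M^2_z$ is smooth in the orbifold sense and the integrands $(\triangle_4z)^2|V|^{-1}$ and $|\Ric|^2$ are smooth, the orbifold points contribute no boundary terms and all the manipulations are justified.
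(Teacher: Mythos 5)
Your proposal is correct and follows the paper's own proof essentially line for line: integrating the scalar-flat specialization of (\ref{EqnTransgressionOfRic}) with the Step-3 form of $\mathcal{TP}_{\Ric}$ over the slab $M^4_{z_0,z_1}$, discarding the $*dVol_2\wedge Jd(\cdot)^2$ term because $dz$ pulls back to zero on each leaf, converting the surviving Lie-derivative term via Lemma \ref{LemmaPermutes} and the Integration Lemma, and concluding by the Fundamental Theorem of Calculus. Your constants check out, and your closing remark on orbifold points matches the justification the paper invokes after Lemma \ref{LemmaPartialElliptic}.
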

	\begin{proof}
		With $s=0$, inserting (\ref{EqnTPThird}) into (\ref{EqnTransgressionOfRic}) gives
		\begin{equation}
			\small
			2|\Ric|^2\,dVol_4
			=
			d\left[\mathcal{L}_{\frac{\partial}{\partial{}z}}\left((\triangle_4z)^2\frac{1}{|V|}\,dVol_3\right)
			-\frac{dz}{|V|}\wedge\frac{V_\flat}{|V|}\wedge{}Jd\left(\frac{1}{|V|}\triangle_4z\right)^2
			\right].
		\end{equation}
		Choose $z_0<z_1$, and denote $M^4_{z_0,z_1}=\{p\in{}M^4\big|z_0<z(p)<z_1\}$.
		Integrating,
		\begin{equation}
			\small
			\begin{aligned}
				&\int_{M^4_{z_0,z_1}}|\Ric|^2\,dVol_4
				\;=\;
				\frac12\int_{M^3_{z_1}}\mathcal{L}_{\frac{\partial}{\partial{}z}}\left((\triangle_4z)^2\frac{1}{|V|}\,dVol_3\right) \\
				&\hspace{1.7in}-\frac12\int_{M^3_{z_0}}\mathcal{L}_{\frac{\partial}{\partial{}z}}\left((\triangle_4z)^2\frac{1}{|V|}\,dVol_3\right) \\
				&\hspace{0.6in}
				\;=\;
				\frac12\frac{\partial}{\partial{}z}\Big|_{z=z_1}\int_{M^3_{z}}(\triangle_4z)^2\frac{1}{|V|}\,dVol_3
				-\frac12\frac{\partial}{\partial{}z}\Big|_{z=z_0}\int_{M^3_{z}}(\triangle_4z)^2\frac{1}{|V|}\,dVol_3
			\end{aligned} \label{EqnPreDeriv}
		\end{equation}
		where the term $|V|^{-2}dz\wedge{}V_\flat\wedge{}Jd(|V|^{-1}\triangle_4z)^2$ vanished because $dz\equiv0$ restricted to $M^3_z=\{z=const\}$.
		The Integration Lemma, Lemma \ref{LemmaIntegration}, gives
		\begin{equation}
			\int_{M^3_z}(\triangle_4z)^2\frac{1}{|V|}\,dVol_3
			\;=\;
			2\pi\int_{M^2_z}(\triangle_4z)^2\,dVol_2
		\end{equation}
		Then we consider the left-hand side.
		From Lemma \ref{LemmaIntegration}
		\begin{equation}
			\int_{M^4_{z_0,z_1}}|\Ric|^2\,dVol_4
			\;=\;
			2\pi\int_{z_0}^{z_1}\left(\int_{M^2_z}|\Ric|^2\,dVol_2\right)dz
		\end{equation}
		From this, we multiply both sides of (\ref{EqnPreDeriv}) by $\frac{1}{z_1-z_0}$ and take the limit $z_1-z_0\rightarrow0$.
		The Fundamental Theorem of Calculus then gives
		\begin{equation}
			\begin{aligned}
				2\int_{M^2_z}|\Ric|^2\,dVol_2
				&\;=\;
				\frac{d^2}{dz^2}\int_{M^2_{z}}(\triangle_4z)^2\,dVol_2
			\end{aligned} \label{EqnDeriv}
		\end{equation}
	\end{proof}

	%
	%
	%
	%
	%
	%
	\subsection{The LeBrun instanton examples.} \label{SubSectCGBExamples}
	We produce nontrivial examples demonstrating Lemma \ref{LemmaTPRicComp2} and Proposition \ref{PropTwoDerivsOfLaplSq}.
	To be able to easily compute Ricci curvatures, we use the framework of \cite{NaffWeber}.
	In that framework, $C=C(w)$ and $F=F(w)$ and the metric is
	\begin{equation}
		\begin{aligned}
			&g\;=\;C\left(\frac{1}{2F}dw^2\,+\,F(\eta^1)^2+(\eta^2)^2+(\eta^3)^2\right), \\
			&J(dw)\;=\;-2F\eta^1, \quad J(\eta^2)\;=\;-\eta^3, \\
			&\omega\;=\;\frac12Cdw\wedge\eta^1+C\eta^2\wedge\eta^3
		\end{aligned} \label{EqnsUTwo}
	\end{equation}
	where $\eta^1$, $\eta^2$, $\eta^3$ form the left-invariant coframe on $\mathbb{S}^2$ with $d\eta^i=-\epsilon^i{}_{jk}\eta^j\wedge\eta^k$.
	This framework is very flexible: when $C=C_0e^{-w}$, the metric is K\"ahler for any $F$.
	
	From \cite{LeB89}, the LeBrun instanton metric of mass $m$ on the surface $O(-k)$ is
	\begin{equation}
		\begin{aligned}
		&g
		=\frac{1}{f(r)}(dr)^2
		+r^2\left(f(r)(\eta^1)^2+(\eta^2)^2+(\eta^3)^2\right), \text{ where} \\
		&f(r)
		=\left(1-\frac{m^2}{r^2}\right)\left(1+\frac{(k-1)m^2}{r^2}\right)
		\end{aligned}
	\end{equation}
	Changing coordinates by $r=e^{-w/2}$ to express this in the form (\ref{EqnsUTwo}), we find $C=e^{-w}$ and $F=1+m^2(k-2)e^{w}-m^4(k-1)e^{2w}$ with coordinate range $w\in(-\infty,-\log\,m^2]$.
	For example $k=1$ is the Burns metric and $k=2$ is the Eguchi-Hanson metric.
	For any $w<-\log(m^2)$ the level-set $M^3_w$ is the lens space $L(k,1)$ and the symplectic reduction $M^2_w$ is a smooth 2-sphere.
	For $w=const$ we have $Vol_3(M^3_w)=\frac2k\pi^2e^{-w}$ and $Vol_2(M^2_w)=\pi{}e^{-w}.$
	From (13) of \cite{NaffWeber}
	\begin{equation}
		|\Ric|^2\;=\;16m^4(k-2)^2e^{4w}.
	\end{equation}
	Letting $\psi,\theta,\varphi$ be the Euler coordinates on $\mathbb{S}^3$, the Killing field is $V=\frac{2}{k}\frac{\partial}{\partial\psi}$ (see \S6 of \cite{Weber24a}) and using the fact that $i_{V}\eta^1=\frac1k$ we obtain the momentum function $z=-\frac{1}{2k}(e^{-w}-m^2)$ with range $z\in(-\infty,0]$.
	We compute for general $F=F(w)$
	\begin{equation}
		\begin{aligned}
			&dJdz=-\frac1k\frac{\partial}{\partial{}w}(e^{-w}F)dw\wedge\eta^1
			+\frac2k{}e^{-w}F\eta^1\wedge\eta^2, \\
			&dJdz\wedge\omega=-\frac1ke^{w}\left(\frac{\partial}{\partial{}w}(e^{-w}F)-e^{-w}F\right)\omega\wedge\omega
		\end{aligned}
	\end{equation}
	so therefore $\triangle_4z=-\frac4k-\frac{2(k-2)}{k}m^2e^{w}$.
	Changing variables from $w$ to $z$, we find
	\begin{equation}
		\begin{aligned}
			&\triangle_4z\;=\;-2\frac{-4z+m^2}{-2kz+m^2}, \\
			&Vol_2(M^2_z)\;=\;\pi(-2kz+m^2), \\
			&\int_{M^2_z}\triangle_4z\,dVol_2
			\;=\;2\pi(4z-m^2), \\
			&\int_{M^2_z}(\triangle_4z)^2\,dVol_2
			\;=\;4\pi\frac{(-4z+m^2)^2}{-2kz+m^2}, \\
			&\int_{M^2_z}|\Ric|^2\,dVol_2
			\;=\;16\pi{}m^4(k-2)^2\frac{1}{(-2kz+m^2)^3}.
		\end{aligned} \label{EqnLeBrunEquations}
	\end{equation}
	We have $M^2_z\approx\mathbb{S}^2$ for each $z$ so $\chi^g(M^2_z)=2$.
	Each $M^3_z$ is the lense space $L(k,1)$ and $M^3_z\rightarrow{}M^2_z$ has the usual fiber bundle structure (the standard projection along a $k$-to-1 quotient of the Hopf fibration), so the Euler number of this $\mathbb{S}^1$ fiber bundle is $e^g(M^3_z,M^2_z)=-k$.
	Direct computation from (\ref{EqnLeBrunEquations}) gives
	\begin{equation}
		\begin{aligned}
			&\frac{d}{dz}Vol_2(M^2_z)
			\;=\;-2k\pi\;=\;2\pi{}e^g(M^3_z,M^2_z) \\
			&\frac{d}{dz}\int_{M^2_z}\triangle_4z\,dVol_2
			\;=\;8\pi{}\;=\;4\pi\chi^g(M^2_z)
		\end{aligned}
	\end{equation}
	which recovers (\ref{EqnSecondOfTri}) of \S\ref{SecEvoK} and (\ref{EqnVolGrowthEquality}) from \S\ref{SecKillingTwoForm}.
	Further, we have
	\begin{equation}
		\small
		\begin{aligned}
			&\frac{d}{dz}\int_{M^2_z}(\triangle_4z)^2\,dVol_2
			\;=\;8\pi\frac{(-4z+m^2)(4kz-(k-4)m^2)}{(-2kz+m^2)^2} \\
			&\frac{d^2}{dz^2}\int_{M^2_z}(\triangle_4z)^2\,dVol_2
			\;=\;32\pi{}m^4(k-2)^2\frac{1}{(-2kz+m^2)^3}
			\;=\;2\int_{M^2_z}|\Ric|^2\,dVol_2
		\end{aligned}
	\end{equation}
	which recovers the Chern-Gauss-Bonnet relation Eq. (\ref{EqnSecondDerivOfTriSq}) of Proposition \ref{PropTwoDerivsOfLaplSq}.

	%
	%
	%
	%
	%
	%
	%
	%
	\section{Type II metrics} \label{SecTypeIandII}
	
	We prove Theorem \ref{ThmTypeIIa}, Proposition \ref{PropTypeIIb}, and Theorem \ref{ThmIntroCptLevelsNoZeros}.

	%
	%
	%
	%
	%
	%
	\subsection{The basic inequalities} \label{SubSecBasicIneq}
	
	Recall that Type IIa metrics have compact level-sets and have a lower bound $|V|>\epsilon>0$ outside a compact set.
	Theorems \ref{ThmTypeIIa} and \ref{PropTypeIIb} both follow from the basic relationship between $\int_{M^2_z}dVol_2$, $\int_{M^2_z}(\triangle_4z)dVol_2$, and $\int_{M^2_z}(\triangle_4z)^2dVol_2$ from H\"older's inequality that
	\begin{equation}
		\left(\int_{M^2_z}\triangle_4z\,dVol_2\right)^2
		\le
		\int_{M^2_z}dVol_2
		\;
		\int_{M^2_z}(\triangle_4z)^2dVol_2. \label{EqnHolder}
	\end{equation}
	The quantities in (\ref{EqnHolder}) are controlled by other factors: $\int_{M^2_z}dVol_2$ is controlled by $e^g(M^3_z,M^2_z)$ by Proposition \ref{PropConstantGrowthInText}, $\int_{M^2_z}\triangle_4z\,dVol_2$ is controlled by $\chi^g(M^2_z)$ by Proposition \ref{PropGrowthOfVSqVol}, and $\int_{M^2_z}(\triangle_4z)^2dVol_2$ is controlled by $\int_{M^4}|\Ric|^2dVol_4$ by Proposition \ref{PropTwoDerivsOfLaplSq}.
	It will be convenient to abbreviate $\chi^g=\chi^g(M^2_z)$ and $e^g=e^g(M^3_z,M^2_z)$.
	\begin{lemma}[The Basic Inequalities] \label{LemmaBasicIneqs}
		Assume $M^4$ is Type IIa, $\mathcal{C}=\{z_0,\dots,z_N\}$ are the critical values of $z$, and either $M^{4+}$ or $M^{4-}$ is non-empty.
		
		If $M^{4+}$ and/or $M^{4-}$ is non-empty, we have
		\begin{equation}
			\begin{aligned}
				&\chi^g\ge0, \; e^g\ge0 \quad \text{on $M^{4+}$}, \\
				&\chi^g\ge0, \; e^g\le0 \quad \text{on $M^{4-}$}.
			\end{aligned} \label{EqnEulerNumberSigns}
		\end{equation}
		Assuming the set of critical values $\mathcal{C}$ is non-empty, set
		\begin{equation}
			\begin{aligned}
				&C_0=\lim_{z\searrow{}z_N}\int_{M^2_z}|V|^2\,dVol_2, \quad
				C_0'=\lim_{z\nearrow{}z_0}\int_{M^2_z}|V|^2\,dVol_2, \\
				&C_1=\lim_{z\searrow{}z_N}\int_{M^2_z}\triangle_4z\,dVol_2, \quad
				C_1'=\lim_{z\nearrow{}z_0}\int_{M^2_z}\triangle_4z\,dVol_2, \\
				&C_2=\lim_{z\searrow{}z_N}\int_{M^2_z}dVol_2, \hspace{0.425in}
				C_2'=\lim_{z\nearrow{}z_0}\int_{M^2_z}dVol_2.
			\end{aligned}
		\end{equation}
		Then we have
		\begin{equation}
			\begin{aligned}
				\int_{M^2_z}|V|^2dVol_2
				&\;\le\;\left(2\pi\chi^g\cdot{}(z-z_N)^2+C_1\cdot(z-z_N)+C_0\right) \quad \text{on $M^{4+}$} \\
				\int_{M^2_z}|V|^2\,dVol_2
				&\;\le\;\left(2\pi\chi^g\cdot{}(z-z_0)^2+C_1'\cdot(z-z_0)+C_0'\right) \quad \text{on $M^{4-}$}
			\end{aligned} \label{IneqVSqIntegrated}
		\end{equation}
		and
		\begin{equation}
			\small
			\begin{aligned}
				&|4\pi\chi^g\cdot(z-z_N)+C_1|
				\le\sqrt{\big(2\pi{}e^g\cdot(z-z_N)+C_2\big)\int_{M^2_z}(\triangle_4z)^2\,dVol_2} \quad \text{on $M^{4+}$} \\
				&|4\pi\chi^g\cdot(z-z_0)+C_1'|
				\le\sqrt{\big(2\pi{}e^g\cdot(z-z_0)+C_2'\big)\int_{M^2_z}(\triangle_4z)^2\,dVol_2} \quad \text{on $M^{4-}$}.
			\end{aligned} \label{IneqHolderBasic}
		\end{equation}
		If $|z|$ is sufficiently large then $|V|>\epsilon$; for these $z$ we have
		\begin{equation}
			\begin{aligned}
				2\pi{}e^g\cdot(z-z_N)+C_1
				&\;\le\;\epsilon^{-2}\int_{M^2_z}|V|^2\,dVol_2 \quad\text{on $M^{4+}$} \\
				2\pi{}e^g\cdot(z-z_0)+C_1'
				&\;\le\;\epsilon^{-2}\int_{M^2_z}|V|^2\,dVol_2 \quad\text{on $M^{4-}$}.
			\end{aligned} \label{IneqVolAndVSq}
		\end{equation}
	\end{lemma}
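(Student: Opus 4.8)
The plan is to derive every assertion from the evolution identities already in hand: Proposition~\ref{PropConstantGrowthInText} (giving $\frac{d}{dz}\int_{M^2_z}dVol_2=2\pi e^g$), Proposition~\ref{PropGrowthOfVSqVol} (giving $\frac{d^2}{dz^2}\int_{M^2_z}|V|^2\,dVol_2=\frac{d}{dz}\int_{M^2_z}\triangle_4z\,dVol_2=4\pi\chi^g$), and the H\"older inequality~(\ref{EqnHolder}). At the outset I would record that in the Type IIa setting $M^4$ is non-compact with a compact leaf, so $V$ is Hamiltonian by Lemma~\ref{LemmaHamiltonian} and $z$ is a global function; this makes the regions $M^{4\pm}$ and all integrals below unambiguous. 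First I would dispose of the sign statement~(\ref{EqnEulerNumberSigns}): on $M^{4+}=\{z>z_N\}$ and $M^{4-}=\{z<z_0\}$ there are no critical values, so Lemma~\ref{LemmaUnboundedZText} gives that the level-sets and fibrations are isotopic and hence $\chi^g$, $e^g$ are constant there, while their signs are precisely the content of Corollary~\ref{CorSignedEuler}.

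Next I would establish that the six limiting constants are well-defined and finite. The key observation is that, because the derivatives in Propositions~\ref{PropConstantGrowthInText} and~\ref{PropGrowthOfVSqVol} are constant on each critical-point-free region $M^{4\pm}$, the three quantities $\int_{M^2_z}|V|^2\,dVol_2$, $\int_{M^2_z}\triangle_4z\,dVol_2$, and $\int_{M^2_z}dVol_2$ are polynomials in $z$ there, of degrees $\le 2$, $\le 1$, $\le 1$ respectively. A polynomial extends continuously to the boundary value $z_N$ (resp.\ $z_0$), so the one-sided limits $C_0,C_1,C_2$ (resp.\ the primed versions) exist and are finite. This same polynomial structure yields, by integrating the constant first derivatives once from $z_N^+$, the explicit expressions $\int_{M^2_z}\triangle_4z\,dVol_2=4\pi\chi^g(z-z_N)+C_1$ and $\int_{M^2_z}dVol_2=2\pi e^g(z-z_N)+C_2$ on $M^{4+}$.

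With these in hand the remaining inequalities are substitutions. For~(\ref{IneqVSqIntegrated}) I would integrate $\frac{d^2}{dz^2}\int_{M^2_z}|V|^2\,dVol_2=4\pi\chi^g$ twice from $z_N^+$, using $C_1$ and $C_0$ as the initial first derivative and value, reproducing the quadratic $2\pi\chi^g(z-z_N)^2+C_1(z-z_N)+C_0$ (in fact with equality, hence the stated bound). For~(\ref{IneqHolderBasic}) I would feed the two linear expressions above into~(\ref{EqnHolder}), note that the factor $2\pi e^g(z-z_N)+C_2=\int_{M^2_z}dVol_2\ge0$ is a genuine area so the square root is legitimate, and take square roots. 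Finally,~(\ref{IneqVolAndVSq}) follows from the pointwise bound $1\le\epsilon^{-2}|V|^2$, valid wherever $|V|>\epsilon$: integrating it over $M^2_z$ gives $\int_{M^2_z}dVol_2\le\epsilon^{-2}\int_{M^2_z}|V|^2\,dVol_2$, into which I substitute the linear expression for $\int_{M^2_z}dVol_2$ from the previous step. The $M^{4-}$ versions of all four displays follow from the symmetry $z\mapsto-z$, which exchanges $M^{4+}$ with $M^{4-}$ and reverses the sign of $e^g$.

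The only genuine subtlety, and what I would treat most carefully, is the finiteness and meaning of the boundary constants at the singular level-set over a critical value, where zeros of $V$ occur and $M^2_z$ may degenerate. The polynomial argument of the second paragraph is exactly what rescues this: it controls the integral quantities uniformly up to the boundary through their differential equations, so that no direct analysis of the degeneration of $M^2_z$ near $z_N$ is required.
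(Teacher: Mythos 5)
Your proposal is correct and follows essentially the same route as the paper's proof: both rest on Propositions \ref{PropConstantGrowthInText} and \ref{PropGrowthOfVSqVol} to obtain the linear and quadratic polynomial expressions for $\int_{M^2_z}dVol_2$, $\int_{M^2_z}\triangle_4z\,dVol_2$, and $\int_{M^2_z}|V|^2\,dVol_2$ on the critical-point-free regions (whence the sign conclusions, the finiteness of the boundary constants, and (\ref{IneqVSqIntegrated}) with equality), then feed the linear expressions into H\"older's inequality (\ref{EqnHolder}) for (\ref{IneqHolderBasic}) and use the pointwise bound $1\le\epsilon^{-2}|V|^2$ for (\ref{IneqVolAndVSq}). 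One remark: your substitution naturally produces (\ref{IneqVolAndVSq}) with the volume constant $C_2$ (resp.\ $C_2'$) rather than the printed $C_1$ --- which is exactly what the paper's own proof yields as well, so the $C_1$ in the statement is evidently a typo and your version is the intended one.
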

	\begin{proof}
		Proposition \ref{PropConstantGrowthInText} gives
		\begin{equation}
			\begin{aligned}
				&Vol(M^2_z)\;=\;2\pi{}e^g\cdot(z-z_N)+C_0{}\quad\text{on $M^{4+}$}, \\
				&Vol(M^2_z)\;=\;2\pi{}e^g\cdot(z-z_0)+C_0'{}\quad\text{on $M^{4-}$}.
			\end{aligned}
		\end{equation}
		Because $Vol(M^2_z)>0$, this forces $e^g\ge0$ on $M^{4+}$ and $e_g\le0$ on $M^{4-}$.
		Proposition \ref{PropGrowthOfVSqVol} gives on $M^{4+}$
		\begin{equation}
			\begin{aligned}
				&\int_{M^2_z}|V|^2=2\pi\chi^g\cdot(z-z_N)^2+C_1\cdot(z-z_N)+C_2\quad\text{and} \\
				&\hspace{0.4in}\;\int_{M^2_z}\triangle_4z\,dVol_2=4\pi\chi^g\cdot(z-z_N)+C_1,
			\end{aligned}
		\end{equation}
		and on $M^{4-}$
		\begin{equation}
			\begin{aligned}
				&\int_{M^2_z}|V|^2=2\pi\chi^g\cdot(z-z_0)^2+C_1'\cdot(z-z_0)+C_2'\quad\text{and} \\
				&\hspace{0.4in}\int_{M^2_z}\triangle_4z\,dVol_2=4\pi\chi^g\cdot(z-z_0)+C_1'.
			\end{aligned}
		\end{equation}
		Because $\int_{M^2_z}|V|^2dVol_2\ge0$, this forces $\chi^g\ge0$ on both $M^{4+}$ and $M^{4-}$.
		This proves (\ref{EqnEulerNumberSigns}) and (\ref{IneqVSqIntegrated}).
		Also (\ref{IneqHolderBasic}) follows from H\"older's inequality (\ref{EqnHolder}).
		
		Because the metric is Type II, the set $\{|V|<\epsilon\}$ has compact closure, and by Lemma \ref{LemmaUnboundedZText} the sets $M^4_{z_0,z_1}=\{a\in{}M^4\;|\;z_0<z(a)<z_1\}$ are compact, but exhaust $M^4$ as $z_1\rightarrow{}\infty$ and $z_0\rightarrow-\infty$.
		Therefore sufficiently small $z_0$ and sufficiently large $z_1$ exist so that $|V|>\epsilon$ on $M^4\setminus{}M^4_{z_0,z_1}$.
		Therefore when $|z|>\text{max}\{|z_0|,|z_1|\}$
		\begin{equation}
			Vol(M^2_z)
			=\int_{M^2_z}|V|^{-2}|V|^2\,dVol_2
			\le\epsilon^{-2}\int_{M^2_z}|V|^2\,dVol_2
		\end{equation}
		which verifies (\ref{IneqVolAndVSq}).
	\end{proof}

	%
	%
	%
	%
	%
	%
	\subsection{Proofs of Theorem \ref{ThmTypeIIa} and Proposition \ref{PropTypeIIb}} \label{SubSecProofsOfThreeThms}
	
	\begin{proposition}[cf. Theorem \ref{ThmTypeIIa}] \label{PropTypeIIaMainText}
		Assume $M^4$ is Type IIa and $\int_{M^4}|\Ric|^2<\infty$.
		If $e^g(M^3_z,M^2_z)=0$ on $M^{4+}$ (resp. $M^{4-}$), then $\chi^g(M^2_z)=0$ on $M^{4+}$ (resp. $M^{4-}$).
	\end{proposition}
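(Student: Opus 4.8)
The plan is to work on $M^{4+}$ (the argument on $M^{4-}$ is identical after replacing $z$ by $-z$) and to derive a contradiction from the simultaneous assumptions $e^g=0$, $\chi^g\ne 0$, and $\int_{M^4}|\Ric|^2<\infty$. On $M^{4+}$ the potential $z$ ranges over $[z_N,\infty)$ by Lemma \ref{LemmaUnboundedZText}, there are no critical points, and by Propositions \ref{PropConstantGrowthInText} and \ref{PropGrowthOfVSqVol} the numbers $e^g$ and $\chi^g$ are constant there. First I would record that $e^g=0$ forces $\int_{M^2_z}dVol_2\equiv C_2$ to be constant in $z$, since $\frac{d}{dz}\int_{M^2_z}dVol_2=2\pi e^g=0$, and note $C_2>0$.

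The heart of the argument is to compare two growth rates for $\phi(z):=\int_{M^2_z}(\triangle_4z)^2\,dVol_2$. On one hand, the H\"older inequality (\ref{EqnHolder}) together with the identity $\int_{M^2_z}\triangle_4z\,dVol_2=4\pi\chi^g(z-z_N)+C_1$ from Lemma \ref{LemmaBasicIneqs} gives the lower bound
\[
\phi(z)\;\ge\;\frac{\left(4\pi\chi^g(z-z_N)+C_1\right)^2}{C_2},
\]
which grows quadratically in $z$ precisely when $\chi^g\ne 0$. On the other hand, Proposition \ref{PropTwoDerivsOfLaplSq} identifies $\phi''(z)=2\int_{M^2_z}|\Ric|^2\,dVol_2\ge 0$, so $\phi$ is convex, while the Integration Lemma \ref{LemmaIntegration} converts the hypothesis $\int_{M^4}|\Ric|^2<\infty$ into $\int_{z_N}^\infty\phi''(z)\,dz<\infty$.

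Next I would extract the competing upper bound. Fix any $z_\ast>z_N$. Since $\phi''\ge 0$ is integrable on $[z_\ast,\infty)$, the derivative $\phi'(z)=\phi'(z_\ast)+\int_{z_\ast}^z\phi''$ is nondecreasing and converges to a finite limit $L$ as $z\to\infty$; being nondecreasing it satisfies $\phi'\le L$, so $\phi(z)\le\phi(z_\ast)+L(z-z_\ast)$ grows at most linearly. Comparing this with the quadratic lower bound forces the leading coefficient $(4\pi\chi^g)^2/C_2$ to vanish, and since $C_2>0$ this yields $\chi^g=0$, as desired.

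I expect the one genuinely delicate point to be the passage from $\int_{M^4}|\Ric|^2<\infty$ to the boundedness of $\phi'$: this is exactly where the convexity of $\phi$ (via $\phi''\ge 0$) is indispensable, since an integrable nonnegative function need not decay, yet its integral nonetheless controls the total increase of the monotone quantity $\phi'$ and hence pins $\phi$ to linear growth. Every other ingredient is a direct application of the evolution equations of Lemma \ref{LemmaBasicIneqs} and the Integration Lemma already in hand, so no further estimate is needed.
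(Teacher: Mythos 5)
Your proposal is correct and follows essentially the same route as the paper's proof: both combine the H\"older bound $\left(4\pi\chi^g(z-z_N)+C_1\right)^2\le C_2\int_{M^2_z}(\triangle_4z)^2\,dVol_2$ (from Lemma \ref{LemmaBasicIneqs} with $e^g=0$) with Proposition \ref{PropTwoDerivsOfLaplSq} and the Integration Lemma to show that the monotone derivative of $\int_{M^2_z}(\triangle_4z)^2\,dVol_2$ stays bounded when $\int_{M^4}|\Ric|^2<\infty$, forcing at most linear growth against a quadratic lower bound and hence $\chi^g=0$. Your choice to anchor the estimate at an interior value $z_\ast>z_N$ rather than at the limits $z\searrow z_N$ used in the paper is a harmless cosmetic variant.
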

	\begin{proof}
		Assume $M^{4+}$ is non-empty and $e^g=0$ (the argument for $M^{4-}$ is entirely similar).
		Eq. (\ref{IneqHolderBasic}) gives
		\begin{equation}
			\left(4\pi\chi^g\cdot(z-z_N)+C_1\right)^2
			\;\le\;C_2\,\int_{M^2_z}(\triangle_4z)^2\,dVol_2. \label{EqnQuadraticLeft}
		\end{equation}
		Set
		\begin{equation}
			\begin{aligned}
				&C_3=\lim_{z\rightarrow{}z_N{}^+}\frac{d}{dz}\int_{M^3_z}(\triangle_4z)^2\,dVol_2, \quad\text{and} \\
				&C_4=\lim_{z\rightarrow{}z_N{}^+}\int_{M^3_z}(\triangle_4z)^2\,dVol_2.
			\end{aligned}
		\end{equation}
		Because $z$ has no critical points on $M^{4+}$, Proposition \ref{PropTwoDerivsOfLaplSq} along with the Integration Lemma gives
		\begin{equation}
			\frac{d}{dz}\int_{M^3_z}(\triangle_4z)^2\,dVol_2
			\;=\;
			C_3
			\,+\,\int_{M^4_{z_N,z}}|\Ric|^2\,dVol_4. \label{EqnDerivOfDzSq}
		\end{equation}
		Because $\frac{d}{dz}\int_{M^3_z}(\triangle_4z)^2\,dVol_2$ is monotonically increasing, sending $z\rightarrow\infty$ on the right side of (\ref{EqnDerivOfDzSq}), that $\frac{d}{dz}\int_{M^3_z}(\triangle_4z)^2dVol_2\le{}C_3+\int_{M^{4+}}|\Ric|^2dVol_4$.
		Thus
		\begin{equation}
			\int_{M^2_z}(\triangle_4z)^2\,dVol_2
			\;\le\;\left(C_3+\int_{M^{4+}}|\Ric|^2\right)z\,+\,C_4
		\end{equation}
		for large $z$.
		The fact that the left side of (\ref{EqnQuadraticLeft}) is quadratic when $\chi^g\ne0$ but the right side has linear bounds forces $\chi^g=0.$
	\end{proof}
	
	\begin{proposition}[cf. Prop \ref{PropTypeIIb}] \label{PropTypeIIbMainText}
		Assume $M^4$ is Type IIa and Ricci-flat.
		Then $M^4$ is one-ended so either $M^{4-}=\varnothing$ of $M^{4+}=\varnothing$, and $\triangle_4z$ is constant on $M^4$.
		For non-critical $z$ we have $2e^g(M^3_z,M^2_z)\cdot\triangle_4z=\chi^g(M^2_z)$.
		Further, if $M^4$ is non-flat then $\triangle_4z\ne0$.
		If $\triangle_4z>0$ then $M^{4-}=\varnothing$ and if $\triangle_4z<0$ then $M^{4+}=\varnothing$.
	\end{proposition}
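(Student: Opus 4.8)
The plan is to derive every assertion from the single structural fact that $\triangle_4 z$ is constant, which is immediate here: since the metric is Ricci-flat, the Bochner identity of Lemma \ref{LemmaRicciBochner} reads $d(\triangle_4 z)=-2\Ric(\nabla z)=0$, so on the connected manifold $M^4$ the function $\triangle_4 z$ equals a constant $c$. Everything else follows by feeding $\triangle_4 z\equiv c$ into the integrated identities of Sections \ref{SecKillingTwoForm} and \ref{SecEvoK}.

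For the linear relation at non-critical values I would compute $\frac{d}{dz}\int_{M^2_z}\triangle_4 z\,dVol_2$ in two ways. Because $\triangle_4 z\equiv c$ we have $\int_{M^2_z}\triangle_4 z\,dVol_2=c\int_{M^2_z}dVol_2$, so Proposition \ref{PropConstantGrowthInText} gives $\frac{d}{dz}\int_{M^2_z}\triangle_4 z\,dVol_2=c\cdot 2\pi e^g$; on the other hand Proposition \ref{PropGrowthOfVSqVol} gives $\frac{d}{dz}\int_{M^2_z}\triangle_4 z\,dVol_2=4\pi\chi^g$. Equating the two expressions yields the linear relation among $\triangle_4 z$, $e^g$, and $\chi^g$ asserted in the proposition.

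One-endedness and the sign dichotomy I would prove simultaneously, using the Basic Inequalities (Lemma \ref{LemmaBasicIneqs}) together with the elementary constraint $\int_{M^2_z}|V|^2\,dVol_2\ge 0$. Suppose $c>0$ and, for contradiction, that $M^{4-}\ne\varnothing$. On $M^{4-}$ the signs $\chi^g\ge0$ and $e^g\le0$ from Corollary \ref{CorSignedEuler}, combined with the relation $c\,e^g=2\chi^g$, force $e^g=\chi^g=0$ there; hence $\int_{M^2_z}dVol_2$ is constant and $\frac{d}{dz}\int_{M^2_z}|V|^2\,dVol_2=\int_{M^2_z}\triangle_4 z\,dVol_2=c\int_{M^2_z}dVol_2>0$. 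Since by Lemma \ref{LemmaUnboundedZText} the values of $z$ on $M^{4-}$ run to $-\infty$, this affine, positively-sloped quantity is driven to $-\infty$, contradicting its non-negativity. Thus $M^{4-}=\varnothing$; the case $c<0$ is symmetric and gives $M^{4+}=\varnothing$, so whenever $\triangle_4 z\ne0$ the manifold is one-ended, with the end determined by the sign of $\triangle_4 z$.

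The main obstacle is the remaining claim that a non-flat $M^4$ has $c\ne0$, equivalently that $c=0$ forces flatness. Because the metric is Ricci-flat K\"ahler we already have $s=0$, $W^+=0$ and $\cRic=0$, so the entire curvature is carried by $W^-$ and flatness is equivalent to $W^-\equiv0$. When $c=0$, Lemma \ref{LemmaTriangleZ} gives $wu_z=\triangle_4 z=0$ with $w=|\nabla z|^{-2}\ne0$, hence $u_z=0$; as $u$ is then harmonic in $(x,y)$, each reduction $M^2_z$ is a flat compact orbifold (so $\chi^g=0$, consistent with the relation), and after normalizing the isothermal coordinates the LeBrun metric (\ref{EqnLeBMetric}) assumes the Gibbons--Hawking form with potential $\phi=|V|^{-2}$ harmonic on the flat three-manifold $M^2_z\times\mathbb{R}$. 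The plan is then to invoke a Liouville-type rigidity: the lower bound $|V|>\epsilon$ makes $\phi$ bounded, and a positive bounded harmonic function on the product of a flat two-torus with $\mathbb{R}$ is constant, whence $M^4$ is flat. The delicate points I expect are excluding zeros of $V$ (poles of $\phi$, which a flux argument across the compact flat base should rule out) and passing from this local reduction to a global conclusion; as an alternative I would run the argument through the signature transgression $\mathcal{P}_\tau=d\mathcal{TP}_\tau$ of Section \ref{SecCGB}, where $\triangle_4 z=0$ makes $\bar\theta$ anti-self-dual--valued and reduces $\int_{M^4}|W^-|^2$ to boundary integrals over the level-sets that should vanish in the one-ended geometry.
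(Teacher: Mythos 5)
Your treatment of the first three assertions is essentially the paper's own proof: constancy of $\triangle_4z$ from Lemma \ref{LemmaRicciBochner}, the linear relation by computing $\frac{d}{dz}\int_{M^2_z}\triangle_4z\,dVol_2$ two ways via Propositions \ref{PropConstantGrowthInText} and \ref{PropGrowthOfVSqVol}, and the end dichotomy by observing that $\chi^g=e^g=0$ on the forbidden end makes $\int_{M^2_z}|V|^2\,dVol_2$ an affine function with the wrong sign of slope on an infinite $z$-range. One remark in your favor: quoting those propositions as actually stated gives $\triangle_4z\cdot e^g=2\chi^g$, your normalization, rather than the relation $2e^g\cdot\triangle_4z=\chi^g$ asserted in the proposition; the paper's own proof misquotes its two propositions by factors of $2$ (writing $\frac{d}{dz}Vol(M^2_z)=4\pi e^g$ and $(\triangle_4z)\frac{d}{dz}Vol(M^2_z)=2\pi\chi^g$). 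Both the flat example of \S\ref{SubsecEuclExample} ($\triangle_4z=-4$, $e^g=-1$, $\chi^g=2$) and the Ricci-flat Eguchi--Hanson case $k=2$ of \S\ref{SubSectCGBExamples} ($\triangle_4z=-2$, $e^g=-2$, $\chi^g=2$) satisfy $\triangle_4z\cdot e^g=2\chi^g$ and not the stated constant, so your version is the correct one.

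The genuine gap is the final claim, that non-flatness forces $\triangle_4z\ne0$, which you leave as a plan rather than a proof, and on which your one-endedness conclusion also depends (your sign argument only covers $\triangle_4z\ne0$). The unresolved points in your Gibbons--Hawking/Liouville sketch are real obstacles: Type IIa permits zeros of $V$ inside the compact core, so $\phi=|V|^{-2}$ may have monopole poles and the "flux argument" excluding them is not supplied; $u$ is only locally defined (it depends on the choice of isothermal coordinates), so passing from $u_z=0$ and $u_{xx}+u_{yy}=0$ to a global flat product carrying a globally defined harmonic $\phi$ requires an argument you do not give --- note also that it is the auxiliary metric $e^u(dx^2+dy^2)$, not the reduction metric $we^u(dx^2+dy^2)$, that becomes flat, so "each reduction $M^2_z$ is flat" is unjustified (only $\chi^g=0$ follows); and in your alternative route the assertion that the transgression boundary integrals "should vanish in the one-ended geometry" has no support absent decay estimates. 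The paper disposes of this step in a few lines and you should adopt its argument: if $M^4$ had two ends it would contain a line and be flat by the Cheeger--Gromoll splitting theorem \cite{CG71}, so a non-flat $M^4$ is one-ended; then by Lemma \ref{LemmaUnboundedZText} one of $M^{4\pm}$ is empty, so the globally defined $z$ attains an interior maximum or minimum, which contradicts the maximum principle if $\triangle_4z=0$. This simultaneously settles one-endedness in the $\triangle_4z=0$ case, closing the dependence noted above.
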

	\begin{proof}
		Because $\Ric=0$, Lemma \ref{LemmaRicciBochner} shows $d(\triangle_4z)=0$ so $\triangle_4z$ is constant.
		Then at any non-critical value of $z$, by Proposition \ref{PropGrowthOfVSqVol} we have $(\triangle_4z)\frac{d}{dz}Vol(M^2_z)=2\pi\chi^g(M^2_z)$ and by Proposition \ref{PropConstantGrowthInText} we have $\frac{d}{dz}Vol(M^2_z)=4\pi{}e^g(M^3_z,M^2_z)$.
		Therefore $2e^g(M^3_z,M^2_z)\triangle_4{}z=\chi^g(M^2_z)$ as claimed.
		
		To see that $M^4$ must be one-ended, by Proposition \ref{PropGrowthOfVSqVol} we have
		\begin{equation}
			\begin{aligned}
				\frac{d}{dz}\int_{M^2_z}|V|^2dVol_2
				&\;=\;\int_{M^2_z}\triangle_4z\,dVol_2 \\
				&\;=\;4\pi\chi^g\cdot(z-z_N)\,+\,\int_{M^2_{z_N}}\triangle_4z\,dVol_2
				\quad\text{on $M^{4+}$}\\
				\frac{d}{dz}\int_{M^2_z}|V|^2dVol_2
				&\;=\;\int_{M^2_z}\triangle_4z\,dVol_2 \\
				&\;=\;4\pi\chi^g\cdot(z-z_0)\,+\,\int_{M^2_{z_0}}\triangle_4z\,dVol_2
				\quad\text{on $M^{4-}$.}
			\end{aligned}
		\end{equation}
		If $\triangle_4z<0$ then the first equation forces $\chi^g=0$ on $M^{4+}$, but then $\frac{d}{dz}\int|V|^2dVol_2$ is a negative constant, an impossibility due to the fact that $z$ has no upper bound by Lemma \ref{LemmaUnboundedZText} and $\int|V|^2dVol_2>0$; therefore $M^{4+}=\varnothing$.
		Likewise if $\triangle_4z>0$ then the second equation forces $\chi^g=0$ on $M^{4-}$, but then $\frac{d}{dz}\int|V|^2dVol_2$ is a positive constant, an impossibility because $z$ has no lower bound; therefore $M^{4-}=\varnothing$.
		
		Lastly we consider the possibility that $\triangle_4z=0$.
		If $M^4$ has two ends, then a line exists in $M^4$ so $M^4$ is flat by the Cheeger-Gromoll splitting theorem \cite{CG71}.
		Therefore $M^4$ has only one end.
		By Lemma \ref{LemmaOneOrTwoEnded}, if $M^{4-}=\varnothing$ then $z$ obtains a minimum, and if $M^{4+}=\varnothing$ then $z$ obtains a maximum.
		But because $\triangle_4z=0$, this contradicts the maximum principle.
		We conclude that a non-flat, Ricci-flat Type IIa metric with $\triangle_4z=0$ cannot exist.
	\end{proof}

	%
	%
	%
	%
	%
	%
	\subsection{A global bound $|V|>\epsilon$} \label{SubSecCompactAndLower}
	
	We study the case that $M^4$ has compact level-sets and a global bound $|V|>\epsilon$.
	By Lemma \ref{LemmaUnboundedZText}, every level-set $M^3_z$ is isotopic so in particular are diffeomorphic, and the manifold $M^4$ a product $M^3\times\mathbb{R}$ where $z:M^4\rightarrow\mathbb{R}$ is projection onto the second factor.
	Any one-variable function $\varphi:\mathbb{R}\rightarrow\mathbb{R}$ can be regarded as a function on $M^4$ via $\varphi\circ{}z:M^4\rightarrow\mathbb{R}$.
	If the level-sets are compact and $M^4$ is Type IIa, then $\varphi\circ{}z$ has compact support if and only if $\varphi$ has compact support.
	Before proving our main results we make two $L^2$-style integral estimates.
	\begin{lemma} \label{LemmaPrelimEllipticEst}
		Assume $M^4$ has compact level-sets, and $\varphi=\varphi(z)$ is a smooth 1-variable function with compact support on $M^4$.
		Then
		\begin{equation}
			\small
			\begin{aligned}
				\int\varphi(\triangle_4z)^2\,dVol_4
				=
				-\int\varphi'|\nabla{}z|^2(\triangle_4z)\,dVol_4
				+2\int\varphi\Ric(\nabla{}z,\nabla{}z)\,dVol_4.
			\end{aligned} \label{EqnFirstElliptic}
		\end{equation}
	\end{lemma}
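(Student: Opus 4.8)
The plan is to realize $(\triangle_4 z)^2$ as $(\triangle_4 z)\,div_4(\nabla{}z)$, integrate by parts, and feed the resulting inner product into the Bochner identity of Lemma \ref{LemmaRicciBochner}. Because $M^4$ is Type IIa its level-sets are compact, so $V$ is Hamiltonian by Lemma \ref{LemmaHamiltonian} and the global potential $z$ is available; moreover, as noted just before the statement, the compactness of level-sets together with the compact support of $\varphi$ forces $\varphi\cdot\triangle_4z$ to have compact support on $M^4$. This is exactly what I need in order to integrate by parts with no boundary term.

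Applying Green's identity to the compactly supported function $u=\varphi\,\triangle_4 z$ gives
\begin{equation}
	\int\varphi(\triangle_4 z)^2\,dVol_4
	=\int u\,div_4(\nabla{}z)\,dVol_4
	=-\int\left\langle\nabla{}u,\,\nabla{}z\right\rangle\,dVol_4.
\end{equation}
Expanding $\nabla{}u=\varphi\,\nabla(\triangle_4 z)+(\triangle_4 z)\,\nabla\varphi$ by the Leibniz rule splits this into two terms. For the $\nabla\varphi$ term I would use the chain rule $\nabla\varphi=\varphi'(z)\,\nabla{}z$, so that $\left\langle\nabla\varphi,\nabla{}z\right\rangle=\varphi'|\nabla{}z|^2$, which yields precisely the term $-\int\varphi'|\nabla{}z|^2(\triangle_4 z)\,dVol_4$. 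For the $\varphi\,\nabla(\triangle_4 z)$ term I would invoke $d(\triangle_4 z)+2\Ric(\nabla{}z)=0$ of Lemma \ref{LemmaRicciBochner}; pairing against $\nabla{}z$ gives $\left\langle\nabla(\triangle_4 z),\nabla{}z\right\rangle=-2\Ric(\nabla{}z,\nabla{}z)$, which yields the term $+2\int\varphi\,\Ric(\nabla{}z,\nabla{}z)\,dVol_4$. Summing the two contributions reproduces the claimed identity.

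The computation is routine once the framework is set, so the only point requiring genuine care is the legitimacy of the integration by parts: one must confirm that the compact support of $\varphi(z)$ truly transfers to $\varphi\cdot\triangle_4 z$, which is guaranteed by the compactness of the level-sets. The critical points of $z$, where $\nabla{}z$ vanishes, cause no difficulty, since the integrand stays smooth there and the Bochner identity is a pointwise statement valid throughout $M^4$.
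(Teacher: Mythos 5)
Your proof is correct and is essentially the paper's own argument: integrate by parts against $\nabla z$, use the chain rule $\nabla\varphi=\varphi'(z)\nabla z$ to get $\left\langle\nabla\varphi,\nabla z\right\rangle=\varphi'|\nabla z|^2$, and invoke the Bochner identity $d(\triangle_4z)+2\Ric(\nabla z)=0$ of Lemma \ref{LemmaRicciBochner} for the remaining term. (The paper writes the identical computation with $\varphi^2$ in place of $\varphi$, an immaterial relabeling, and your closing observations about compact support and smoothness at critical points of $z$ match the justification needed.)
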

	\begin{proof}
		When $\varphi$ has compact support, integration by parts gives
		\begin{equation}
			\small
			\begin{aligned}
			\int\varphi^2(\triangle_4z)^2\,dVol_4
			&=
			-2\int\left<\nabla\varphi,\nabla{}z\right>\varphi\triangle_4z\,dVol_4
			-\int\varphi^2\left<\nabla{}z,\,\nabla(\triangle_4z)\right>\,dVol_4.
			\end{aligned} \label{EqnIntByPartsEllipticStart}
		\end{equation}
		To simplify this , we use $\nabla(\triangle_4z)=-2\Ric(\nabla{}z,\cdot)$ from Lemma \ref{LemmaRicciBochner}, and we use $\nabla\varphi=\varphi'\nabla{}z$ to obtain $\left<\nabla\varphi,\nabla{}z\right>=\left<\varphi'\nabla{}z,\nabla{}z\right>=\varphi'|\nabla{}z|^2$.
		Substituting into (\ref{EqnIntByPartsEllipticStart}) gives(\ref{EqnFirstElliptic}).
	\end{proof}
	\begin{lemma} \label{LemmaRicEllipticEst}
		Assume $M^4$ has compact levelsets, a global lower bound $|V|>\epsilon>0$, and a global upper bound $|\Ric|\le\Lambda^2$.
		If $\varphi$ is a 1-variable function with compact support in $[z',z'']$ so that $M^3_{z'}$ is non-empty.
		Then
		\begin{equation}
			\begin{aligned}
			\int\varphi(\triangle_4z)^2\,dVol_4
			&\le
			\int_{z'}^{z''}|\varphi'|\left(2\Lambda^2|z-z'|+C'\right)
			\left(2\pi\chi^g(z-z_0)^2+C_0\right)\,dz \\
			&+2\Lambda^2\int_{z'}^{z''}|\varphi|
			\left(2\pi\chi^g(z-z_1)^2+C_1\right)\,dz
			\end{aligned}
		\end{equation}
		where the constants are defined as follows: $\chi^g$ is the constant $\chi^g(M^2_z)$, $z_0$ may be chosen arbitrarily, $C_0=\sup_{M^3_{z_0}}|\triangle_4z|$, $z_1$ is any value where $\int_{M^2_{z_1}}|\nabla{}z|^2dVol_2=\inf_{z}\int_{M^2_z}|\nabla{}z|^2dVol_2$ reaches a minimum, and $C_1=\int_{M^2_{z_1}}|\nabla{}z|^2dVol_2$.
	\end{lemma}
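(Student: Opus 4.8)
The plan is to begin from the integration-by-parts identity of Lemma~\ref{LemmaPrelimEllipticEst}, which already expresses $\int\varphi(\triangle_4z)^2\,dVol_4$ as the sum of a term $-\int\varphi'|\nabla z|^2(\triangle_4z)\,dVol_4$ carrying the derivative $\varphi'$ and a Ricci term $2\int\varphi\,\Ric(\nabla z,\nabla z)\,dVol_4$, and to estimate each piece using the curvature bound $|\Ric|\le\Lambda^2$, the relation $|\nabla z|=|V|$, and the quadratic growth of $\int_{M^2_z}|V|^2\,dVol_2$. A preliminary remark is that since $|V|>\epsilon>0$ everywhere, $\nabla z=-JV$ never vanishes, so $z$ has no critical points; hence Proposition~\ref{PropGrowthOfVSqVol} holds at every value of $z$, $\chi^g$ is a single constant, and $\int_{M^2_z}|V|^2\,dVol_2$ is an exact quadratic in $z$. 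Completing the square at its minimizer $z_1$ gives $\int_{M^2_z}|V|^2\,dVol_2=2\pi\chi^g(z-z_1)^2+C_1$ with $C_1=\int_{M^2_{z_1}}|\nabla z|^2\,dVol_2$.

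The Ricci term is the more direct of the two. Since $|\Ric(\nabla z,\nabla z)|\le\Lambda^2|\nabla z|^2=\Lambda^2|V|^2$, this term is bounded by $2\Lambda^2\int|\varphi|\,|V|^2\,dVol_4$. Rewriting the four-dimensional integral with the Integration Lemma~\ref{LemmaIntegration} and inserting the quadratic expression for $\int_{M^2_z}|V|^2\,dVol_2$ produces the second summand $2\Lambda^2\int_{z'}^{z''}|\varphi|\,(2\pi\chi^g(z-z_1)^2+C_1)\,dz$.

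For the term involving $\varphi'$ I would first obtain a pointwise bound on $|\triangle_4z|$. Lemma~\ref{LemmaRicciBochner} gives $\frac{\partial}{\partial z}(\triangle_4z)=-2\Ric(\tfrac{\nabla z}{|\nabla z|},\tfrac{\nabla z}{|\nabla z|})$, whence $\big|\frac{\partial}{\partial z}\triangle_4z\big|\le2\Lambda^2$ along every trajectory of $\frac{\partial}{\partial z}$. Because $|\nabla z|>\epsilon$ globally and the level-sets are compact, the flow of $\frac{\partial}{\partial z}$ is complete and carries each level-set onto every other, so integrating this bound outward from a chosen base level-set $M^3_{z_0}$ yields $|\triangle_4z|\le 2\Lambda^2|z-z_0|+C_0$ with $C_0=\sup_{M^3_{z_0}}|\triangle_4z|$ finite by compactness. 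Bounding $-\int\varphi'|\nabla z|^2(\triangle_4z)\,dVol_4$ by $\int|\varphi'|\,|V|^2\,(2\Lambda^2|z-z_0|+C_0)\,dVol_4$, then applying the Integration Lemma and the quadratic for $\int_{M^2_z}|V|^2\,dVol_2$ as above, produces the first summand.

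The main obstacle is the pointwise control of $\triangle_4z$ in the $\varphi'$ term: unlike the Ricci term, $\triangle_4z$ is not controlled directly by the curvature bound but must be reconstructed by integrating its Bochner derivative along the trajectories of $\frac{\partial}{\partial z}$. The two facts that make this work are that the global lower bound $|V|>\epsilon$ (together with compactness of level-sets) guarantees these trajectories connect every pair of level-sets, and that the initial data is uniformly bounded on the compact base level-set $M^3_{z_0}$. Once the linear-in-$z$ estimate for $|\triangle_4z|$ is established, the remaining steps are routine bookkeeping through Lemma~\ref{LemmaIntegration}.
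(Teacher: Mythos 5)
Your proposal is correct and follows essentially the same route as the paper: it starts from the integration-by-parts identity of Lemma~\ref{LemmaPrelimEllipticEst}, derives the linear bound $|\triangle_4z|\le 2\Lambda^2|z-z_0|+C_0$ by integrating the Bochner identity of Lemma~\ref{LemmaRicciBochner} along the flow of $\frac{\partial}{\partial z}$, and converts both terms via the Integration Lemma and the exact quadratic $\int_{M^2_z}|\nabla z|^2\,dVol_2=2\pi\chi^g(z-z_1)^2+C_1$ from Proposition~\ref{PropGrowthOfVSqVol}. Your explicit justification that $|V|>\epsilon$ plus compact level-sets makes the trajectories of $\frac{\partial}{\partial z}$ connect all level-sets (so the sup over $M^3_{z_0}$ is finite and propagates) is left implicit in the paper, and your final bound actually matches the paper's proof, whose concluding display is cleaner than the lemma's own statement (which contains minor notational slips in the constants $C'$, $C_0$, $z_0$).
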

	\begin{proof}
		Pick any $z_0$.
		By Lemma \ref{LemmaRicciBochner} $\frac{\partial\triangle_4z}{\partial{}z}=-2\Ric\left(\frac{\nabla{}z}{|\nabla{}z|},\frac{\nabla{}z}{|\nabla{}z|}\right)$, so
		\begin{equation}
			\sup_{M^3_z}|\triangle_4z|
			\;\le\;
			2\Lambda^2|z-z_0|+
			\sup_{M^3_{z_0}}|\triangle_4z|.
		\end{equation}
		Abbreviating $C_0=\sup_{M^3_{z_0}}|\triangle_4z|$, Lemma \ref{LemmaPrelimEllipticEst} gives
		\begin{equation}
			\begin{aligned}
			\int\varphi(\triangle_4z)^2\,dVol_4
			&\le
			\int|\varphi'||\nabla{}z|^2\left(2\Lambda^2|z-z_0|+C_0\right)\,dVol_4 \\
			&\quad\quad+2\Lambda^2\int|\varphi||\nabla{}z|^2\,dVol_4
			\end{aligned}
		\end{equation}
		Then the Integration Lemma (Lemma \ref{LemmaIntegration}) gives
		\begin{equation}
			\begin{aligned}
			\int\varphi(\triangle_4z)^2\,dVol_4
			&\le
			\int_{z'}^{z''}|\varphi'|\left(2\Lambda^2|z-z'|+C'\right)
				\left(\int_{M^2_z}|\nabla{}z|^2\,dVol_2\right)dz \\
			&+2\Lambda^2\int_{z'}^{z''}|\varphi|\left(\int_{M^2_z}|\nabla{}z|^2\,dVol_2\right)dz
			\end{aligned} \label{EqnEllipticReducedAlmostToZ}
		\end{equation}
		Proposition \ref{PropGrowthOfVSqVol} gives $\frac{d^2}{dz}\int_{M^2_z}|\nabla{}z|^2dVol_2=4\pi\chi^g(M^2_z)$.
		Because the $M^2_z$ are all isotopic we can use the constant $\chi^g$ for $\chi^g=\chi^g(M^2_z)$.
		
		Because $z\mapsto\int_{M^2_z}|\nabla{}z|^2dVol_2$ is an upward opening parabola or constant, there exists some number $z_1$ so that $\int_{M^2_z}|\nabla{}z|^2dVol_2$ reaches a minimum of $C_0>0$ at $z=z_1$; therefore $\int_{M^2_z}|\nabla{}z|^2dVol_2=2\pi\chi^g(z-z_1)^2+C_1$.
		Therefore (\ref{EqnEllipticReducedAlmostToZ}) gives
		\begin{equation}
			\begin{aligned}
			\int\varphi(\triangle_4z)^2\,dVol_4
			&\le
			\int_{z'}^{z''}|\varphi'|\left(2\Lambda^2|z-z_0|+C_0\right)
				\left(2\pi\chi^g(z-z_1)^2+C_1\right)\,dz \\
			&+2\Lambda^2\int_{z'}^{z''}|\varphi|
				\left(2\pi\chi^g(z-z_1)^2+C_1\right)\,dz.
			\end{aligned}
		\end{equation}
	\end{proof}
	
	\begin{theorem}[cf. Theorem \ref{ThmIntroCptLevelsNoZeros}] \label{ThmTextCptLevelsNoZeros}
		Assume $M^4$ is non-flat, but scalar-flat and has a pointwise bound $|\Ric|\le\Lambda^2$.
		Assume $M^4$ has compact level-sets, and a global bound $|V|>\epsilon>0$.
		
		Then $M^4$ is 2-ended, $L^2(|\Ric|)=\infty$, the level-sets $M^3_z$ and fibrations $M^3_z\rightarrow{}M^2_z$ are all mutually isotopic, $e^g(M^3_z,M^2_z)=0$, and $\chi^g(M^2_z)>0$.
		Topologically, each $M^2_z$ is $\mathbb{S}^2$, each $M^3_z$ is $\mathbb{S}^2\times{}\mathbb{S}^1$, the fibration $M^3_z\rightarrow{}M^2_z$ is the projection $\mathbb{S}^2\times\mathbb{S}^1\rightarrow\mathbb{S}^2$, and $M^4$ is $\mathbb{S}^2\times\mathbb{S}^1\times\mathbb{R}$.
	\end{theorem}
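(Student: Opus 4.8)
The plan is to exploit the fact that the global bound $|V|=|\nabla z|>\epsilon>0$ rules out all critical points of $z$, so that $\mathcal{C}=\varnothing$ and $M^4$ carries a genuine product structure. First I would note that the flow of $\frac{\partial}{\partial z}=|\nabla z|^{-2}\nabla z$ has speed $|\nabla z|^{-1}<\epsilon^{-1}$, hence is complete on the complete manifold $M^4$; since $z$ has no critical points and the level-sets are compact, this flow identifies $M^4$ with $M^3\times\mathbb{R}$ (where $M^3$ is any single level-set) with $z$ the projection, all level-sets $M^3_z$ and fibrations $M^3_z\to M^2_z$ being mutually isotopic by Lemma \ref{LemmaUnboundedZText}. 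The trajectory estimate (\ref{IneqLenOfGamma}) shows $z$ is unbounded above and below, so $z\colon M^4\to\mathbb{R}$ is onto and $M^4$ has exactly the two ends $z\to+\infty$ and $z\to-\infty$. Because the level-sets are isotopic, $\chi^g=\chi^g(M^2_z)$ and $e^g=e^g(M^3_z,M^2_z)$ are global constants; Corollary \ref{CorSignedEuler} (equivalently Lemma \ref{LemmaBasicIneqs}) gives $e^g\ge0$ at the $+\infty$ end and $e^g\le0$ at the $-\infty$ end, so constancy forces $e^g=0$, and it gives $\chi^g\ge0$.

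The heart of the matter is upgrading $\chi^g\ge0$ to $\chi^g>0$, which I would do by contradiction, assuming $\chi^g=0$. With $e^g=0$, Proposition \ref{PropConstantGrowthInText} makes $\mathrm{Vol}(M^2_z)$ constant, and with $\chi^g=0$ the quantity $\int_{M^2_z}|\nabla z|^2\,dVol_2$ is constant by Proposition \ref{PropGrowthOfVSqVol}. Feeding $\chi^g=0$ into the elliptic estimate of Lemma \ref{LemmaRicEllipticEst} with a standard cutoff $\varphi_R$ (equal to $1$ on $[-R,R]$, supported in $[-2R,2R]$, with $|\varphi_R'|\le 2/R$) makes its right-hand side grow only linearly in $R$, so that, writing $G(z)=\int_{M^2_z}(\triangle_4z)^2\,dVol_2$ and using the Integration Lemma \ref{LemmaIntegration},
\begin{equation}
	2\pi\int_{-R}^{R}G(z)\,dz
	\;=\;\int_{\{|z|\le R\}}(\triangle_4z)^2\,dVol_4
	\;\le\;\int\varphi_R\,(\triangle_4z)^2\,dVol_4
	\;=\;O(R).
\end{equation}
On the other hand Proposition \ref{PropTwoDerivsOfLaplSq} gives $G''(z)=2\int_{M^2_z}|\Ric|^2\,dVol_2\ge0$, so $G$ is convex and nonnegative. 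A convex nonnegative function whose averages over $[-R,R]$ grow only linearly must satisfy $G'\equiv0$: if $G'(z_1)\ne0$ for some $z_1$, convexity forces linear growth of $G$ toward $+\infty$ or $-\infty$, whence $\int_{-R}^{R}G\gtrsim R^2$, contradicting the display. Thus $G''\equiv0$, so $\int_{M^2_z}|\Ric|^2\,dVol_2=0$ for every $z$ and $\Ric\equiv0$. But then $M^4$ is a complete two-ended Ricci-flat manifold, so by the Cheeger-Gromoll splitting theorem it contains a line and splits as $\mathbb{R}\times N^3$ with $N^3$ Ricci-flat, hence flat; equivalently this contradicts the one-endedness of non-flat Ricci-flat Type IIa metrics established in Proposition \ref{PropTypeIIbMainText}. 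Either way $M^4$ is flat, contradicting the hypothesis, so $\chi^g>0$.

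With $\chi^g>0$ and $e^g=0$ the remaining conclusions follow. Since $\mathrm{Vol}(M^2_z)$ is constant while $\int_{M^2_z}\triangle_4z\,dVol_2=4\pi\chi^g z+C_1$ is linear (Propositions \ref{PropConstantGrowthInText} and \ref{PropGrowthOfVSqVol}), Hölder's inequality (\ref{EqnHolder}) gives $G(z)\ge(4\pi\chi^g z+C_1)^2/\mathrm{Vol}(M^2_z)$, which grows quadratically; were $L^2(|\Ric|)<\infty$, then $\int_{\mathbb{R}}G''\,dz=\tfrac1\pi\int_{M^4}|\Ric|^2\,dVol_4<\infty$ would force the convex $G$ to have bounded derivative and hence at most linear growth, a contradiction, so $L^2(|\Ric|)=\infty$. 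Finally, $\chi^g>0$ and $e^g=0$ place $M^3_z$ in the $\mathbb{S}^2\times\mathbb{R}$ Thurston geometry; the level-sets are orientable (regular level-sets of $z$ in the oriented $M^4$) with orientable K\"ahler-reduction base, so the classification recalled in the introduction forces $M^2_z\cong\mathbb{S}^2$ (whence $\chi^g=2$), $M^3_z\cong\mathbb{S}^2\times\mathbb{S}^1$ with $M^3_z\to M^2_z$ the trivial projection (forced by $e^g=0$), and therefore $M^4\cong\mathbb{R}\times\mathbb{S}^2\times\mathbb{S}^1$. The main obstacle is the contradiction step proving $\chi^g>0$: it requires combining the pointwise Ricci bound (through Lemma \ref{LemmaRicEllipticEst}) with the convexity of $G$ coming from the Chern-Gauss-Bonnet identity, and then invoking the splitting theorem to eliminate the resulting Ricci-flat case.
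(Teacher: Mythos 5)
Your proposal is correct and follows essentially the same path as the paper: 2-endedness and isotopy from Lemma \ref{LemmaUnboundedZText}, $e^g=0$ from positivity of $Vol(M^2_z)$, the contradiction argument for $\chi^g>0$ combining the pointwise Ricci bound (Lemma \ref{LemmaRicEllipticEst}) with the Chern--Gauss--Bonnet identity and the Cheeger--Gromoll splitting theorem, H\"older's inequality for $L^2(|\Ric|)=\infty$, and the Seifert classification for the topology. The only deviation is in the execution of the key estimate: where the paper inserts an explicit quartic bump $\eta$ into the transgression identity $2\int\eta|\Ric|^2\,dVol_4=\int\eta''(\triangle_4z)^2\,dVol_4$ and sends $\delta\rightarrow0$ to get $\int|\Ric|^2=0$ directly, you use a linear cutoff $\varphi_R$ together with convexity of $G(z)=\int_{M^2_z}(\triangle_4z)^2\,dVol_2$ (Proposition \ref{PropTwoDerivsOfLaplSq}) to force $G''\equiv0$ --- an equivalent and arguably cleaner execution of the same idea that avoids the paper's explicit bump-function constants.
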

	\begin{proof}
		By Lemma \ref{LemmaUnboundedZText} $M^4$ is 2-ended, $z$ ranges from $-\infty$ to $\infty$, and the values $e^g=e^g(M^3_z,M^2_z)$ and $\chi^g=\chi^g(M^2_z)$ are constant.
		To see that $e^g=0$, by Proposition \ref{PropConstantGrowthInText} we have
		\begin{equation}
			Vol(M^2_z)
			\;=\;2\pi{}e^g(M^3_z,M^2_z)\,z\;+\;C_3.
		\end{equation}
		Because $Vol(M^2_z)>0$ for all $z$, necessarily $e^g=0$.
		Thus $Vol(M^2_z)$ is constant.
		
		We use a proof by contradiction to prove strict inequality $\chi^g>0$.
		Assume to the contrary that $\chi^g=0$.
		Then by Proposition \ref{PropGrowthOfVSqVol} constants $C_0$ and $C_1$ exist so
		\begin{equation}
			\begin{aligned}
				\int_{M^2_z}\triangle_4z\,dVol_2
				&\;=\;C_2, \quad\text{and}\quad
				\int|V|^2\,dVol_2
				\;=\;C_2\cdot{}z\,+\,C_1.
			\end{aligned}
		\end{equation}
		Because $\int_{M^2_z}|V|^2dVol_2>0$ and $z$ ranges in $(-\infty,\infty)$, necessarily $C_2=0$ so
		\begin{equation}
			\begin{aligned}
				\int_{M^2_z}\triangle_4z\,dVol_2
				\;=\;0, \quad
				\int_{M^2_z}|V|^2\,dVol_2
				\;=\;C_1
			\end{aligned}
		\end{equation}
		Assume $\eta:M^4\rightarrow\mathbb{R}$ has compact support.
		By (\ref{EqnTransgressionOfRic}) we have $\int\eta|\Ric|^2dVol_4=-\int{}d\eta\wedge\mathcal{TP}{}_{\Ric}$.
		Using the expression for $\mathcal{TP}_{\Ric}$ in Lemma \ref{LemmaUsableCGB}, we have
		\begin{equation}
			2\int\eta|\Ric|^2
			\;=\;
			-\int{}d\eta\wedge\mathcal{L}_{\frac{\partial}{\partial{}z}}\left((\triangle_4z)^2\frac{1}{|V|}\,dVol_3\right).
		\end{equation}
		The Leibniz rule gives
		\begin{equation}
			2\int\eta|\Ric|^2
			\;=\;
			\int{}(\triangle_4z)^2\frac{1}{|V|}\;\mathcal{L}_{\frac{\partial}{\partial{}z}}\big(d\eta\big)\wedge{}dVol_3
		\end{equation}
		where we used the fact that $\mathcal{L}_{\frac{\partial}{\partial{}z}}=di_{\frac{\partial}{\partial{}z}}$ on 4-forms is a total derivative, which vanish under the integral.
		Now assume $\eta=\eta(z)$ is a function of one variable that has compact support.
		Then
		\begin{equation}
			\mathcal{L}_{\frac{\partial}{\partial{}z}}\big(d\eta\big)
			\;=\;\frac{\partial^2\eta}{\partial{}z^2}dz
		\end{equation}
		so abbreviating $\frac{\partial^2\eta}{\partial{}z^2}=\eta''$ we have
		\begin{equation}
			\begin{aligned}
			2\int\eta|\Ric|^2
			&\;=\;
			\int{}(\triangle_4z)^2\frac{1}{|V|}\;\eta''dz\wedge{}dVol_3
			\;=\;
			\int\eta''(\triangle_4z)^2\,dVol_4.
			\end{aligned}
		\end{equation}
		Using Lemma \ref{LemmaRicEllipticEst}, with $\varphi=\eta''$ and the assumption that $\chi^g=0$, we obtain
		\begin{equation}
			\begin{aligned}
			2\int\eta|\Ric|^2
			&\;\le\;
			C_1\int_{z'}^{z''}|\eta'''|\left(2\Lambda^2|z-z_0|+C_0\right)
			+2\Lambda^2C_1\int_{z'}^{z''}|\eta''|dz
			\end{aligned} \label{EqnMainRicciEst}
		\end{equation}
		where $\eta$ has support in $[z',z'']$, $z_0$ may be chosen arbitrarily, and $C_0=\sup_{M^2_{z_0}}|\triangle_4z|$.
		We select $z_0=0$ and choosing $\delta>0$ we select
		\begin{equation}
			\eta=
			\begin{cases}
				0, & z\notin[-\delta^{-1},\,\delta^{-1}] \\
				(1-(\delta\,z)^2)^4, & z\in[-\delta^{-1},\,\delta^{-1}].
			\end{cases}
		\end{equation}
		Then $\eta\in{}C^3_c$ so we may use the estimate (\ref{EqnMainRicciEst}).
		The integration in (\ref{EqnMainRicciEst}) may be carried out explicitly. We obtain
		\begin{equation}
			2\int\eta|\Ric|^2
			\;\le\;
			\frac{4608(3\sqrt{7}+4\sqrt{21})\Lambda^2C_1}{2401}\Lambda^2C_1\delta
			+\frac{1808}{49}C_0C_1\delta^2. \label{EqnEstIntCarriedOut}
		\end{equation}
		Noting that $\lim_{\delta\rightarrow0^+}\eta(z)\equiv1$, Eq. (\ref{EqnEstIntCarriedOut}) gives in the limit
		\begin{equation}
			\int|\Ric|^2\;=\;0
		\end{equation}
		and we have proven that if $\chi^g(M^2_z)=0$, then $M^4$ is Ricci-flat.
		But $M^4$ is 2-ended so it has a line, so by the Splitting Theorem \cite{CG71} the manifold is flat, contradicting the assumption that $M^4$ is non-flat. 
		We conclude that $\chi^g>0$.
		
		We have proven that $e^g=0$ and $\chi^g>0$.
		We must prove $\int|\Ric|^2=\infty$.
		By Propositions \ref{PropConstantGrowthInText} and \ref{PropGrowthOfVSqVol} we have constants $C_3>0$ and $C_1$ so
		\begin{equation}
			\begin{aligned}
				C_3\;=\;\int_{M^2_z}\,dVol_2
				\quad\text{and}\quad
				4\pi\chi^g\cdot{}z+C_1\;=\;\int_{M^2_z}\triangle_4z\,dVol_2.
			\end{aligned}
		\end{equation}
		By H\"older's inequality therefore
		\begin{equation}
			(4\pi\chi^g\cdot{}z\,+\,C_1)^2
			\;\le\;C_3\int_{M^2_z}(\triangle_4z)^2\,dVol_2
		\end{equation}
		which means the convex function $\int_{M^2_z}(\triangle_4z)^2\,dVol_2$ has quadratic growth (or greater).
		In particular $\frac{d}{dz}\int_{M^2_z}(\triangle_4z)^2\,dVol_2$ has no upper bound.
		But
		\begin{equation}
			\small
			\begin{aligned}
				\frac{d}{dz}\int_{M^2_z}(\triangle_4z)^2dVol_2
				&\;=\;\left.\frac{d}{dz}\right|_{z=0}\int_{M^2_z}(\triangle_4z)^2dVol_2
				\,+\,\int_{0}^z\left(\int_{M^2_z}|\Ric|^2dVol_2\right)\,dz \\
				&\;=\;\left.\frac{d}{dz}\right|_{z=0}\int_{M^2_z}(\triangle_4z)^2\,dVol_2
				\,+\,\int_{M^4_{0,z}}|\Ric|^2\,dVol_4
			\end{aligned}
		\end{equation}
		where we used the Integration Lemma (Lemma \ref{LemmaIntegration}) in the second line.    
		Because $\frac{d}{dz}\int_{M^2_z}(\triangle_4z)^2\,dVol_2$ has no upper bound, so $\int_{M^4_{0,z}}|\Ric|^2\,dVol_4$ has no upper bound.
		We conclude that $\int_{M^4}|\Ric|^2dVol_4=\infty$.
		
		Now the Seifert fibration $M^3_z\rightarrow{}M^2_z$ has positive orbifold Euler number for its base and zero fibration Euler number.
		A glance at the classification of Seifert fibered spaces---see \cite{Tol74} or Theorem 5.3 of \cite{Scott83}---states that $M^3_z$ has Thurston covering geometry $\mathbb{S}^2\times\mathbb{R}$.
		Because $M^3_z$ is compact, orientable, has orientable fibers, and an oriented base, it has the topology of $\mathbb{S}^2\times\mathbb{S}^1$ and the fibration $M^3_z\rightarrow{}M^2_z$ is the projection onto the $\mathbb{S}^2$ fiber.
		By Lemma \ref{LemmaUnboundedZText} and the fact that $z$ is unbounded from above and below means $M^4\approx\mathbb{S}^2\times\mathbb{S}^1\times\mathbb{R}$ where the projection onto the last factor $M^4\rightarrow\mathbb{R}$ is the function $z$.
	\end{proof}

	%
	%
	%
	%
	%
	%
	%
	%
	
	\section{Examples} \label{SecExamples}
	
	We present some illustrating examples of scalar-flat metrics with a Killing field.
	We refer the reader also to Sections \ref{SubsecEuclExample} and \ref{SubSectCGBExamples} in the text, which presented flat examples and LeBrun instanton examples in detail.
	
	\subsection{Type I examples} \label{SubSecTypeIExamples}
	
	The easiest example is $\mathbb{S}^2\times\Sigma^2$ where $\mathbb{S}^2$ has its unit round metric and $\Sigma^2$ is a surface of negative genus.
	More examples come from blowing (inserting $\mathbb{P}^1$ divisors of negative self-intersection) up at zeros of the Killing field, see for example \cite{Szekl15} and references therein.
	Compact scalar-flat metrics have been studied extensively and there are too many references to mention; see for example \cite{KLP97}, \cite{CTF98}, \cite{Szekl09}, \cite{HS02}, \cite{LeB20}.
	
	\subsection{The flat and hyperbolic examples} \label{SubSecTypeIIExamples}
	
	The simplest example is $\mathbb{C}^2$ with two rotational fields generated by the diffeomorphism flow $\psi_t.(z_1,z_1)=(e^{2\pi\alpha}z_1,e^{2\pi\beta}z_2)$.
	If $\alpha$ and $\beta$ have the same sign this is a Type IIa example.
	If $\alpha$ or $\beta$ is zero, or if they have opposite signs, the levelsets are non-compact and this is a Type IIb example.
	
	A wider range of phenomena is captured in the $\mathbb{S}^2\times\mathbb{H}^2$ examples.
	With $\theta_1\in[0,2\pi)$ the flow parameter on $\mathbb{S}^2$ and $\theta_2\in[0,2\pi)$ the flow parameter on $\mathbb{H}^2$, the metric is
	\begin{equation}
		g=g_{\mathbb{S}^2}+g_{\mathbb{H}^2}
		=\Big((dr^1)^2+(\sin(r)d\theta_1)^2\Big)+\Big((dr^2)^2+(f(r)d\theta_2)^2\Big)
	\end{equation}
	where $f(r^2)$ satisfied $f''(r^2)=f(r^2)$; the three cases are $f(r^2)=\sinh(r^2)$, $r^2\in[0,\infty)$ where the Killing field is elliptic, $f(r^2)=e^{r^2}$, $r^2\in(-\infty,\infty)$ where the Killing field is parabolic, and $f(r^2)=\cosh(r^2)$, $r^2\in(-\infty,\infty$ where the Killing field is hyperbolic.
	On the $\mathbb{S}^1$ factor $r^1\in[-\pi,\pi]$.
	We have $\theta_1,\theta_2\in[0,2\pi)$.
	We find potential functions in the usual way by solving $\nabla{}z^i=J\frac{\partial}{\partial\theta^i}$, and arrive at $z^1=-\cos(r^1)$ and the three cases $z^2=\cosh(r^2)$, $z^2=e^{r^2}$, and $z^2=\sinh(r^2)$.
	Changing to momentum variables, we find in the three cases
	\begin{equation}
		\small
		g
		=
		\frac{1}{1-(z^1)^2}\left(dz^1\right)^2
		+\left(1-(z^1)^2\right)\left(d\theta_1\right)^2
		+
		\begin{cases}
			\frac{1}{(z^2)^2-1}\left(dz^2\right)^2+((z^2)^2-1)\left(d\theta_2\right)^2 & \\
			\frac{1}{(z^2)^2}\left(dz^2\right)^2+(z^2)^2\left(d\theta_2\right)^2 & \\
			\frac{1}{(z^2)^2+1}\left(dz^2\right)^2+((z^2)^2+1)\left(d\theta_2\right)^2
		\end{cases}
	\end{equation}
	where $z^1\in[-1,1]$ and the coordinate ranges for $z^2$ are $z^2\in(1,\infty)$, $z^2\in(0,\infty)$, $z^2\in(-\infty,\infty)$ for the three cases respectively.
	The momentum reduction is the map $\Phi:(z^1,z^2):M^4\rightarrow\mathbb{R}^2$.
	The images $\Sigma^2=\Phi(M^4)\subset\mathbb{R}^2$, in the three cases, are depicted in Figure \ref{FigThreeCases}.
	\begin{figure}[ht]
		\centering
		\includegraphics[scale=0.45]{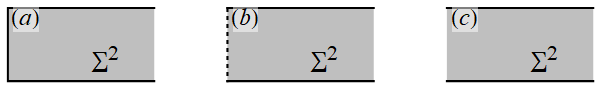}
		\caption{
			Momentum reductions for the $\mathbb{S}^2\times\mathbb{H}^2$ metrics:
			(a) the elliptic case, $\Sigma^2$ is a closed half-strip;
			(b) the parabolic case, $\Sigma^2$ is a semi-closed half-strip; and
			(c) the hyperbolic case, $\Sigma^2$ is a closed strip.
		}
		\label{FigThreeCases}
	\end{figure}
	The elliptic and hyperbolic cases are both Type IIa examples.
	In the parabolic case, the manifold has a cusp-like end as $z^2\searrow0$ an the norm of the field $\frac{\partial}{\partial\theta_2}$ proceeds to zero; therefore the parabolic case is a Type III example.
	
	\subsection{Type II examples}
	Type II examples are plentiful, not least because the Ricci-flat case has been so thoroughly studied.
	The examples of the LeBrun instantons of Section \ref{SubSectCGBExamples} are all Type IIa; see \cite{LeB89}.
	Other Type II examples include the Taub-NUT, multi-Taub-NUT, Eguchi-Hanson, and multi-Eguchi-Hanson examples \cite{GiHa78} \cite{EH79}, the Kronheimer examples \cite{Kro89}, the Pedersen-Poon examples \cite{PP91}, the Hwang-Singer examples \cite{HS02}, the Calderbank-Singer examples \cite{CS04}, the Calderbank-Pedersen examples \cite{CP02}, the Donaldson generalized Taub-NUTs \cite{Don09}, and the exceptional Taub-NUT of \cite{Weber23}.
	There is a general classification in the compact case in \cite{ACGT04}.
	The exceptional Taub-NUT is Type IIa, but neither ALE nor asymptotically conical, and has infinite $L^2(|\Ric|)$.
	A naive hope that Type IIa metrics might automatically have some asymptotic geometric rigidity seems to be false.

	\subsection{Type III examples} \label{SubSecTypeIIIExamples}
	
	Type III examples are characterized by no asymptotic lower bound on $|V|$, either because the zero-locus is non-compact (Type IIIb), or the zero-locus is compact but there is no asymptotic lower bound on $|V|$ (Type IIIa).
	Type IIIa is problematic mainly for the reason that level-sets can change between compact and non-compact.
	
	The ``exceptional half-plane'' metric is a toric scalar-flat manifold with $L^2(\Ric)=\infty$ that provides an example of both Type IIIa and IIIb.
	This manifold has underlying complex manifold $\mathbb{C}^2$ with one Killing field being translational and the other being rotational.
	The rotational field gives a Type IIIb example; its zero-locus is an unbounded totally geodesic copy of $\mathbb{C}$.
	The translational field gives a type IIIa example; its Killing field is nowhere-zero but has no lower-bound.
	
	\begin{figure}[ht]
		\centering
		\includegraphics[scale=0.5]{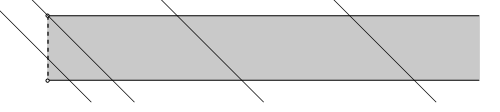} \\
		\hspace{0.2in}
		\includegraphics[scale=0.5]{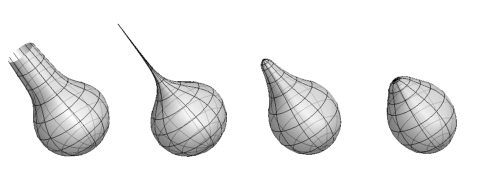}
		\caption{
			Topology change of the reduced $M^2_z$ in the $\mathbb{S}^2\times\mathbb{H}^2$ example, where the Killing field is non-zero but may approach zero asymptotically.
			Diagonal lines indicate various values of $z=z^1+z^2$ in the $z^1$-$z^2$ momentum plane.
			Below are representations of the corresponding reductions $M^2_z$.
		}
		\label{FigTopologyChange}
	\end{figure}
	For an example with an unbounded zero-locus that consists of isolated points, see \cite{AKL89} which has an example with infinite topological type.
	
	When Theorem \ref{ThmCptNonCpt} fails, it seems difficult to even begin an analysis.
	To see this actually occur, consider parabolic $\mathbb{S}^2\times\mathbb{H}^2$ case depicted in Figure~\ref{FigThreeCases}(b).
	Any constant-coefficient combination of the momentum functions $z^1$, $z^2$ gives another momentum function and corresponding Killing field.
	Taking $z=z^1+z^2$, we see that $M^3_z$ is compact when $z>0$ and non-compact when $0<z\le1$; soe Figure \ref{FigTopologyChange} for a depiction of this phenomenon.


\begin{thebibliography}{0}
		
		\bibitem{AKL89}{M. Anderson, P. Kronheimer, and C. LeBrun}, \emph{Complete Ricci-flat Kähler manifolds of infinite topological type}. Communications in Mathematical Physics \textbf{125} (1989) 637-–642
		
		\bibitem{ACG06}{V. Apostolov, D. Calderbank and P. Gauduchon}. \emph{Hamiltonian 2-forms in Kähler geometry, I General theory.} Journal of Differential Geometry \textbf{73} No. 3 (2006) 359--412.
		
		\bibitem{ACGT04}{V. Apostolov, D. Calderbank, P. Gauduchon, and C. T{\o}nnesen-Friedman}. \emph{Hamiltonian 2-forms in Kähler geometry, II Global classification.} Journal of Differential Geometry \textbf{68} No. 2 (2004) 277--345.
		
		\bibitem{AP09}{C. Arezzo and F. Pacard}, \emph{Blowing up Kähler manifolds with constant scalar curvature, II}. {Annals of Mathematics, Second Series}, \textbf{170} No. 2 (2009) 685--738
		
		\bibitem{Besse}{A. Besse}, \emph{Einstein manifolds}. Ergebnisse der Mathematik und ihrer Grenzgebiete, vol. 10, Berlin, New York: Springer-Verlag, 1987
		
		\bibitem{Calder00}{D. Calderbank}, \emph{The geometry of the Toda equation}. {Journal of Geometry and Physics}, \textbf{36} No. 2 (2000) 152--162
		
		\bibitem{CP02}{D. Calderbank and H. Pedersen}, \emph{Selfdual Einstein metrics with torus symmetry}. Journal of Differential Geometry \textbf{60} (2002) 485–521
		
		\bibitem{CS04}{D. Calderbank and M. Singer}, \emph{Einstein metrics and complex singularities}. Inventiones Mathematicae \textbf{156} No. 2 (2004) 405-–443.
		
		
		\bibitem{CG71}{J. Cheeger and D. Gromoll}, \emph{The Splitting theorem for manifolds of nonnegative Ricci curvature}. {Journal of Differential Geometry} {\bf 6} (1971) 119--128
		
		
		\bibitem{CG86}{J. Cheeger and M. Gromov}, \emph{Collapsing Riemannian manifolds while keeping their curvature bounded, I, II}. Journal of Differential Geometry, {\bf{23}}, (1986), 309--346 and {\bf{32}}, (1990), 269--298.
		
		
		\bibitem{CC18}{X. Chen and J. Cheng}, \emph{On the constant scalar curvature Kähler metrics, general automorphism group} arXiv:1801.05907
		
		\bibitem{CW11}{X. Chen and B. Weber}, \emph{Moduli spaces of critical Riemannian metrics with $L^{\frac{n}{2}}$-norm curvature bounds}. Advances in Mathematics \textbf{226} No. 2 (2011) 1307--1330
		
		\bibitem{CS74}{S. Chern and J. Simons}, \emph{Characteristic Forms and Geometric Invariants}. {Annals of Mathematics, Second Series} {\bf 99} No. 1 (1974) 48--69
		
		\bibitem{Dan96}{A. Dancer}, \emph{Scalar-flat K\"ahler metrics with $SU(2)$ symmetry}. Journal f\"ur die reine undangewandte Mathematik {\bf 479} (1996) 99—-120
		
		\bibitem{Don09}{S. Donaldson}, \emph{Constant Scalar Curvature Metrics on Toric Surfaces}. Geometric and Functional Analysis \textbf{19} (2009) 83-–136
		
		\bibitem{Derd83}{A. Derdzinski}, \emph{Self-dual K\"ahler manifolds and Eisntein manifolds of dimension four}. {Compositio Mathematica} {\bf 49} (1983) 405--433
		
		\bibitem{DK81}{D. DeTurck and J. Kazhdan}, \emph{Some regularity theorems in Riemannian geometry}. Annales ENS \textbf{14} (1981) 249-–260.
		
		\bibitem{DK97}{S. Donaldson and P. Kronheimer}, \emph{The Geometry of Four-Manifolds}. {Oxford Mathematical Monographs}, Clarendon Press, 1997
		
		\bibitem{DF89}{S. Donaldson and R. Friedman}, \emph{Connected sums of self-dual manifolds and deformations of singular spaces.} Nonlinearity \textbf{2} (1989) 197--239.
		
		\bibitem{DunPlan11}{M. Dunajski and P. Plansangkate}, \emph{Scalar-flat K\"ahler metrics with conformal Bianchi V symmetry}. Classical and Quantum Gravity \textbf{28} (2011)
		
		\bibitem{EH79}{T. Eguchi and J. Hanson}, \emph{Self-dual solutions to Euclidean gravity}. Ann. Physics \textbf{120} No. 1 (1979) 82-–106.
		
		
		
		
		\bibitem{GiHa78}{G. Gibbons and S. Hawking}, \emph{Gravitational multi-instantons}. Physics Letters B \textbf{78} No. 4 (1978) 430-–432.
		
		
		
		\bibitem{GH78}{P. Griffiths and J. Harris}, \emph{Principles of Algebraic Geometry}. Wiley Interscience Publication, New York, 1978
		
		\bibitem{Hempel76}{J. Hempel}, \emph{3-manifolds}. AMS Chelsea Publishing Volume 349, New York, 1976
		
		\bibitem{HS02}{A. Hwang and M. Singer}, \emph{A momentum construction for circle-invariant Kahler metrics}. Trans. Amer. Math. Soc. \textbf{354} (2002) 2285-–2325.
		
		\bibitem{KLP97}{J. Kim, C. LeBrun, and M. Pontecorvo}, \emph{Scalar-flat Ka\"ahler surfaces of all genera}. {Journal f\"ur die reine und angewandte Mathematik} \textbf{486} (1997) 69–-95
		
		\bibitem{KN96}{S. Kobayahi and K. Nomizu}, \emph{Foundations of Differential Geometry Vol 1 and 2}. Wiley-Interscience, New York, 1996
		
		\bibitem{Kro89}{P. Kronheimer}, \emph{The construction of ALE spaces as hyper-K\"ahler quotients}, Journal of Differential Geometry {\bf 29} No. 3 (1989) 665-–683.
		
		\bibitem{LeB89}{C. LeBrun}, \emph{Counter-examples to the generalized positive action conjecture}. Communications in Mathematical Physics {\bf 118} No. 4 (1988) 591-–596.
		
		\bibitem{LeB91a}{C. LeBrun}, \emph{Explicit self-dual metrics on $\mathbb{CP}_2\#\dots\#\mathbb{CP}_2$}, {Journal of Differential Geometry} {\bf 34} (1991) 223--253
		
		\bibitem{LeB91b}{C. LeBrun}, \emph{Scalar-flat K\"ahler metrics on blown-up ruled surfaces}. {Journal f\"ur die reine und angewandte Mathematik} {\bf 420} (1991) 161--177
		
		\bibitem{LeB20}{C. LeBrun}, \emph{Bach-flat K\"ahler surfaces}. {The Journal of Geometric Analysis} \textbf{30} (2020) 2491--2514
		
		\bibitem{LV16}{M. Lock and J. Viaclovsky}, \emph{A smorgasbord of scalar-flat Kähler ALE surfaces}, {Journal f\"ur die reine und angewandte Mathematik} {\bf 746} (2019) 171-–208
		
		\bibitem{MS13}{D. McDuff and D. Salamon}, \emph{Introduction to Symplectic Topology}. Oxford University Press, Oxford Graduate Texts in Mathematics 27, 2017.
		
		\bibitem{NaffWeber}{K. Naff and B. Weber}, \emph{Canonical metrics and ambiK\"ahler structures on 4-manifolds with $U(2)$ symmetry}, ArXiv:2111.11150
		
		\bibitem{NR84}{M. Nicolau and A. Reventos}, \emph{On some geometrical properties of Seifert bundles}, {Israel Journal of Mathematics} \textbf{47} No. 4 (1984)
		
		
		
		\bibitem{Orlik72}{P. Orlik}, \emph{Seifert Manifolds}. Springer Berlin, Heidelberg 1972
		
		\bibitem{PP91}{H. Pedersen and Y. Poon}, \emph{Hamiltonian Constructions of K\"hler-Einstein Metrics and K\"ahler Metrics of Constant Scalar Curvature}. Communications in Mathematics Physics \textbf{136} (1991) 309--326
		
		
		\bibitem{Raymond68}{F. Raymond} \emph{Classification of the actions of the circle on 3-manifolds}. Transactions of the American Mathematical Society {\bf{131}} No. 1 (1968) 51--78.
		
		\bibitem{RS09}{Y. Rollin and M. Singer}, \emph{Construction of Ka\"ahler surfaces with constant scalar curvature.} Journal of the European Mathematical Society \textbf{11} (2009) 979–-997
		
		
		\bibitem{Scott83}{P. Scott}, \emph{The geometries of 3-manifolds}. {London Mathematical Society} \textbf{15} No. 5 (1983) 401--487
		
		\bibitem{Szekl15}{G. Sz\'ekelyhidi}, \emph{Blowing up extremal K\"ahler manifolds II}. {Inventiones Mathematicae} \textbf{200} (2015) 925–-977
		
		\bibitem{Szekl09}{G. Sz\'ekelyhidi}, \emph{The Calabi functional on a ruled surface}. Ann. Sci. Éc. Norm. Supér. \textbf{42} (2009) 837--856
		
		\bibitem{SiTh69}{I. Singer and J. Thorpe}, \emph{The curvature of 4-dimensional Einstein spaces}. Global Analysis (Papers in Honor of K. Kodaira), Princeton University Press, Princeton, 1969, 355-365.
		
		\bibitem{Thurston82}{W. Thurston}, \emph{Three dimensional manifolds, Kleinian groups and hyperbolic geometry}. Bulletin of the American Mathematical Society {\bf 6} No. 6 (1982) 357--381.
		
		\bibitem{TV05}{G. Tian and J. Viaclovsky}, \emph{Bach-flat asymptotically locally Euclidean metrics}. Inventiones Mathematicae {\bf 160} No. 2 (2005) 357-–415.
		
		\bibitem{Tod95}{K. Tod}, \emph{Scalar-flat K\"ahler and hyper-K\"ahler metrics from Painlev\'e-III}. Classical and Quantum Gravity \textbf{12} No. 6 (1995)
		
		\bibitem{Tol74}{J. Tollefson}, \emph{The compact 3-manifolds covered by $\mathbb{S}^2\times\mathbb{R}$}. Proceedings of the American Mathematical Society {\bf 45} No. 3 (1974)
		
		\bibitem{CTF98}{C. T{\o}nnesen-Friedman}, \emph{Extremal K\"ahler metrics on minimal ruled surfaces}. {Journal f\"ur die reine und angewandte Mathematik} \textbf{502} (1998) 175–-197.
		
		\bibitem{Uhl82}{K. Uhlenbeck}, \emph{Removable singularities in Yang-Mills fields}. Communications in Mathematical Physics {\bf 83}, 11–29 (1982)
		
		\bibitem{Weber19}{B. Weber}, \emph{Energy and asymptotics of Ricci-flat 4-manifolds with a Killing field}. Proceedings of the American Mathematical Society {\bf 147} (2019) 3117--3130
		
		\bibitem{Weber23}{B. Weber}, \emph{Generalized Taub-NUTs and two exceptional instantons}, {Communications in Analysis and Geometry} \textbf{30} No. 7 (2022) 1575--1632
		
		\bibitem{Weber24a}{B. Weber}, \emph{Analytic Classification of toric instanton metrics in dimension 4}, to Appear in Journal of Geometry
		
		\bibitem{Weber24b}{B. Weber}, \emph{Asymptotic geometry of toric K\"ahler instantons}. arXiv:2208.00997
		
	\end{thebibliography}
\end{document}